\newcommand{\RR}{\mathbb{R}}
\newcommand{\NN}{\mathbb{N}}
\newcommand{\Z}{\mathbb{Z}}
\newcommand{\CC}{\mathbb{C}}
\newcommand{\norm}{\|\cdot\|}
\newcommand{\test}{\mathcal{E}}
\newcommand{\dist}{\mathcal{E}^\prime}
\newcommand{\vertiii}[1]{{\vert\kern-0.25ex\vert\kern-0.25ex\vert #1
    \vert\kern-0.25ex\vert\kern-0.25ex\vert}}
\newtheorem{prpstn}{Proposition}[section]
\newtheorem{lmm}{Lemma}[section]
\newtheorem{thrm}{Theorem}[section]
\newtheorem{dfntn}{Definition}[section]
\newtheorem{crllr}{Corollary}[section]
\newtheorem{rmrk}{Remark}[section]
\title{Stabilization of the linearized water tank system}
\author[a,e]{Jean-Michel Coron}
\author[b,e]{Amaury Hayat}
\author[c]{Shengquan Xiang}
\author[d]{Christophe Zhang}
\affil[a]{Sorbonne Universit\'{e}, Universit\'{e} de Paris, CNRS, Laboratoire Jacques-Louis Lions, 75005 Paris, France. Email address: jean-michel.coron@sorbonne-universite.fr.}
\affil[b]{CERMICS, \'{E}cole des Ponts ParisTech, 77455 Marne la Vall\'{e}e, France. Email address: amaury.hayat@enpc.fr.}
\affil[c]{B\^{a}timent des Math\'{e}matiques, EPFL, CH-1015 Lausanne, Switzerland.
E-mail address: shengquan.xiang@epfl.ch.}
\affil[d]{Chair in Applied Analysis (Alexander von Humboldt Professorship), Department Mathematik, Friedrich-Alexander-Universit\"{a}t Erlangen-N\"{u}rnberg, 91058 Erlangen, Germany. Email address: christophe.zhang@fau.de.}
\affil[e]{CAGE, INRIA, Paris, France.}
\date{\empty}
\begin{document}
\maketitle

\begin{abstract}
      In this article we study the so-called water tank system. In this system, the behavior of water contained in a 1-D tank is modelled by Saint-Venant equations, with a scalar distributed control. It is well-known that the linearized systems around uniform steady-states are not controllable, the uncontrollable part being of infinite dimension. Here we will focus on the linearized systems around non-uniform steady states, corresponding to a constant acceleration of the tank. We prove that these systems are controllable in Sobolev spaces, using the moments method and perturbative spectral estimates. Then, for steady states corresponding to small enough accelerations, we design an explicit Proportional Integral feedback law (obtained thanks to a well-chosen dynamic extension of the system) that stabilizes these systems exponentially with arbitrarily large decay rate. Our design relies on feedback equivalence/backstepping.
\end{abstract}

\smallskip
\noindent \textbf{Keywords.} Saint-Venant equations, controllability, feedback equivalence, backstepping, rapid stabilization.

\noindent \textbf{AMS Subject Classification.}
35L04,  
93D20,   
93D15.
\tableofcontents

\section{Introduction}
\subsection{Equations of the problem}\label{s1}
We consider the water tank model as introduced in \cite{DPR1999}, in one space dimension, under the hypothesis that the depth of the water is small compared to the length of the tank, and that the acceleration of the tank is small compared to the gravitational constant. The behavior of the water inside the tank, in the frame of reference of the tank, is then modelled by the Saint-Venant equation with no friction and no slope,
\begin{equation}\label{sv}
\left\{\begin{aligned}
 &\partial_{t}H+\partial_{x}(HV)=0,\\
 &\partial_{t}V+V\partial_{x}V+g\partial_{x}H=-U(t).
 \end{aligned}\right.
\end{equation}
where $H$ is the height of the water, $V$ its averaged horizontal velocity and $U$ is control input, given by the acceleration of the tank. Without loss of generality we can suppose that $g=1$.
The water is localized inside the water tank, which implies the following Dirichlet boundary conditions:
\begin{equation}\label{bc-sv0}
 V(t,0)=V(t,L)=0.
\end{equation}
Moreover, integrating the first equation of \eqref{sv} we obtainthe conservation of the mass of the water, we have that
\begin{gather}\label{cond-conser-mass}
    \int_0^L H(t, x) dx \textrm{ does not change with respect to time.}
\end{gather}
First derived in 1871 by Barr\'{e} de Saint-Venant \cite{ABSV1,ABSV3,ABSV2}, the Saint-Venant equations are among the most famous equations
in fluid dynamics and represent flows under shallow water approximation. Despite their apparent simplicity, they capture a large
number of physical behaviors, which made them a ground tool for practical application
in particular in the regulation of canals for agriculture management and in the regulation of navigable rivers.

The Saint-Venant equations are an example of quasilinear hyperbolic systems. The stabilization of such systems by proportional or output boundary control have been studied for decades, one can cite for instance the pioneering work of Li and Greenberg \cite{greenberg} for a system of two homogeneous equations considered in the framework of the $C^{1}$ norm. This was later generalized by \cite{Qin,Zhao,1994-Li-book,deHalleux,2010-Li-Rao-Wang-DCDS,2019-C1-H} to general nonhomogeneous systems and \cite{BastinCoron1D} in the framework of the $H^{2}$ norm.
The first result concerning the boundary stabilization of the Saint-Venant equations in themselves goes back to 1999
with \cite{Coron1999} where the stability of the homogeneous linearized Saint-Venant was shown, using proportional boundary conditions. This was extended in \cite{CAB2007} to the nonlinear homogeneous Saint-Venant equations.
Later, in 2008, using a semigroup approach and the method of the characteristics, the stabilization of the nonlinear homogeneous equations was achieved for sufficiently small friction and slope \cite{dos2008,Bastin2008}.
The same type of result was shown in \cite{CBA2008} using a Lyapunov approach while \cite{bastin2009} dealt with the inhomogeneous Saint-Venant equations in the particular case where the steady-states are uniform.
In 2017, the stabilization was achieved for arbitrary large friction but in the absence of slope \cite{BC2017},
and very recently for any section profil and any slope or friction \cite{HS,SVgeneral}. Other results exists using different boundary conditions for instance PI controls
\cite[Chapter 8]{BastinCoron1D}, \cite{XuSallet,SantosBastinCoronAndreaPI,BastinCoronPI,XuSallet2014,SV_trinh2018,SaintVenantPI}
or full-state feedbacks resulting of a backstepping approach \cite{BCKV} (see \cite{SVbilayer,SVexner} for its application on variant systems based on the Saint-Venant equations). However, the stabilization of the Saint-Venant equations by internal control,
has seldom been studied while being very interesting mathematically and corresponding to several physical situations, for instance a water tank subject to an acceleration.

\subsection{Main result}
The water tank problem is interesting in that it has been studied for a long time and is rich enough to have led to several interesting results. Among the control results in this setting, one can cite \cite{DPR1999,PR2002,Coron2002} where the authors show, among others, that the linearized homogeneous Saint-Venant equations with null velocity at the boundaries and subject to a scalar control force are not locally approximately controllable around their uniform steady-states. This proof also implies that they are not stabilizable either.

Now, consider the non-uniform steady-states corresponding to a (small) acceleration $U(t)=\gamma$ with $\gamma>0$ fixed: $H^{*}=H^{\gamma}$, $V^{*}=0$ with $H^{\gamma}(0)=1+\gamma L/2$ and
\begin{equation}\begin{aligned}
 H^{\gamma}(x)=&1-\gamma \left(x-\frac{L}{2}\right), \; \forall x\in [0, L],\\
 &\int_0^L H^\gamma (x)dx=L.
 \end{aligned}
\end{equation}
The linearized equations around this steady-state expressed with the variables $h=H-H^{\gamma}$ and $v=V-V^{\gamma}$ denoting the perturbations and the internal control $u(t)=-(U(t)-\gamma)$
\begin{equation}\label{sv-lin}
\partial_{t}
 \begin{pmatrix}
        h\\v
       \end{pmatrix}+
       \begin{pmatrix}
        0& H^{\gamma}\\1 & 0
       \end{pmatrix}\partial_{x}\begin{pmatrix}
        h\\v
       \end{pmatrix}
       +\begin{pmatrix}
        0& -\gamma\\0& 0
       \end{pmatrix}\begin{pmatrix}
        h\\v
       \end{pmatrix}= u(t)\begin{pmatrix}
        0\\1
       \end{pmatrix},
\end{equation}
with the boundary conditions:
\begin{equation}\label{bc-sv}
 v(t,0)=v(t,L)=0.
\end{equation}
Condition \eqref{cond-conser-mass} becomes
\begin{equation}\label{cond-cons-2-0}
    \frac{d}{dt}\int_0^L h(t, x)dx=0.
\end{equation}
We assume from now on that
\begin{equation}\label{cond-cons-init}
  \int_0^L h(0, x)dx=0,
\end{equation}
which implies from \eqref{cond-cons-2-0} that
\begin{equation}\label{cond-cons-2}
\int_0^L h(t, x)dx=0.
\end{equation}
The motivation behind this choice is physical: the total mass of water is conserved when moving the water tank. Therefore if the mass of water of the initial state is not equal to the mass of water of the steady-state, the convergence cannot occur. This is not a problem in practice as for any initial mass of water there is a corresponding steady-state of the system. Therefore if \eqref{cond-cons-init} is not satisfied it means that the target steady-state was not chosen well.

From now on we assume that $|\gamma|$ is small. In particular , for any $x\in[0,L)$ $H^{\gamma}(x)\neq 0$, the transport matrix is diagonalizable and the system is strictly hyperbolic.

 We now recall the definition of exponential stability:

\begin{dfntn}\label{def-1-new}
 For a  given feedback law $u$,  the
 system \eqref{sv-lin}--\eqref{bc-sv} is called  exponentially stable with decay rate $\mu$ if there exists a constant $C>0$ such that for
any $(h_{0},v_{0})\in H^{1}((0,L);\mathbb{R}^{2}))$ satisfying the compatibility conditions $v_{0}(0)=v_{0}(L)=0$ corresponding to \eqref{bc-sv}, the
system  has a unique solution $(h,v)\in C^{0}([0,+\infty); H^{1}((0,L);\mathbb{R}^{2}))$ and
\begin{equation}
    \lVert h(t,\cdot),v(t,\cdot)\rVert_{H^{1}((0,L);\mathbb{R}^{2})}
\leq Ce^{-\mu t}\lVert h_{0},v_{0}\rVert_{H^{1}((0,L);\mathbb{R}^{2})},\text{  }\forall\text{  }t\in[0,+\infty).
\end{equation}
Moreover, we say that the control $u$ stabilizes the system \eqref{sv-lin}--\eqref{bc-sv} with decay rate $\mu$, and the action of finding such a stabilizing control is called exponential stabilization.
\end{dfntn}

In this article we give a way of stabilizing system \eqref{sv-lin}--\eqref{bc-sv} exponentially for a small enough $\gamma$.

To state our main result let us introduce some notations. First, note that we defined stability for real-valued solutions to the water-tank equations, as they are the ones that make physical sense. However, as we rely on spectral properties of the system to build feedbacks, we will mainly work with general complex-valued solutions. This is the natural framework to express the spectral properties of the system, and we will then make sure that for real-valued initial data, our feedbacks lead to real-valued dynamics (see Corollary \ref{cor-real-valued}).  We know from \cite{Russell2} (see also Section \ref{syseigen}) that the family of eigenvectors associated to the problem \eqref{sv-lin}--\eqref{bc-sv} form a Riesz basis of $(L^{2}(0, L; \CC))^{2}$, let us note them $(h_{n}^\gamma,v_{n}^\gamma)_{n\in\mathbb{Z}}$.
We denote by $\mathcal{D}_\gamma$ the space of finite (complex) linear combinations of the $(h_{n}^\gamma,v_{n}^\gamma)_{n\in\Z}$. Then any sequence $(F_n)_{n \in \Z}\in \CC^\Z$ defines an element $F$ of $\mathcal{D}_\gamma^\prime$:
\begin{equation}\langle (h_{n}^\gamma,v_{n}^\gamma)^T, F \rangle_{\mathcal{D}_{\gamma}, \mathcal{D}_{\gamma}'} =\overline{F_n}.\end{equation}
This gives us a general framework to talk about linear feedback laws.

In the rest of this article, if there is no confusion we remove the indices in the preceding functional, and simply denote the duality bracket as $\langle , \rangle$. The actual domain of definition of our feedback laws, and their regularity, will be closely studied later on in Subsection \ref{section-feedback-reg}.
\begin{thrm}\label{th1}
For any $\mu>0$, there exists $\tilde{\gamma}>0$ such that, for any $\gamma\in(0,\tilde{\gamma})$, there exists a feedback law $u$ which stabilizes the system \eqref{sv-lin}--\eqref{bc-sv}
with decay rate $\mu$. More precisely,
there exists $\nu \neq 0$ such that the feedback law $u$

\begin{equation}\label{F0-new}
    u(t):=\left\langle  \begin{pmatrix}h\\v\end{pmatrix}(t, \cdot) , F_{1}^{\gamma}\right\rangle +u_{2}(t),
\end{equation}
where
\begin{equation}\label{F0-new2}
    u_{2}'(t) = \frac{\nu L}{L^{\gamma}}\langle  (h_{0}^{\gamma} \ v_{0}^{\gamma})^{T},F_{1}^{\gamma}\rangle\left(u_{2}(t)+\left\langle  \begin{pmatrix}h\\v\end{pmatrix}(t, \cdot) , F_{1}^{\gamma}\right\rangle\right)
\end{equation}
and where $L_{\gamma}:=\frac{2}{\gamma}\left(1-\sqrt{1-\gamma\frac{L}{2}}\right)$ and
 $F_{1}^{\gamma}\in \mathcal{D}_\gamma^\prime$ is given by
\begin{equation}\label{F}
\begin{aligned}
    &\langle (h_{n},v_{n})^{T},F_{1}^{\gamma}\rangle=-\frac{\tanh(4\mu L)}{H^{\gamma}(0)}\frac{( h_{n})^{2}(0)}{\int_{0}^{L}\frac{L}{L_{\gamma}\sqrt{1-\gamma \left(x-\frac{L}{2}\right)}}\exp\left(-\int_{0}^{x}\frac{3\gamma}{4(1-\gamma (x-\frac{L}{2}))}ds\right)v_{n}(x)dx},\text{  } \forall\text{  }n\in\mathbb{Z}^{*},\\
    &\langle (h_{0}, v_{0})^{T}, F_{1}^{\gamma}\rangle=-2\frac{\tanh(4\mu L)}{H^{\gamma}(0)}\frac{(h_{0})^{2}(0)}{\nu},
    \end{aligned}
\end{equation}
stabilizes the system \eqref{sv-lin}--\eqref{bc-sv} exponentially in $(H^1)^2$ norm with decay rate $\mu$, for initial conditions in $(H^1)^2$ satisfying the boundary conditions \eqref{bc-sv}.
\end{thrm}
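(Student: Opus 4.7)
The plan is to prove Theorem \ref{th1} by a feedback-equivalence/backstepping argument performed on a dynamic extension of \eqref{sv-lin}--\eqref{bc-sv}, exploiting the Riesz-basis structure given by the eigenfunctions $(h_n^\gamma,v_n^\gamma)_{n\in\Z}$.

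First I would set up the dynamic extension. Because the feedback $u$ in \eqref{F0-new} involves an extra scalar $u_2$ governed by the ODE \eqref{F0-new2}, the closed-loop system is an evolution equation on the augmented state $(h,v,u_2)\in (H^1(0,L;\CC))^2\times\CC$. I would write this closed-loop generator as $\mathcal{A}^\gamma_{\mathrm{cl}}=\mathcal{A}^\gamma+\mathcal{B}F$, where $\mathcal{A}^\gamma$ is the free Saint-Venant generator and $\mathcal{B}=(0,1)^T$. The role of the dynamic extension is to compensate the fact that the zero mode $(h_0^\gamma,v_0^\gamma)$, corresponding to mass conservation \eqref{cond-cons-2}, cannot be acted on by $u$ alone; the scalar $u_2$ provides an auxiliary degree of freedom that couples with the $n=0$ eigenspace and explains the separate formula for $\langle(h_0,v_0)^T,F_1^\gamma\rangle$ in \eqref{F}.

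Next I would introduce the target system. The natural choice is a generator $\mathcal{A}_\mu$ whose spectrum is obtained from that of $\mathcal{A}^\gamma$ by shifting each eigenvalue $\lambda_n^\gamma$ to $\lambda_n^\gamma-(\mu+\epsilon)$ (or some equivalent construction ensuring spectral abscissa $\le -\mu$), keeping the same Riesz basis structure. The feedback-equivalence strategy is to seek an isomorphism $T$ of the augmented state space that intertwines the two generators, $T\mathcal{A}^\gamma_{\mathrm{cl}} = \mathcal{A}_\mu T$. Writing $T$ in the Riesz basis reduces this identity to a countable family of scalar equations, which one then solves explicitly for $\langle (h_n,v_n)^T,F_1^\gamma\rangle$. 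The closed-form in \eqref{F} is exactly what one obtains: the boundary term $(h_n)^2(0)$ comes from evaluating the transport equation underlying $\mathcal{A}^\gamma$ at $x=0$, and the denominator integral is the pairing of $v_n$ with a weight dictated by the characteristic variables of the diagonalized transport matrix.

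The main technical obstacle will be showing that the transformation $T$ defined this way is a bounded isomorphism on $(H^1(0,L;\CC))^2\times\CC$ (equipped with the compatibility conditions \eqref{bc-sv}). This requires sharp spectral asymptotics: one needs to show, perturbatively in $\gamma$, that $\lambda_n^\gamma = i n\pi/L_\gamma + O(1)$, that the eigenfunctions admit uniform bounds $|h_n^\gamma(0)|\sim 1$, and that the weighted integral $\int_0^L (\cdot)\, v_n^\gamma(x)\,dx$ is bounded below uniformly in $n$. Such estimates were already used for the controllability result and follow from a perturbative treatment of the $\gamma=0$ spectrum together with Bari-type Riesz-basis perturbation. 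Once these estimates are in place, one verifies that the coefficients $\langle(h_n,v_n)^T,F_1^\gamma\rangle$ decay at the correct rate in $n$, so that the feedback extends continuously to the relevant $(H^1)^2$ space (the regularity discussion of Subsection \ref{section-feedback-reg}), and that the intertwining operator $T$ and its inverse act boundedly between the $(H^1)^2\times\CC$-type norms.

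Finally I would conclude the proof as follows. The target generator $\mathcal{A}_\mu$ generates a $C^0$-semigroup with $\lVert e^{t\mathcal{A}_\mu}\rVert\le C e^{-\mu t}$ by construction of the shifted spectrum (here using again that $(h_n^\gamma,v_n^\gamma)$ is a Riesz basis, so Parseval-type estimates transfer spectral bounds into norm bounds). The isomorphism $T$ then gives $\lVert e^{t\mathcal{A}^\gamma_{\mathrm{cl}}}\rVert\le \lVert T^{-1}\rVert \lVert T\rVert C e^{-\mu t}$, which is exactly the exponential decay required by Definition \ref{def-1-new}. It remains to verify that real-valued initial data give rise to real-valued dynamics; this is the content of the Corollary \ref{cor-real-valued} mentioned in the excerpt, and it follows from the symmetry $(h_{-n}^\gamma,v_{-n}^\gamma)=\overline{(h_n^\gamma,v_n^\gamma)}$ together with a direct check that the explicit formula \eqref{F} preserves conjugation. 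The existence of $\tilde\gamma>0$ comes from the perturbative nature of the spectral estimates and of the invertibility of $T$, both of which are only guaranteed for $|\gamma|$ sufficiently small depending on the prescribed decay rate $\mu$.
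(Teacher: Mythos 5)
Your overall skeleton (dynamic extension to recover the mass-conservation direction, an intertwining isomorphism built in the Riesz basis of eigenfunctions, perturbative spectral estimates in $\gamma$, smallness of $\tilde\gamma$ coming from those estimates) matches the paper, but two of your central steps would fail as stated. First, your target system is wrong for this problem: you take $\mathcal{A}_\mu$ to be the original generator with its spectrum shifted, \emph{keeping the same eigenbasis}, i.e.\ an internal-damping target of the type \eqref{target-example}. The paper deliberately does not do this: because $\Lambda$ and $J$ do not commute, there is no simple transformation between such a target and the system, and the computations with an internally damped target do not produce a closed-form feedback; the paper instead uses a target \eqref{target} with \emph{boundary} dissipation $z_1(t,0)=-e^{-2\mu L}z_2(t,0)$, whose stability is proved by a Lyapunov function (Proposition \ref{expstab-target}), not by spectral shifting. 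The specific structure of \eqref{F} — the $\tanh$ factor and the squared boundary value $(h_n)^2(0)$ — comes precisely from the boundary-dissipative eigenfunctions $\widetilde{\phi}_m$ (factors $\overline{\widetilde\phi_{m,1}(0)}(1\pm e^{-2\mu L})$) and from an equiconvergence theorem (Proposition \ref{PropEquiconvergence}, adapted from Komornik) used to evaluate the Dirichlet-type sums $\sum_n f_{n,1}(0)\langle f_n,\widetilde\phi_m\rangle$; your assertion that \eqref{F} ``is exactly what one obtains'' from a spectral-shift target is unsupported and almost certainly false. Second, the intertwining relation $T\mathcal{A}^\gamma_{\mathrm{cl}}=\mathcal{A}_\mu T$ alone does not determine $F$: projecting it onto the Riesz basis only yields the kernel/coefficient equations \eqref{coeffexpression}, which produce a candidate $T$ \emph{for every} feedback with nonvanishing coefficients. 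What pins down $F$ (and makes the bilinear problem in $(T,F)$ well-posed, as in the finite-dimensional system \eqref{FiniteDOpEq}) is the additional weak $TB=B$ condition, the second limit in \eqref{OpEqLimits}; this condition, together with the controllability of both the extended system and the target (used to prove the Riesz-basis property of $(k_n)$ and hence invertibility of $T$), is entirely absent from your argument.

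A further concrete error: you claim the coefficients $\langle(h_n,v_n)^T,F_1^\gamma\rangle$ ``decay at the correct rate in $n$'' so that the feedback is continuous on $(H^1)^2$. In fact they grow like $|n|$ (see \eqref{Fgrowth}--\eqref{Fgrowth-mu}); $F$ is unbounded on the state space $D(\mathcal{A})$ and is only continuous on the smaller space $X^2$ (Lemma \ref{lemreg}, Proposition \ref{FeedbackReg}). Consequently the well-posedness of the closed-loop system is not automatic and cannot be dismissed: the paper must define the closed-loop semigroup by conjugation, $S(t)=T^{-1}\widetilde S(t)T$, identify its generator with $(-\mathcal{A}+\mathcal{I}_\nu F, D_F)$, and characterize $D_F=T^{-1}D(\widetilde{\mathcal{A}}^2)$ (Lemma \ref{lemm-A+BK-diag}, Proposition \ref{prop-domain-equality}). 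Your final step, transferring the decay of the target semigroup through $T$, is the right idea, but it only becomes legitimate after the operator equality \eqref{OperatorEquality} has been verified on $D_F$ — which requires both limits in \eqref{OpEqLimits}, hence the regularity analysis of the singular part of $F$ and again the equiconvergence result — none of which your proposal supplies.
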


\begin{rmrk}
Note that the control $u$ is actually given by a feedback law having the form of a proportional integral control. As we will see later on, this comes from the fact that we actually design a linear feedback law for a dynamic extension of the system.
\end{rmrk}
\begin{rmrk}
This result works for any small $\gamma>0$, therefore one could wonder whether it could be extended to $\gamma=0$. However,  when $\gamma=0$ the system \eqref{sv-lin}--\eqref{bc-sv}
has an uncontrollable part of infinite dimension, and our strategy of proof does not apply anymore (see Section \ref{sec-mom} for details on this uncontrollable subspace). Therefore this result is sharp in this sense.
\end{rmrk}

\subsection{Transforming the system}
\label{transfosubsection}

To obtain Theorem \ref{th1}, we will proceed by first transforming \eqref{sv-lin}--\eqref{bc-sv}--\eqref{cond-cons-2} through several variable changes, then prove a stabilization result for the system thus obtained.

Let us first use the change of variables
\begin{equation}\label{diffeo0}
\begin{pmatrix}
  \xi_1\\
        \xi_2
       \end{pmatrix}=\begin{pmatrix}
        \sqrt{\frac{1}{H^\gamma}}&1\\
        -\sqrt{\frac{1}{H^\gamma}}&1
       \end{pmatrix}
       \begin{pmatrix}
        h\\v
       \end{pmatrix},
\end{equation}
we get the system in Riemann coordinates:
\begin{equation}
 \label{character-form}
\begin{pmatrix}
        \xi_{1}\\ \xi_{2}
       \end{pmatrix}_t
       \!+\!\begin{pmatrix}
        \lambda_1& 0\\0& -\lambda_2
       \end{pmatrix}
       \begin{pmatrix}
        \xi_1\\\xi_2
       \end{pmatrix}_x
       \!+\!\delta_{0}(x)\begin{pmatrix}
        1&\frac{1}{3}\\ -\frac{1}{3}& -1
       \end{pmatrix}
       \begin{pmatrix}
       \xi_1\\ \xi_2
       \end{pmatrix}=u(t)\begin{pmatrix}1\\1\end{pmatrix},
\end{equation}
where $\lambda_{1}=\lambda_{2}=\sqrt{\vphantom{1}H^{\gamma}}$, $\delta_{0}(x)=-\frac{3}{4}\frac{\gamma}{\vphantom{\sqrt{1}}\sqrt{1-\gamma \left(x-\frac{L}{2}\right)}}$, and with the boundary conditions:
\begin{equation}\label{sec-2-bound-0}
\left\{\begin{split}
 \xi_{1}(t,0)=-\xi_{2}(t,0),\\
 \xi_{2}(t,L)=-\xi_{1}(t,L).
\end{split}\right.
\end{equation}
The condition \eqref{cond-cons-2} becomes:
\begin{equation}\label{sec-2-conserv-1}
    \int_0^L \sqrt{H^{\gamma}(x)} \left(\xi_1-\xi_2\right)(x) dx=0.
\end{equation}

In the literature of hyperbolic systems, the zero-order terms in the left-hand side of \eqref{character-form} are known as source terms. In this work, we will rather refer to them as ``coupling terms", and keep the denomination ``source term" for the control. Indeed, in what follows we will be studying the spectral properties of the operator with coupling terms included.
We would like to simplify the matrix in front of the transport term. To this aim, let us introduce a change of variables in space: $y=\frac{2}{\gamma\vphantom{\sqrt{1}}}\left(\sqrt{1+\gamma \frac{L}{2}}-\sqrt{1-\gamma \left(x-\frac{L}{2}\right)}\right)$ and define
\begin{equation}\label{L-gamma}
L_{\gamma}=\frac{2}{\gamma\vphantom{\sqrt{1}}}\left(\sqrt{1+\gamma \frac{L}{2}}-\sqrt{1-\gamma \frac{L}{2}}\right),
\text{   (}\gamma\text{  }\text{is supposed sufficiently small).}
\end{equation}
Note that
\begin{equation}
L_{\gamma} = L +O(\gamma^{2}).
\end{equation}
By a slight abuse of notation we used again $\xi$, now defined on $y \in [0,L_{\gamma}]$, to denote the solutions to this last system, so that these equations become:
\begin{equation}
\label{sys0}
\begin{aligned}
\partial_{t}\begin{pmatrix}\xi_{1}\\\xi_{2}\end{pmatrix}+\begin{pmatrix}1&0\\0 &-1\end{pmatrix}\partial_{y}\begin{pmatrix}\xi_{1}\\\xi_{2}\end{pmatrix}+\delta_{1}(y) \begin{pmatrix}1&\frac{1}{3}\\-\frac{1}{3}&-1\end{pmatrix}\begin{pmatrix}\xi_{1}\\\xi_{2}\end{pmatrix}=u(t)\begin{pmatrix}1\\1\end{pmatrix}
\end{aligned},
 \end{equation}
where $\delta_{1}(y)=-\frac{3}{4}\frac{\gamma}{(\sqrt{1+\frac{\gamma L}{2}}-\gamma \vphantom{1}y/2)}$, and with the boundary conditions:
\begin{equation}
\label{bound1}
\begin{aligned}
\xi_{1}(t,0)=-\xi_{2}(t,0),\\
\xi_{2}(t,L_{\gamma})=-\xi_{1}(t,L_{\gamma}).
 \end{aligned}
\end{equation}
The conservation law \eqref{sec-2-conserv-1} becomes,
\begin{equation}
    \int_0^{L_{\gamma}} \left(\sqrt{1+\frac{\gamma L}{2}}- \frac{\gamma}{2}y\right)^2 (\xi_1(y)- \xi_2(y))dy=0.
\end{equation}

 This could be expressed in a more compact form using the following notations:
\begin{equation}\label{lambdaJ}
\begin{aligned}
\Lambda=\begin{pmatrix}1&0\\0 &-1\end{pmatrix},
J_{0}= \begin{pmatrix}1&\frac{1}{3}\\-\frac{1}{3}&-1\end{pmatrix}.
\end{aligned}
\end{equation}
We also define
\begin{equation}\label{defJ}
\begin{aligned}
J=\begin{pmatrix}0&\frac{1}{3}\\-\frac{1}{3}&0\end{pmatrix},
\end{aligned}
\end{equation}
which will be used later on. Looking at \eqref{sys0} and \eqref{lambdaJ}, the transport matrix
$\Lambda$ has now a simple form, as expected, but the length of the domain depends now on $\gamma$. We arrange this by using a scaling simultaneously on time and space and we define
\begin{equation}\label{scal}
\begin{split}
&w(t, z):= \xi(L_{\gamma} t/L, y(z)),\text{ with }y(z)= \frac{L_{\gamma}}{L}z,\\
&u_{\gamma}(t) = \frac{L_{\gamma}}{L}u(\frac{L_{\gamma}}{L}t).
\end{split}
\end{equation}
For convenience we renote $x:=z\in [0, L]$ so that $x$ still denotes the space variable, then $w(t,x)$ and $u(t) $satisfy
\begin{equation}
\begin{aligned}
&\partial_{t}w(t, x)+\Lambda\partial_{x}w(t, x)+\delta J_0 w(t, x)=u_{\gamma}(t)\begin{pmatrix}1\\1\end{pmatrix},
\label{w-con}\\
& w_{1}(t,0)=-w_{2}(t,0),\\
& w_{2}(t,L)=-w_{1}(t,L),
\end{aligned}
\end{equation}
with
\begin{equation}\label{def_delta}
\begin{split}
\delta(x)&=(L_{\gamma}/L)\delta_{1}(L_{\gamma}x/L)\\
&=   -\frac{3L_{\gamma}}{4L}\frac{\gamma}{\sqrt{1+\frac{\gamma L}{2}}-\frac{\gamma L_{\gamma}x}{2L}},
\end{split}\text{ for all }x\in[0,L],
\end{equation}
so that $\delta$ is smooth on $[0, L]$ and has the following asymptotic expression with respect to $\gamma$, \begin{equation}\label{es-delta-O2}
\delta(x)= -\frac{3}{4}\gamma \Big(1+ \frac{1}{2}\gamma \left(\frac{L}{2}+x\right)+ O(\gamma^2)\Big).
\end{equation}
And the condition of mass conservation becomes
\begin{equation}\label{missing-direc-w-con}
    \textcolor{black}{\int_0^L \left(\sqrt{1+\frac{\gamma L}{2}}- \frac{\gamma}{2}\frac{L_{\gamma}}{L}x\right)^2 (w_1(x)- w_2(x))\frac{L}{L_{\gamma}}}dx= 0,
\end{equation}
which, from now on, will be called the ``missing direction''  as this cannot be changed, whatever the control, and restricts necessarily the admissible perturbation or the reachable states.

And, finally, we use a diagonal change of coordinates
\begin{equation}\label{diffeo}
\zeta(t,x) :=\exp\left(\int_{0}^{x} \delta(y) dy\right)w(t,x),
\end{equation}
with
\begin{equation}
   \exp\left(\int_{0}^{x} \delta(y) dy\right)=\textcolor{black}{ \left(\sqrt{1+\frac{\gamma L}{2}}- \frac{\gamma}{2} \frac{L_{\gamma}}{L} x\right)^{3/2}=
   1+\left(\frac{3L}{8}-\frac{3x}{4}\right)\gamma +O(\gamma^{2}).}
\end{equation}
This last operation is used to remove the diagonal coefficients of the \textcolor{black}{coupling term} (see \cite[Chapter 9]{KrsticBook}, \cite{HuOlive2019} for more examples on the interest of this change of coordinates). The system then becomes
\begin{equation}
\partial_{t}\zeta+\Lambda\partial_{x}\zeta+\delta J \zeta=u_{\gamma}(t)\exp\left(\int_{0}^{x}\delta(y) dy\right)\begin{pmatrix}1\\1\end{pmatrix},
\label{sys01}
\end{equation}
with boundary conditions
\begin{equation}\label{cond-0}
\left\{\begin{aligned}
\zeta_{1}(t,0)&=-\zeta_{2}(t,0),\\
\zeta_{2}(t,L)&=-\zeta_{1}(t,L).
 \end{aligned}\right.
\end{equation}
{\color{black}The \textcolor{black}{condition of mass conservation becomes}
\begin{equation}\label{missing-direc-sys1}
    \int_0^L \textcolor{black}{\left(\sqrt{1+\frac{\gamma L}{2}}- \frac{\gamma}{2}\frac{L_{\gamma}}{L}x\right)^{1/2} (\zeta_1(x)- \zeta_2(x))\frac{L}{L_{\gamma}}}dx= 0.
\end{equation}}

This will be our system in the following, together with the boundary conditions \eqref{cond-0}.
\subsection{Spaces and notations}
In this subsection, we \textcolor{black}{define} several notations which will be used throughout the article. \textcolor{black}{Some of them will be introduced later on in the article but are gathered here as a glossary for the reader's convenience.
To simplify the computations and the statements we denote
\begin{equation}
\label{defspaces}
\begin{aligned}
&(L^{2})^{2}=L^{2}((0,L);\CC^{2}),\text{  }\\
&(H^{s})^{2}=H^{s}((0,L);\CC^{2}),\text{ for any }s\geq0.
\end{aligned}
\end{equation}
Similarly for any $s\in\mathbb{N}$ we denote $C^{s}=C^{s}([0,L]; \mathbb{C}^{2})$ and we note $C^{s}_{pw}$ the space of piecewise $C^{s}$ functions, i.e. functions $f$ such that there exists a subdivision $\{\sigma_i\}, \sigma_i\in [0,L], i\in \{1, \ldots, n-1\}$ for some $n\geq 1$, such that
\begin{equation}
    f_{|[\sigma_i, \sigma_{i+1}]} \in C^1([\sigma_i, \sigma_{i+1}]), \quad \forall i\in \{1, \ldots, n-1\}.
\end{equation}
For any family we denote for simplicity $(a_{n})_{n\in\mathbb{Z}}=(a_{n})$, the index set being specified when the family is not considered over all of $\mathbb{Z}$.
}
The scalar product corresponding to the $L^2$ norm is defined by
\begin{equation}
    \langle f, g\rangle:=\frac{1}{2 L} \int_0^{L} f_1(x)\overline{g_1(x)}+ f_2(x)\overline{g_2(x)} dx.
\end{equation}

We now present the following families of functions, whose existence will be justified later on:
\begin{itemize}
\item $(f_{n})_{n\in\mathbb{Z}}$ denote the eigenfunctions of the operator given by \eqref{A}--\eqref{domaineA} and associated to the original system \eqref{sys01}--\eqref{cond-0} and forming an orthonormal basis.
\item $(\widetilde{f}_{n},\widetilde{\phi}_{n})_{n\in\mathbb{Z}}$ denote the eigenfunctions forming a Riesz basis and the associated biorthonormal family of the operator given by \eqref{Atilde} and associated to the target system \eqref{target}.
        \item $(\psi_{n},\chi_{n})$ denote the eigenfunctions forming a Riesz basis and the associated biorthonormal family of the operator associated to the system \eqref{w-con}.
    \item $(\widetilde{\psi}_{n},\widetilde{\chi}_{n})$ denote the eigenfunctions forming a Riesz basis and the associated biorthonormal family of the operator associated to the system \eqref{target-new}.
  
\end{itemize}

Let us now note $\mathcal{E}$ the space of finite linear combinations of the $(f_n)_{n \in \Z}$. Then any sequence $(F_n)_{n \in \Z}$ defines an element $F$ of $\mathcal{E}^\prime$ by
\begin{equation}
\langle f_n, F \rangle_{\mathcal{E}, \mathcal{E}'} =\overline{F_n}.
\end{equation}
Here $\test$ and $\dist$ are linked to the spaces $\mathcal{D}_\gamma$ and $\mathcal{D}_\gamma^\prime$  by the changes of variables performed in the previous section.

Finally we define the spaces
\begin{equation}
    X^s:=\{f\in (L^2)^2, \quad (\tau^\mathcal{I})^{-1}(\Lambda \partial_x f + \delta(x) J f) \in (H^{s-1})^2\}, \quad s\geq 1,
\end{equation}
where $\tau^{I}$ is an isomorphism of $H^{s}$ defined by \eqref{tau-I},
and we endow them with the norms:
\begin{equation}
    \|f\|_{X^s}:= \|(\tau^\mathcal{I})^{-1}(\Lambda \partial_x f + \delta(x) J f) \|_{(H^{s-1})^2} + \|f\|_{L^2}, \quad s\geq 1.
\end{equation}

\section{General presentation of the method of proof}
\subsection{Backstepping and system equivalence}
This result will be shown by combining, on one hand, pole-shifting (see \cite[Section 10.1]{CoronBook}) and the notions of system equivalence introduced by Pavel Brunovsky in \cite{Brunovsky}, and on the other hand, ideas developed in the context of the backstepping method for PDEs.

Generalizations of pole-shifting theorems to infinite-dimensional systems have been investigated in \cite{Sun, Rebarber}. In \cite{Russell2}, pole-shifting results for general $2\times2$ hyperbolic systems with bounded feedback laws are studied using a notion of canonical form. In all these works, as in finite dimension, the assumption that the system is controllable is crucial.

On the other hand, the backstepping method was not particularly designed for controllable systems.
Backstepping originally referred to a way of designing stabilizing feedback laws for systems consisting in a stabilizable finite dimensional system with an added chain of integrators (see \cite{Kodit,Tsinias,Isi} for instance). Later, this method has been modified and adapted to general triangular systems, using this particular structure to design iterative changes of variables to perform feedback linearization. It was then modified to obtain a consistent feedback law when applied to spatial discretizations of parabolic equations (see \cite{KBHeat} and \cite{KBParab}). Remarkably, the new change of variables was the discretization of a Volterra transformation of the second kind. This was the starting point of PDE backstepping. The key idea is to look for a Volterra transformation of the second kind (which has the advantage of being invertible) mapping the original system to a target system for which the stability is easy to prove.
 These transformations were extensively used in the last decades, for instance for the heat equation \cite{BoskovicKrsticLiu,KBParab,KBHeat}, for first order hyperbolic linear then quasilinear systems \cite{VCKB,BCKV}, and for many particular cases (see \cite{CerpaCoron13, XiangKdVNull,XiangKdVFinite} for the KdV equations, \cite{MBHK} for coupled PDE-ODE systems, \cite{Coron-Xiang-2018} for Burgers equations, or \cite{KrsticBook} for an overview), the goal of each new study being to show that such a transformation exists. However, considering only a special type of invertible transformations necessarily restricts the cases where this method can be applied. Moreover, Volterra transformations of the second kind are usually used to move a complexity in the dynamics to the boundaries, to be dealt with using an appropriate control. Therefore it could be ill-adapted to an internal control stabilization problem, where the boundary conditions are fixed and cannot be changed, although some results exist by applying a second invertible transform (see \cite{KrsticHaraTsub} or \cite{Woittennek}).

 Several works have broadened the scope of the method by considering general kernel operators, namely Fredholm transformations. This requires more work as a Fredholm transform is not always invertible, but has been successful in many cases: see \cite{CoronLu1} for the Korteweg-de Vries equation and \cite{CoronLu2} for a Kuramoto-Sivashinsky equation,   \cite{Schrodinger} for a Schr\"{o}dinger equation,   \cite{CHO1} for integro-differential hyperbolic systems, and in \cite{CHO2} for general hyperbolic balance laws.

Broadly speaking, the spirit of PDE backstepping is that the stabilization problem becomes that of the existence of an isomorphism between the system under consideration and a second exponentially stable system, depending on the expression of the feedback law. This is actually closely related to the concept of ``$F$-equivalence'' for linear controllable finite-dimensional systems in \cite{Brunovsky}. And although the first installments of PDE backstepping with Volterra transformations do not seem to have anything to do with the controllability of the system, extensions of the method to more general transformations sometimes rely on a controllability assumption to build an invertible Fredholm transformation (see \cite{CHO1} and the references therein).

More recently, in \cite{Schrodinger} rapid stabilization of the linearized bilinear Schr\"{o}dinger equation was obtained. The authors used such an extension of the PDE backstepping method with a Fredholm type transformation, relying mainly on spectral properties of the Schr\"{o}dinger equation and on a controllability assumption. This was also adapted successfully, despite very different spectral properties, to the linear transport equation in \cite{ZhangRapidStab}.

As we will show in the next section, the outline of this method revolves around the notion of $F$-equivalence (with pole-shifting in mind as the choice of target system will illustrate), as shown by \eqref{FiniteDOpEq}, which can be easily adapted to an infinite-dimensional setting. The search for invertible transformations then draws heavily from the techniques of PDE backstepping, in that we look the transformation in the form of a kernel operator.

\subsection{A finite-dimensional example}\label{FiniteD}

 We use a finite-dimensional example to illustrate the variant of the backstepping method we use in this article. We refer to \cite{coron2015stabilization,Schrodinger} for alternate presentations.

Consider the finite-dimensional control system
\begin{equation}\label{FiniteDControlSys}
\dot{x}(t)=Ax(t)+ Bu(t), \quad x\in \RR^n, A \in \RR^{n\times n}, B\in \RR^{n\times1}.
\end{equation}
Suppose that \eqref{FiniteDControlSys} is controllable. Then it is known (see for example \cite[Section 10.1]{CoronBook}) that for every unitary polynomial $P \in \RR_n[X]$ of degree $n$ there exists a feedback $K\in \RR^{1\times n}$ such that $P$ is the characteristic polynomial of $A+BK$.

This pole-shifting property for controllable systems can be formulated in another way, by trying to invertibly transform system \eqref{FiniteDControlSys} into another system with shifted poles, namely
\begin{equation}\label{FiniteDTarget}
\dot{x}=(A-\lambda I)x,
\end{equation}
which is exponentially stable for a large enough $\lambda$.

More generally, in the spirit of the classification of linear controllable systems of \cite{Brunovsky}, and in particular the notion of $F$-equivalence, we can try to transform system \eqref{FiniteDControlSys} with an invertible matrix $T\in \RR^{n\times n}$:
\begin{equation}\label{FiniteDTargetGeneral}
y=Tx \;  \textrm{  satisfying }\; \dot{y}=\tilde{A}y,
\end{equation}
which is exponentially stable if $\tilde{A}$ is well chosen,  $e.g.$ $\tilde{A}= A-\lambda I$.

Suppose that $x(t)$ is a solution of system \eqref{FiniteDControlSys} with $u(t)=Kx(t)$. The invertible matrix $T$ would map \eqref{FiniteDControlSys} into
$$\dot{(Tx)} = T\dot{x}=T(A+BK)x .$$
In order for $Tx$ to be a solution of \eqref{FiniteDTargetGeneral}, it is necessary and sufficient that
\begin{equation}\label{backstepping-eq} T(A+BK)= \tilde{A}T.\end{equation}

To find such a $T$, one method is to rely on the control canonical form, introduced in \cite{Brunovsky} (see also \cite[section 10.1]{CoronBook}). As $(A,B)$ is controllable, it can always be put in canonical form (see \cite[Lemma 10.2]{CoronBook}). Thus, we now suppose without loss of generality that $(A,B)$ is in canonical form, that is:
\begin{equation}\label{canonical}
A=\begin{pmatrix} 0 & 1 & 0 & \cdots  & 0 \\
                  \vdots & \ddots & 1 & \ddots & \vdots \\
                  \vdots & \ddots & \ddots & 1 &0 \\
                  0 & \cdots & \cdots & 0&1 \\
                  a_1 & \cdots & \cdots &\cdots & a_n
                    \end{pmatrix},
                    \quad
                    B=\begin{pmatrix} 0 \\
                    \vdots \\
                    0 \\ 1 \end{pmatrix}.
\end{equation}

Now, suppose that $(\tilde{A},B)$ is also controllable, so that there exists an invertible matrix $T$ such that
\begin{equation}\label{tildecanonical}
    \begin{aligned}
        T^{-1} \tilde{A} T&= c(\tilde{A}), \quad TB=\begin{pmatrix} 0 \\
                    \vdots \\
                    0 \\ 1 \end{pmatrix}=B
    \end{aligned}
\end{equation}
where $c(\widetilde{A})$ is the companion matrix of $\widetilde{A}$. Now, because $(A,B)$ is in the canonical form \eqref{canonical}, there exists a unique $K$ such that
\begin{equation}\label{companion-A-tilde}
    A+BK=c(\tilde{A}),
\end{equation}
which yields
\begin{equation*}
   T(A+BK)=\tilde{A}T.
\end{equation*}
Recall now the second equation in \eqref{tildecanonical}:
\begin{equation}
    TB=B.
\end{equation}
Injecting the above equation into \eqref{backstepping-eq}, we get the following equations:
\begin{equation}\label{FiniteDOpEq}
\left\{\begin{aligned}
TA+BK&=\tilde{A}T, \\
TB=& B,
\end{aligned}\right.
\end{equation}
for which we just proved the existence of a solution $(T, K)$.

Let us now prove that the solution to \eqref{FiniteDOpEq} is unique. First note that from \eqref{companion-A-tilde} and \eqref{backstepping-eq}, there exists a unique $K\in \RR^{1\times n}$ such that there exists solutions $(T,K)$ to \eqref{FiniteDOpEq}. Now let $T_1, T_2 \in \RR^{n\times n }$ be invertible solutions to \eqref{FiniteDOpEq}. Then from \eqref{tildecanonical} we get
\begin{equation}
\begin{aligned}
    T_1T_2^{-1} \widetilde{A}  & = \widetilde{A} T_1T_2^{-1}, \\
    T_1T_2^{-1} B & = B.
    \end{aligned}
\end{equation}
Then, applying $\widetilde{A}$ to the second equation above, we get
\begin{equation}
    T_1T_2^{-1} \widetilde{A}^j B  = \widetilde{A}^j B, \quad j=0, \cdots, n-1.
\end{equation}
Using the Kalman rank condition on the controllable pair $(\tilde{A}, B)$, this implies that $T_1T_2^{-1}=I_n$, and thus, the uniqueness of the solution $(T,K)$ to \eqref{FiniteDOpEq}.

\begin{thrm}
If $(A,B)$ and $(\tilde{A}, B)$ are controllable, then there exists a unique pair $(T,K)$ satisfying equations \eqref{FiniteDOpEq}.
\end{thrm}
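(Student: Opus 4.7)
The plan is to exploit the Brunovsk\'y control canonical form, which is available precisely because the pair $(A,B)$ is controllable. After reducing $(A,B)$ to this form, the companion-matrix structure will make the feedback $K$ essentially determined by the requirement $A + BK = c(\widetilde{A})$, and the remaining freedom in the intertwining map $T$ can then be pinned down using the controllability of $(\widetilde{A}, B)$.

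For existence, I would first observe that replacing $(A, B)$ by $(PAP^{-1}, PB)$ for a suitable invertible $P$ reduces matters to the case where $(A, B)$ is already in the canonical form \eqref{canonical}, so that choosing the last row of $K$ amounts to prescribing the characteristic polynomial of $A + BK$ freely. Next, I would use the controllability of $(\widetilde{A}, B)$ to produce an invertible $T$ that simultaneously conjugates $\widetilde{A}$ to its companion matrix $c(\widetilde{A})$ and sends $B$ to $B$; such a $T$ can be built explicitly from the Kalman matrix of $(\widetilde{A}, B)$, which is invertible by hypothesis. Choosing $K$ so that $A + BK = c(\widetilde{A})$, which is possible thanks to the canonical form, then yields $T(A+BK) = T c(\widetilde{A}) = \widetilde{A} T$ together with $TB = B$, which is exactly \eqref{FiniteDOpEq}.

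For uniqueness, I plan to take two solution pairs $(T_1, K_1)$ and $(T_2, K_2)$ and first argue that the canonical-form analysis around \eqref{companion-A-tilde} forces $K_1 = K_2$, since $A + BK_i = T_i^{-1} \widetilde{A} T_i$ must have the characteristic polynomial of $\widetilde{A}$ and, in canonical form, that polynomial determines the bottom row of $K$. It then remains to show $T_1 = T_2$, which I would do by setting $M := T_1 T_2^{-1}$ and observing from \eqref{FiniteDOpEq} that $MB = B$ and $M\widetilde{A} = \widetilde{A} M$; iterating gives $M \widetilde{A}^j B = \widetilde{A}^j B$ for every $j \geq 0$, and the Kalman rank condition for $(\widetilde{A}, B)$ then forces $M = I_n$.

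The main point requiring care is the bookkeeping of the two changes of basis: the one that puts $(A,B)$ in canonical form, and the one encoded by $T$. One must check that pulling the final $(T,K)$ back through the first change of basis still yields a valid solution of \eqref{FiniteDOpEq}, and that both controllability hypotheses are genuinely used at the right places --- that of $(A,B)$ to produce and exploit the canonical form, and that of $(\widetilde{A}, B)$ to both construct $T$ and run the Kalman-based uniqueness argument. Without either hypothesis, either existence (matching $c(\widetilde{A})$) or uniqueness (pinning down the centralizer of $\widetilde{A}$) would break down.
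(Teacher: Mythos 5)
Your proposal follows essentially the same route as the paper: reduce $(A,B)$ to the Brunovsk\'y canonical form, use the controllability of $(\tilde{A},B)$ to build an invertible $T$ with $T^{-1}\tilde{A}T=c(\tilde{A})$ and $TB=B$, pick $K$ so that $A+BK=c(\tilde{A})$, and prove uniqueness by showing $T_1T_2^{-1}$ commutes with $\tilde{A}$ and fixes $B$, then invoking the Kalman rank condition. Your explicit verification that $K_1=K_2$ via the companion-matrix structure, and your attention to pulling the solution back through the change of basis, only make explicit steps the paper treats as immediate, so the argument is correct and equivalent.
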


\begin{rmrk}
Another way of understanding these equations is to notice that \eqref{backstepping-eq} is not ``well-posed''. Indeed, if there exists a solution $(T,K)$ to the single equation \eqref{backstepping-eq}, then the $(aT,K), a \neq 0$ are all solutions. This suggests that a second condition could be added to resolve this degree of freedom. Moreover, \eqref{backstepping-eq} is not linear, but bilinear in $(T,K)$. An obvious option is then to consider the additional equation
\begin{equation*}
    TB=B,
\end{equation*}
so that the $TBK$ term becomes $BK$, and the problem becomes linear in $(T,K)$, as can be seen in \eqref{FiniteDOpEq}.

As we have seen above, it turns out that with this additional equation the problem becomes ``well-posed''. In fact, this additional equation appears when one considers canonical forms, which makes it all the more natural.
\end{rmrk}

The controllability of \eqref{FiniteDControlSys} and \eqref{FiniteDTargetGeneral}  is crucial here, as it allows us to use the control canonical form. However, another proof can be found in \cite{Schrodinger}, which is more adaptable to the context of PDEs: the idea is to suppose that $A$ and $\tilde{A}$ are diagonalizable. Then the controllability of $(\tilde{A}, B)$ allows to build a basis for the space state using the eigenvectors of $A$, in which $T$ can then be constructed. The $TB=B$ condition along with the controllability of the first system help define the coefficients of the feedback $K$, and finally the controllability of the second system ensures the invertibility of $T$ with $K$ thus defined.

This $F$-equivalence formulation of pole-shifting, which links controllability to stabilization, can be used in infinite dimension. In our case, the controllability of \eqref{sys01}--\eqref{cond-0} will have the same importance: it will also allow us to build a basis for the state space,  which will be crucial to build invertible transformations depending on $F$, using ideas developed in the context of PDE backstepping.

\section{Preparing for backstepping}

As announced above, we will use a variant of PDE backstepping that relies on a controllability assumption. As such, it cannot be applied directly to the linearized water-tank system \eqref{sys01}--\eqref{cond-0}--\eqref{missing-direc-sys1}, as it is not controllable. Instead it will be applied to a modified version of this system, with a modified control operator, which corresponds to a specific dynamic extension of this system. This explains the proportional integral form of the feedback, which is simply a linear feedback for the extended system.

\subsection{Dealing with mass conservation}
\subsubsection{A modified control operator}
Let us now consider again our system \eqref{sys01}:
\begin{equation*}
\partial_{t}\zeta+\Lambda\partial_{x}\zeta+\delta J \zeta=u_{\gamma}(t)\exp\left(\int_{0}^{x}\delta(y) dy\right)\begin{pmatrix}1\\1\end{pmatrix},
\end{equation*}
with boundary conditions \eqref{cond-0}.
In order to identify the control, in the following we define
\begin{equation}
    \mathcal{I}:=\exp\left(\int_{0}^{x}\delta(y) dy\right)\begin{pmatrix}1 \\ 1 \end{pmatrix}.
\end{equation}
As we will see later on, this control term has a drawback:
from \eqref{f-0-1-2-0}, we get
\begin{equation}
    \label{missing-moment}
    \langle \mathcal{I}, f_{0}\rangle=0,
\end{equation}
which means that the control cannot act on this direction and therefore the system is not fully controllable. Physically this comes from the fact that the control does not add or spill any water, thus the mass is conserved.\\

To overcome this difficulty we introduce the following virtual control profile
\begin{equation}\label{def-I-nu}
    \mathcal{I}_{\nu}:=  \mathcal{I}+ \nu f_0, \quad \nu >0,
\end{equation}
together with the following virtual system
\begin{equation}
\label{sys1virt}
\left\{\begin{aligned}
& \partial_{t}\mathbf{Z}+\Lambda\partial_{x}\mathbf{Z}+\delta(x) J \mathbf{Z}
=\langle \mathbf{Z}(t,\cdot), F\rangle\mathcal{I}_{\nu}, \\
& \mathbf{Z}_1(t, 0)=-\mathbf{Z}_2 (t, 0) , \quad \forall t\geq 0, \\
& \mathbf{Z}_1(t, L)=-\mathbf{Z}_2 (t, L), \quad \forall t\geq 0,
\end{aligned}\right.
\end{equation}
where $F$ is a linear feedback to be determined.   This corresponds to the following controlled system
\begin{equation}
\label{sys1virt-con}
\left\{\begin{aligned}
& \partial_{t}\mathbf{Z}+\Lambda\partial_{x}\mathbf{Z}+\delta(x) J \mathbf{Z}
= u_{\gamma}^{\nu}(t) \mathcal{I}_{\nu}, \\
& \mathbf{Z}_1(t, 0)=-\mathbf{Z}_2 (t, 0) , \quad \forall t\geq 0, \\
& \mathbf{Z}_1(t, L)=-\mathbf{Z}_2 (t, L), \quad \forall t\geq 0,
\end{aligned}\right.
\end{equation}

\subsubsection{A system of eigenvectors for the open-loop system (i.e. without feedback laws)}
\label{syseigen}

Considering \eqref{sys01}, let us define the following operator, $\mathcal{A}: \mathcal{D}(\mathcal{A})\rightarrow (L^2)^2$,
\begin{equation}\label{A}
    \begin{array}{c}
        \mathcal{A}:=\Lambda \partial_x + \delta(x) J,\\
    \end{array}
\end{equation}
defined on the domain
\begin{equation}\label{domaineA}
  D(\mathcal{A}):=\left\{(f_{1}, f_{2}) \in (H^1)^2 , \quad f_{1}(0)+ f_{2}(\textcolor{black}{0})=0, \quad f_{1}(L)+f_{2}(L)=0\right\}.
\end{equation}
According to  the explicit formulation of $\delta$ given in \eqref{def_delta}, for real valued $\gamma>0$ the operator $\mathcal{A}$ has real coefficients.  In our stabilization problem we always treat the cases that $\gamma>0$,  except that  for  technical reasons  in order to derive some asymptotic information of eigenfunctions in Section \ref{s3} we will extend the definition domain of $\gamma>0$ to $\gamma\in \mathbb{C}$ thus complex resolvent tools can be used.   After having obtained the required spectral information we further restrict the $\gamma$ in $\mathbb{R}^{+}\cap \mathbb{C}= \mathbb{R}^{+}$ to ensure that $\delta$ has real value to make the closed-loop system of physical sense.
Its adjoint operator,  $\mathcal{A}^*: \mathcal{D}(\mathcal{A}^*)\rightarrow (L^2)^2$, is clearly defined by
\begin{equation}\label{A-star}
    \begin{array}{c}
        \mathcal{A}^\ast:=-\Lambda \partial_x - \overline{ \delta}(x) J,\\
    D(\mathcal{A}^\ast):=
    \left\{(f_{1}, f_{2}) \in (H^1)^2 , \quad f_{1}(0)+ f_{2}(0)=0, \quad f_{1}(L)+f_{2}(L)=0\right\}.
    \end{array}
\end{equation}
In the preceding equation by denoting $\overline{\delta}(x)$ we  want to include the situation when $\gamma$ be complex valued, as it will be used in Section \ref{s3}.  However, we shall always keep in mind that in our stabilization problem $\gamma>0$, for which we have,
\begin{equation}
    \label{AntiHermitian}
    \mathcal{A}^\ast=-\mathcal{A},  \textrm{ if $\gamma\in \mathbb{R}$}.
\end{equation}
We know from \cite{Russell2} that $\mathcal{A}$ has a family of eigenfunctions, which we note $(f_n(\gamma))= (f_n)$, that form a Riesz basis of $(L^2)^2$. From \eqref{AntiHermitian} we know that the $(f_n)$ actually form an orthonormal basis, and the corresponding eigenvalues $\mu_n$ are all imaginary. Moreover, they satisfy the following asymptotic behaviour, given the boundary conditions we have set:
\begin{equation}\label{mu_n asymp}\mu_n= \frac{i \pi n}L + \mathcal{O}\left(\frac1n\right), \quad \forall n\in \Z.\end{equation}
Moreover, given the definition of $\mathcal{A}$, we can easily derive a few additional properties (see Appendix \ref{appendixA} for the proof):
\begin{prpstn}\label{eigenf-prop}
The $(f_n, \mu_n)$ satisfy the following:
\begin{itemize}
\item[(i)]\begin{equation}
\label{imaginary-ev}
\mu_{-n}=\overline{\mu_n}=-\mu_n, \quad \forall n \in \Z.
\end{equation}
In particular, $\mu_0=0$.
\item[(ii)] \begin{equation}
\label{fn-f-n}
    f_{-n}=\overline{f_n}=(-f_{n,2}(\cdot), -f_{n,1}(\cdot)), \quad \forall n \in \Z.
\end{equation}

In particular, $f_{n,1}(0), f_{n,1}(L) \in \RR$, and
\begin{equation}\label{f-0-1-2-0}
    f_{0, 1}(x)+ f_{0, 2}(x)=0, \forall x\in [0, L].
\end{equation}
\end{itemize}
\end{prpstn}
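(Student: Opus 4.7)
The plan is to exploit two symmetries of $\mathcal{A}$: the complex conjugation symmetry (coming from the fact that $\mathcal{A}$ has real coefficients when $\gamma\in\RR$), and the pointwise involution $\sigma:(g_1,g_2)\mapsto(-g_2,-g_1)$. The first will give the eigenvalue relation in (i), and the second will give the structural relation on eigenfunctions in (ii).

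For (i), I would first invoke \eqref{AntiHermitian}: since $\mathcal{A}^\ast=-\mathcal{A}$, every eigenvalue is purely imaginary, i.e.\ $\overline{\mu_n}=-\mu_n$. Next, because $\mathcal{A}$ has real coefficients, $\mathcal{A}\overline{f_n}=\overline{\mu_n}\,\overline{f_n}=-\mu_n\overline{f_n}$, so $-\mu_n$ is also an eigenvalue. Combined with the asymptotic \eqref{mu_n asymp}, which makes the $\mu_n$ simple for $|n|$ large, we may label the family so that $\mu_{-n}=-\mu_n$; in particular $\mu_0=0$ is forced by the labeling (it is the unique eigenvalue near the origin).

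For (ii), the key step is to introduce $\sigma$ and check, by direct inspection of \eqref{lambdaJ}--\eqref{defJ}, that $\sigma$ preserves the domain $D(\mathcal{A})$ (the boundary condition $g_1+g_2=0$ at $0$ and $L$ is symmetric under $\sigma$) and anti-commutes with $\mathcal{A}$, namely $\mathcal{A}\sigma=-\sigma\mathcal{A}$. Hence $\sigma f_n$ is an eigenvector of $\mathcal{A}$ with eigenvalue $-\mu_n$, and by simplicity of $\mu_{-n}$ it is proportional to $\overline{f_n}$, i.e.\ $\sigma f_n=c_n\overline{f_n}$ for some constant $c_n$. Using $\sigma^2=\mathrm{id}$ and the fact that $\sigma$ commutes with complex conjugation one obtains $|c_n|=1$. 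The residual phase freedom in the eigenfunctions (each $f_n$ of unit norm is determined only up to a unimodular complex scalar, with $f_{-n}=\overline{f_n}$ coupling the phases of $f_n$ and $f_{-n}$) can then be used to normalize $c_n=1$. This yields the stated identity $\overline{f_n}=(-f_{n,2},-f_{n,1})$, and evaluating at $x=0$ with $f_{n,2}(0)=-f_{n,1}(0)$ gives $\overline{f_{n,1}(0)}=f_{n,1}(0)$, i.e.\ $f_{n,1}(0)\in\RR$; the argument at $x=L$ is identical.

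The case $n=0$ is treated directly and also confirms the labeling. The ODE $\mathcal{A}f_0=0$ reads
\begin{equation*}
\partial_x f_{0,1}+\tfrac{\delta}{3}f_{0,2}=0,\qquad \partial_x f_{0,2}+\tfrac{\delta}{3}f_{0,1}=0.
\end{equation*}
Setting $u:=f_{0,1}+f_{0,2}$ one finds $\partial_x u=-\tfrac{\delta}{3}u$, so together with the boundary condition $u(0)=0$ this forces $u\equiv 0$, which is exactly \eqref{f-0-1-2-0}. As a byproduct, the orthogonal combination $w=f_{0,1}-f_{0,2}$ satisfies a scalar ODE determined up to its value at $0$, so $\mu_0=0$ is indeed simple, and $c_0=1$ automatically.

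The only genuinely delicate point is the phase bookkeeping: coordinating the normalizations so that $c_n=1$ simultaneously for every $n$, while staying compatible with both the orthonormality and the convention $f_{-n}=\overline{f_n}$. Beyond that, all structural ingredients (anti-Hermiticity, reality of the coefficients, the anti-commutation with $\sigma$) are elementary consequences of the definitions, and the simplicity needed for the proportionality argument follows from \eqref{mu_n asymp} for large $|n|$ and from direct resolvent/ODE analysis for the finitely many remaining eigenvalues.
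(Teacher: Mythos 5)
Your structural argument is essentially the paper's mechanism in a different packaging: the paper conjugates the eigenvalue equation directly (using that $\mathcal{A}$ has real coefficients and, by \eqref{AntiHermitian}, purely imaginary eigenvalues) and observes that $(-\overline{f_{n,2}},-\overline{f_{n,1}})$ is again a $\mu_n$-eigenfunction; you split the same symmetry into conjugation plus the swap involution $\sigma:(g_1,g_2)\mapsto(-g_2,-g_1)$, whose anticommutation with $\mathcal{A}$ and compatibility with $D(\mathcal{A})$ you check correctly. Your direct ODE treatment of $n=0$ (giving $f_{0,1}+f_{0,2}\equiv 0$ and the simplicity of the zero eigenvalue) is clean and, if anything, more explicit than the appendix.

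The gap is exactly the step you flag but do not carry out: fixing $c_n=1$ "by the residual phase freedom". The proposition is not a statement that \emph{some} orthonormal eigenbasis with these properties exists; it concerns the specific family $(f_n)$ fixed earlier in the paper (the perturbed, Kato-normalized eigenfunctions of \eqref{def-an}, subsequently $L^2$-normalized), and the relation $\overline{f_n}=(-f_{n,2},-f_{n,1})$ is not phase-invariant. You are not free to re-phase the $f_n$: all the later quantitative facts — the expansions $f_n=E_n+\gamma f_n^{(1)}+O(\gamma^2)$, Lemma \ref{lem-fond}, Lemma \ref{LmmBoundaryBehaviour} (where $f_{n,1}(0)$ is real and uniformly close to $1$), and the explicit feedback coefficients \eqref{Fcoeff} — refer to that same family, so a re-phased basis would require rechecking every one of these estimates. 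The paper resolves the ambiguity the other way around: since, by \eqref{varphi-0}, \eqref{varphi-es} and \eqref{R-es}, $f_n$ is $L^\infty$-close to $E_n$ for $\gamma$ small and $E_n$ satisfies the identity exactly, the proportionality constant for the already-constructed $f_n$ is forced to be $+1$ rather than $-1$. To repair your argument you should either invoke this closeness, or show that the antilinear involution $g\mapsto \sigma\overline{g}$ fixes the normalization $\langle \psi_n(\gamma),\chi_n^{(0)}\rangle=1$ (it fixes $\psi_n^{(0)}$ and $\chi_n^{(0)}$), which pins $c_n=1$ for the constructed family; as written, your normalization step proves a weaker statement than the one the rest of the paper uses.
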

As pointed out earlier, we work with complex valued functions because it is the natural framework to express and use the spectral properties of $\mathcal{A}$. However, as we are working with a physical system, we will have to bear in mind that our final stabilization result concerns real-valued initial conditions. Consequently, the feedback law we build should be real-valued on real-valued functions, so that the dynamics of the closed-loop system make physical sense.

It turns out that it is quite straightforward to characterize real-valued functions and feedbacks, given the previous proposition:
\begin{crllr}\label{cor-real-valued}
Let $f\in (L^2)^2$. Then $f$ is real-valued (a.e.) if and only if
\begin{equation}
    \label{real-valued-func}
    \langle f, f_n \rangle=\overline{\langle f, f_{-n} \rangle}, \quad \forall n \in \Z.
\end{equation}
Let $F \in \dist$. Then $F$ is real-valued on real-valued functions if and only if
\begin{equation}
    \label{real-valued-form}
    \langle f_n, F \rangle=\overline{\langle f_{-n},  F\rangle}, \quad \forall n \in \Z.
\end{equation}
\end{crllr}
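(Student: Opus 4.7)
The plan is to leverage the single symmetry provided by Proposition \ref{eigenf-prop}(ii), namely $f_{-n}=\overline{f_n}$ for all $n\in\Z$: complex conjugation on $(L^2)^2$ acts on the orthonormal basis $(f_n)$ simply by swapping the index $n$ with $-n$. Both equivalences in the corollary are then bookkeeping consequences of this fact.

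For the first equivalence, I would expand an arbitrary $f\in (L^2)^2$ in the orthonormal basis as $f=\sum_{n\in\Z}\langle f,f_n\rangle f_n$. Taking the complex conjugate termwise and using $\overline{f_n}=f_{-n}$, then relabelling the summation index $n\mapsto -n$, I obtain
\begin{equation*}
\overline{f}=\sum_{n\in\Z}\overline{\langle f,f_n\rangle}\,f_{-n}=\sum_{n\in\Z}\overline{\langle f,f_{-n}\rangle}\,f_n.
\end{equation*}
By uniqueness of the coefficients of a vector in an orthonormal basis, the equality $f=\overline{f}$ (i.e.\ $f$ is real-valued a.e.) is then equivalent to $\langle f,f_n\rangle=\overline{\langle f,f_{-n}\rangle}$ for every $n\in\Z$, which is \eqref{real-valued-func}.

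For the second equivalence, every $g\in\test$ is a finite combination $g=\sum_n a_n f_n$, and by linearity of the duality pairing $\langle g,F\rangle=\sum_n a_n\langle f_n,F\rangle$. Applying the first part to $g$ itself, $g$ is real-valued iff its coefficients satisfy $a_n=\overline{a_{-n}}$. Grouping the $n$ and $-n$ terms, their joint contribution is $a_n\langle f_n,F\rangle+\overline{a_n}\langle f_{-n},F\rangle$, whose reality for every $a_n\in\CC$ is equivalent to $\langle f_n,F\rangle=\overline{\langle f_{-n},F\rangle}$ (the $n=0$ term $a_0\langle f_0,F\rangle$ with $a_0\in\RR$ arbitrary yields the same relation at $n=0$). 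Conversely, once \eqref{real-valued-form} holds, the very same grouping shows $\langle g,F\rangle\in\RR$ for every real-valued $g\in\test$.

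There is no genuine obstacle here; the argument is purely combinatorial on top of the symmetry from Proposition \ref{eigenf-prop}(ii). The only point deserving a little care is to distinguish between the bilinear duality pairing $\langle\cdot,\cdot\rangle_{\test,\dist}$ and the sesquilinear $L^2$ inner product when one translates the coefficients $a_n=\langle g,f_n\rangle$ of the expansion into the evaluation $\langle g,F\rangle=\sum_n a_n\langle f_n,F\rangle$; once this is done consistently, the two equivalences follow directly.
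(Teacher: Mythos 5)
Your proof is correct, and it follows exactly the route the paper intends: the paper states Corollary \ref{cor-real-valued} without proof as an immediate consequence of Proposition \ref{eigenf-prop}(ii), and your expansion in the orthonormal basis $(f_n)$ with the symmetry $f_{-n}=\overline{f_n}$, plus testing $F$ on real-valued elements of $\test$ supported on a pair $\{n,-n\}$, is precisely that ``straightforward'' argument. No gaps to report.
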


Throughout this article $s$ denotes an element of $[0,+\infty)$. We have:
\begin{equation}
    \label{D(A^s)}
    D(\mathcal{A}^s):=\{\alpha \in (L^2)^2, \quad \sum\limits_{n\in\mathbb{Z}} (1+|\mu_n|^{2s}) |\langle\alpha, f_n \rangle |^2 < \infty \}\subset (H^s)^2, \quad s\in  [0, +\infty).
\end{equation}
We endow  $D(\mathcal{A}^s)$ with the norm
\begin{equation}
    \|\alpha\|^2_{D(\mathcal{A}^s)}=\sum\limits_{n\in\mathbb{Z}} (1+|\mu_n|^{2s}) |\langle\alpha, f_n \rangle |^2, \quad  \forall \alpha \in D(\mathcal{A}^s).
\end{equation}
In fact, for $s\in \NN$, $\mathcal{A}^s f_n= \mu_n^s f_n$, and a simple Sobolev embedding estimate yields the existence of  $c$ and $C$ independent of $n$ such that
\begin{equation}
    c(1+|\mu_n|^{2s})\leq \|f_n\|_{L^2}^2+  \|\partial_x^s f_n\|_{L^2}^2\leq C  (1+|\mu_n|^{2s}),\; \forall n\in \mathbb{Z},
\end{equation}
therefore
\begin{equation}
    c\sum\limits_{n\in\mathbb{Z}} (1+|\mu_n|^{2s}) |\langle\alpha, f_n \rangle |^2\leq  \|\partial_x^s \alpha\|_{L^2}^2+ \|\alpha\|_{L^2}^2 \leq  C\sum\limits_{n\in\mathbb{Z}} (1+|\mu_n|^{2s}) |\langle\alpha, f_n \rangle |^2, \; \forall \alpha \in D(\mathcal{A}^s),
\end{equation}
while the other cases can be treated by interpolation arguments.

\subsection{Our target system}\label{section-target}

In past applications of the backstepping method, the most frequently used target system is simply the uncontrolled system corresponding to the damped operator $\mathcal{A}-\mu Id$, for some $\mu>0$. This choice can be easily understood: by adding a damping large enough the solution is likely to be decaying with a decay rate large enough. Moreover, note that if $(\mathcal{A},B)$ is controllable and $\tilde{\mathcal{A}}=\mathcal{A}-\mu Id$, then $(\tilde{\mathcal{A}}, B)$ is controllable as it can be seen by performing the change of state-variables $y=e^{\mu t}x$ (or by using the Kalman's rank condition \cite[Theorem 1.6]{CoronBook} in finite dimension).
As it appears, in our case it is more practical to consider a target system where the dissipation occurs instead at the boundary:
\begin{equation}
\label{target}
\left\{\begin{aligned}
 &\partial_{t}z+\Lambda \partial_x z + \delta(x)J z= 0,\\
&z_{1}(t,0)=-e^{-2\mu L} z_{2}(t,0), \\
&z_{2}(t,L)=-z_{1}(t,L),
 \end{aligned}\right.
\end{equation}
which corresponds to the auxiliary control system
\begin{equation}
\label{target-control}
\left\{\begin{aligned}
 &\partial_{t}z+\Lambda \partial_x z + \delta(x)J z= v(t)\mathcal{I}_{\nu},\\
&z_{1}(t,0)=-e^{-2\mu L} z_{2}(t,0), \\
&z_{2}(t,L)=-z_{1}(t,L),
 \end{aligned}\right.
\end{equation}
An important step will then be to prove that for some choice of $\nu>0$, both \eqref{sys1virt-con} and \eqref{target-control} are controllable in $D(\mathcal{A})$ and $D(\widetilde{\mathcal{A}})$  respectively  (see \eqref{domaineA} and \eqref{Atilde}).
\subsubsection{A system of eigenvectors for the target system}

As this target system has boundary conditions that are different from the original system,
let us define a new operator:
\begin{equation}\label{Atilde}
    \begin{array}{c}
        \widetilde{\mathcal{A}}:=\Lambda \partial_x + \delta(x) J,\\
        D(\widetilde{\mathcal{A}}):=\left\{(f_{1}, f_{2}) \in (H^1)^2 , \quad f_{1}(0)+e^{-2\mu L} f_{2}(0)=0,\text{  } f_{1}(L)+f_{2}(L)=0\right\}.
    \end{array}
\end{equation}
The eigenvectors $(\widetilde{f}_n, \widetilde{\mu}_{n})$ of $\widetilde{\mathcal{A}}$ form a Riesz basis of $\left(L^2 \right)^2$, and, using again the results in \cite{Russell2}, we have the following asymptotic development for $\widetilde{\mu}_{n}$:
\begin{equation}\label{mu_nAsympDissip}
\widetilde{\mu}_{n}=\mu + \frac{i \pi n}L + O\left(\frac1n\right).
\end{equation}
Moreover, the $(\widetilde{f}_n)$ admit a biorthogonal family which we note $(\widetilde{\phi}_n)$. Note that it is a well known fact that the $(\widetilde{\phi}_n, \overline{\widetilde{\mu}_{n}})$ are the eigenvectors of the adjoint operator:
\begin{equation}\label{AtildeStar}
    \begin{array}{c}
        \widetilde{\mathcal{A}}^\ast:=-\Lambda \partial_x - \delta(x) J,\\
        D(\widetilde{\mathcal{A}}^\ast):=\left\{(f_{1}, f_{2}) \in (H^1)^2 , \quad f_{1}(0)+e^{2\mu L} f_{2}(0)=0, f_{1}(L)+f_{2}(L)=0\right\}.
    \end{array}
\end{equation}
Again, this allows us to define the following spaces:
\begin{equation}
    D(\widetilde{\mathcal{A}}^s):=\{\alpha \in (L^2)^2, \quad \sum (1+|\widetilde{\mu}_n|^{2s}) |\langle\alpha, \widetilde{\phi}_n \rangle |^2 < \infty \}, \quad s\geq 0,
\end{equation}
endowed with the norm
\begin{equation}
    \|\alpha\|^2_{D(\widetilde{\mathcal{A}}^s)}=\sum (1+|\widetilde{\mu}_n|^{2s}) |\langle\alpha, \widetilde{\phi}_n \rangle |^2, \quad  \forall \alpha \in   D(\widetilde{\mathcal{A}}^s),
\end{equation}
and
\begin{equation}
    D((\widetilde{\mathcal{A}}^\ast)^s):=\{\alpha \in (L^2)^2, \quad \sum_{n\in \Z} (1+|\widetilde{\mu}_n|^{2s}) |\langle\alpha, \widetilde{f}_n \rangle |^2 < \infty \}, \quad s\geq 0,
\end{equation}
endowed with the norm
\begin{equation}
    \|\alpha\|^2_{ D((\widetilde{\mathcal{A}}^\ast)^s)}=\sum_{n\in \Z} (1+|\widetilde{\mu}_n|^{2s}) |\langle\alpha, \widetilde{f}_n \rangle |^2, \quad  \forall \alpha \in  D((\widetilde{\mathcal{A}}^\ast)^s).
\end{equation}

Now notice that when $\gamma=0$, from the expression of $\delta_{1}$ and \eqref{def_delta}, $\delta(x)=0$ and the operator becomes:
\begin{equation}\label{Atilde-order0}
    \begin{array}{c}
        \widetilde{\mathcal{A}}^{(0)}:=\Lambda \partial_x ,\\
        D(\widetilde{\mathcal{A}}^{(0)}):=\left\{(f_{1}, f_{2}) \in (H^1)^2 , \quad f_{1}(0)+e^{-2\mu L} f_{2}(0)=0, f_{1}(L)+f_{2}(L)=0\right\},
    \end{array}
\end{equation}
for which the eigenvectors and eigenvalues are
\begin{equation}\label{38}
    \widetilde{f}_n^{(0)}=\begin{pmatrix} e^{\left(\mu+\frac{i \pi n}L\right)x} \\ -e^{\left(\mu+\frac{i \pi n}L\right)(2L-x)} \end{pmatrix}, \quad \widetilde{\mu}_n^{(0)}=\mu + \frac{i\pi n}L, \quad \forall n \in \Z,
\end{equation}
and the corresponding biorthogonal family is given by:
\begin{equation}\label{39}
    \widetilde{\phi}_n^{(0)}=\begin{pmatrix} e^{\left(-\mu+\frac{i \pi n}L\right)x} \\ -e^{\left(-\mu+\frac{i \pi n}L\right)(2L-x)} \end{pmatrix},  \quad \forall n \in \Z.
\end{equation}
Direct calculation implies that these eigenfunctions are not normal in $(L^2)^2$ space. Thanks to \cite{Russell2}, we know that $ \{\widetilde{f}_n^{(0)}/ \lVert  \widetilde{f}_n^{(0)} \lVert_{L^2}\}$ form a Riesz basis of $(L^2)^2$ space. Moreover, since
\begin{equation}\label{eqref-f-L2}
     \lVert  \widetilde{f}_n^{(0)} \lVert_{(L^2)^2}^{2}= \frac{e^{4\mu L}-1}{2\mu},
\end{equation}
we know that
\begin{gather}\label{eqref-f-n-riesz}
    \{\widetilde{f}_n^{(0)}\} \textrm{ form a Riesz basis of $(L^2)^2$ space.}
\end{gather}

Now note that in this case, as $\delta=0$, $\mathcal{I}=(1, 1)$. By integration by parts,
\begin{equation}\label{40}
    \begin{aligned}
      \overline{\widetilde{\mu}_n^{(0)}} \left\langle (1,1) ,  \widetilde{\phi}_n^{(0)} \right\rangle &= \left\langle (1,1) ,  \Lambda \partial_x \widetilde{\phi}_n^{(0)} \right\rangle \\
      &= \overline{\left(\widetilde{\phi}_n^{(0)}\right)_1(L)}-\overline{\left(\widetilde{\phi}_n^{(0)}\right)_1(0)} - \overline{\left(\widetilde{\phi}_n^{(0)}\right)_2(L)} + \overline{\left(\widetilde{\phi}_n^{(0)}\right)_2(0)} \\
      &=  2(-1)^n e^{-\mu L} - 1 - e^{-2\mu L},
    \end{aligned}
\end{equation}
which is clearly bounded for $\mu\neq 0$. This, together with the moment theory,  suggest that  $(\widetilde{\mathcal{A}}^{(0)}, (1,1)) $ might be controllable, the detailed proof of which will be given in Section \ref{sec-con-diss}.

Moreover, similarly to \eqref{w-con}, we can also consider the following system with coupling term $\delta(x)J_0$ and control $v(t)(1, 1)$,
\begin{equation}
\label{target-new}
\left\{\begin{aligned}
 &\partial_{t}\tilde z+\Lambda \partial_x \tilde z + \delta(x)J_0 \tilde z=v(t)\begin{pmatrix}1\\1\end{pmatrix},\\
&\tilde z_{1}(t,0)=-e^{-2\mu L} \tilde z_{2}(t,0), \\
&\tilde z_{2}(t,L)=-\tilde z_{1}(t,L),
 \end{aligned}\right.
\end{equation}
which, thanks to similar calculations,  is also controllable for $\gamma=0, \mu\neq 0$.

These calculations at order $0$ also allow us to state the following properties for the eigenfunctions of $\widetilde{\mathcal{A}}$, and their biorthogonal family, analogous to Proposition \ref{eigenf-prop} and its proof (see Appendix \ref{appendixA}):
\begin{prpstn}\label{eigenf-prop-tilde}
The $(\widetilde{f}_n, \widetilde{\phi}_n, \widetilde{\mu}_n)$ satisfy the following:
\begin{itemize}
\item[(i)]\begin{equation}
\label{tilde-ev-conjugate}
\widetilde{\mu}_{-n}=\overline{\widetilde{\mu}_n}, \quad \forall n \in \Z.
\end{equation}
\item[(ii)] \begin{equation}
\label{fn-f-n-tilde}
\begin{aligned}
    \widetilde{f}_{-n}&=\overline{\widetilde{f}_n},  \\
    \widetilde{\phi}_{-n}&=\overline{\widetilde{\phi}_n}, \quad \forall n \in \Z.
    \end{aligned}
\end{equation}
\end{itemize}
\end{prpstn}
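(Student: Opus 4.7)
The plan is to mirror the proof of Proposition \ref{eigenf-prop}, since for $\gamma \in \RR$ the operator $\widetilde{\mathcal{A}} = \Lambda\partial_x + \delta(x) J$ has real coefficients and its boundary conditions $f_1(0) + e^{-2\mu L} f_2(0) = 0$, $f_1(L) + f_2(L) = 0$ also have real coefficients (as $\mu \in \RR$). Consequently $D(\widetilde{\mathcal{A}})$ is stable under complex conjugation and $\overline{\widetilde{\mathcal{A}} f} = \widetilde{\mathcal{A}}\, \overline{f}$. Hence if $(\widetilde{\mu}_n, \widetilde{f}_n)$ is an eigenpair of $\widetilde{\mathcal{A}}$, so is $(\overline{\widetilde{\mu}_n}, \overline{\widetilde{f}_n})$.

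It remains to identify $(\overline{\widetilde{\mu}_n}, \overline{\widetilde{f}_n})$ with the eigenpair indexed by $-n$. First I would fix the indexing convention via the asymptotics \eqref{mu_nAsympDissip}: $\widetilde{\mu}_n = \mu + i\pi n/L + O(1/n)$, so $\overline{\widetilde{\mu}_n} = \mu - i\pi n/L + O(1/n)$ matches precisely the asymptotic of $\widetilde{\mu}_{-n}$. Since these asymptotics separate eigenvalues for $|n|$ large, the identification $\widetilde{\mu}_{-n} = \overline{\widetilde{\mu}_n}$ holds for all large $|n|$. For the remaining finitely many low modes, I would invoke continuity of the simple eigenvalues in the parameter $\gamma$ together with the explicit $\gamma = 0$ data from \eqref{38}--\eqref{39}, where one reads off directly that $\overline{\widetilde{f}_n^{(0)}} = \widetilde{f}_{-n}^{(0)}$ and $\overline{\widetilde{\phi}_n^{(0)}} = \widetilde{\phi}_{-n}^{(0)}$; continuing the eigenfunctions continuously from $\gamma = 0$ to small $\gamma > 0$ preserves this relation.

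For the biorthogonal family, the same conjugation argument applied to $\widetilde{\mathcal{A}}^\ast = -\Lambda\partial_x - \delta(x) J$ (which again has real coefficients and real boundary conditions) shows that $\overline{\widetilde{\phi}_n}$ is an eigenfunction of $\widetilde{\mathcal{A}}^\ast$ with eigenvalue $\widetilde{\mu}_n = \overline{\widetilde{\mu}_{-n}}$, hence proportional to $\widetilde{\phi}_{-n}$. The proportionality constant is fixed by biorthonormality: using $\langle \overline{f}, \overline{g} \rangle = \overline{\langle f, g \rangle}$ for the sesquilinear pairing, one computes
\[
\langle \widetilde{f}_{-n}, \overline{\widetilde{\phi}_n} \rangle = \langle \overline{\widetilde{f}_n}, \overline{\widetilde{\phi}_n} \rangle = \overline{\langle \widetilde{f}_n, \widetilde{\phi}_n \rangle} = 1 = \langle \widetilde{f}_{-n}, \widetilde{\phi}_{-n} \rangle,
\]
so the constant is $1$ and $\widetilde{\phi}_{-n} = \overline{\widetilde{\phi}_n}$.

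The only genuinely delicate point is the indexing for the finitely many low-frequency modes where the asymptotic separation is not immediate. This is why the explicit $\gamma = 0$ computation is indispensable: it pins down the indexing at the unperturbed operator, and since Proposition \ref{eigenf-prop-tilde} is only needed for $\gamma$ small, a standard perturbative continuation of simple eigenvalues (justified by the analytic-perturbation framework that the paper announces for Section \ref{s3}) transports the identities to $\gamma > 0$ small. Everything else is essentially formal manipulation of the reality of the operator.
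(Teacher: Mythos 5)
Your argument is correct and is essentially the paper's own: the paper proves Proposition \ref{eigenf-prop-tilde} by noting it is analogous to Proposition \ref{eigenf-prop}, whose proof (Appendix \ref{appendixA}) likewise conjugates the eigenvalue problem of the real-coefficient operator and then identifies the resulting eigenpair with the one of index $-n$ through the explicit $\gamma=0$ eigenfunctions \eqref{38}--\eqref{39} and the perturbative closeness estimates (here those of Lemma \ref{tild-lem-fond}). Your extra step fixing the proportionality constant for $\widetilde{\phi}_{-n}$ via $\langle \overline{f},\overline{g}\rangle=\overline{\langle f,g\rangle}$ and biorthonormality is a correct way of handling the normalization that the paper leaves implicit.
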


\subsubsection{Exponential stability of the target system}\label{subsubsection-expstab-target}
In this section we show the following proposition  concerning the stability of system \eqref{sys1virt}.
\begin{prpstn}\label{expstab-target}
For any $\lambda_{0}\in(0,\mu)$, there exists $\gamma_{s}(\lambda_{0})>0$ such that for any $\gamma\in(0,\gamma_{s})$ and $\lambda\in[0,\lambda_{0}]$, the target system \eqref{target} is exponentially stable with decay rate $\lambda$ (for the $H^{p}$ norm, for any $p\in\mathbb{N}$). Moreover $\gamma_{s}$ can be chosen continuous and decreasing with respect to  $\lambda_{0}$.
\end{prpstn}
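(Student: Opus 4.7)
My plan is to construct an explicit weighted $L^2$ Lyapunov functional for \eqref{target} whose derivative absorbs both the transport part and the small coupling $\delta(x)J$, and then to propagate the estimate to $H^p$ by applying it to powers of the generator $\widetilde{\mathcal{A}}$. Fix $\lambda_1 := (\lambda_0 + \mu)/2 \in (\lambda_0,\mu)$ and consider
\begin{equation*}
V_0(z) := \int_0^L \Bigl( p\, e^{-2\lambda_1 x}|z_1(x)|^2 + q\, e^{2\lambda_1 x}|z_2(x)|^2 \Bigr) dx,
\end{equation*}
with positive constants $p,q$ to be chosen. Differentiating along a solution of \eqref{target}, integrating by parts, and using $\Lambda = \mathrm{diag}(1,-1)$ together with the boundary conditions $z_2(t,L) = -z_1(t,L)$ and $z_1(t,0) = -e^{-2\mu L}z_2(t,0)$, I obtain
\begin{equation*}
\dot V_0 = -2\lambda_1 V_0 + B(z) + R_\gamma(z),
\end{equation*}
where the boundary contribution is
\begin{equation*}
B(z) = (p e^{-4\mu L} - q)|z_2(0)|^2 + (q e^{2\lambda_1 L} - p e^{-2\lambda_1 L})|z_1(L)|^2,
\end{equation*}
and the residual $R_\gamma(z)$, produced entirely by the $\delta(x)J$ term, satisfies $|R_\gamma(z)| \leq C_1 \gamma\, V_0(z)$ thanks to the asymptotic bound \eqref{es-delta-O2} on $\delta/\gamma$ and to Cauchy--Schwarz, with $C_1$ depending only on $\lambda_0, p, q, L$.

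The first step is the choice of weights. Since $\lambda_1 < \mu$, the interval $(e^{-4\mu L}, e^{-4\lambda_1 L})$ is nonempty, so I pick any $q/p$ strictly inside it, e.g. its geometric mean; both coefficients of $B(z)$ are then strictly negative, and in particular $B(z) \leq 0$. The second step is to absorb the coupling by choosing $\gamma_s(\lambda_0) > 0$ with $C_1 \gamma_s \leq 2(\lambda_1 - \lambda_0)$; since $C_1$, $\lambda_1$ and the chosen $p,q$ depend continuously on $\lambda_0$, this $\gamma_s$ is continuous and decreasing in $\lambda_0$. For every $\gamma \in (0,\gamma_s)$ and every $\lambda \in [0,\lambda_0]$, one then gets $\dot V_0 \leq -2\lambda V_0$, and the equivalence of $V_0$ with $\|\cdot\|_{(L^2)^2}^2$ yields exponential stability in $(L^2)^2$ with rate $\lambda$.

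To reach the $H^p$ norm, the key observation is that $\widetilde{\mathcal{A}}^k z(t)$ solves the same PDE with the same boundary conditions as $z(t)$ whenever $z_0 \in D(\widetilde{\mathcal{A}}^k)$, because $\partial_t(\widetilde{\mathcal{A}}^k z) = -\widetilde{\mathcal{A}}(\widetilde{\mathcal{A}}^k z)$ and $\widetilde{\mathcal{A}}^k z(t) \in D(\widetilde{\mathcal{A}})$. Applying the previous estimate to $\widetilde{\mathcal{A}}^k z$ for $k = 0, 1, \ldots, p$ and summing therefore produces exponential decay, with rate $\lambda$, of the graph norm $\sum_{k=0}^p \|\widetilde{\mathcal{A}}^k z(t)\|_{(L^2)^2}^2$. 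Since $\Lambda$ is invertible and $\delta$ is smooth, writing $\partial_x z = \Lambda^{-1}(\widetilde{\mathcal{A}} z - \delta J z)$ and iterating shows that this graph norm is equivalent to $\|z\|_{(H^p)^2}^2$ uniformly for $\gamma$ small, so the conclusion follows up to possibly shrinking $\gamma_s$, still continuously and monotonically in $\lambda_0$.

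The main obstacle will be the clean bookkeeping of the $H^p$ well-posedness and of the equivalence between the graph norm of powers of $\widetilde{\mathcal{A}}$ and the usual Sobolev norm, together with checking that the compatibility conditions at the boundary propagate through the iteration. Modulo this essentially elliptic-type regularity argument, the proof is a direct instance of the by-now classical weighted Lyapunov technique for $2\times 2$ hyperbolic systems with dissipative boundary, as developed in \cite{BastinCoron1D} and the references therein.
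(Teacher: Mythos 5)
Your proof is correct, and it follows the same overall skeleton as the paper's: a weighted quadratic Lyapunov functional applied to $\widetilde{\mathcal{A}}^{k}z$ (equivalently $\partial_t^k z$) for $k=0,\dots,p$, boundary dissipation exploited through the ratio of the two weights, and the $H^p$ statement recovered from the equivalence of this functional with the $H^p$ norm, cf. \eqref{defV}--\eqref{equivnorm}. The difference lies in how the coupling term $\delta J$ is handled. The paper keeps general $C^1$ weights $\theta_1,\theta_2$ and requires the interior quadratic form to be \emph{exactly} positive semi-definite (third condition of \eqref{condg10}); following \cite{BastinCoron2011} this is reduced to the scalar ODE problem \eqref{eta_cond} for $\eta$, solved by exhibiting the explicit super-solution $\xi$ and a comparison argument, which yields the bound \eqref{cond_gamma} on $\gamma_s$ and feeds the later discussion of non-existence of such Lyapunov functions for large decay rates. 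You instead fix constant-times-exponential weights at the intermediate rate $\lambda_1=(\lambda_0+\mu)/2$, make both boundary coefficients strictly negative by placing $q/p$ in $(e^{-4\mu L},e^{-4\lambda_1 L})$, and absorb the coupling perturbatively via $|R_\gamma|\le C_1\gamma V_0$ (legitimate, since \eqref{es-delta-O2} gives $\|\delta\|_\infty=O(\gamma)$ and the weight ratios are bounded on $[0,L]$), trading the margin $\lambda_1-\lambda_0$ for simplicity. What your route buys is an elementary, fully explicit argument with $\gamma_s\sim 2(\lambda_1-\lambda_0)/C_1$ and no ODE comparison; what the paper's buys is a sharper, weight-optimized condition on $\gamma$ and the structural information used in the remark following the proposition. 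Two small points to tighten: monotonicity of $\gamma_s$ in $\lambda_0$ does not follow from continuity alone, but you can simply replace $\gamma_s(\lambda_0)$ by its running infimum $\inf_{\lambda_0'\le\lambda_0}\gamma_s(\lambda_0')$, which is positive, continuous and nonincreasing; and the equivalence of $\sum_{k\le p}\|\widetilde{\mathcal{A}}^k z\|_{L^2}^2$ with $\|z\|_{H^p}^2$, together with the density/regularization needed to apply the estimate to $H^p$ data with compatibility conditions, should be stated (the paper is equally brisk here, invoking well-posedness in $H^p$ and \eqref{equivnorm}).
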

\begin{proof}
First,
note that the system is well-posed in $H^{p}$ for initial condition satisfying the compatibility conditions at the boundaries \cite{BastinCoron1D} (see also  from
\cite{LiBook} in $C^{p}$). More precisely, let $T>0$, there exists a constant $C(T)>0$ such that for any $ z_{0}\in H^{p}$ the system \eqref{target} with initial condition $ z_{0}$ has a unique solution $ z\in C^{0}([0,T],H^{p})$ and
\begin{equation}\label{estimate-well-posed}
    \lVert z(t,\cdot)\rVert_{H^{p}}\leq C(T)\lVert z_{0}\rVert_{H^{p}}.
\end{equation}

We now define the following Lyapunov function candidate $V$ 
\begin{equation}\label{defV}
V( Z)=\sum\limits_{n=0}^{p}\lVert \Theta(x) (\widetilde{\mathcal{A}}^{n} Z) \rVert_{L^{2}(0,L)}^{2},\quad \forall\text{  } Z\in H^{p}(0,L),
\end{equation}
where $\Theta=\text{diag}(\sqrt{\theta_{1}},\sqrt{\theta_{2}})$ with $\theta_{1}$ and $\theta_{2}$ two positive $C^{1}$ functions to be selected later on. This is a common form of so-called basic quadratic Lyapunov function \cite{BastinCoron2011,BastinCoron1D}. Obviously $V$ is equivalent to the square of the $H^{p}$ norm in the sense that there exists positive constants $C_{1}$ and $C_{2}$ such that for any $ Z\in H^{p}(0,L; \mathbb{C}^2)$,
\begin{equation}\label{equivnorm}
C_{1}\lVert  Z\rVert_{H^{p}(0,L)}^{2}\leq V( Z)\leq C_{2}\lVert  Z\rVert_{H^{p}(0,L)}^{2}.
\end{equation}
Now observe that for a solution $ z$ to  system \eqref{target}, one has
\begin{equation}
V(z)=\sum\limits_{n=0}^{p}\langle\Theta(x)\partial_{t}^{n} z(t,\cdot),\Theta(x)\partial_{t}^{n} z(t,\cdot)\rangle.
\end{equation}
Let $n\in\{0,...,p\}$.  From \eqref{target} we know that  $\partial_{t}^{n}z$ is also a solution to \eqref{target}.
Therefore, differentiating $V( z)$ along time, one has
\begin{equation}
\dot V( z)=-2\mathfrak{Re}\left(\sum\limits_{n=0}^{p}\langle \Lambda\partial_{x}(\partial_{t}^{n} z)(t,\cdot),\Theta^{2}\partial_{t}^{n} z(t,\cdot)\rangle+\langle \delta J\partial_{t}^{n} z(t,\cdot),\Theta^{2}\partial_{t}^{n} z(t,\cdot)\rangle\right).
\end{equation}
Thus, integrating by parts and using the boundary conditions of \eqref{target},
\begin{equation}
\begin{aligned}
\dot V( z)=&-2\lambda V\\
&-\sum\limits_{n=0}^{p}[(\theta_{1}(L)-\theta_{2}(L))(\partial_{t}^{n}z_{1})^{2}(t,L)+(\theta_{2}(0)-\theta_{1}(0)e^{-4\mu L})(\partial_{t}^{n}z_{2})^{2}(t,0)]\\
&-\sum\limits_{n=0}^{p}\mathfrak{Re}\left(\langle(-\Lambda\left(\Theta^{2}\right)'-2\lambda \Theta^{2}+\Theta^{2}\delta J+(\Theta^{2}\delta J)^{\top})\partial_{t}^{n} z,\partial_{t}^{n} z\rangle \right).
\end{aligned}
\end{equation}
Our goal is now to choose $\Theta$ such that the two last sums are nonnegative. Recognizing a 
quadratic form in the integrals, it suffices to ensure that
\begin{equation}\label{condg10}
\begin{aligned}
&\theta_{1}(L)\geq \theta_{2}(L),\\
&\theta_{2}(0)-\theta_{1}(0)e^{-4\mu L}\geq 0\\
&(-\Lambda\left(\Theta^{2}\right)'-2\lambda \Theta^{2}+\Theta^{2}\delta J+(\Theta^{2}\delta J)^{\top}))\text{ is definite semi-positive}.
\end{aligned}
\end{equation}
Denoting $\Xi_{1}=\theta_{1}\exp(2\lambda (x-L))$ and $\Xi_{2}=\theta_{2}\exp(-2\lambda (x-L))$, \eqref{condg10} is equivalent to
\begin{equation}
\begin{aligned}
&\Xi_{1}(L)\geq \Xi_{2}(L),\\
&\Xi_{2}(0)-\Xi_{1}(0)e^{-4(\mu-\lambda) L}\geq 0,\\
&-\Xi_{1}'\Xi_{2}'\geq \left(\frac{\delta}{3}\right)^{2}(\Xi_{1}\exp(-2\lambda (x-L))-\Xi_{2}\exp(2\lambda (x-L)))^{2}.
\end{aligned}
\end{equation}
Following \cite[Proposition 1]{BastinCoron2011}, the existence of $\Xi_{1}$, $\Xi_{2}$ positive and of class $C^{1}$ satisfying these conditions is equivalent to the existence of $\eta$ positive and of class $C^{1}$ satisfying
\begin{equation}
\begin{aligned}
&\eta(L)\leq 1,\\
&\eta(0)= e^{-2(\mu-\lambda) L},\\
&\eta' = \left|\frac{\delta}{3}\right|\left|\exp(-2\lambda (x-L))-\eta^{2}\exp(2\lambda (x-L))\right|,
\end{aligned}
\end{equation}
which, using the first condition in the third one, is in fact equivalent to
\begin{equation}\label{eta_cond}
\begin{aligned}
&\eta(L)\leq 1,\\
&\eta(0)= e^{-2(\mu-\lambda) L},\\
&\eta' = \left|\frac{\delta}{3}\right|\left(\exp(-2\lambda (x-L))-\eta^{2}\exp(2\lambda (x-L))\right).
\end{aligned}
\end{equation}
We will now show the existence of such $\eta$ by exhibiting a super-solution to the two last equations of \eqref{eta_cond} satisfying also the first condition. Let us introduce $\xi$ being the $C^{1}$ solution of
\begin{equation}\label{def_xi}
\begin{aligned}
&\xi'=\lVert\frac{\delta}{3}\rVert_{L^{\infty}(0,L)}\left(e^{2\lambda(L-x)}\right),\\
&\xi(0)=e^{-2(\mu-\lambda) L}.
\end{aligned}
\end{equation}
This system can be easily solved and
\begin{equation}
    \xi(x)=e^{-2(\mu-\lambda) L}+\frac{\lVert\delta\rVert_{L^{\infty}(0,L)}}{6\lambda}\left(e^{2\lambda L}-e^{2\lambda(L-x)}\right).
\end{equation} 
Let us now set
\begin{equation}\label{cond_gamma}
\gamma_{s}= \min \left(\frac{7}{16 L},6\lambda\frac{1-e^{-2(\mu-\lambda) L}}{\left(e^{2\lambda L}-e^{2\lambda(L-x)}\right)}\right),
\end{equation}
and assume that $\gamma\in(0,\gamma_{s})$, then one has from the definition of $\delta$ given in \eqref{def_delta},
\begin{equation}
\xi(L)\leq 1.
\end{equation}
Thus, looking at \eqref{def_xi} and the two last equations of \eqref{eta_cond} and by comparison (see for instance \cite{Hartman}), $\eta$ exists on $[0,L]$ and in addition $\eta\leq\xi $ on $[0,L]$.
Thus, choosing such $\eta$, we have
\begin{equation}\label{V-diff-ineq}
    \dot V( z)\leq -2\lambda V,
\end{equation}
thus, using \eqref{equivnorm},
\begin{equation}\label{target-exp-stab-Hp}
\lVert  z(t,\cdot)\rVert_{H^{p}}\leq \sqrt{\frac{C_{2}}{C_{1}}}\lVert  z_{0}\rVert_{H^{p}}e^{-\lambda t},\text{ on }[0,T].
\end{equation}
But, as $T>0$ was chosen arbitrary and $C_{1}$, $C_{2}$ and $\lambda$ are independent of $T$ this is also true on $[0,+\infty)$. This ends the proof of Proposition \ref{expstab-target}
\end{proof}
\begin{rmrk}
As it can be seen in condition \eqref{cond_gamma},
when $\gamma$ is small enough we can actually achieve a decay rate as close as we want to $\mu$, as was expected looking at the eigenvalues of $\widetilde{\mathcal{A}}$ given by \eqref{mu_nAsympDissip}. Indeed, $\widetilde{\mathcal{A}}$ is obtained by adding a small perturbation (i.e. $\gamma$ small) to a simple operator (corresponding to the case $\gamma=0$). As the real part of the eigenvalues of the unperturbed operator is $\mu$, what we showed with \eqref{mu_nAsympDissip} is that the eigenvalues of the perturbed  system can have a real part as close as we want to $\mu$, provided that $\gamma$ is small enough, which is also characterized by Theorem \ref{tho-rus} and Lemma \ref{lem-es-mu-n-mu}.  This, combined with the fact that eigenfunctions form a Riesz basis (or the Spectral Mapping Theorem), implies an exponential decay rate as close as we want to $\mu$, provided again that $\gamma$ is small enough.  From this point of view, Lyapunov approach also provides a rough estimate on the perturbation of eigenvalues.
\end{rmrk}

Finally, following \cite{BastinCoron2011} again, if the maximal solution of the two last equations of \eqref{eta_cond} does not exist on $[0,L]$ or does not satisfies the first condition of \eqref{eta_cond}, then there does not exist any Lyapunov function with a decay rate larger of equal to $\lambda$ of the form
\begin{equation}\label{defV2}
V( Z)=\sum\limits_{n=0}^{p}\langle\widetilde{\mathcal{A}}^{n} Z,Q(x) (\widetilde{\mathcal{A}}^{n} Z)\rangle,\text{  }\forall\text{  } Z\in H^{p}(0,L),
\end{equation}
where $Q\in C^{1}([0,L],M_{2}^{+}(\mathbb{R}))$, with $M_{2}^{+}(\mathbb{R})$ the space of positive definite matrix on $\mathbb{R}^{2}$. Note that the form \eqref{defV2} includes all the Lyapunov functions of the form \eqref{defV}. This implies that we cannot get a uniform bound on $\gamma$ and in particular that for any $\gamma>0$ there exists $\mu_{\gamma}>0$ such that there does not
exists any Lyapunov function of the form \eqref{defV2} with a decay rate larger of equal to $\mu_{\gamma}>0$. Indeed let $\gamma>0$, and assume by contradiction for any $\mu>0$ there exists a Lyapunov function of the form \eqref{defV2} with decay rate larger or equal to $\mu$, then there exists a function $\eta$ on $[0,L]$ satisfying \eqref{eta_cond}. Besides,
\begin{equation}
    \eta'\geq \inf\limits_{[0,L]}\left|\frac{\delta}{3}\right|\left(e^{\mu(L-x)}-e^{-\mu(L-x)}\right),
\end{equation}
hence, integrating and using that $\eta(0)\geq0$,
\begin{equation}\label{contrad}
    \eta(L)\geq \frac{1}{\mu}\inf\limits_{[0,L]}\left|\frac{\delta}{3}\right|(\text{ch}(\mu L)-1).
\end{equation}
Using the first condition of \eqref{eta_cond},
\begin{equation}
    \frac{1}{\mu}\inf\limits_{[0,L]}\left|\frac{\delta}{3}\right|(\text{ch}(\mu L)-1)\leq 1.
\end{equation}
As $\inf_{[0,L]}|\delta|>0$, there exists $\mu_{1}>0$ such that there is contradiction.

\subsection{Outline of the strategy}\label{outline-of-proof}

We have introduced a virtual control operator in order to obtain a controllable system, by adding a component to the control profile $\mathcal{I}$ along the null eigenfunction of the operator $\mathcal{A}$. As it turns out, this can also be understood as a dynamical extension of the linearized water tank system \eqref{sv-lin}--\eqref{bc-sv}--\eqref{cond-cons-2}.

Indeed, thanks to the fact that $\Lambda \partial_x f_0+ \delta(s)J f_0=0$, projecting the system \eqref{sys1virt} on the $(f_{n})_{n\in\mathbb{Z}}$ and denoting
\begin{equation}
    \mathbf{Z}=\zeta_0 f_0 + \zeta, \text{   with  }\langle\zeta, f_0\rangle =0,
\end{equation}
one has
\begin{equation}
\dot{\zeta}_0=\nu\left(\langle \zeta, F \rangle + \zeta_0\langle  f_{0}, F\rangle\right),
\end{equation}
and
\begin{equation}\label{control-form}
\left\{\begin{aligned}
&\partial_{t}\zeta+\Lambda\partial_{x}\zeta+\delta J \zeta
=\left(\langle \zeta(t,\cdot), F \rangle+\zeta_0\langle f_{0}, F\rangle\right)\mathcal{I}, \\
&\zeta_{1}(t,0)=-\zeta_{2}(t,0),\\
&\zeta_{2}(t,L)=-\zeta_{1}(t,L).
\end{aligned}\right.
\end{equation}

We will show later (see Remark \ref{rmkcontrol}) that the condition \eqref{missing-direc-sys1} corresponding to the missing direction is equivalent to $\langle\zeta,f_{0} \rangle =0$. Thus $\zeta$ is exactly the solution of \eqref{sys01}, \eqref{cond-0}, \eqref{missing-direc-sys1} with control $u_{\gamma}(t):=\left(\langle \zeta(t,\cdot), F\rangle+\zeta_0\langle f_{0}, F\rangle\right)$. This control is indeed an implementable feedback law as it depends only on the state $\zeta$ and on the additional variable $\zeta_0$, which is a dynamical extension of the original system (see, for example, \cite{CoronPraly} and  \cite[Section 11.1]{CoronBook}). Physically speaking, we can think of it as ``virtual mass'' that is added or removed from the real physical system described by $\zeta$.

Then the key result to prove Theorem \ref{th1} is the following:
\begin{prpstn}\label{Prop-feedback-for-virtual-system}
For any $\mu>0$, there exists $\gamma_{0}>0$ such that, for any $\gamma\in(0,\gamma_{0})$, there exists $\nu \neq 0$ such that $F\in \dist$ given by
\begin{equation}
    \begin{aligned}
    \langle f_0, F \rangle&= -2\tanh(\mu L) \frac{(f_{0,1}(0))^{2}}{\nu},\\
    \langle f_n, F \rangle&=-2\tanh(\mu L) \frac{(f_{n,1}(0))^{2}}{\langle \mathcal{I}, f_n \rangle}, \quad \forall n \in \Z^\ast,
    \end{aligned}
\end{equation}
stabilizes \eqref{sys1virt} exponentially  in $D(\mathcal{A})$, with decay rate $3\mu/4$.
\end{prpstn}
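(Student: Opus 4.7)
The plan is to realize the Proposition as an instance of the infinite-dimensional $F$-equivalence/backstepping scheme laid out in Section \ref{FiniteD}: construct a bounded invertible transformation $T : D(\mathcal{A}) \to D(\widetilde{\mathcal{A}})$ intertwining the virtual closed-loop system \eqref{sys1virt} with the target system \eqref{target}, then transport the $H^1$-exponential decay of the target system guaranteed by Proposition \ref{expstab-target} back through $T^{-1}$. Concretely, I seek a pair $(T,F)$ solving the operator equations
\begin{equation}
T\mathcal{A} + \mathcal{I}_\nu \langle \cdot , F\rangle = \widetilde{\mathcal{A}}\, T, \qquad T \mathcal{I}_\nu = \mathcal{I}_\nu,
\end{equation}
which are the PDE analogues of \eqref{FiniteDOpEq}. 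If such a $(T,F)$ exists with $T$ an isomorphism on $D(\mathcal{A})$, then for any $\mathbf{Z}$ solving \eqref{sys1virt} the function $z=T\mathbf{Z}$ solves \eqref{target}; Proposition \ref{expstab-target} with $\lambda = 3\mu/4$ (valid for $\gamma$ small enough) gives the desired rate after pullback by $T^{-1}$.

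The construction of $T$ goes through the spectral bases. Since $(f_n)$ is orthonormal for $\mathcal{A}$ and $(\widetilde{f}_n)$ is a Riesz basis for $\widetilde{\mathcal{A}}$ with biorthogonal family $(\widetilde{\phi}_n)$, I set $T f_n = \sum_k \alpha_{n,k}\widetilde{f}_k$. Applying the first intertwining equation to $f_n$ and pairing with $\widetilde{\phi}_k$ gives
\begin{equation}
\alpha_{n,k}\,(\widetilde{\mu}_k - \mu_n) = \langle f_n , F\rangle\, \langle \mathcal{I}_\nu, \widetilde{\phi}_k \rangle,
\end{equation}
which, thanks to the asymptotics \eqref{mu_n asymp} and \eqref{mu_nAsympDissip}, has safely nonvanishing denominators (of order $\mu$ when $k=n$, of order $|k-n|$ otherwise). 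This determines $T$ once $F$ is known. The normalization $T\mathcal{I}_\nu = \mathcal{I}_\nu$, expanded on $(f_n)$ and $(\widetilde{f}_n)$, then fixes $\langle f_n, F\rangle$ via a scalar identity of the form $\sum_k \alpha_{n,k}\langle \mathcal{I}_\nu,\widetilde{\phi}_k\rangle \langle \widetilde{f}_k , \cdot\rangle = \langle \mathcal{I}_\nu, f_n\rangle$. Recognizing the series expansion of $\tanh(\mu L)$ coming from the residue sum $\sum_k 1/(\widetilde{\mu}_k - \mu_n)$ applied to the explicit $\gamma = 0$ eigenfunctions \eqref{38}--\eqref{39} yields the closed form $\langle f_n, F\rangle = -2\tanh(\mu L) (f_{n,1}(0))^2/\langle \mathcal{I}, f_n\rangle$ for $n\ne 0$; the case $n=0$ uses $\langle \mathcal{I}_\nu, f_0\rangle = \nu$ (since $\langle \mathcal{I}, f_0\rangle = 0$ by \eqref{missing-moment}), producing the $\nu$ in the denominator of $\langle f_0,F\rangle$.

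Step three is to check that $T$, defined by these coefficients, is bounded on $D(\mathcal{A})$ and invertible. Boundedness follows from the Riesz basis property plus the denominator lower bounds above and decay of $\langle \mathcal{I}_\nu,\widetilde{\phi}_k\rangle$ obtained from integration by parts analogous to \eqref{40}. For invertibility, one constructs a candidate inverse $T^{-1}$ by the symmetric series, and then verifies a Fredholm-alternative-type identity: the kernel of $T$ must be trivial because $\mathcal{I}_\nu$ is cyclic for $\mathcal{A}$ (guaranteed once $\nu$ is chosen so that $\langle \mathcal{I}_\nu, f_n\rangle \ne 0$ for all $n$, using \eqref{missing-moment} only at $n=0$ where the shift by $\nu f_0$ saves the day). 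This is where controllability of both \eqref{sys1virt-con} and \eqref{target-control} enters exactly as in the finite-dimensional argument around \eqref{FiniteDOpEq}.

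The main obstacle is the invertibility in Step three. Unlike Volterra backstepping, the Fredholm transformation here is not automatically invertible, and the analysis is delicate because the virtual $\nu$ must be tuned so that both $\langle \mathcal{I}_\nu, f_n\rangle$ and $\langle \mathcal{I}_\nu, \widetilde{\phi}_n\rangle$ remain uniformly bounded below in $n$ and uniformly in $\gamma$ as $\gamma \to 0^+$. I expect this to be handled by a perturbation argument from the explicit $\gamma = 0$ situation \eqref{38}--\eqref{40}, combined with the moments-method controllability results mentioned before Proposition \ref{Prop-feedback-for-virtual-system}. Once invertibility is secured and $T$ is shown to map $D(\mathcal{A})$ continuously and bijectively onto $D(\widetilde{\mathcal{A}})$, Proposition \ref{expstab-target} closes the argument: picking $\lambda_0 \in (3\mu/4, \mu)$ and $\gamma_0 = \gamma_s(\lambda_0)$ from that proposition gives the claimed decay rate $3\mu/4$ in $D(\mathcal{A})$.
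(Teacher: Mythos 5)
Your overall strategy is the paper's: construct a spectrally defined Fredholm transformation $T$ and a feedback $F$ satisfying the infinite-dimensional analogue of \eqref{FiniteDOpEq}, prove $T:D(\mathcal{A})\to D(\widetilde{\mathcal{A}})$ is an isomorphism using the Riesz basis structure and the controllability of both \eqref{sys1virt-con} and \eqref{target-control}, and pull back the stability of Proposition \ref{expstab-target}. However, there are two genuine gaps at precisely the points where the real work lies. First, the normalization $T\mathcal{I}_\nu=\mathcal{I}_\nu$ cannot be imposed as an exact identity: $\mathcal{I}_\nu\notin D(\mathcal{A})$ and the series defining $T\mathcal{I}_\nu$ does not converge, so the condition only makes sense in the weak, truncated form $\langle T\mathcal{I}_\nu^{(N)},\widetilde{\phi}_m\rangle\to\langle\mathcal{I}_\nu,\widetilde{\phi}_m\rangle$. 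Evaluating that limit is not a matter of ``recognizing the series expansion of $\tanh(\mu L)$'' from the $\gamma=0$ eigenfunctions: for $\gamma>0$ one must show that the Dirichlet-type partial sums $\sum_{n=-N}^{N}f_{n,1}(0)\langle f_n,\widetilde{\phi}_m\rangle$ converge to $\overline{\widetilde{\phi}_{m,1}(0)-\widetilde{\phi}_{m,2}(0)}/2$, and this rests on an equiconvergence theorem of Komornik type adapted to the perturbed first-order operator (Proposition \ref{PropEquiconvergence} and Appendix \ref{appendixB}); the factor $\tanh(\mu L)$ then comes from the boundary conditions of $\widetilde{\phi}_m$, not from a residue computation at $\gamma=0$. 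Without this ingredient your ``scalar identity'' fixing $\langle f_n,F\rangle$ is purely formal.

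Second, your final step assumes that solutions of the closed-loop system \eqref{sys1virt} exist and are mapped by $T$ to solutions of \eqref{target}. Since $F$ is unbounded on $D(\mathcal{A})$ (it is only continuous for the $X^2$ norm), well-posedness of the closed-loop system is not automatic and cannot be taken for granted; moreover the operator equality $T(-\mathcal{A}+\mathcal{I}_\nu F)=-\widetilde{\mathcal{A}}T$ must be verified on the domain $D_F$, which requires proving the first limit $\langle\alpha^{(N)},F\rangle\to\langle\alpha,F\rangle$ for $\alpha\in D_F$. The paper does this through a careful regularity analysis of $F$ (splitting off the singular part $h$, the isomorphism $\tau^{\mathcal{I}}$, the spaces $X^s$, the inclusion $D_F\subset X^2$, and a symmetrization lemma), and then obtains well-posedness by defining $S(t)=T^{-1}\widetilde{S}(t)T$ and proving that its generator is exactly $(-\mathcal{A}+\mathcal{I}_\nu F,D_F)$, via the characterization $D_F=T^{-1}D(\widetilde{\mathcal{A}}^2)$. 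These steps are absent from your proposal, and without them the claimed conclusion (exponential stabilization in $D(\mathcal{A})$ with rate $3\mu/4$) does not follow from the existence of the isomorphism $T$ alone.
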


\begin{rmrk}
We have introduced a virtual control operator in order to deal with completely controllable systems, which is a crucial point of our proof.

However, note that the control operator
\begin{equation}
    B:u\in \CC    \mapsto u \mathcal{I}_\nu
\end{equation}
is not a bounded operator in the state space $D(\mathcal{A})$, as $\mathcal{I}_\nu\notin D(\mathcal{A})$. Otherwise, rapid stabilizability of the virtual system \eqref{sys1virt} would be a direct consequence of controllability thanks to \cite[Corollary 1, p.13]{DATKO1971346} (see also \cite[item (i), Theorem 3.3, p.227]{Zabczyk}).

Rather, Proposition \ref{Prop-feedback-for-virtual-system} extends the implication ``controllability $\implies$ stabilizability'' to a case where the control operator is unbounded, as is done in \cite{Schrodinger, Rebarber, Urquiza, ZhangRapidStab}.
\end{rmrk}
Indeed, to design a feedback ensuring the exponential stability of \eqref{sys01}, \eqref{cond-0}, \eqref{missing-direc-sys1}, it suffices to design a linear feedback ensuring the exponential stability of the virtual system \eqref{sys1virt}. Then, for any initial condition of system \eqref{sys01}, \eqref{cond-0},
\begin{equation}\label{integrator-starts-at-0}
  \zeta^0  \in D(\mathcal{A}), \quad \langle \zeta^0, f_0 \rangle =0,
\end{equation}
one can implement that feedback on system \eqref{sys1virt} with initial condition
\begin{equation*}
    \zeta_0(0)=0, \quad \zeta(0)=\zeta^0.
\end{equation*}
This will yield, in particular, exponential stabilization of the $\zeta$ part.

Then, as the transformations \eqref{diffeo0} and \eqref{diffeo} and the scaling introduced in \eqref{sys0} and \eqref{scal} define a diffeomorphism, this implies the exponential stability of the initial system \eqref{sv-lin}--\eqref{cond-cons-2}, albeit with a different decay rate (because of the time scaling in \eqref{scal}). Finally, the inverse transformations of \eqref{diffeo0} and \eqref{diffeo} allow us to recover \eqref{F} in the original variables from the definition of $F$ given by \eqref{Fcoeff} (this is given in more detail in Appendix \ref{appendixC}), which completes the proof of Theorem \ref{th1}.

The following sections are thus devoted to the proof of Proposition \ref{Prop-feedback-for-virtual-system}, by applying the backstepping method described in Section \ref{FiniteD} to virtual system \eqref{sys1virt} and target system \eqref{target}.

Section \ref{s3} handles the controllability of both systems, proving that there exists $\nu \neq 0$ such that System  \eqref{sys1virt} (Lemma \ref{lem-3-5}), and System \eqref{target-control} (Lemma \ref{til-lem-fond}) are both controllable.

Section \ref{s4} builds candidates for a suitable backstepping transformation for our problem, and gives a sufficient condition to find such a backstepping transformation.

Section \ref{s5} then builds a suitable backstepping transformation, along with the associated feedback law, and checks that this feedback law is indeed exponentially stabilizing in some sense.

\section{Controllability}\label{s3}
The goal of this section is to achieve some controllability results for the original system and the target system. Thanks to the moment theory, those results can be obtained by several estimates, such as Lemma \ref{lem-fond}, Lemma \ref{lem-3-5}, Lemma \ref{tild-lem-fond},  \eqref{psi-sca-pro}, \eqref{Iboundary}, \eqref{tild-es-bd-con-III}, which are exactly some  of the key points needed to solve our stabilization problem.

When $\gamma=0$ the target system \eqref{target-control} becomes quite simple (see \eqref{Atilde-order0}) and obviously controllable. Hence, it is rather easy to prove its controllability with $\gamma>0$ small. On the other hand, as already mentioned, the initial system \eqref{sys01}--\eqref{cond-0} is not controllable when $\gamma=0$. 
Therefore, in this section we mainly focus  on  the controllability of system \eqref{sys01}--\eqref{cond-0} which will be the object of Section \ref{sec-con-or} and Corollary \ref{cor-con-or}. Then, as the operator $\mathcal{A}$ given by \eqref{A} and the operator $\widetilde{\mathcal{A}}$ given by \eqref{Atilde} share many common properties, almost all the calculations and estimates in Section \ref{sec-def-eig}--\ref{sec-2.2} also hold for \textcolor{black}{$\widetilde{\mathcal{A}}$}, which will lead to the controllability of System \eqref{target-control}.This will be the goal of  Section \ref{sec-con-diss} and Theorem \ref{tmm-diss-con}.

\subsection{Controllability of System \eqref{sys01}--\eqref{cond-0}}\label{sec-con-or}
As system \eqref{sys01}-- \eqref{cond-0}
  is obtained from \eqref{w-con} 
by an isomorphism, it suffices to prove the controllability of System \eqref{w-con}, while for ease of notation we simply replace the control by $u(t)$. In the following  we mainly focus on System \eqref{w-con}.

\subsubsection{Asymptotic calculation: perturbed operators and perturbed eigenfunctions}\label{sec-def-eig}
Let us define the operator
\begin{equation}
     T:= \Lambda \partial_x,
 \end{equation}
\begin{equation}\label{T-gamma}
  T_{\gamma}:=  \mathcal{S}_0=\Lambda \partial_x+ \delta(x)J_0,
\end{equation}
\begin{equation}
    D(\mathcal{S}_0):= \left\{(w_1, w_2)\in (H^{1})^2: w_1(0)= -w_2(0), w_1(L)= -w_2(L)\right\},
\end{equation}
associated to the system \eqref{w-con}. 
We want to find an asymptotic expression of eigenvalues and eigenfunctions of this operator.
The adjoint operator $\mathcal{S}_0^\ast$ is given by
\begin{equation}
    \mathcal{S}_0^\ast :=-\Lambda \partial_x+ \bar \delta(x) J^*_0,
\end{equation}
\begin{equation}
    D(\mathcal{S}^*_0)= D(\mathcal{S}_0).
\end{equation}
Hence, $\mathcal{S}_0$ is neither self-adjoint nor anti-adjoint. In fact, it is  not even a normal operator:
\begin{equation}
    \mathcal{S}_0\mathcal{S}_0^*- \mathcal{S}_0^*\mathcal{S}_0= 2 \delta_{x}(x).
\end{equation}

Note that
this operator has the same eigenvalues $ \mu_{n}(\gamma)$ as
the operator $\mathcal{A}$, thanks to the change of variables \eqref{diffeo}.

Concerning the relation between $\mathcal{S}_0$ and $\mathcal{S}_0^*$, if $(\psi_{n}(\gamma), \mu_{n}(\gamma))$ are eigenfunctions and eigenvalues of $\mathcal{S}_0$, then  $(\chi_{n}(\gamma), \overline{\mu_n}(\gamma))$ are eigenfunctions and eigenvalues of $\mathcal{S}_0^*$. Moreover, these two group of eigenfunctions are bi-orthogonal in the sense that,
\begin{equation}\label{ortho}
    \big\langle \psi_{n}(\gamma), \chi_m(\gamma)\big\rangle= 0, \textrm{ if } n\neq m.
\end{equation}
Direct calculation implies that the eigenvalues of $T$   are  simple and isolated,
\begin{equation}
\mu_{n}^{(0)}= i\pi n/L.
\end{equation}
The related normalized eigenfunctions of $T$ and $T^*$ are given by
\begin{equation}
    \psi_{n}^{(0)}=(e^{i\pi nx/L}, -e^{-i \pi nx/L})\; \textrm{ and } \;\chi_{n}^{(0)}=(e^{-i\pi nx/L}, -e^{i \pi nx/L}).
\end{equation}

Similarly, the eigenvalues of $T_{\gamma}$ are denoted by  $\mu_{n}(\gamma)$, which is also isolated provided $\gamma$ small enough.  In fact, due to the fact that an eigenfunction multiplied by a scalar number is still an eigenfunction, it is convenient to consider \textit{perturbed eigenfunctions}, $\psi_{n}(\gamma)$, according to Kato's book \cite[page 92]{Kato-book}:
\begin{equation}
\langle \psi_{n}(\gamma), \chi_{n}^{(0)}\rangle=1,\text{  }(\text{resp. } \langle \psi_{n}^{(0)}, \chi_{n}(\gamma)\rangle=1).
\end{equation}
This normalization formula is standard and is convenient to perform symbol calculation.  For this reason we cannot  assume $\langle \psi_{n}(\gamma), \chi_{n}(\gamma)\rangle=1$ at the same time, see \eqref{ortho}.

\subsubsection{$L^2$-normalized eigenfunctions and Riesz basis}\label{sec-per-Riesz}
We are also interested in the $L^2$--normalized eigenfunctions:
\begin{equation}
\hat \psi_{n}(\gamma):=    \psi_{n}(\gamma)/ \lVert \psi_{n}(\gamma)\lVert_{(L^2)^2},  \hat \chi_{n}(\gamma):=    \chi_{n}(\gamma)/ \lVert \chi_{n}(\gamma)\lVert_{(L^2)^2}.
\end{equation}
The following theorem by Russell tells us that those eigenfunctions form a Riesz basis when $\gamma$ is sufficiently small.
\begin{thrm}[Russell \cite{Russell2}]\label{tho-rus} There exists $r_R>0$ such that, for any $\gamma\in (-r_R, r_R)$ the following properties hold.
\begin{itemize}
    \item[(1)] $T_{\gamma}$ has simple isolated eigenvalues $\mu_{n}(\gamma)$.  Moreover,
     \begin{gather}\label{mu_n asymp-lam}
     \mu_{n}(\gamma)= \frac{i \pi n}{L} + \gamma \mathcal{O}\left(\frac1n\right), \quad \forall n\in \Z^*.
     \end{gather}
        \item[(2)] Both $\{\hat\psi_{n}(\gamma)\}_n$ and $\{\hat\chi_{n}(\gamma)\}_n$  form a Riesz basis 
of $(L^{2}(0,L))^2$.
 \item[(3)] The   functions $\{e^{\mu_n(\cdot- 2L)}\}_n$ form a Riesz basis for $L^2(0, 2L)$, while its dual bi-orthogonal basis $\{p_n\}_n$ also form a Riesz basis.
\end{itemize}
\end{thrm}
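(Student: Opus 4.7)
The plan is to treat $T_\gamma$ as an analytic perturbation of $T_0 = \Lambda\partial_x$ in the small parameter $\gamma$, use ODE shooting to locate eigenvalues via a characteristic function, and then invoke a quadratic-closeness argument (Bari's theorem) to promote the perturbed eigenfunctions to a Riesz basis. Part (3) would then follow from the asymptotic location of the $\mu_n$ via classical non-harmonic Fourier analysis.

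For (1), I would rewrite the eigenvalue problem $T_\gamma \psi = \mu \psi$ as the first-order system
\begin{equation}
\partial_x \psi_1 = (\mu - \delta(x))\psi_1 - \tfrac{\delta(x)}{3}\psi_2, \qquad -\partial_x \psi_2 = \tfrac{\delta(x)}{3}\psi_1 + (\mu + \delta(x))\psi_2,
\end{equation}
and let $\Phi(x;\mu,\gamma)$ denote its fundamental matrix with $\Phi(0;\mu,\gamma)=I$, which is entire in $\mu$ and analytic in $\gamma$. The boundary condition $\psi_1(0)+\psi_2(0)=0$ selects the initial data $(1,-1)^T$, and $\psi_1(L)+\psi_2(L)=0$ then yields a scalar characteristic function $D(\mu,\gamma) := (1,1)\,\Phi(L;\mu,\gamma)(1,-1)^T$ whose zeros are exactly the eigenvalues. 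For $\gamma=0$ one solves explicitly to obtain $D(\mu,0)=-2\sinh(\mu L)$, whose zeros $\mu_n^{(0)}=i\pi n/L$ are simple. A Gronwall estimate on $\Phi - \Phi^{(0)}$ shows that $D(\mu,\gamma) = -2\sinh(\mu L) + \gamma\, R(\mu,\gamma)$ with $R$ uniformly bounded on strips, so Rouché's theorem inside small disks around each $\mu_n^{(0)}$ gives, for $|\gamma|$ small, a unique simple zero $\mu_n(\gamma)$; applying the implicit function theorem to $D(\cdot,\gamma)$ at each $\mu_n^{(0)}$ yields analyticity in $\gamma$, and expanding $\Phi$ by variation of constants produces the asymptotic $\mu_n(\gamma) = i\pi n/L + \gamma\,\mathcal{O}(1/n)$ (the $1/n$ comes from a Riemann–Lebesgue integration-by-parts against $e^{2\mu_n^{(0)}(\cdot)}$ in the first-order term of the Dyson expansion).

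For (2), the orthonormal basis of $(L^2)^2$ built from the $\hat\psi_n^{(0)} = \psi_n^{(0)}/\|\psi_n^{(0)}\|$ is a natural reference. The same Dyson expansion shows $\|\hat\psi_n(\gamma) - \hat\psi_n^{(0)}\|_{L^2} = \mathcal{O}(\gamma/|n|)$ for $|n|\geq 1$, so
\begin{equation}
\sum_{n\in\Z} \|\hat\psi_n(\gamma) - \hat\psi_n^{(0)}\|_{L^2}^2 < \infty.
\end{equation}
Combined with $\omega$-linear independence (the $\hat\psi_n(\gamma)$ are eigenfunctions for distinct simple eigenvalues) and completeness (which follows from the Riesz basis property of the unperturbed family together with the quadratic-closeness bound), Bari's theorem applies and $\{\hat\psi_n(\gamma)\}$ is a Riesz basis of $(L^2)^2$. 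The same argument applies to $\{\hat\chi_n(\gamma)\}$ since $\mathcal{S}_0^\ast$ is of the same form.

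For (3), the asymptotic $\mu_n(\gamma)=i\pi n/L + \mathcal{O}(1/n)$ shows that $\{\mu_n/\pi\}$ is, for small $\gamma$, a small perturbation of the integer lattice $\{n/L\}$ in the sense of Kadec's $1/4$-theorem: the family $\{e^{\mu_n t}\}$ on $L^2(0,2L)$ is quadratically close to the orthogonal family $\{e^{i\pi n t/L}\}$, which on $(0,2L)$ has the right density to be a Riesz basis; Kadec's theorem, or the more refined Avdonin theorem, then promotes $\{e^{\mu_n(\cdot - 2L)}\}$ to a Riesz basis, and the existence and Riesz basis property of its biorthogonal family $\{p_n\}$ is automatic. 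The main obstacle is the bookkeeping for the $1/n$ remainder in the eigenvalue asymptotics uniformly in $\gamma$ — that estimate has to be good enough both to invoke Kadec and to sum the quadratic-closeness series — but once the characteristic function is controlled by a contour argument on strips $|\mathrm{Re}\,\mu|\leq M$ and $|\mathrm{Im}\,\mu - \pi n/L| \leq 1$, the rest is a standard perturbation package.
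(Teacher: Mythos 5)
First, a point of comparison: the paper does not prove this statement at all — it is quoted from Russell's work \cite{Russell2} — so you are supplying a proof where the authors rely on a citation. Your shooting/characteristic-function scheme for item (1) and the Kadec-type argument for item (3) are reasonable in outline (for real $\gamma$ the $\mu_n$ are purely imaginary, and a uniform bound $|\mu_n(\gamma)-i\pi n/L|<\pi/(4L)$ is exactly what Kadec's $1/4$-theorem on $L^2(0,2L)$ needs; the biorthogonal family of a Riesz basis is automatically a Riesz basis). Two caveats there: Rouch\'e in disks around the $\mu_n^{(0)}$ must be supplemented by an argument excluding eigenvalues outside the union of the disks, and the refinement $\mu_n(\gamma)=i\pi n/L+\gamma\,\mathcal{O}(1/n)$ does not follow from the first Dyson term alone — you must control the full remainder $D(\mu_n^{(0)},\gamma)-2\sinh(\mu_n^{(0)}L)$, checking that the non-oscillatory parts of every order assemble into multiples of $\sinh(\mu L)$ (hence vanish at $\mu_n^{(0)}$) while the oscillatory parts gain $1/n$ by integration by parts; this is doable since $\delta$ is smooth with $\|\delta\|_{C^1}=\mathcal{O}(\gamma)$, but it is glossed over.

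The genuine gap is in item (2): the estimate $\|\hat\psi_n(\gamma)-\hat\psi_n^{(0)}\|_{L^2}=\mathcal{O}(\gamma/|n|)$ on which your Bari/quadratic-closeness argument rests is false for this operator. The first-order Kato correction is $\psi_n^{(1)}=\frac{3L}{4}\sum_{k\neq n}\frac{\langle J_0\psi_n^{(0)},\psi_k^{(0)}\rangle}{i\pi(k-n)}\psi_k^{(0)}$ with $\langle J_0\psi_n^{(0)},\psi_k^{(0)}\rangle=\frac{(-1)^{n+k}-1}{i\pi}\bigl(\frac{1}{n-k}+\frac{1}{3}\frac{1}{n+k}\bigr)$; taking $k=n+1$ the factor $(-1)^{n+k}-1=-2$ and $\frac{1}{n-k}=-1$, so the component of $\psi_n(\gamma)-\psi_n^{(0)}$ along $\psi_{n+1}^{(0)}$ is of size $c\gamma$ with $c>0$ independent of $n$ (the diagonal entries of $J_0$ are responsible). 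The paper makes exactly this point: its Kato computation yields only $\|\psi_n(\gamma)-\psi_n^{(0)}\|_\infty\leq 4L|\gamma|$ and it stresses that ``there is no asymptotic behavior with respect to $n$''; decay in $n$ is only hoped for the purely off-diagonal coupling $J$, not for $J_0$. Consequently $\sum_n\|\hat\psi_n(\gamma)-\hat\psi_n^{(0)}\|_{L^2}^2$ diverges and Bari's theorem does not apply. A repair that avoids Russell's machinery altogether, for the real $\gamma$ of the statement: $T_\gamma=\Lambda\partial_x+\delta J_0$ is similar to $\mathcal{A}=\Lambda\partial_x+\delta J$ via the bounded invertible multiplication operator $\exp\bigl(\int_0^x\delta\bigr)$ (the transformation \eqref{diffeo}); $\mathcal{A}$ is skew-adjoint with compact resolvent, so its normalized eigenfunctions form an orthonormal basis of $(L^2)^2$, and applying the multiplication isomorphism shows that $\{\hat\psi_n(\gamma)\}$ is a Riesz basis; the same argument applied to $\mathcal{S}_0^\ast$ handles $\{\hat\chi_n(\gamma)\}$. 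Alternatively one can prove operator-norm (not Hilbert--Schmidt) smallness of the map sending $\psi_n^{(0)}$ to $\hat\psi_n(\gamma)$, but that is precisely the nontrivial content your sketch currently skips.
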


\subsubsection{The moments method and the controllability of  System \eqref{w-con}}\label{sec-mom}
The moments method consists in decomposing the state and the control term in a Riesz basis of eigenfunctions. This yields an infinity of independent ODEs, and leads to a moments problem. The main issues, as indicated above, are to check that  the eigenfunctions form a Riesz basis, and that the projection of the control term on each direction is away from 0 (hence observable).  We also refer to the book \cite{moment-book} for a good introduction of this method.

This suggests us to study
\begin{equation}\label{moment-bas}
     a_n:= \big\langle  \psi_{n}(\gamma), (1,1)\big \rangle  \textrm{ and }b_n:= \big\langle \chi_{n}(\gamma), (1,1)\big\rangle,\text{ }\forall n\in\mathbb{Z},
\end{equation}
Indeed, at least formally,  we are able to decompose $(1, 1)$ by
\begin{equation}\label{decomp0}
   (1,1)= \sum_{n\in\mathbb{Z}} d_n \psi_{n}(\gamma),
\end{equation}
thus, using \eqref{ortho}
\begin{equation}\label{dn-bn}
    d_n \big\langle \psi_{n}(\gamma), \chi_{n}(\gamma)\big \rangle=   \big\langle (1, 1), \chi_{n}(\gamma) \big\rangle= \bar b_n.
\end{equation}

To \textit{observe} the  ``direction" generated by  $\psi_n(\gamma)$, the value  $b_n$ should be nonzero.  We further get the controllability provided that the values of $\{\psi_n(\gamma)\}_n$ satisfying some inequalities, see \cite{moment-book}  for more details of this method, and see \eqref{u-mom-pres} for the construction of the control in our case. \\

\noindent\textbf{Not controllable for $\gamma$=0.}

For the case $\gamma=0$, thanks to the explicit expression,     $\chi_n(0)= \chi_{n}^{(0)}=(e^{-i\pi nx{\color{black}/L}}, -e^{i \pi nx{\color{black}/L}})$, we have,
\begin{equation}
    \big\langle \chi_n(0), (1, 1)\big\rangle= -\frac{2iL}{\pi n} (1-\cos{\pi n}).
\end{equation}
The preceding inequality implies that there are infinitely many direction that can not be observed, more precisely the space generated by $\{\psi_{2n}(0)\}_n$. Consequently,  with another argument than the one given in \cite{Coron2002} we prove that the system is not controllable with the uncontrollable part being of infinite dimension. \\

\noindent\textbf{Controllable for $\gamma>0$ sufficiently small.}

For that moment, let us assume that the following key lemma holds (it will be proved in Sections \ref{sec-2.2}--\ref{sec-2.3}):
\begin{lmm}\label{lem-fond}
There exists $\gamma_0,$ $c,$ $C>0$ such that for any $\gamma\in (0, \gamma_0)$ we have
\begin{itemize}
    \item[(i)] $(\psi_{n}(\gamma))_{n\in\mathbb{Z}} \textrm{ is a Riesz basis  of $L^{2}$}$;
    \item[(ii)] $\lvert\langle \psi_{n}(\gamma), \chi_{n}(\gamma) \rangle\lvert\in (1/2, 2)$;
    \item[(iii)] $| \mu_{n}(\gamma)-\mu_{n}^{(0)}|< \frac{1}{4L}$;
    \item[(iv)]  For $n>0$, $b_n$ is away from zero in the following sense
    \begin{gather}
b_{0}=0,
       \\
         \gamma  \frac{c}{n} <|
       b_{n}
       |<\frac{C}{n}, \forall n \in \mathbb{Z}^*.
    \end{gather}
\end{itemize}
\end{lmm}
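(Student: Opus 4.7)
The plan is to treat Lemma \ref{lem-fond} as a perturbation result around the explicitly solvable unperturbed operator $T = \Lambda\partial_x$ obtained at $\gamma = 0$. Items (i)--(iii) will follow from Russell's Theorem \ref{tho-rus} combined with standard analytic (Kato-type) perturbation theory, while the substantive content of the lemma, item (iv), will require a first-order Taylor expansion in $\gamma$ together with a parity analysis of the moments $b_n$.

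For (i), Theorem \ref{tho-rus}(2) gives the Riesz basis property of $\{\hat\psi_n(\gamma)\}$, so it suffices to show that the normalization factors $\|\psi_n(\gamma)\|_{L^2}$ lie in a compact subset of $(0, \infty)$ uniformly for $\gamma \in (0, \gamma_0)$. This follows from $\|\psi_n^{(0)}\|_{L^2} = 1$ and a Lipschitz bound $\|\psi_n(\gamma) - \psi_n^{(0)}\|_{L^2} \le C\gamma$ uniform in $n$, obtained via Kato's resolvent formula using the spectral gap \eqref{mu_n asymp-lam}. Item (iii) is obtained similarly: for $|n|$ large it is immediate from \eqref{mu_n asymp-lam}, while for bounded indices, analyticity in $\gamma$ of simple isolated eigenvalues yields continuity, so shrinking $\gamma_0$ enforces $|\mu_n(\gamma) - \mu_n^{(0)}| < 1/(4L)$. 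For (ii), the paper's normalization $\langle \psi_n(\gamma), \chi_n^{(0)}\rangle = 1$ gives
\begin{equation*}
\langle \psi_n(\gamma), \chi_n(\gamma)\rangle = 1 + \langle \psi_n(\gamma), \chi_n(\gamma) - \chi_n^{(0)}\rangle,
\end{equation*}
and the uniform-in-$n$ estimate $\|\chi_n(\gamma) - \chi_n^{(0)}\|_{L^2} \le C\gamma$ (again via the Kato resolvent series) yields $|\langle \psi_n(\gamma), \chi_n(\gamma)\rangle - 1| \le C\gamma$.

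The main work is (iv). The identity $b_0 = 0$ reflects the mass-conservation direction spanned by $\chi_0(\gamma)$, along which the control profile $(1, 1)$ has zero component, consistently with \eqref{missing-direc-w-con}. For $n \neq 0$, the upper bound $|b_n(\gamma)| \le C/|n|$ follows by integration by parts: using the eigenvalue equation $\mathcal{S}_0^\ast \chi_n(\gamma) = \overline{\mu_n(\gamma)}\,\chi_n(\gamma)$, one writes $b_n(\gamma) = \overline{\mu_n(\gamma)}^{\,-1} \langle \mathcal{S}_0^\ast \chi_n(\gamma), (1,1)\rangle$, integrates by parts, and combines the boundary condition $(w_1 + w_2)|_{0, L} = 0$ with the asymptotics of $\mu_n$ and uniform control of boundary traces. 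For the lower bound, an explicit integration at $\gamma = 0$ gives $b_n^{(0)} \propto (1 - (-1)^n)/n$, which is of size $c/|n|$ for odd $n$ and vanishes identically for even $n \ne 0$. For odd $n$, a uniform Lipschitz estimate $|b_n(\gamma) - b_n^{(0)}| \le C\gamma/|n|$ yields $|b_n(\gamma)| \ge c/|n|$, stronger than the claim. For even $n \neq 0$, one expands $b_n(\gamma) = \gamma\, b_n^{(1)} + O(\gamma^2)$ and must prove $|b_n^{(1)}| \ge c/|n|$. Applying the Kato resolvent formula to the perturbation $\delta(x) J_0$, where $\delta(x)/\gamma \to -3/4$ by \eqref{es-delta-O2}, the correction $\chi_n^{(1)}$ decomposes in the unperturbed biorthogonal basis with denominators $\bar\mu_n^{(0)} - \bar\mu_m^{(0)} = i\pi(m - n)/L$. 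Pairing with $(1, 1)$ eliminates all even $m$ in the resulting sum (since $b_m^{(0)} = 0$ for even $m$), and explicit evaluation of the remaining integrals of $e^{\pm i\pi(n \pm m)x/L}$ should reveal a dominant contribution of size $c/|n|$ with a nonzero universal constant.

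The main technical obstacle is precisely this first-order computation for even $n$: ruling out accidental cancellations in the Kato series requires a careful tracking of the off-diagonal structure of $J_0$, and controlling the $O(\gamma^2)$ remainder uniformly in $n$ relies crucially on the uniform spectral-gap bounds provided by Theorem \ref{tho-rus}. Items (i)--(iii) and the odd-$n$ case of (iv) are, by comparison, soft consequences of standard perturbation arguments.
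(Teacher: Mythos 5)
Your items (i)--(iii), the identity $b_0=0$, and the upper bound in (iv) follow the same route as the paper (Russell's theorem plus uniform Kato-type resolvent estimates, and integration by parts reducing $\bar\mu_n b_n$ to boundary traces), and your parity analysis correctly identifies that odd $n$ is handled at order zero while even $n$ requires the first-order correction. The gap is in how you close the even-$n$ lower bound, which is the heart of the lemma. You expand $b_n(\gamma)=\gamma b_n^{(1)}+O(\gamma^2)$, aim at $|b_n^{(1)}|\geq c/|n|$, and propose to control the remainder ``uniformly in $n$''. But a remainder bounded by $C\gamma^2$ uniformly in $n$ cannot be absorbed by a main term of size $\gamma c/|n|$: for any fixed $\gamma$ the comparison fails as soon as $|n|\gtrsim c/(C\gamma)$. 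Moreover, the sup-norm perturbation estimates available here (e.g.\ \eqref{varphi-0}, \eqref{es-vn-final}) genuinely do \emph{not} decay in $n$ -- the paper points this out explicitly -- so you cannot upgrade the remainder to $O(\gamma^2/|n|)$ by the same Kato machinery. The paper avoids this by never expanding $b_n$ itself: it expands $\mu_n(\gamma)\langle\psi_n(\gamma),(1,1)\rangle$ (see \eqref{bound-es-for}), which after integration by parts is a combination of boundary values plus an explicitly $O(\gamma)$ integral; there the zeroth-order term is $2((-1)^n-1)$, the first-order term is $\gamma l_n^{(1)}$ with the \emph{uniform constant} lower bound $|l_n^{(1)}|>2L/\pi^2$ for even $n$ (see \eqref{ln-es}), and the uniform $O(\gamma^2)$ remainder then suffices for all $n$ at once; the $1/n$ decay of $b_n$ enters only at the end through division by $\mu_n$, giving \eqref{psi-sca-pro}. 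Your odd-$n$ Lipschitz claim $|b_n(\gamma)-b_n^{(0)}|\leq C\gamma/|n|$ has the same issue: it does not follow from $\|\chi_n(\gamma)-\chi_n^{(0)}\|_\infty\leq C\gamma$ alone and must be derived through the same boundary-value identity.

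Second, the decisive computation for even $n$ is left as an expectation (``explicit evaluation \ldots should reveal a dominant contribution \ldots with a nonzero universal constant''). Ruling out cancellation here is not soft: the paper must evaluate the series
\begin{equation*}
l^{(1)}_{2m}= \frac{6L}{\pi^2}\sum_{k\in \mathbb{Z}}  \frac{1}{2k+1-2m} \Big(\frac{1}{2k+1-2m}- \frac{1}{3}\cdot \frac{1}{2m+2k+1} \Big)
\end{equation*}
and show the cross term is strictly smaller than the square term uniformly in $m$ (using $\sum_k (2k+1)^{-2}=\pi^2/4$ and a splitting of the cross sum), which is where the off-diagonal structure of $J_0$ and the factor $1/3$ actually matter. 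Without carrying out this estimate (or an equivalent one), and without reorganizing the expansion around $\mu_n b_n$ as above, the proof of (iv) is not complete.
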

Thanks to Lemma \ref{lem-fond} and the classical moment theory, we can conclude that the system is not yet controllable but there is only one dimension missing corresponding to the moment $b_{0}$. From \eqref{decomp0}, the missing direction corresponds therefore to $\text{Span}\{\psi_{0}\}$. In fact we will show later on that this missing direction corresponds exactly to the condition \eqref{missing-direc-w-con} which is the condition of mass conservation in the original system \eqref{cond-cons-2} (see Remark \ref{rmkcontrol}).
This also means that any state that keeps a constant mass is reachable, more precisely
\begin{thrm}\label{tho-con-w}
 For $T\in [2L,+\infty)$, for $ \gamma\in (0, \gamma_0)$,  system \eqref{w-con} is $D(\mathcal{H}^s_{(0)})$ controllable with controls having $D_p(\mathcal{H}_{(0)}^{s-1}(0, 2L))$ regularity, where
 \begin{gather}
    D(\mathcal{H}^s_{(0)})=\{f:= \sum_{n\in \mathbb{Z}^*}f_n \psi_n \in H^{s} | \sum_{n\in \mathbb{Z}^*} (1+ n^{2s}) f_n^2< +\infty\}, \\
  D_p(\mathcal{H}^s_{(0)}(0, 2L))=\{f:= \sum_{n\in \mathbb{Z}^*}f_n \bar{p}_n \in H^{s} | \sum_{n\in \mathbb{Z}^*} (1+ n^{2s}) f_n^2< +\infty\}, \\
  \textrm{$\{p_n\}_n$ is the dual basis of $\{e^{\mu_n(s-2L)}\}_n$ presented in Theorem \ref{tho-rus}},\\
  \textrm{the control vanishes on $(2L, T)$.}
 \end{gather}
\end{thrm}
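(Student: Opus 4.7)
The natural approach is the classical moments method, using the Riesz basis $\{\psi_n(\gamma)\}_{n \in \mathbb{Z}}$ of eigenfunctions of $\mathcal{S}_0 = T_\gamma$ provided by Lemma \ref{lem-fond}(i). Since $b_0 = 0$, the direction $\psi_0$ is uncontrollable; however $\mu_0 = 0$, so this direction is also preserved by the free dynamics, and the state space $D(\mathcal{H}^s_{(0)})$ is defined precisely to exclude it (the summation is indexed by $n \in \mathbb{Z}^*$). For all other modes, Lemma \ref{lem-fond}(iv) ensures the quantitative ``visibility'' $|b_n| > c\gamma/n$, which drives both the controllability and the sharp regularity counting.

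By linearity of \eqref{w-con} it suffices to prove reachability from $0$: given $w_T = \sum_{n \in \mathbb{Z}^*} w_{T,n}\, \psi_n(\gamma) \in D(\mathcal{H}^s_{(0)})$, we seek $u$ supported in $[0, 2L]$ such that $w(T, \cdot) = w_T$. Projecting the Duhamel formula for \eqref{w-con} onto $\chi_n(\gamma)$ and using \eqref{ortho} together with the identity \eqref{dn-bn}, the reachability condition becomes the sequence of scalar moment equations
\begin{equation*}
\int_0^{2L} e^{\mu_n(\gamma) s}\, u(s)\, ds \;=\; \alpha_n, \qquad \alpha_n := w_{T,n}\, e^{\mu_n(\gamma) T}\, \frac{\langle \psi_n(\gamma), \chi_n(\gamma) \rangle}{\overline{b_n}}, \qquad n \in \mathbb{Z}^*,
\end{equation*}
where the support constraint $u \equiv 0$ on $(2L, T)$ has been used to restrict the integral. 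By Theorem \ref{tho-rus}(3), $\{e^{\mu_n(\gamma)(\cdot - 2L)}\}_{n \in \mathbb{Z}}$ admits a biorthogonal Riesz basis $\{p_n\}_{n \in \mathbb{Z}}$ in $L^2(0, 2L)$. Setting
\begin{equation*}
u(s) := \sum_{n \in \mathbb{Z}^*} \alpha_n\, e^{-2L\mu_n(\gamma)}\, \overline{p_n(s)} \;\;\text{on } [0, 2L], \qquad u \equiv 0 \text{ on } [2L, T],
\end{equation*}
the biorthogonality $\int_0^{2L} e^{\mu_m(s - 2L)} \overline{p_n} \, ds = \delta_{mn}$ directly yields $\int_0^{2L} e^{\mu_m(\gamma) s} u(s)\, ds = \alpha_m$, solving the moment problem.

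For the regularity claim, Lemma \ref{lem-fond}(ii)--(iv) combined with the fact that $\mu_n(\gamma)$ stays uniformly close to $i\pi n/L$ (Lemma \ref{lem-fond}(iii), so that $|e^{-2L\mu_n(\gamma)}|$ is bounded) give the uniform bound $|\alpha_n e^{-2L\mu_n(\gamma)}| \leq C_\gamma\, n\, |w_{T,n}|$, whence
\begin{equation*}
\sum_{n \in \mathbb{Z}^*} (1 + n^{2(s-1)})\, |\alpha_n e^{-2L\mu_n(\gamma)}|^2 \;\leq\; C_\gamma^2 \sum_{n \in \mathbb{Z}^*} (1 + n^{2s})\, |w_{T,n}|^2 \;<\; +\infty.
\end{equation*}
Since $\{\overline{p_n}\}$ is a Riesz basis, this places $u$ in $D_p(\mathcal{H}^{s-1}_{(0)}(0, 2L))$. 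The loss of exactly one derivative between state and control regularity is the cost of dividing by $b_n \sim 1/n$.

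The main obstacle is \emph{not} the moments argument itself, which is textbook once Lemma \ref{lem-fond} and Theorem \ref{tho-rus} are available. The real work lies in establishing the lower bound $|b_n| > c\gamma/n$ on the controllability moments $b_n = \langle \chi_n(\gamma), (1,1) \rangle$. At $\gamma = 0$ one has $b_{2k}(0) = 0$ (the even modes are completely uncontrolled, as computed just before Lemma \ref{lem-fond}), so the perturbative expansion of $\chi_n(\gamma)$ in $\gamma$ must be carried out carefully enough to show that these degenerate moments open up at a uniform rate at least $c\gamma/n$; this is the spectral content of Sections \ref{sec-2.2}--\ref{sec-2.3} underlying Lemma \ref{lem-fond}(iv).
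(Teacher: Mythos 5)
Your proposal is correct and follows essentially the same route as the paper: reduce by linearity to reachability from $0$, project onto the eigenbasis $\{\psi_n(\gamma)\}$ to obtain scalar moment equations, solve them explicitly with the biorthogonal family $\{p_n\}$ from Theorem \ref{tho-rus}, and count regularity via Lemma \ref{lem-fond}(ii)--(iv), losing one derivative from the bound $|b_n|\gtrsim \gamma/n$. The only cosmetic difference is that you absorb general $T\geq 2L$ into the factor $e^{\mu_n T}$ in $\alpha_n$, whereas the paper reduces outright to $T=2L$ by setting the control to zero on $(2L,T)$; both are fine.
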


\begin{proof}[Proof of Theorem \ref{tho-con-w}]
Indeed, for ease of notations, we simply denote System \eqref{w-con} by
\begin{equation}
    w_t(t)+ A w(t)= u(t)(1, 1),\; t\in (0, 2L).
\end{equation}
Moreover, it suffices to prove the controllability for the case $T=2L$, since in  the other cases we can simply take $u|_{(2L, T)}=0$.
Because the above equation is linear, it further suffices to  assume that the initial state as $w(0, \cdot)=0$ and the final state as $w(2L, \cdot)= \sum_n k_n \psi_n\in D(\mathcal{H}^s_{(0)})$.

Thus it allows us to decompose the state in the Riesz basis $\{\psi_n\}_n$
\begin{equation}
    w(t, \cdot)= \sum_{n\in \mathbb{Z}^*} w_n(t)\psi_n,
\end{equation}
which satisfies
\begin{equation}
    \sum_n\big((w_n)_t(t)+ \mu_n w_n(t)- d_n u(t)\big)\psi_n=0.
\end{equation}
Therefore
\begin{equation}
    (w_n)_t(t)+ \mu_n w_n(t)- d_n u(t)=0, \;\forall n\in \mathbb{Z}^*.
\end{equation}
Hence Duhamel's formula yields
\begin{equation}
    k_n= w_n(2L)=  d_n\int_0^{2L} e^{\mu_n(s-2L)} u(s) ds.
\end{equation}
Thanks to Theorem \ref{tho-rus}, more precisely the duality of $\{e^{\mu_n(\cdot-2L)}\}_n$ and $\{p_n\}_n$, the control $u(s)|_{[0, 2L]}$ can be given by
\begin{equation}\label{u-mom-pres}
    u|_{[0, 2L]}(s)= \sum_{n\in \mathbb{Z}^*} \frac{k_n}{d_n} \bar{p}_n= \sum_{n\in \mathbb{Z}^*} \frac{k_n \langle \psi_{n}(\gamma), \chi_{n}(\gamma) \rangle}{\bar{b}_n} \bar{p}_n\in D(\mathcal{H}^{s-1}_{(0)}).
\end{equation}
\end{proof}

\subsubsection{Asymptotic calculation: holomorphic extension}
 We will now use Kato's method \cite{Kato-book} of asymptotic calculation with the help of complex analysis to obtain an explicit formulation, and remainder estimates, for the eigenvalues, and an insight on the eigenfunctions' asymptotic behavior.  For ease of the presentation and notations, in the following we only focus on the operator $  \mathcal{S}_0= T_{\gamma}$ while its adjoint $\mathcal{S}_0^*=T_{\gamma}^*$ can be treated the same way.

 Let us consider eigenfunctions on the space $(C^{0}([0, 1]))^2$. From now on, $\lVert \cdot \lVert_{\infty}$ denotes the $L^{\infty}(0, L)^2$ norm, $i.e.$ for $u=(u_1, u_2)$,
 \begin{equation}
     ||u||_{\infty}=  ||u||_{(L^{\infty}(0, L))^2}:= \max\{||u_1||_{L^{\infty}(0, L)}, ||u_1||_{L^{\infty}(0, L)}\}.
 \end{equation}
 In the preceding formulas, $\gamma$ was assumed to be a (sufficiently small) real number. Now we extend those formulas to $\gamma \in \mathbb{C}$ with $|\gamma|$ small: at least formally this extension is true.  In fact, this complex extension enables us to use holomorphic techniques concerning asymptotic calculation. Once we will get estimates that we require, we will apply them with a real $\gamma$, as several properties are better in this case, for example $\mathcal{A}$ is skew-adjoint provided that $\gamma$ is real. Moreover, the operators $\{T_{\gamma}\}$ are of \textit{type (A)} (see \cite[Chapter 7, Section 2]{Kato-book}), hence the extended formulas are holomorphic for $|\gamma|$ small.
\begin{rmrk}
We choose the $(L^{\infty})^2$ norm for asymptotic information on boundary points, because it will be useful in the following demonstration. Getting an estimation on the $(L^2)^2$ norm, though, would be much simpler.
\end{rmrk}
Let us define,
\begin{gather}
    A(\gamma):= T_{\gamma}-T= \delta(x) J_0,\\
    A^{(1)}(\gamma)= T_{\gamma}-T- \gamma  T^{(1)}, \text{ }\text{ } T^{(1)}= -\frac{3}{4} J_0,
\end{gather}
where $T_{\gamma}$ is still given by \eqref{T-gamma}. Using \eqref{def_delta}--\eqref{es-delta-O2} we immediately get the existence of  $d\in (0, 1/(8L^2))$ such that, for any  $|\gamma|< d$,
\begin{gather}
    |\delta(x)|< |\gamma|, \; |\delta(x)+\frac{3}{4} \gamma|< L |\gamma|^2,
\end{gather}
thus the linear operators $A(\gamma)$ and $A^{(1)}(\gamma)$ on $(L^{\infty})^2$ space verifies
\begin{gather}\label{A-gamma-es}
    \lVert A(\gamma)\lVert< 2|\gamma|, \;  \lVert A^{(1)}(\gamma)\lVert< 2L|\gamma|^2.
\end{gather}
\begin{thrm}
The resolvent $R(\xi):= (T-\xi Id)^{-1}$ is defined on $D_0:= \mathbb{C}\setminus \bigcup\limits_{n\in\mathbb{Z}} \mu_n^{(0)}$. Besides, $T$ has compact resolvent, i.e. $R(\xi)$ is compact for all $\xi \in D_0$.
\end{thrm}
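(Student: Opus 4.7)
The plan is to prove this by an explicit computation of the resolvent via the method of characteristics, combined with a standard compactness argument using the Rellich--Kondrachov theorem.

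First, I would explicitly invert $T - \xi \mathrm{Id}$. Given $g = (g_1, g_2) \in (L^2)^2$, the equation $(T-\xi\mathrm{Id})f = g$ decouples into
\begin{equation*}
\partial_x f_1 - \xi f_1 = g_1, \qquad -\partial_x f_2 - \xi f_2 = g_2,
\end{equation*}
which integrate to
\begin{equation*}
f_1(x) = e^{\xi x} f_1(0) + \int_0^x e^{\xi(x-s)} g_1(s)\, ds, \quad f_2(x) = e^{-\xi x} f_2(0) - \int_0^x e^{-\xi(x-s)} g_2(s)\, ds.
\end{equation*}
The boundary condition $f_1(0) = -f_2(0)$ introduces a single scalar unknown $\alpha := f_1(0)$; substituting into the boundary condition $f_1(L) + f_2(L) = 0$ yields a scalar equation of the form $2\sinh(\xi L)\,\alpha = \Phi(g)$, where $\Phi$ is a continuous linear functional on $(L^2)^2$. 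Hence the equation has a unique solution $f \in (H^1)^2 \cap D(T)$ if and only if $\sinh(\xi L) \neq 0$, i.e., if and only if $\xi \neq i\pi n /L = \mu_n^{(0)}$ for every $n \in \Z$. This identifies the resolvent set as exactly $D_0$ and furnishes the explicit formula $R(\xi) g = f$.

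Next, I would verify that $R(\xi) : (L^2)^2 \to (L^2)^2$ is bounded. The explicit formula shows that $\|f\|_{(H^1)^2} \leq C(\xi)\|g\|_{(L^2)^2}$: each of the Volterra-type integrals above is a bounded operator from $L^2$ to $H^1$ on $(0,L)$, and the scalar $\alpha$ is controlled by $|\sinh(\xi L)|^{-1} \|g\|_{(L^2)^2}$. Thus $R(\xi)$ is a bounded operator from $(L^2)^2$ to $(H^1)^2 \cap D(T)$.

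Finally, compactness is immediate: the embedding $(H^1(0,L))^2 \hookrightarrow (L^2(0,L))^2$ is compact by the Rellich--Kondrachov theorem, so composing with this embedding shows $R(\xi) : (L^2)^2 \to (L^2)^2$ is compact for every $\xi \in D_0$. There is no real obstacle here; the computation is entirely elementary. The only point requiring mild care is checking that the explicit representation of $R(\xi)$ lands in the domain $D(T)$ (i.e., the boundary conditions at $0$ and $L$ are both satisfied), which is built into the determination of $\alpha$ from the second boundary condition.
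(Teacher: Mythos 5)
Your proof is correct, but it follows a genuinely different route from the paper. The paper argues spectrally: since the eigenfunctions $\psi_n^{(0)}=(e^{i\pi nx/L},-e^{-i\pi nx/L})$ form an orthonormal basis of $(L^2)^2$, writing $u=\sum a_n\psi_n^{(0)}$ and $v=R(\xi)u=\sum b_n\psi_n^{(0)}$ gives the diagonal formula $b_n=a_n/(\mu_n^{(0)}-\xi)$, from which boundedness on $D_0$ and compactness (a diagonal operator whose entries tend to zero is a norm limit of finite-rank truncations) follow at once; this explicit coefficient representation is then reused heavily downstream, e.g.\ for the bound $\lVert R(\xi)\rVert<2L$ off the disks and for the Laurent series defining $P_n$ and $S_n$. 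You instead solve $(T-\xi)f=g$ by variation of constants, reduce the boundary conditions to the scalar equation $2\sinh(\xi L)\,\alpha=\Phi(g)$ with $\Phi$ continuous on $(L^2)^2$, and conclude compactness from the bounded map $R(\xi):(L^2)^2\to (H^1)^2$ composed with the Rellich--Kondrachov embedding. Your argument is more elementary and self-contained: it does not presuppose completeness of the eigenfunction family (which the paper's one-line proof implicitly uses), it identifies the resolvent set as exactly $D_0$ directly from the solvability condition $\sinh(\xi L)\neq 0$, and it yields the extra regularity $R(\xi):(L^2)^2\to D(T)$. What it does not deliver is the diagonal formula \eqref{R-exp}, which is the part of this theorem the paper actually needs for the subsequent perturbation estimates, so if you used your proof in context you would still have to record that formula (an easy consequence of applying your $R(\xi)$ to each $\psi_n^{(0)}$, or of orthonormal expansion) before proceeding.
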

The proof is straightforward: let us suppose that  $\xi \in D_0$, and that  $v=R(\xi)u$. If we write down $u$ and $v$ by
\begin{equation}
   u =\sum\limits_{n\in\mathbb{Z}} a_n \psi_{n}^{(0)} \textrm{ and } v=\sum\limits_{n\in\mathbb{Z}} b_n \psi_{n}^{(0)},
\end{equation}
 then by comparing the coefficient of $\psi_n$ we get
\begin{equation}\label{R-exp}
    b_n= \frac{a_n}{\mu_{n}^{(0)}-\xi}.
\end{equation}
By slightly changing the notations and  basically following the same calculations as in Kato \cite{Kato-book}, we get the following result.
\begin{thrm}\label{thm-t-g}
 For any $n\in \mathbb{Z}$ there exists $d_n>0$ (convergence radii) such that
 \begin{itemize}
     \item[(i)] (Kato, \cite[page 377, Theorem 2.4]{Kato-book}) $T_{\gamma}$ has compact resolvent;
     \item[(ii)]  (Kato, \cite[page 382, Example 2.14]{Kato-book}) $\mu_{n}(\gamma)$ and $\psi_{n}(\gamma)$ are holomorphic in $B_{d_n}= \{z\in \mathbb{C}: |z|\leq d_n\}$;
 \end{itemize}
\end{thrm}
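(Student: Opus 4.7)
My plan is to apply Kato's analytic perturbation theory for holomorphic families of type (A). The setup is favorable because the domain $D(T_\gamma) = D(\mathcal{S}_0)$ is independent of $\gamma$ (the boundary conditions carry no $\gamma$), and $T_\gamma = T + A(\gamma)$ with $A(\gamma) = \delta(x) J_0$ depending holomorphically on $\gamma$ in the disk $|\gamma| < d$, bounded by $\|A(\gamma)\| < 2|\gamma|$ on $(L^\infty)^2$ thanks to \eqref{A-gamma-es}. Thus $\{T_\gamma\}$ is a type (A) holomorphic family in a small complex neighborhood of $0$, and every qualitative conclusion will follow by writing the resolvent of $T_\gamma$ as a Neumann series in the perturbation.

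For (i), I would exploit that $T$ has compact resolvent $R(\xi)$ on $D_0$, as already established. Pick any $\xi_0 \in D_0$; by \eqref{R-exp} and the Riesz basis property of $\{\psi_n^{(0)}\}$ one checks $\|R(\xi_0)\|$ is finite and, for $\xi_0$ chosen far from the spectrum of $T$, as small as desired. For $|\gamma|$ small enough that $\|A(\gamma)R(\xi_0)\| < 1$, the Neumann series
\begin{equation*}
(T_\gamma - \xi_0)^{-1} = R(\xi_0) \sum_{k \geq 0} \bigl(-A(\gamma) R(\xi_0)\bigr)^{k}
\end{equation*}
converges in operator norm; it is compact as an absolutely convergent series each of whose summands contains the compact factor $R(\xi_0)$. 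Compactness of $(T_\gamma - \xi)^{-1}$ at every regular value $\xi$ then propagates by the first resolvent identity, and $T_\gamma$ is closed.

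For (ii), I would use the Dunford--Taylor spectral projector. Since $\mu_n^{(0)}$ is simple and isolated, let $\Gamma_n$ be a circle of radius $r_n < \pi/(2L)$ centred at $\mu_n^{(0)}$, enclosing no other $\mu_m^{(0)}$. On the compact set $\Gamma_n$, $\xi \mapsto R(\xi)$ is bounded by some $M_n$. Choose $d_n > 0$ with $2 d_n M_n < 1/2$, so that for $|\gamma| < d_n$ the same Neumann series shows $\Gamma_n$ lies in the resolvent set of $T_\gamma$, with $(T_\gamma - \xi)^{-1}$ jointly holomorphic in $(\gamma, \xi)$ on $B_{d_n} \times \Gamma_n$. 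The projector
\begin{equation*}
P_n(\gamma) = -\frac{1}{2\pi i} \oint_{\Gamma_n} (T_\gamma - \xi)^{-1}\, d\xi
\end{equation*}
is then holomorphic in $\gamma$; as its rank is integer-valued and continuous in $\gamma$, it remains equal to $1$ throughout $B_{d_n}$ (shrinking $d_n$ if necessary). The unique eigenvalue inside $\Gamma_n$ is recovered by the holomorphic formula $\mu_n(\gamma) = \mathrm{tr}(T_\gamma P_n(\gamma))$, and an eigenfunction by $\psi_n(\gamma) := c_n(\gamma) P_n(\gamma) \psi_n^{(0)}$, where $c_n(\gamma)$ is fixed by the normalization $\langle \psi_n(\gamma), \chi_n^{(0)} \rangle = 1$. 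This last denominator equals $\langle \psi_n^{(0)}, \chi_n^{(0)}\rangle \neq 0$ at $\gamma = 0$, so $c_n(\gamma)$ is holomorphic near $0$, and $\psi_n(\gamma)$ inherits holomorphy.

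The only genuinely non-automatic point is that the standard references develop this machinery in Hilbert space, whereas here the author works in $(L^\infty)^2$ for reasons explained later (boundary values). Hence the main thing to verify carefully is the $(L^\infty)^2$-operator-norm estimate on $R(\xi)$ that feeds the Neumann series and yields boundedness on the contour $\Gamma_n$; this is an elementary but necessary computation using \eqref{R-exp} together with the explicit form $\psi_n^{(0)} = (e^{i\pi nx/L},-e^{-i\pi nx/L})$. Everything else is a direct transcription of Kato's arguments, as the author already indicates.
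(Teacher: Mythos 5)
Your proposal is correct and follows essentially the same route as the paper: both rest on Kato's analytic perturbation theory for the type (A) family $T_\gamma = T + \delta(x)J_0$, with the perturbed resolvent given by the Neumann series $R(\xi)\bigl(1+A(\gamma)R(\xi)\bigr)^{-1}$ and the eigenpair obtained from the Dunford--Taylor projector under Kato's normalization $\langle \psi_n(\gamma), \chi_n^{(0)}\rangle = 1$. The paper itself simply cites Kato for (i)--(ii) and then carries out precisely the $(L^\infty)^2$ resolvent bound you identify as the one non-automatic ingredient (leading to its further claim $d_n\geq d$), so your reconstruction matches its argument.
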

In fact $d_n\geq d$: let us define
\begin{gather}
    \mathbb{D}:= \bigcup_{n} B(i \pi n/L; 1/L), \textrm{ with } B(a; r):= \{x\in \mathbb{C}; |x-a|<r\}, \\
    \Gamma_n:= \{x\in \mathbb{C}; |x- i \pi n/L|= 1/L\},
\end{gather}
and the perturbation of the resolvent
\begin{equation}
    R(\xi, \gamma):= (T_{\gamma}- \xi Id)^{-1}.
\end{equation}
Simple symbol calculation leads to the second Neumann series of $R(\xi, \gamma)$:
\begin{equation}
    R(\xi, \gamma)= R(\xi)\big(1+ A(\gamma) R(\xi)\big)^{-1}.
\end{equation}
In order to justify the above formal calculations upon holomorphic coefficient $\gamma$, it suffices to have $\lVert A(\gamma) R(\xi)\lVert< 1$.

For $\xi \in \mathbb{C}\setminus \mathbb{D}$, we estimate the norm of $R(\xi)$. Notice that for   $u= (u_1, u_2)= \sum\limits_{n\in\mathbb{Z}} a_n \psi_{n}^{(0)}\in (\mathcal{L}_{(0)})^{2}$, we have
\begin{equation}
    ||u||_{(L^2)^2}^2= \frac{1}{2L}\int_0^L |u_1|^2(x)+|u_2|^2(x) dx\leq ||u||_{(L^{\infty})^2}^2= ||u||_{\infty}^2.
\end{equation}
Moreover,  we have the following
\begin{equation}
   \sum\limits_{n\in\mathbb{Z}}\frac{1}{\lvert \mu_n^{(0)}-\xi\rvert^2}\leq 4L^2.
\end{equation}
Indeed, observing the periodicity of $\mathbb{C}\setminus \mathbb{D},$ we may assume that Im$\; \xi\in [0, \pi n/L)$ and further get
\begin{align}
 \sum\limits_{n\in\mathbb{Z}}\frac{1}{\lvert \mu_n^{(0)}-\xi\rvert^2}&= \sum\limits_{n\in\mathbb{Z}\setminus\{0, 1\}}\frac{1}{\lvert \mu_n^{(0)}-\xi\rvert^2}+ \frac{1}{\lvert\xi\rvert^2}+ \frac{1}{\lvert \mu_1^{(0)}-\xi\rvert^2},  \\
 &\leq 2\sum\limits_{n\in\mathbb{N}^*}\frac{L^2}{(\pi n)^2}+ 2 L^2, \\
 &=7L^2/3.
\end{align}
Therefore,
thanks to the explicit expression of $\psi_n^{(0)}$, we have
\begin{equation}
\lVert R(\xi) u\lVert_{\infty} \leq \sum\limits_{n\in\mathbb{Z}} \left|\frac{a_n}{\mu_n^{(0)}- \xi}\right|\leq \left(\sum\limits_{n\in\mathbb{Z}} |a_n|^2\right)^{\frac{1}{2}} \left(\sum\limits_{n\in\mathbb{Z}} \frac{1}{\lvert \mu_n^{(0)}-\xi\rvert^2}\right)^{\frac{1}{2}}< 2L \lVert u\lVert_{(L^2)^2} \leq  2L \lVert u\lVert_{\infty},
\end{equation}
which gives $\lVert R(\xi) \lVert < 2L$.

On the other hand, for $|\gamma|< d$, we have
\begin{equation}\label{est-A-gamma}
    \lVert A(\gamma)\lVert< 2|\gamma|< 1/(4L^2).
\end{equation}
Therefore
\begin{equation}
\lVert A(\gamma) R(\xi)\lVert< 1\text{ for }\xi \in \mathbb{C}\setminus \mathbb{D}\text{ and }|\gamma|< d.
\end{equation}
Hence $R(\xi, \gamma)$ is holomorphic.  From now on, we will always let $\xi$ and $\gamma$ be in this domain to guarantee convergences of calculations. In the following subsection our aim will be to derive asymptotic estimates on $\mu_{n}(\gamma)$ and $\psi_{n}(\gamma)$, first by direct estimation, then using majoring series.
\subsubsection{Asymptotic calculation: direct estimation}
In order to estimate $\mu_{n}(\gamma)$ and $\psi_{n}(\gamma)$, we need to decompose $R(\xi)$.
More precisely, for each $\mu_{n}^{(0)}$, we have the following Laurent series:
\begin{equation}
    R(\xi)= -\big(\xi-\mu_{n}^{(0)}\big)^{-1} P_n+ \sum_{m=0}^{+\infty} \big(\xi-\mu_{n}^{(0)}\big)^m S_n^{m+1},
\end{equation}
with
\begin{equation}
    P_n:= -\frac{1}{2\pi i} \int_{\Gamma_n} R(\xi) d \xi,
\end{equation}
\begin{equation}
     S_n:= \frac{1}{2\pi i} \int_{\Gamma_n} \big(\xi-\mu_{n}^{(0)}\big)^{-1} R(\xi) d \xi.
\end{equation}
Moreover, thanks to the explicit formula \eqref{R-exp}, we get
\begin{equation}
    P_n u = \big\langle u, \psi_{n}^{(0)}\big\rangle \psi_{n}^{(0)},
\end{equation}
\begin{equation}\label{S-gap}
    S_n u= \sum_{k\neq n} \frac{\big\langle u, \psi_k^{(0)} \big\rangle}{\mu_k^{(0)}-\mu_{n}^{(0)}} \psi_k^{(0)}.
\end{equation}
The following lemma gives the expansion of eigenvalues.
\begin{lmm}\label{lem-es-mu-n-mu}  According to \cite[Chapter 2 Section 3.1]{Kato-book}, one has
\begin{equation}
   \big\lvert \mu_{n}(\gamma)- \mu_{n}^{(0)}-\sum_{p=1}^m\gamma^p \mu_n^{(p)} \big\lvert\leq \frac{\lvert \gamma\lvert^{m+1}}{d^m (d-\lvert \gamma\lvert)}, \;\forall |\gamma|< d.
\end{equation}
\end{lmm}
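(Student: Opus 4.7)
The statement is essentially a standard Kato--type perturbation estimate in disguise, so my plan is to derive it from the holomorphy result already available (Theorem \ref{thm-t-g}(ii)) together with a geometric-series tail bound on the Taylor series of $\mu_n(\gamma)$ at $\gamma=0$. Since the conclusion of Theorem \ref{thm-t-g} gives that $\mu_n(\cdot)$ is holomorphic on $B_d:=\{|\gamma|<d\}$, it admits a convergent power series
\begin{equation*}
\mu_n(\gamma)=\mu_n^{(0)}+\sum_{p=1}^{\infty}\gamma^{p}\mu_n^{(p)},\qquad |\gamma|<d,
\end{equation*}
and the remainder after truncation at order $m$ is the tail $\sum_{p\geq m+1}\gamma^{p}\mu_n^{(p)}$. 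So the whole task reduces to a suitable Cauchy-type bound $|\mu_n^{(p)}|\leq 1/d^{p}$.

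To prove that bound, I would use the Riesz spectral projection and the trace formula for a simple eigenvalue:
\begin{equation*}
\mu_n(\gamma)=-\frac{1}{2\pi i}\,\mathrm{tr}\int_{\Gamma_n}\xi\, R(\xi,\gamma)\,d\xi,
\end{equation*}
valid as long as $\mu_n(\gamma)$ is the only eigenvalue of $T_\gamma$ enclosed by $\Gamma_n$. Then I insert the second Neumann series
\begin{equation*}
R(\xi,\gamma)=R(\xi)\bigl(I+A(\gamma)R(\xi)\bigr)^{-1}=\sum_{k=0}^{\infty}(-1)^{k}R(\xi)\bigl(A(\gamma)R(\xi)\bigr)^{k},
\end{equation*}
which converges on the contour $\Gamma_n$ thanks to the estimates already established in the excerpt, namely $\|R(\xi)\|\leq 2L$ on $\Gamma_n$ and $\|A(\gamma)\|\leq 2|\gamma|$ for $|\gamma|<d\leq 1/(8L^2)$. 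Term-by-term integration on $\Gamma_n$ then identifies $\mu_n^{(p)}$ as a contour integral of $p$ factors of $A(\gamma)R(\xi)/\gamma$ alternating with $R(\xi)$. The choice of the radius $d$ has been calibrated precisely so that $\|A(\gamma)R(\xi)\|\leq |\gamma|/d$ uniformly for $\xi\in\Gamma_n$, which gives the key estimate $|\mu_n^{(p)}|\leq 1/d^{p}$ uniformly in $n$.

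Given this coefficient bound, the desired inequality is then immediate by summing the geometric tail:
\begin{equation*}
\Bigl|\mu_n(\gamma)-\mu_n^{(0)}-\sum_{p=1}^{m}\gamma^{p}\mu_n^{(p)}\Bigr|\leq \sum_{p=m+1}^{\infty}\frac{|\gamma|^{p}}{d^{p}}\cdot 1= \frac{(|\gamma|/d)^{m+1}}{1-|\gamma|/d}=\frac{|\gamma|^{m+1}}{d^{m}(d-|\gamma|)},
\end{equation*}
which is the announced bound. Alternatively, one can skip the explicit identification of $\mu_n^{(p)}$ and just apply Cauchy's estimate for holomorphic functions to $\mu_n(\gamma)-\mu_n^{(0)}$ on the disk $|\gamma|<d$, provided one first establishes a uniform-in-$n$ $L^\infty$-bound on this holomorphic function.

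The main obstacle is making the constant $d$ and the bound $|\mu_n^{(p)}|\leq 1/d^p$ genuinely uniform in $n$: for large $n$ the contour $\Gamma_n$ moves far out in the imaginary direction, so one must verify, as was already done in the earlier computation $\sum_{n}|\mu_n^{(0)}-\xi|^{-2}\leq 7L^2/3$, that the resolvent estimate $\|R(\xi)\|\leq 2L$ holds uniformly on all of the $\Gamma_n$. Once this $n$-uniform resolvent control is in hand, the rest of the argument is just symbolic manipulation of the Neumann series, and the statement follows verbatim from Kato \cite[Chap.~2, Sec.~3.1]{Kato-book}.
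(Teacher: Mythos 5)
Your proposal is correct and follows essentially the same route as the paper, which proves nothing beyond what it sets up just before the lemma (the uniform resolvent bound $\lVert R(\xi)\rVert<2L$ on the contours $\Gamma_n$, the bound $\lVert A(\gamma)\rVert<2|\gamma|$, and the Neumann series for $R(\xi,\gamma)$) and then simply invokes Kato's asymptotic perturbation estimates; your reconstruction via the rank-one spectral projection/trace formula and the geometric tail is exactly that argument, with the uniformity in $n$ correctly traced back to the $n$-independent resolvent estimate on $\Gamma_n$. The only caveat is cosmetic: the clean coefficient bound $|\mu_n^{(p)}|\leq d^{-p}$ is most safely obtained from Cauchy's estimates applied to the holomorphic function $\gamma\mapsto \mu_n(\gamma)-\mu_n^{(0)}$, which is bounded by the contour radius $1/L$ uniformly in $n$ (since the perturbed eigenvalue stays inside $\Gamma_n$ for $|\gamma|<d$), so the remainder carries a harmless multiplicative constant of order $1/L$ that the paper, citing Kato, also suppresses.
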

Therefore we conclude the estimates on the eigenvalues,
\begin{equation}\label{est-mu-n-mu}
    \big \lvert \mu_{n}(\gamma)- \mu_{n}^{(0)} \big\lvert\leq \frac{3|\gamma|}{2d} \leq \frac{1}{2L}, \;\forall |\gamma|< \min \{\frac{d}{3L}, \frac{d}{3}\}.
\end{equation}

Now we turn to the eigenfunctions.  The explicit asymptotic formulation of $\psi_{n}(\gamma)$ reads
\begin{align}
\psi_{n}(\gamma)= \psi_{n}^{(0)}-\gamma S_nT^{(1)}\psi_{n}^{(0)}+...
=\psi_{n}^{(0)}+\gamma  \psi^{(1)}_n+ \gamma^2  \psi^{(2)}_n...
\end{align}
Thanks to \eqref{S-gap}, $\psi^{(k)}_n$ can be explicitly characterized and calculated, $e.g.$
the first order term is given by
\begin{equation}\label{varphi-1-exp}
    \psi^{(1)}_n= -S_nT^{(1)}\psi_{n}^{(0)}= \frac{3}{4} S_n J_0 \psi_{n}^{(0)}=\frac{3L}{4} \sum_{k\neq n} \frac{\big\langle J_0\psi_{n}^{(0)}, \psi_k^{(0)}  \big\rangle}{i \pi (k-n)} \psi_k^{(0)}.
\end{equation}
The remainder of the zeroth order reads as
\begin{equation}\label{first-rem}
    \psi_{n}(\gamma)- \psi_{n}^{(0)}= - S_n \Big( 1+ \big(A(\gamma)-\mu_{n}(\gamma)+\mu_{n}^{(0)}\big)S_n\Big)^{-1}A(\gamma) \psi_{n}^{(0)}.
\end{equation}
Let us define
$s_n:=\lVert S_n\lVert$. Suppose that  $u= \sum_k a_k\psi_k^{(0)}\in (L^{\infty})^2$, then
\begin{equation}\label{es-sn-norm}
\lVert S_n u\lVert_{\infty} \leqslant \sum_{k\neq n} \frac{|a_k|}{\big|\mu_k^{(0)}-\mu_n^{(0)}\big|}\leqslant \frac{L}{\pi}\Big(\sum_k |a_k|^2\Big)^{1/2}\Big(\sum_{k\neq n}\frac{1}{(n-k)^2}\Big)^{1/2}\leqslant L\lVert u\lVert_{L^2}\leqslant L\lVert u\lVert_{\infty}.
\end{equation}
On the other hand, choosing $u=\psi_{n+1}^{(0)}$, we get
\begin{equation}
    S_n \psi_{n+1}^{(0)}= \frac{L\psi_{n+1}^{(0)}}{i \pi}.
\end{equation}
Therefore
\begin{equation}\label{est-s-n}
    L/\pi \leq s_n\leq L, \; \forall n\in \mathbb{Z}.
\end{equation}
Combining \eqref{first-rem}, \eqref{est-s-n}, \eqref{est-A-gamma} and \eqref{est-mu-n-mu}, we get
\begin{equation}\label{varphi-0}
    \lVert  \psi_{n}(\gamma)- \psi_{n}^{(0)} \lVert_{\infty}\leq 4L|\gamma|, \;\forall |\gamma|< d^{(1)}:= \min \left\{\frac{d}{3L}, \frac{d}{3}\right\}.
\end{equation}

Let us continue to estimate the remainder of the first order.
\begin{align}
   &\;\;\;  \;\;\; \psi_{n}(\gamma)- \psi_{n}^{(0)}- \gamma \psi_{n}^{(1)}\notag \\
      &= - S_n \Big( 1+ \big(A(\gamma)-\mu_{n}(\gamma)+\mu_n^{(0)}\big)S_n\Big)^{-1}A(\gamma) \psi_{n}^{(0)}- \gamma \frac{3}{4} S_n J_0 \psi_{n}^{(0)}\notag \\
       &=  S_n \Big( 1+ \big(A(\gamma)-\mu_{n}(\gamma)+\mu_n^{(0)}\big)S_n\Big)^{-1}\left(\frac{3}{4}\gamma J_0- A^{(1)}(\gamma)\right) \psi_{n}^{(0)}- \gamma \frac{3}{4} S_n J_0 \psi_{n}^{(0)}\notag \\
      &= \gamma \frac{3}{4} \Big(S_n \Big( 1+ \big(A(\gamma)-\mu_{n}(\gamma)+\mu_n^{(0)}\big)S_n\Big)^{-1}- S_n \Big)J_0\psi_{n}^{(0)} \notag\\
      &\;\;\;\;\;\;\;\;\;\;\;\;\;\;\; -S_n \Big( 1+ \big(A(\gamma)-\mu_{n}(\gamma)+\mu_n^{(0)}\big)S_n\Big)^{-1}A^{(1)}(\gamma) \psi_{n}^{(0)}\notag\\
      &= -\gamma \frac{3}{4} S_n \left( \big(A(\gamma)-\mu_{n}(\gamma)+\mu_n^{(0)} \big) S_n\right) \Big( 1+ \big(A(\gamma)-\mu_{n}(\gamma)+\mu_n^{(0)}\big)S_n\Big)^{-1} J_0 \psi_{n}^{(0)}    \notag\\
       &\;\;\;\;\;\;\;\;\;\;\;\;\;\;\; -S_n \Big( 1+ \big(A(\gamma)-\mu_{n}(\gamma)+\mu_n^{(0)}\big)S_n\Big)^{-1}A^{(1)}(\gamma) \psi_{n}^{(0)}.\label{rem-seond}
\end{align}
Combining \eqref{A-gamma-es}, \eqref{first-rem}, \eqref{est-s-n},  \eqref{est-A-gamma} and \eqref{est-mu-n-mu},  we find that the preceding formula is of order $\gamma^2$,  thus get the existence of $d^{(2)}>0$ and $C^{(2)}$ which are independent of $n\in \mathbb{Z}$ such that
\begin{equation}\label{es-vn-final0}
      \lVert  \psi_{n}(\gamma)- \psi_{n}^{(0)}- \gamma \psi_{n}^{(1)} \lVert_{\infty} \leq C^{(2)}|\gamma|^2 , \;\forall |\gamma|< d^{(2)}.
\end{equation}

\begin{rmrk}
We observe that, by using this direct method, the best estimates that we can get are Lemma \ref{lem-es-mu-n-mu} and
\begin{equation}
    \lVert \psi_{n}(\gamma)- \psi_{n}^{\textcolor{black}{(0)}}- \gamma \psi_{n}^{(1)}- \gamma^2 \psi_{n}^{(2)}-...- \gamma^k \psi_{n}^{(k)} \lVert_{\infty} \leqslant C^{(k)} \lvert\gamma\lvert^{k+1}, \;\forall |\gamma|< d^{(k)}.
\end{equation}
Clearly there is no asymptotic behavior with respect to $n$.
\end{rmrk}

\subsubsection{Asymptotic calculation: majorizing series for better estimation}\label{sec-2.2}
The so called majorizing series provides a more  systematic way of  estimating remainder terms. This method is heavily used for  high order remainder terms estimates, since it is rather difficult to perform direct calculation as \eqref{rem-seond} then.  One can see \cite[Chapter 2, Section 3.2; or page 382, Example 2.14]{Kato-book} for \textcolor{black}{a} perturbation of \textcolor{black}{the} Laplace operator. \textcolor{black}{To give a comprehensive view of the method,} we start \textcolor{black}{by} considering a reduced case: the eigenvalues and eigenfunctions of
$T+ \gamma T^{(1)}$.
As suggested in \cite{Kato-book}, we define the following
\begin{equation}
  p_n:= \lVert \frac{3}{4} J_0 P_n \lVert,\;\; q_n:= \lVert \frac{3}{4} J_0 S_n \lVert,\;\; r_n:= [(p_n s_n)^{1/2}+ q_n^{1/2}]^{-2}.
\end{equation}

\begin{lmm}
There exists positive constants $c$ and $C$ independent of $n$ such that
\begin{equation}\label{spq}
   c < p_n, q_n, r_n, \lVert P_n \lVert< C, \forall n\in \mathbb{Z}.
\end{equation}
\end{lmm}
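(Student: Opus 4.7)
The plan is to obtain each of the four uniform bounds separately, using the explicit formulas for $P_n$, $S_n$, $\psi_n^{(0)}$, and the structure of $J_0$, with all operator norms understood in $(L^\infty)^2$.

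For $\lVert P_n \rVert$ and $p_n$, I start from $P_n u = \langle u, \psi_n^{(0)} \rangle \psi_n^{(0)}$. The bound $|\langle u, \psi_n^{(0)}\rangle| \leq \tfrac{1}{2}(\lVert u_1 \rVert_\infty + \lVert u_2 \rVert_\infty) \leq \lVert u \rVert_\infty$ follows from $|e^{\pm i\pi nx/L}| = 1$, while $\lVert \psi_n^{(0)} \rVert_\infty = 1$, which gives $\lVert P_n \rVert \leq 1$; testing on $u = \psi_n^{(0)}$ shows equality, so $\lVert P_n \rVert \equiv 1$. For $p_n$ I compute $J_0 \psi_n^{(0)}$ explicitly and observe that each of its two components has squared modulus $\tfrac{10}{9} - \tfrac{2}{3}\cos(2\pi nx/L)$, so that $\lVert J_0 \psi_n^{(0)} \rVert_\infty = 4/3$ for $n \neq 0$ and $2/3$ for $n = 0$. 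Combined with the functional estimate above, this yields $p_n \in [1/2, 1]$ uniformly.

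For $q_n$, submultiplicativity together with the matrix norm $\lVert J_0 \rVert_{\infty \to \infty} = 4/3$ and the upper bound $s_n \leq L$ from \eqref{est-s-n} gives $q_n \leq L$. For the lower bound I pick a test function $\psi_m^{(0)}$ with $m = n+1$ if $n \neq -1$ and $m = n-1$ otherwise, so that $m \neq 0$ and $|m-n| = 1$; then \eqref{S-gap} yields $S_n \psi_m^{(0)} = \tfrac{L}{i\pi(m-n)}\psi_m^{(0)}$, and since $m \neq 0$ the previous calculation gives $\lVert J_0 \psi_m^{(0)}\rVert_\infty = 4/3$, hence $q_n \geq L/\pi$. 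Once $p_n$, $q_n$, and $s_n$ are bracketed in fixed positive intervals, the bounds on $r_n = [(p_n s_n)^{1/2} + q_n^{1/2}]^{-2}$ follow by monotonicity, sandwiching $r_n$ between $1/(4L)$ and $1/(\sqrt{L/(2\pi)} + \sqrt{L/\pi})^2$.

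The main obstacle is the lower bound for $q_n$: $S_n$ is a non-trivial infinite-dimensional operator, and we need a test function whose response is not killed by composing with $J_0$. The resolution is to simultaneously minimise the resolvent denominator $|\mu_m^{(0)} - \mu_n^{(0)}|$ (achieved at $|m-n|=1$) and avoid the degenerate frequency $m = 0$, where $\lVert J_0 \psi_0^{(0)}\rVert_\infty$ drops from $4/3$ to $2/3$; the small dichotomy between $n = -1$ and $n \neq -1$ guarantees an admissible $m$ always exists.
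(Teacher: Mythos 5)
Your proof is correct and follows essentially the same route as the paper: upper bounds from the explicit formula $P_n u=\langle u,\psi_n^{(0)}\rangle\psi_n^{(0)}$ and $s_n\leq L$, lower bounds by testing $\frac{3}{4}J_0P_n$ on $\psi_n^{(0)}$ and $\frac{3}{4}J_0S_n$ on a neighbouring mode $\psi_{n\pm1}^{(0)}$, with the bound on $r_n$ then following from its definition. The only difference is that you compute $\lVert J_0\psi_n^{(0)}\rVert_\infty$ explicitly and add a (harmless but unnecessary) case distinction to avoid $m=0$, where the paper simply uses $\psi_{n+1}^{(0)}$ since $\lVert J_0\psi_0^{(0)}\rVert_\infty=2/3$ is still uniformly positive.
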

\begin{proof}
Suppose that  $u= \sum_k a_k\psi_k^{\textcolor{black}{(0)}}\in (L^{\infty})^2$, then
\begin{equation}
    \lVert P_n u\lVert_{\infty} =\lvert a_n\lvert \lVert \psi_{n}^{\textcolor{black}{(0)}} \lVert_{\infty} \leqslant  \lVert u \lVert_{L^2}\leqslant \lVert u\lVert_{\infty},
\end{equation}
which completes the right hand side inequality of \eqref{spq}.\\
On the other hand, for the left hand side of \eqref{spq},  by choosing $u$ as $\psi_{n}^{\textcolor{black}{(0)}}$ or $\psi_{n+1}^{\textcolor{black}{(0)}}$, successively we have
\begin{equation}
\begin{aligned}
&    S_n \psi_{n+1}^{\textcolor{black}{(0)}}= \frac{L \psi_{n+1}^{\textcolor{black}{(0)}}}{i \pi}, \text{  }\;\;  J_0S_n \psi_{n+1}^{\textcolor{black}{(0)}}=J_0 \frac{L \psi_{n+1}^{\textcolor{black}{(0)}}}{i \pi},\\
&    P_n \psi_{n}^{\textcolor{black}{(0)}}= \psi_{n}^{\textcolor{black}{(0)}},\text{  }\;\; J_0 P_n \psi_{n}^{\textcolor{black}{(0)}}=J_0 \psi_{n}^{\textcolor{black}{(0)}}.
    \end{aligned}
\end{equation}
\end{proof}

We obtain \textcolor{black}{the} asymptotic expression of $\psi_{n}(\gamma)$ \textcolor{black}{similarly as} in \cite[page 382 Example 2.14]{Kato-book} (which is based on majorizing series and \cite[Chapter 2 Section 3 Examples 3.2-3.7]{Kato-book}):
\begin{lmm}
For any $|\gamma|< \min \{r_n\}$ we have
\begin{equation}\label{vn-es-co}
    \lVert \psi_{n}(\gamma)-\psi_{n}^{\textcolor{black}{(0)}}- \gamma \psi^{(1)}_n \lVert_{\infty} \leqslant |\gamma|^2 \frac{s_n}{(p_nq_ns_n)^{1/2}} \left((p_ns_n)^{1/2}+ q_n^{1/2}\right)^2\left(p_ns_n+q_n+(p_nq_ns_n)^{1/2}\right).
\end{equation}
\end{lmm}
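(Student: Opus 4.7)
The plan is to follow Kato's treatment of analytic perturbations of operators with simple isolated eigenvalues, specifically the majorizing-series technique developed in Chapter 2, Section 3 of \cite{Kato-book}, and applied in Example 2.14 on page 382 to a model essentially isomorphic to ours. Because $T^{(1)} = -\tfrac{3}{4} J_0$ is bounded, the family $T + \gamma T^{(1)}$ is of type (A), and the hypotheses of Kato's Example 2.14 are verified for $|\gamma|< r_n$.

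First, I would represent the perturbed spectral projection and the perturbed eigenvector via Dunford--Taylor contour integrals around the unperturbed eigenvalue $\mu_n^{(0)}$. Expanding the resolvent as the Neumann series
\begin{equation}
R(\xi,\gamma) = R(\xi) \sum_{k=0}^{\infty} \bigl(-\gamma T^{(1)} R(\xi)\bigr)^{k},
\end{equation}
valid along $\Gamma_n$ provided $|\gamma|\,\|T^{(1)} R(\xi)\|< 1$, and integrating term by term produces the formal power series
\begin{equation}
\psi_{n}(\gamma) = \psi_{n}^{(0)} + \sum_{k\geq 1} \gamma^{k} \psi_{n}^{(k)},
\end{equation}
under the normalization $\langle \psi_n(\gamma),\chi_n^{(0)}\rangle = 1$ fixed earlier. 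The coefficients $\psi_n^{(k)}$ are explicit alternating products of the building blocks $P_n$, $S_n$, $T^{(1)}$, $\psi_n^{(0)}$; in particular, the $k=1$ term reproduces \eqref{varphi-1-exp}.

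Second, I would bound these coefficients using the majorizing-series device of Kato \cite[Ch.~2, \S3, Examples 3.2--3.7]{Kato-book}. By induction on the combinatorial structure of the iterated products, each $\|\psi_n^{(k)}\|_{\infty}$ is dominated by a scalar expression built out of $p_n = \|\tfrac{3}{4}J_0 P_n\|$, $q_n = \|\tfrac{3}{4}J_0 S_n\|$ and $s_n = \|S_n\|$. The resulting scalar majorizing series is geometric-type and can be summed in closed form, giving radius of convergence exactly
\begin{equation}
r_n = \bigl[(p_n s_n)^{1/2} + q_n^{1/2}\bigr]^{-2}.
\end{equation}
For the bound \eqref{vn-es-co}, one subtracts the contributions of the $k=0$ and $k=1$ terms and evaluates the remaining tail at $|\gamma|$; after simplification one obtains the prefactor $|\gamma|^2 s_n /(p_n q_n s_n)^{1/2}$ together with the algebraic factor $\bigl((p_n s_n)^{1/2}+q_n^{1/2}\bigr)^{2}\bigl(p_n s_n + q_n + (p_n q_n s_n)^{1/2}\bigr)$ appearing on the right-hand side.

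The main obstacle is purely bookkeeping: keeping careful track of the combinatorics of the Neumann iterates so that each scalar bound falls cleanly into products of the three quantities $p_n,q_n,s_n$, and then summing the majorizing series without losing a factor. Since this calculation is carried out explicitly in Kato for an essentially identical setting, the proof amounts to verifying that our $P_n, S_n, T^{(1)}$ satisfy the algebraic identities used there (which is immediate, as $P_n$ and $S_n$ are the rank-one and reduced-resolvent operators associated with the simple eigenvalue $\mu_n^{(0)}$), and then quoting the formula. The bounds \eqref{spq} together with \eqref{est-s-n} ensure that the inequality is non-degenerate uniformly in $n$, which is what will ultimately enable the remainder to be absorbed in subsequent moment estimates.
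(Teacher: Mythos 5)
Your proposal is correct and follows essentially the same route as the paper: the paper proves this lemma simply by invoking Kato's majorizing-series machinery (\cite[page 382, Example 2.14]{Kato-book}, based on \cite[Chapter 2, Section 3, Examples 3.2--3.7]{Kato-book}) applied to the reduced type-(A) family $T+\gamma T^{(1)}$ under the normalization $\langle\psi_n(\gamma),\chi_n^{(0)}\rangle=1$, which is exactly the Dunford--Taylor/Neumann-series and majorizing-series argument you sketch. The only cosmetic difference is that you spell out the bookkeeping the paper leaves to the citation.
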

By inserting \eqref{spq} into \eqref{vn-es-co}, we get
\begin{equation}\label{es-vn-final}
     \lVert \psi_{n}(\gamma)-\psi_{n}^{\textcolor{black}{(0)}}- \gamma \psi^{(1)}_n \lVert_{\infty} \leqslant C |\gamma|^2, \forall |\gamma|< \min \{r_n\}, \forall n\in \mathbb{Z}.
\end{equation}
However, as we have seen that $p_n, q_n, s_n$ are bounded by constant\textcolor{black}{s} instead of $c/n$,
\textcolor{black}{when considering not only $T+ \gamma T^{(1)}$ but the whole $T_{\gamma}$,}
the best asymptotic estimate that we can get from majorizing series is (still)
\begin{equation}\label{Kato-es}
    \lVert \psi_{n}(\gamma)- \psi_{n}^{\textcolor{black}{(0)}}- \gamma \psi_{n}^{(1)}- \gamma^2 \psi_{n}^{(2)}-...- \gamma^k \psi_{n}^{(k)} \lVert_{\infty} \leqslant C^{(k)} \gamma^{k+1},
\end{equation}
with $C^{(k)}$ independent of $n$.
\begin{rmrk}[Comparison with Laplacian]
The asymptotic calculation for the Laplacian operator contains some factor  $1/n$ which comes from \eqref{S-gap}. More precisely, it comes from the localization of the eigenvalues and the fact that for the Laplacian operator case $\mu_{n}\sim n^{2}$. Indeed, for the same reason, we can expect some asymptotic behavior of eigenvalues' gap for fractional Laplacian $\Delta^s$ when $s>1$.
\end{rmrk}

\subsubsection{Controllability from  boundary conditions of eigenfunctions}\label{sec-2.3}
After having obtained  the asymptotic information of the perturbed eigenfunctions in the preceding sections, from now on we restrict ourselves to the case where $\gamma$ is in  $\mathbb{R}$.

With the two last subsections, we have an estimate on $ \lVert \psi_{n}(\gamma)- \psi_{n}^{\textcolor{black}{(0)}}- \gamma \psi_{n}^{(1)}- \gamma^2 \psi_{n}^{(2)}-...- \gamma^k \psi_{n}^{(k)} \lVert_{\infty}$ that it not totally satisfactory and that does not depend on $n$. However, notice that, from \eqref{dn-bn}, what \textcolor{black}{we need} to conclude the study of the controllability  is indeed the value of $\langle \psi_n, (1, 1)\rangle$ and $\langle \chi_n, (1, 1)\rangle$.  Therefore, the values of type:
\begin{equation}\label{v-n-asy-sp}
  \big \langle \psi_{n}(\gamma)- \psi_{n}^{\textcolor{black}{(0)}}- \gamma \psi_{n}^{(1)}- \gamma^2 \psi_{n}^{(2)}-...- \gamma^k \psi_{n}^{(k)}, (1,1) \big \rangle.
\end{equation}
Thus it sounds  natural  to investigate \eqref{v-n-asy-sp} directly.

Let us calculate  the boundary value of $\psi_{n}^{(1)}$, more precisely,  a combination of boundary values:
\begin{equation}
l^{(1)}_n:=
\psi_{n, 1}^{(1)}(1)- \psi_{n, 1}^{(1)}(0)-\psi_{n, 2}^{(1)}(1)+\psi_{n,2}^{(1)}(0).
\end{equation}
We observe that $l^{(0)}_{n}:=\psi_{n, 1}^{(0)}(1)- \psi_{n, 1}^{(0)}(0)-\psi_{n, 2}^{(0)}(1)+\psi_{n,2}^{(0)}(0) = 2 \big ((-1)^n-1 \big )$.  Direct calculation shows that
\begin{equation}
l^{(1)}_n= \frac{3L}{2 i\pi}  \sum_{k\neq n} \frac{\langle J_0\psi_{n}^{\textcolor{black}{(0)}}, \psi_k^{\textcolor{black}{(0)}}  \rangle}{ (k-n)}   \big ((-1)^k-1 \big ).
 \end{equation}
On the other hand we know that
\begin{equation}
 \big \langle J_0\psi_{n}^{\textcolor{black}{(0)}}, \psi_k^{\textcolor{black}{(0)}}  \big \rangle= \frac{(-1)^{n+k}-1}{i \pi} \Big(\frac{1}{n-k}+ \frac{1}{3}\cdot \frac{1}{n+k} \Big), \textrm{ if }|n|\neq |k|,
\end{equation}
\begin{equation}
 \big \langle J_0\psi_{n}^{\textcolor{black}{(0)}}, \psi_k^{\textcolor{black}{(0)}}  \big  \rangle= 0, \textrm{ if }|n|= |k|.
\end{equation}
Notice that if $n$ is odd we will have that $ \big ((-1)^k-1 \big ) \big ((-1)^{n+k}-1 \big )=0$, therefore $l^{(1)}_n=0.$  We \textcolor{black}{thus} only need to consider the case when $n=2m$ is even. Then
\begin{align}
 l^{(1)}_n=& -\frac{3L}{2\pi^2}\sum_{k\neq 2m}  \big ((-1)^k-1 \big )^2  \frac{1}{k-2m} \Big(\frac{1}{2m-k}+ \frac{1}{3}\cdot \frac{1}{2m+k} \Big) \\
 =& -\frac{6L}{\pi^2}\sum_{k\in \mathbb{Z}}  \frac{1}{2k+1-2m} \Big(\frac{1}{2m-2k-1}+ \frac{1}{3}\cdot \frac{1}{2m+2k+1} \Big)\\
 =& \frac{6L}{\pi^2}\sum_{k\in \mathbb{Z}}  \frac{1}{2k+1-2m} \Big(\frac{1}{2k+1-2m}- \frac{1}{3}\cdot \frac{1}{2m+2k+1} \Big).
\end{align}
We know that
\begin{equation}
\sum_{k\in \mathbb{Z}}  \left(\frac{1}{2k+1-2m}\right)^2=\sum_{k\in \mathbb{Z}}  \left(\frac{1}{2k+1}\right)^2= \frac{\pi^2}{4} =: \mathcal{C}_0>1.
\end{equation}

If $m=0$, then $l^{(1)}_n>0$. If $m\neq 0$, without loss of generality,  by assuming that $m>0$ we get
\begin{align}
&\Big| \frac{1}{3}\sum_k \frac{1}{2k+1-2m}\cdot \frac{1}{2m+2k+1}\Big|\\=&\Big|\frac{1}{3} \Big(\sum_{k\geqslant m}+ \sum_{k\leqslant -m-1}+ \sum_{k=-m}^{m-1} \Big)    \frac{1}{2k+1-2m}\cdot \frac{1}{2m+2k+1}\Big|\\
\leqslant& \Big|\frac{1}{3} \sum_{k\geqslant m}   \frac{1}{2k+1-2m}\cdot \frac{1}{2m+2k+1}\Big|+ \Big|\frac{1}{3}  \sum_{k\leqslant -m-1}  \frac{1}{2k+1-2m}\cdot \frac{1}{2m+2k+1}\Big| \\
& \;\;\;\;\;+ \Big|\frac{1}{3}  \sum_{k=-m}^{m-1}    \frac{1}{2k+1-2m}\cdot \frac{1}{2m+2k+1}\Big|\\
\leqslant &\frac{1}{6}\mathcal{C}_0+ \frac{1}{6}\mathcal{C}_0+  \frac{2m}{3(4m-1)}\\
< & \mathcal{C}_0-\frac{1}{3}.
\end{align}
Therefore,
\begin{equation}\label{ln-es}
  \big| l^{(1)}_n\big| > \frac{2L}{ \pi^2} \textrm{ if $n$ is even, $\big| l^{(1)}_n\big|=0$ if $n$ is odd.}
\end{equation}

\begin{rmrk}\label{rmk-diag-es}
The way that we get this uniform bound relies on the matrix $J_0$, more precisely, the diagonal matrix $\Lambda$. Indeed, this is the key point to get the controllability with control as $(1, 1)u(t)$: in \eqref{T-gamma} if we replace $\delta(x)J_0$ by $\delta(x)\Lambda$, then the operator becomes easier as the coupling term disappear; thanks to the expansion of $\delta(x)$, \eqref{es-delta-O2}, if we further replace $\delta(x)\Lambda$ by $-\frac{3}{4}\gamma \Lambda$, then the operator becomes
\begin{equation}
  T_{\gamma, s}:=\Lambda \left( \partial_x-\frac{3}{4}\gamma\right),
\end{equation}
\begin{equation}
    D( T_{\gamma, s}):= \left\{(w_1, w_2)\in (H^1)^2: w_1(0)= -w_2(0), w_1(L)= -w_2(L)\right\},
\end{equation}
for which the controllability is rather easy to obtain. Maybe with the help of some perturbation arguments we can also prove the controllability of the operator $\Lambda \partial_x+ \Lambda\delta(x)$ from this observation, of course we need to deal with some loss of derivative issues as the normal fixed point argument can not be applied here. However, in our case we need to consider $J_0= \Lambda+ J$, where $J$ is of the same order  as $\Lambda$, hence cannot be ignored.
\end{rmrk}
We  observe from \eqref{spq} and \eqref{varphi-1-exp}  that $\lVert \psi_{n}^{(1)} \lVert_{\infty}$ are $O(1)$ rather than $O((1/n)^{\alpha})$.
However, in \eqref{v-n-asy-sp}, we still have a $\gamma^{k}$ in front of $\lVert \psi_{n}^{(k)} \lVert_{\infty}$, which, when $\gamma$ is small, gives good estimates.

In fact, thanks to the above calculation, \eqref{T-gamma}, \eqref{es-vn-final}, we get the following:
\begin{align}
\delta(x)=& = -\frac{3}{4}\gamma + R_0(\gamma),\label{delta-es}\\
\label{varphi-1-es}
\psi_{n, 1}(\gamma)=&e^{i \pi n x/L}+ \gamma \psi_{n, 1}^{(1)}+ R_{n, 1}(\gamma),\\
\psi_{n, 2}(\gamma)=&-e^{-i \pi n x/L}+ \gamma  \psi_{n, 2}^{(1)}+ R_{ n, 2}(\gamma).\label{varphi-2-es}
\end{align}
Therefore, the existence of $r^{(2)}>0$ and $C_2>0$ such that
\begin{equation}\label{varphi-es}
    \lVert\psi_{n}^{(1)}\lVert_{\infty}\leq L,
\end{equation}
\begin{equation}\label{R-es}
 \lVert R_0(\gamma) \lVert_{\infty},\;\; \lVert R_{n, 1}(\gamma) \lVert_{\infty},\;\; \lVert R_{n, 2}(\gamma) \lVert_{\infty}\leqslant C_2|\gamma|^2.
\end{equation}
  Moreover, there exists $r^{(3)}\in (0, r^{(2)})$ such that, if $|\gamma|< r^{(3)}$, then we have
 \begin{equation}
     4/5<\lVert \psi_{n, 1}(\gamma)\lVert_{\infty},  \lVert \psi_{n, 2}(\gamma)\lVert_{\infty},
 \end{equation}
  \begin{equation}\label{es-L2}
     4/5<\lVert \psi_{n, 1}(\gamma)\lVert_{(L^{2})^2},  \lVert \psi_{n, 2}(\gamma)\lVert_{(L^{2})^2}<6/5.
 \end{equation}
 The same estimates hold for $\chi_{n}(\gamma)$, hence
 \begin{equation}\label{es-sca-pro}
    \big   \langle \psi_{n}(\gamma), \chi_{n}(\gamma) \big \rangle= 1+  \big \langle \psi_{n}(\gamma), \chi_{n}(\gamma)-\chi_{n}^{\textcolor{black}{(0)}} \big \rangle \in (3/4, 5/4).
 \end{equation}

 Furthermore, for $n\in \mathbb{Z}^*$ we observe from the definition of $\psi_{n}(\gamma)$ and \eqref{delta-es}--\eqref{R-es} that
 \begin{align}
     &\mu_{n}(\gamma) \int_0^L \psi_{n, 1}(\gamma)(x)+ \psi_{n, 2}(\gamma)(x) dx\\
     =& 2 \langle \mu_{n}(\gamma) \psi_{n}(\gamma), (1,1)\rangle\\
     =& 2\langle \Lambda \partial_x \psi_{n}(\gamma)+ \delta(x)J_0\psi_{n}(\gamma), (1,1)\rangle\\
     =& \big(\psi_{n, 1}(\gamma)(1)-\psi_{n, 1}(\gamma)(0)-\psi_{n, 2}(\gamma)(1)+\psi_{n, 2}(\gamma)(0)\big)+ \frac{2}{3}\int_0^L \delta(x)(\psi_{n, 1}-\psi_{n, 2})(x) dx \notag \\
     =&\Big(2((-1)^n-1)+ \gamma l^{(1)}_n+ \mathcal{O}(\gamma^2) \Big) -\frac{1}{2} \gamma \left(e^{i\pi nx/L}+ e^{-i\pi nx/L}  \right)+ \mathcal{O}(\gamma^2) \notag\\
     =&2((-1)^n-1)+ \gamma l^{(1)}_n+ \mathcal{O}(\gamma^2).\label{bound-es-for}
 \end{align}
 {\color{black}We remark here that $O(\gamma^2)$ means uniformly bounded by $C\gamma^2$  with some $C>0$ independent of   $n\in \mathbb{Z}^*$ and $\gamma$ small. The same convention is used for other similar notations, for example $\mathcal{O}(\gamma)$.}
By inserting \eqref{ln-es}  into \eqref{bound-es-for}, we get
\begin{equation}
 \mu_{n}(\gamma) \int_0^1 \psi_{n, 1}(\gamma)+ \psi_{n, 2}(\gamma)=\left\{
\begin{aligned}
0+ \gamma l^{(1)}_n+ R_n(\gamma),  \textrm{ when $n$ is even}, \\
-4+ 0+ R_n(\gamma), \textrm{ when $n$ is odd},
\end{aligned}
\right.
\end{equation}
with $|R_{n}(\gamma)|\leqslant C_3|\gamma|^2$.
Therefore,  there exists $r^{(4)}\in (0, r^{(3)})$ such that, if $|\gamma|< r^{(4)}$, then we have
\begin{equation}
    |\gamma| \frac{L}{ \pi^2} < |\mu_{n}(\gamma) \langle \psi_{n}(\gamma), (1,1)\rangle|< 5,\quad {\color{black} \forall n \in \mathbb{Z}^*}.
\end{equation}
We are able to perform the same calculation for $\chi_{n}(\gamma)$ which are eigenfunctions of $\mathcal{S}_0^*$, thus
\begin{equation}\label{psi-sca-pro}
    |\gamma| \frac{L}{ \pi^2} < |\bar \mu_{n}(\gamma) \langle \chi_{n}(\gamma), (1,1)\rangle|< 5, \quad {\color{black} \forall n \in \mathbb{Z}^*}.
\end{equation}
{\color{black}
\begin{rmrk}[$\chi_0$ and the ``missing direction'']
\label{rmkcontrol}
As for the case when $n=0$, thanks to the diffeomorphism \eqref{diffeo} and the equation \eqref{f-0-1-2-0}, we know that \begin{equation}
\begin{aligned}
    \psi_{0, 1}(x)+ \psi_{0, 2}(x)&=0, \quad\forall x \in [0, L],\\
    \partial_x \psi_{0,1} + \frac{2}{3}\delta \psi_{0,1}&=0,
\end{aligned} \end{equation}
thus $\psi_0$ can be computed explicitly,
\begin{equation}
    \psi_{0, 1}= -\psi_{0, 2}=  \textcolor{black}{\left(\sqrt{1+\frac{\gamma L}{2}}- \frac{\gamma}{2} \frac{L_{\gamma}}{L} x\right)^{-1}}=  \textcolor{black}{\left(\sqrt{1+\frac{\gamma L}{2}}- \frac{\sqrt{1+\frac{\gamma L}{2}}-\sqrt{1-\frac{\gamma L}{2}}}{L} x\right)^{-1}},
\end{equation}
which of course satisfies
\begin{equation}
\langle \psi_{0}(\gamma), (1,1)\rangle=0.
\end{equation}
The same reason and similar calculations for $\chi_0$ lead to
\begin{gather}
    \chi_{0, 1}(x)+ \chi_{0, 2}(x)=0,\\
    \partial_x \chi_{0, 1}(x)- \frac{4}{3}\delta(x) \chi_{0, 1}(x)= 0,
\end{gather}
thus
\begin{gather}
    \chi_{0, 1}(x)= \textcolor{black}{\left(\sqrt{1+\frac{\gamma L}{2}}- \frac{\sqrt{1+\frac{\gamma L}{2}}-\sqrt{1-\frac{\gamma L}{2}}}{L} x\right)^{2}}, \\
    \langle \chi_{0}(\gamma), (1,1)\rangle=0.\label{proj-chi-0}
\end{gather}
We observe from \eqref{proj-chi-0} that we can not cover direction $\psi_0$ with our control, due to the moment theory. {\color{black}More precisely, the quantity
\begin{equation}\label{uncontr-direc}
    \langle w, \chi_0\rangle= \int_0^L \textcolor{black}{\left(\sqrt{1+\frac{\gamma L}{2}}- \frac{\sqrt{1+\frac{\gamma L}{2}}-\sqrt{1-\frac{\gamma L}{2}}}{L} x\right)^{2}} (w_1(x)- w_2(x)) dx,
\end{equation}
which is precisely the term appearing in \eqref{missing-direc-w-con},
remains constant in time for any control $u(t)$. \textcolor{black}{Note that the term $\frac{L_{\gamma}}{L}$ in factor of the term in \eqref{missing-direc-w-con} is a constant and therefore does not change which quantity is conserved quantity.} Hence the control profile $(1,1)^T$ is compatible with the conservation of mass \eqref{missing-direc-w-con}. The equivalence between \eqref{missing-direc-w-con} and \eqref{uncontr-direc} is not a coincidence: from \eqref{missing-direc-w-con} we know that there is at least one direction for which we can not control; on the other hand, \eqref{proj-chi-0} gives us the only uncontrollable direction of the system. Algebraically, these two directions have to be the same.}
\end{rmrk}
}

Thanks to Theorem \ref{tho-rus} and  \eqref{es-L2}, for $|\gamma|< r_R$, we know that
\begin{equation}\label{var-rie-bas}
\textrm{  $(\psi_{n}(\gamma))_{n\in\mathbb{Z}^{*}}$
form a Riesz basis \textcolor{black}{of $\mathcal{L}^{2}_{(0)}$},}
\end{equation}
where
\begin{equation}
\mathcal{L}^{2}_{(0)}:=\{f\in L^{2}|\langle f, \chi_{0} \rangle=0 \}.
\end{equation}

Combining \eqref{moment-bas}, \eqref{dn-bn}, \eqref{mu_n asymp}, \eqref{es-sca-pro}, \eqref{psi-sca-pro}, and \eqref{var-rie-bas}, \textcolor{black}{this} complete the proof of Lemma \ref{lem-fond}.

\subsubsection{Controllability of \textcolor{black}{transformed system} \eqref{sys01}--\eqref{cond-0}}

Let us look now at the operator $\mathcal{A}$ given by \eqref{A}--\eqref{domaineA} and
associated to system \eqref{sys01}--\eqref{cond-0}.

As stated previously, the eigenvalues of the operators \eqref{T-gamma} and of \eqref{A} are the same; and the eigenfunctions of \eqref{A} can be generated by applying the diffeomorphism \eqref{diffeo} on the eigenfunctions of \eqref{T-gamma}. Indeed, with $(f_{n})_{\textcolor{black}{n\in\mathbb{Z}^{*}}}$ the family of eigenfunctions of $\mathcal{A}$ that form a Riesz basis of $(L^{2}_{(0)})^{2}$, and depends of course on $\gamma$, \textcolor{black}{and from $(ii)$ in Lemma \ref{lem-fond}, there exists uniformly bounded positive constants $(a_{n})_{n\in\mathbb{Z}^{*}}$ such that}
\begin{equation}\label{def-an}
\textcolor{black}{f_{n}(\gamma):= a_{n}}\exp \Big( \int_0^x \delta(y)dy\Big)   \psi_{n}(\gamma).
\end{equation}

\begin{rmrk}\label{rmk-fn-not-orth}
The constant $a_n$ is to be added to insure  that the generated eigenfunctions $f_{n}(\gamma)$  verify the normalized condition from perturbation, $\langle f_{n}(\gamma), \phi_{n}^{\textcolor{black}{(0)}}\rangle=1$, so that similar perturbation calculations can be performed. Moreover, they are not orthonormal basis as $L^2$ normalized condition is not satisfied.
\end{rmrk}
As \textcolor{black}{the change of coordinates} \eqref{diffeo} \textcolor{black}{used to remove the diagonal coefficients of the \textcolor{black}{coupling term}} is an isomorphism in $H^s$, Theorem \ref{tho-con-w} directly leads to the controllability of \textcolor{black}{this system and thus of \eqref{sys01}--\eqref{cond-0}}:
\begin{crllr}\label{cor-con-or}
 If $T\geqslant 2L, \gamma\in (0, \gamma_0)$, then system \eqref{sys01}\textcolor{black}{--\eqref{cond-0}} is $D(\mathcal{A}^s_{(0)})$ controllable with $D_p(\mathcal{H}_{(0)}^{s-1}(0, 2L))$  controls, \textcolor{black}{where
 \begin{equation}
    D(\mathcal{A}^s_{(0)})=\{f\in D(\mathcal{A}^s)| \langle f,f_{0}\rangle=0\}.
 \end{equation}
 }
\end{crllr}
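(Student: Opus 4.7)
The plan is to transfer the controllability result of Theorem \ref{tho-con-w} for system \eqref{w-con} directly to system \eqref{sys01}--\eqref{cond-0} via the diagonal change of coordinates \eqref{diffeo}. Recall that this diffeomorphism is given by multiplication by $\exp\!\bigl(\int_0^x \delta(y)\,dy\bigr)$, which is a smooth, strictly positive, and bounded function of $x\in[0,L]$, with smooth and strictly positive reciprocal. Consequently it induces an isomorphism on each Sobolev space $(H^s)^2$, preserves the boundary conditions \eqref{cond-0}, and maps the domain $D(\mathcal{A}^s)$ to itself (up to equivalent norms). Moreover it intertwines the source terms: the control profile $(1,1)^T$ for system \eqref{w-con} is mapped precisely to the profile $\mathcal{I}=\exp\!\bigl(\int_0^x\delta\bigr)(1,1)^T$ of system \eqref{sys01}.

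First I would verify that $\zeta$ is a solution of \eqref{sys01}--\eqref{cond-0} with control $u_\gamma$ if and only if $w=\exp(-\int_0^x\delta)\,\zeta$ is a solution of \eqref{w-con} with the same $u_\gamma$. This is exactly how \eqref{sys01} was derived in Subsection \ref{transfosubsection}, so no new computation is needed. Then I would check that this equivalence respects the functional spaces in the statement: by \eqref{def-an}, the eigenfunctions $f_n$ of $\mathcal{A}$ are (up to the normalization constants $a_n$ which are uniformly bounded above and below) the images of $\psi_n$ under the diffeomorphism, so the space $D(\mathcal{A}^s_{(0)})$ corresponds under the change of variables to $D(\mathcal{H}^s_{(0)})$ (or rather an equivalent space defined via $\{\psi_n\}_{n\in\Z^\ast}$), and the missing direction $\langle\cdot,f_0\rangle=0$ corresponds to the missing direction $\langle\cdot,\chi_0\rangle=0$ discussed in Remark \ref{rmkcontrol}. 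The control space $D_p(\mathcal{H}_{(0)}^{s-1}(0,2L))$ is identical in both formulations since the control is scalar-valued in time.

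Concretely, given an initial state $\zeta^0\in D(\mathcal{A}^s_{(0)})$ and a final state $\zeta^1\in D(\mathcal{A}^s_{(0)})$, I would set $w^0:=\exp(-\int_0^x\delta)\zeta^0$ and $w^1:=\exp(-\int_0^x\delta)\zeta^1$, which belong to $D(\mathcal{H}^s_{(0)})$ (in the basis $\{\psi_n\}$), apply Theorem \ref{tho-con-w} to obtain a control $u\in D_p(\mathcal{H}^{s-1}_{(0)}(0,2L))$ driving $w^0$ to $w^1$ in time $T\geq 2L$ (with $u\equiv 0$ on $(2L,T)$), and then declare that this same $u$ drives $\zeta^0$ to $\zeta^1$ in the transformed system. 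The result follows immediately.

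There is no real obstacle here: the argument is essentially bookkeeping about the diffeomorphism \eqref{diffeo}. The only mild care needed is to confirm that the constants $a_n$ in \eqref{def-an} stay uniformly bounded away from $0$ and $\infty$ (so that the Riesz basis property and the norm equivalence on $D(\mathcal{A}^s_{(0)})$ are not spoiled), which follows from the asymptotic estimate \eqref{es-L2} on $\psi_n$ together with the smoothness of the multiplier.
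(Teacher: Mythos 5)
Your proposal is correct and follows exactly the paper's argument: the paper proves this corollary as "a direct consequence of Theorem \ref{tho-con-w} and isomorphism by change of variables," i.e. transferring controllability of \eqref{w-con} through the multiplier $\exp\bigl(\int_0^x \delta\bigr)$, which is what you do. Your additional bookkeeping (preservation of the boundary conditions, correspondence of the control profiles and of the missing direction, uniform boundedness of the constants $a_n$ from \eqref{def-an}) is just a more explicit writing of the same step.
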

The preceding corollary is just a direct consequence of Theorem \ref{tho-con-w} and isomorphism by change of variables.  Moreover,  actually,  by using the same technique we are able to prove the following estimate (compare to \eqref{psi-sca-pro})
\textcolor{black}{\begin{equation}\label{Igrowth0}
m_{0}|\mu_n^{-1}|\leq |\langle \mathcal{I},a_{n}^{-1} f_n \rangle | \leq M_{0}|\mu_n^{-1}|, \quad \forall n \in \Z^{*},
\end{equation}
for some constants $m_{0},M_{0}>0$, which implies, from the uniform boundedness of $(a_{n})$,}
\begin{equation}\label{Igrowth}
m|\mu_n^{-1}|\leq |\langle \mathcal{I}, f_n \rangle | \leq M|\mu_n^{-1}|, \quad \forall n \in \textcolor{black}{\Z^{*}},
\end{equation}
for some constants $m,M>0$.
We can be even more precise, using the fact that
$\mathcal{I}\in (H^1)^2$:
let us write, for $n\in \Z^\ast$,
\begin{equation}\label{es-cal-macal-I}
    \begin{aligned}
    -\mu_n \langle \mathcal{I}, f_n \rangle &= \langle \mathcal{I}, \mathcal{A} f_n \rangle \\
    &= \int_0^L \mathcal{I}_1 \partial_x f_{n,1} - \mathcal{I}_2 \partial_x f_{n,2} -  \langle \delta(x) J \mathcal{I}, f_n \rangle \\
    &= \left[\mathcal{I}_{1} f_{n,1}\right]_0^L -\left[\mathcal{I}_2 f_{n,2}\right]_0^L - \int_0^L \partial_x\mathcal{I}_{1}  f_{n,1} - \partial_x\mathcal{I}_2  f_{n,2} - \langle \delta(x) J \mathcal{I}, f_n \rangle \\
    &= 2\left(e^{\int_0^L \delta} f_{n,1}(L)-f_{n,1}(0)\right) - \langle \Lambda \partial_x \mathcal{I} + \delta(x) J \mathcal{I}, f_n \rangle\\
    &={\color{black}  2\left(e^{\int_0^L \delta} f_{n,1}(L)-f_{n,1}(0)\right) - \langle \delta(x)J_0 \mathcal{I}, f_n \rangle.}
    \end{aligned}
\end{equation}
Now, as $\Lambda \partial_x \mathcal{I} + \delta(x) J \mathcal{I} \in (L^2)^2$, this gives us an asymptotic expansion for the coefficients of $\mathcal{I}$:
\begin{equation}
    \label{I-asympt}
    \langle \mathcal{I}, f_n \rangle=\frac{2\left(f_{n,1}(0)-e^{\int_0^L \delta} f_{n,1}(L)\right)}{\mu_n} + \frac{1}{\mu_n}\langle \delta(x)J_0 \mathcal{I}, f_n \rangle, \quad \forall n \in \Z^\ast.
\end{equation}
{\color{black}Thanks to the expansion of $f_n$ and $\delta(x)$, we know that\textcolor{black}{, for any $n\in\mathbb{Z}^{*}$}
\begin{equation}
    f_{n,1}(0)-e^{\int_0^L \delta} f_{n,1}(L)= \left(1-(-1)^n \right)+ \gamma \left( \frac{3L}{4}(-1^n)+ f^{(1)}_{n, 1}(0)-f^{(1)}_{n, 1}(L) \right)+ \mathcal{O}(\gamma^2),
\end{equation}
which, combined with the fact that
\begin{equation}
    \langle \delta(x)J_0 \mathcal{I}, f_n \rangle= -\frac{3}{4}\gamma \langle J_0 \mathcal{I}, f_n \rangle+ O(\gamma^2)= -\frac{3}{4}\gamma \langle J_0 (1, 1), f_n^{(0)} \rangle+ O(\gamma^2)= \mathcal{O}(\gamma^2),
\end{equation}
leads to
\begin{equation}
     -\mu_n \langle \mathcal{I}, f_n \rangle= \left(1-(-1)^n \right)+ \gamma \left( \frac{3L}{4}(-1^n)+ f^{(1)}_{n, 1}(0)-f^{(1)}_{n, 1}(L) \right)+ \mathcal{O}(\gamma^2).
\end{equation}
Similar calculations as in Subsection \ref{sec-2.3}, lead to the existence of $\widetilde{\gamma}_0\in (0, \gamma_0), m, M> 0$ (where $\gamma_0$ is defined in Lemma \ref{lem-fond}) such that
\begin{equation}\label{Iboundary}
m\leq |\mu_n \langle \mathcal{I}, f_n \rangle|\leq M, \quad \forall n \in \Z^*,\quad  \forall \gamma\in (0, \widetilde{\gamma}_0).
\end{equation}
}
From $\|f_n\|_{(L^2)^2}= 1+ O(\gamma)$ and Remark \ref{rmk-fn-not-orth}, we know that \textcolor{black}{$(f_n)$ is not an} orthonormal basis.  From now on, we replace  $f_n$ by \textcolor{black}{the} orthonormal basis $(f_n/\|f_n\|_{(L^2)^2})$ which still satisfies the preceding inequality, and which is still denoted by $(f_n)$ to simplify the notations.

\textcolor{black}{We now study the system with our virtual control $\mathcal{I}_{\nu}$:}
thanks to the fact that
\begin{equation}
    \langle f_0, f_n \rangle= \delta_{0, n},
\end{equation}
we obtain the following lemma that leads to the controllability of the system with \textcolor{black}{control $\mathcal{I}_{\nu}$}.
\begin{lmm}\label{lem-3-5}
Let $0< \lvert\gamma\lvert< \widetilde{\gamma}_0$ .  Let $\nu\neq 0$. There exist $m, M> 0$ such that
\begin{gather}
m\leq | \langle \mathcal{I}_{\nu}, f_0 \rangle|= |\nu|\leq M,\\
m\leq |\mu_n \langle \mathcal{I}_{\nu}, f_n \rangle|\leq M, \quad \forall n \in \Z^*.
\end{gather}
\end{lmm}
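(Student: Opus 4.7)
The plan is to reduce the lemma directly to the previously-established bound \eqref{Iboundary}, since the definition $\mathcal{I}_{\nu} := \mathcal{I} + \nu f_0$ together with the fact (recalled just before the statement) that $(f_n)$ is now taken to be orthonormal makes the computation essentially automatic. The starting observation is that, by linearity of the inner product,
\begin{equation*}
\langle \mathcal{I}_{\nu}, f_n \rangle = \langle \mathcal{I}, f_n \rangle + \nu \langle f_0, f_n \rangle = \langle \mathcal{I}, f_n \rangle + \nu\, \delta_{0,n}, \quad \forall n \in \Z.
\end{equation*}

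First I would handle $n=0$: property \eqref{missing-moment} gives $\langle \mathcal{I}, f_0 \rangle = 0$, so $\langle \mathcal{I}_{\nu}, f_0 \rangle = \nu$, and the first bound holds trivially with any $0 < m \leq |\nu| \leq M$. For $n \in \Z^*$ the Kronecker term vanishes, leaving $\langle \mathcal{I}_{\nu}, f_n \rangle = \langle \mathcal{I}, f_n \rangle$, at which point \eqref{Iboundary} directly supplies constants $m_1, M_1 > 0$, uniform in $n \in \Z^*$ and in $\gamma \in (0, \widetilde{\gamma}_0)$, such that $m_1 \leq |\mu_n \langle \mathcal{I}, f_n \rangle| \leq M_1$. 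Taking $m := \min(|\nu|, m_1)$ and $M := \max(|\nu|, M_1)$ would then yield both announced inequalities simultaneously.

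There is no real obstacle: the virtual profile $\mathcal{I}_{\nu}$ was designed precisely to restore the single missing moment \eqref{missing-moment} at $n=0$ without disturbing the other moments of $\mathcal{I}$, so that the lemma reduces to a bookkeeping corollary. All the genuine work, namely the perturbative asymptotics of eigenfunctions carried out in Sections \ref{sec-def-eig}--\ref{sec-2.3} together with the integration-by-parts identity \eqref{es-cal-macal-I} which supplies the $1/\mu_n$ decay with matching lower bound, has already been absorbed into \eqref{Iboundary}. The only subtlety worth highlighting in the write-up is that $(f_n)$ must be the $L^2$-normalized orthonormal family (rather than the unnormalized perturbed family from Remark \ref{rmk-fn-not-orth}), since orthonormality is what makes $\langle f_0, f_n \rangle = \delta_{0,n}$ and hence decouples the $\nu f_0$ contribution from the $n \neq 0$ modes.
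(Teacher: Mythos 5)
Your proposal is correct and follows essentially the same route as the paper: the authors likewise pass to the $L^2$-normalized family $(f_n)$, use $\langle f_0,f_n\rangle=\delta_{0,n}$ together with \eqref{missing-moment} to isolate the $n=0$ moment as $\nu$, and invoke \eqref{Iboundary} for $n\in\Z^*$. Nothing is missing; the lemma is indeed a direct bookkeeping consequence of those facts.
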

And thus, as all the moments are uniformly bounded away from 0, we recover the controllability of the system in $D(\mathcal{A})$ (which is a Sobolev space with certain boundary conditions). This explains the definition of $\mathcal{I}_{\nu}$ given by \eqref{def-I-nu}.

{\color{black}\begin{crllr}
 For $T\in [2L,+\infty)$, for $ \gamma\in (0, \gamma_0)$,  system \eqref{sys1virt-con} is $D(\mathcal{H}^s)$ controllable with controls having $D_p(\mathcal{H}^{s-1}(0, 2L))$ regularity, where
 \begin{gather}
  D_p(\mathcal{H}^s(0, 2L))=\{f:= \sum_{n\in \mathbb{Z}}f_n \bar{p}_n \in H^{s} | \sum_{n\in \mathbb{Z}} (1+ n^{2s}) f_n^2< +\infty\}, \\
  \textrm{$\{p_n\}_n$ is the dual basis of $\{e^{\mu_n(s-2L)}\}_n$ presented in Theorem \ref{tho-rus}},\\
  \textrm{the control vanishes on $(2L, T)$.}
 \end{gather}
\end{crllr}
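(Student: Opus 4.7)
The plan is to mirror the argument of Theorem \ref{tho-con-w}, with the observation that the addition of the $\nu f_0$ component in $\mathcal{I}_\nu$ exactly restores the missing moment direction which had prevented controllability of \eqref{w-con} (equivalently \eqref{sys01}--\eqref{cond-0}) on the full state space. Concretely, since the control enters linearly through the profile $\mathcal{I}_\nu$ and the corresponding operator is $\mathcal{A}$ (whose eigenfunctions $(f_n)_{n\in\mathbb{Z}}$ form a Riesz basis of $(L^2)^2$ by Theorem \ref{tho-rus} transported through the diffeomorphism \eqref{diffeo}), all the ingredients of the moments method are in place on the whole of $\mathbb{Z}$.

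First, by linearity and by extending any control by zero on $(2L,T)$, it suffices to treat $T=2L$ with zero initial data and arbitrary target $\mathbf{Z}(2L,\cdot)=\sum_{n\in\mathbb{Z}} k_n f_n \in D(\mathcal{H}^s)$. Decomposing the solution in the Riesz basis, $\mathbf{Z}(t,\cdot)=\sum_n \mathbf{Z}_n(t) f_n$, projecting \eqref{sys1virt-con} onto each $f_n$, and integrating the resulting scalar ODE via Duhamel's formula yields the moment equations
\begin{equation}
k_n \;=\; \langle \mathcal{I}_\nu, f_n\rangle \, \int_0^{2L} e^{\mu_n(s-2L)} \, u_\gamma^\nu(s)\, ds, \qquad n\in \mathbb{Z}.
\end{equation}
By Lemma \ref{lem-3-5}, for $\gamma\in(0,\widetilde{\gamma}_0)$ and any fixed $\nu\neq 0$, the moments $\langle \mathcal{I}_\nu, f_n\rangle$ are nonzero and satisfy the two-sided bounds $m\leq |\langle\mathcal{I}_\nu,f_0\rangle|\leq M$ and $m\leq |\mu_n \langle\mathcal{I}_\nu,f_n\rangle|\leq M$ for $n\in\mathbb{Z}^\ast$, so none of the directions is ``lost''.

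Second, I will solve this moment problem using the dual Riesz basis $\{p_n\}_{n\in\mathbb{Z}}$ of $\{e^{\mu_n(\cdot - 2L)}\}_{n\in\mathbb{Z}}$ in $L^2(0,2L)$ provided by Theorem \ref{tho-rus}(3). Setting
\begin{equation}
u_\gamma^\nu(s) \;=\; \sum_{n\in\mathbb{Z}} \frac{k_n}{\langle \mathcal{I}_\nu, f_n\rangle}\, \overline{p_n}(s), \qquad s\in[0,2L],
\end{equation}
the biorthogonality $\int_0^{2L} e^{\mu_n(s-2L)} \overline{p_m}(s)\,ds = \delta_{nm}$ gives exactly the required moments. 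To control the regularity, note that for $n\in\mathbb{Z}^\ast$, $|k_n/\langle \mathcal{I}_\nu, f_n\rangle| \leq m^{-1}|\mu_n|\,|k_n|$, while the $n=0$ term contributes a single bounded coefficient; hence
\begin{equation}
\sum_{n\in\mathbb{Z}} (1+n^{2(s-1)}) \left|\frac{k_n}{\langle\mathcal{I}_\nu,f_n\rangle}\right|^2 \;\lesssim\; |k_0|^2 + \sum_{n\in\mathbb{Z}^\ast}(1+n^{2s})|k_n|^2 \;<\;\infty,
\end{equation}
by the assumption $\sum k_n f_n\in D(\mathcal{H}^s)$. Since the $\{p_n\}$ form a Riesz basis of $L^2(0,2L)$, this places $u_\gamma^\nu$ in $D_p(\mathcal{H}^{s-1}(0,2L))$, completing the proof; the extension to $T>2L$ is achieved by the zero extension.

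The only genuine obstacle is the uniform-in-$n$ lower bound on the moments, and this is precisely the content of Lemma \ref{lem-3-5} (itself the synthesis of the perturbative spectral estimates \eqref{Iboundary} and the choice $\nu\neq 0$ that repairs the missing direction); everything else is routine moment-problem bookkeeping.
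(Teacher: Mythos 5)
Your proposal is correct and follows essentially the same route as the paper: the corollary is obtained exactly as Theorem \ref{tho-con-w}, by the moments method in the (orthonormal) eigenbasis $(f_n)$ with the dual Riesz basis $\{p_n\}$ of $\{e^{\mu_n(\cdot-2L)}\}$ from Theorem \ref{tho-rus}, the only new ingredient being the uniform two-sided bounds on $\langle \mathcal{I}_\nu, f_n\rangle$ supplied by Lemma \ref{lem-3-5}, which restore the previously missing $f_0$ direction. Your Duhamel reduction, explicit control formula, and $D_p(\mathcal{H}^{s-1})$ regularity bookkeeping match the paper's argument.
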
}

\subsubsection{Asymptotic calculation: {\color{black} can we achieve better estimates than  $\mathcal{O}(1)$   for the transformed operator $\mathcal{A}$?}}
We have seen from the above calculation that $\gamma \psi_{n}^{(1)}$ and $R_{1, n}(\gamma)$ \textcolor{black}{are} $\mathcal{O}(\gamma)$ and $\mathcal{O}(\gamma^2)$ \textcolor{black}{respectively}. Thanks to \eqref{ln-es}, at least for $L^{\infty}$ norm the $O(1)$ type estimates are sharp: there is no decay with respect to $n$. As we have indicated in Remark \ref{rmk-diag-es}, the diagonal matrix plays an important role in the estimation.  Hence it is natural to ask \textcolor{black}{whether} the normalized eigenfunctions of $\textcolor{black}{\mathcal{A}}$ are better than those of $\mathcal{S}_0$, as the diagonal coefficients of $J$ are 0. The normalized eigenfunctions are given by
\begin{equation}
    \textcolor{black}{\mathcal{A}}\bar \psi_{n}(\gamma)= \mu_{n}(\gamma) \bar \psi_{n}(\gamma), \; \langle \bar \psi_{n}(\gamma), \bar \chi_{n}^{\textcolor{black}{(0)}}\rangle=1.
\end{equation}

In this situation, everything we have defined for the calculation of eigenfunctions from \eqref{T-gamma} to \eqref{v-n-asy-sp} remain the same if we replace $J_0$ by $J$, except  for the part of controllability: as $(1,1)$ is replaced by $\exp\Big(\int_0^x \delta(y) dy\Big)(1,1)$.

Therefore,  from \eqref{varphi-1-exp} and using \eqref{defJ} we know that
\begin{equation}\label{varphi-1-exp-new}
    \bar \psi^{(1)}_n=\frac{3L}{4} \sum_{k\neq n} \frac{\langle J\psi_{n}^{\textcolor{black}{(0)}}, \psi_k^{\textcolor{black}{(0)}}  \rangle}{i \pi (k-n)} \psi_k^{\textcolor{black}{(0)}}= -\frac{\textcolor{black}{L}}{2\pi^2}\sum_{k\neq n} \frac{\big((-1)^{k-n}-1\big)}{n^2-k^2} \psi_k^{\textcolor{black}{(0)}}.
\end{equation}
{\color{black}It is easy to find that
\begin{equation}
    \lVert \bar \psi^{(1)}_n \lVert_{(L^2)^2}\xrightarrow[\lvert n\lvert\rightarrow +\infty]{} 0.
\end{equation}
Moreover, with the help of more precise estimates we are able to prove that
\begin{equation}
    \lVert \bar \psi^{(1)}_n \lVert_{\infty}\xrightarrow[\lvert n\lvert\rightarrow +\infty]{} 0.
\end{equation}
The interpolation implies that the same result holds for the $(L^p)^2$ norm. \textcolor{black}{It may be that for the transformed operator we would able to conclude to} some better asymptotic estimates rather than $O(1)$. On the other way around, as we know that the controllability is strongly related to those estimates, see \eqref{moment-bas}. The system is controllable only if $b_n$ is away from 0 in some sense. If for the transformed operator   $\Lambda \partial_x+  \delta(x) J$ we are able to get some better decay estimates, it would be interesting to use them to  investigate the controllability of the system
\begin{equation}
\begin{aligned}
&\partial_{t}w+\Lambda\partial_{x}w+\delta J w=u_{\gamma}(t)\begin{pmatrix}1\\1\end{pmatrix},
\label{w-con00}\\
& w_{1}(t,0)=-w_{2}(t,0),\\
& w_{2}(t,L)=-w_{1}(t,L),
\end{aligned}
\end{equation}
as this system is more complicated in some sense.  More precisely, as it is stated in  Remark \ref{rmk-diag-es} we used the diagonal term of $J_0$ to simplify the calculation, while in this case the diagonal term disappears and we may need to go for third order of $\gamma$ or beyond.
}

\subsection{Controllability of the target system}\label{sec-con-diss}
This time we directly consider the system with simplified coupling term.
As previously, it suffices to investigate the properties of the operator
\begin{equation}\label{til-T-gamma}
\widetilde{T}_{\gamma}:=  \widetilde{\mathcal{S}}_0=\Lambda \partial_x+ \delta(x){\color{black}J},
\end{equation}
\begin{equation}
    D(\widetilde{\mathcal{S}}_0):= \left\{(w_1, w_2)\in (H^1)^2: w_1(0)= -e^{2\mu L}w_2(0), \ w_1(L)= -w_2(L)\right\},
\end{equation}
{\color{black}and the controllability of
\begin{equation}\label{til-w-con}
\left\{\begin{aligned}
\partial_{t}w+\Lambda\partial_{x}w+\delta J w&=u_{\gamma}(t)\mathcal{I}_{\nu},
\\
 w_{1}(t,0)=-e^{2\mu L}w_{2}(t,0),& \ w_{2}(t,L)=-w_{1}(t,L).
\end{aligned}\right.
\end{equation} }
As in Section \ref{sec-con-or} we define  adjoint operators $\widetilde{\mathcal{S}}_0^*$, eigenvalues $\widetilde{\mu}_n$, and  normalized eigenfunctions $(\textcolor{black}{\widetilde{f}_{n}}, \textcolor{black}{\widetilde{\phi}_{n}})$.
As indicated at the beginning of Section \ref{s3} we are able to perform the same calculations for operator $\widetilde{\mathcal{S}}_0$ as in Section \ref{sec-def-eig}--\ref{sec-2.2}, we omit the explicit calculation  for the reader's convenience. More precisely, we have
\begin{lmm}\label{tild-lem-fond}
Let $\mu\neq0$. There exist $\bar r_{\mu}, c_{\mu}, C_{\mu}>0$ such that  for any $ \gamma\in B_{\bar r_{\mu}}$ and any $ n\in \mathbb{Z}$ we have
\begin{gather}\label{tilde-es-vn-final}
|\widetilde{\textcolor{black}{\mu}}_n-\widetilde{\mu}_n^{\textcolor{black}{(0)}}|< 1/(4L),\\
\{\widetilde{\phi}_{n}/\lVert \widetilde{\phi}_{n}\lVert_{(L^{2})^2}\}_n \textrm{ is a Riesz basis  \textcolor{black}{of $(L^{2})^2$}},\\
\{\widetilde{f}_{n}/\lVert \widetilde{f}_{n}\lVert_{(L^{2})^2}\}_n \textrm{ is a Riesz basis  \textcolor{black}{of $(L^{2})^2$}},\label{tild-varphi-es} \\
 \lVert \textcolor{black}{\widetilde{\phi}_{n}}-\widetilde{\phi}_{n}^{\textcolor{black}{(0)}} \lVert_{\infty},
      \lVert \textcolor{black}{\widetilde{f}_{n}}-\widetilde{f}_{n}^{\textcolor{black}{(0)}} \lVert_{\infty} \leqslant C_{\mu} \lvert \gamma \lvert,\\
     \lVert \textcolor{black}{\widetilde{\phi}_{n}}-\widetilde{\phi}_{n}^{\textcolor{black}{(0)}}- \gamma \widetilde{\phi}^{(1)}_n \lVert_{\infty} \leqslant C_{\mu} \lvert \gamma \lvert^2,\\
      \lVert \textcolor{black}{\widetilde{f}_{n}}-\widetilde{f}_{n}^{\textcolor{black}{(0)}}- \gamma \widetilde{f}^{(1)}_n \lVert_{\infty} \leqslant C_{\mu} \lvert \gamma \lvert^2,\\
       \lVert \widetilde{\phi}_{n}^{(1)}\lVert_{\infty},   \lVert \widetilde{f}_{n}^{(1)}\lVert_{\infty}\leq C_{\mu},\\
  c_{\mu} \leq  \lVert \widetilde{\phi}_{n}\lVert_{(L^{2})^2},   \lVert \widetilde{f}_{n}\lVert_{(L^{2})^2},    \lVert \widetilde{\phi}_{n}\lVert_{\infty},   \lVert \widetilde{f}_{n}\lVert_{\infty}\leq C_{\mu}.
\end{gather}
\end{lmm}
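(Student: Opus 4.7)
The strategy is to replay verbatim, on $\widetilde{T}_\gamma$, the Kato-style perturbative machinery developed in Sections \ref{sec-def-eig}--\ref{sec-2.2} for $T_\gamma$. The role of the unperturbed operator $T = \Lambda\partial_x$ is now played by $\widetilde{T}^{(0)} := \Lambda\partial_x$ with the dissipative boundary conditions $f_1(0)+e^{-2\mu L}f_2(0)=0$ and $f_1(L)+f_2(L)=0$, whose eigenpairs are explicit: the $\widetilde{f}_n^{(0)}$ and biorthogonal $\widetilde{\phi}_n^{(0)}$ given in \eqref{38}--\eqref{39}, with simple eigenvalues $\widetilde{\mu}_n^{(0)}=\mu+i\pi n/L$. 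Note that the perturbation is $\widetilde{A}(\gamma) = \delta(x)J$ (off-diagonal only, since in the target system the coupling matrix is $J$ rather than $J_0$), and from the explicit formula \eqref{def_delta}--\eqref{es-delta-O2} one has $\|\widetilde{A}(\gamma)\|_\infty \leq |\gamma|/2$ for $|\gamma|$ small.

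The first step is to fix circular contours $\widetilde{\Gamma}_n = \{\xi\in\CC: |\xi-\widetilde{\mu}_n^{(0)}|=1/L\}$ encircling each unperturbed eigenvalue and prove a uniform bound $\|\widetilde{R}^{(0)}(\xi)\| \leq K(\mu) L$ on $\CC\setminus\bigcup_n B(\widetilde{\mu}_n^{(0)};1/L)$. Because $\widetilde{T}^{(0)}$ is not anti-selfadjoint, one decomposes $u\in(L^\infty)^2$ in the Riesz basis $\{\widetilde{f}_n^{(0)}/\|\widetilde{f}_n^{(0)}\|_{L^2}\}$ (whose Riesz-basis property in $(L^2)^2$ follows from \eqref{eqref-f-n-riesz} and Theorem \ref{tho-rus}) and uses the biorthogonal family to express $\widetilde{R}^{(0)}(\xi)u = \sum_n (\widetilde{\mu}_n^{(0)}-\xi)^{-1} \langle u,\widetilde{\phi}_n^{(0)}\rangle \widetilde{f}_n^{(0)}$. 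The $L^2$-norm of $\widetilde{f}_n^{(0)}$ equals $\sqrt{(e^{4\mu L}-1)/(2\mu)}$ by \eqref{eqref-f-L2}, which is where the $\mu$-dependence of the constants $c_\mu, C_\mu$ enters. Combined with $\sum_n |\widetilde{\mu}_n^{(0)}-\xi|^{-2} \leq C L^2$ outside the contours, this yields the desired resolvent bound. Once $\|\widetilde{A}(\gamma)\widetilde{R}^{(0)}(\xi)\|<1/2$ on the contours, the Neumann series for $\widetilde{R}(\xi,\gamma)$ converges and Kato's argument (\cite[Ch.~2, \S3]{Kato-book}) gives holomorphy of $\widetilde{\mu}_n$ and of the perturbed eigenfunctions/biorthogonals in $\gamma$ on $B_{\bar r_\mu}$, together with the eigenvalue estimate \eqref{tilde-es-vn-final} by shrinking $\bar r_\mu$.

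Next, the Cauchy integrals $\widetilde{P}_n = -(2\pi i)^{-1}\int_{\widetilde{\Gamma}_n} \widetilde{R}^{(0)}(\xi)\,d\xi$ and $\widetilde{S}_n = (2\pi i)^{-1}\int_{\widetilde{\Gamma}_n}(\xi-\widetilde{\mu}_n^{(0)})^{-1}\widetilde{R}^{(0)}(\xi)\,d\xi$ have $\mu$-uniform-in-$n$ operator norms (the analog of \eqref{est-s-n} and \eqref{spq}, with constants now depending on $\mu$ via $\|\widetilde{f}_n^{(0)}\|_{L^2}$). The direct remainder identity analogous to \eqref{first-rem} immediately gives $\|\widetilde{f}_n-\widetilde{f}_n^{(0)}\|_\infty \leq C_\mu |\gamma|$ and similarly for $\widetilde{\phi}_n$; then the second-order identity modeled on \eqref{rem-seond}, or equivalently the majorizing-series argument of Section \ref{sec-2.2}, yields the $O(|\gamma|^2)$ remainder and the explicit formula for $\widetilde{f}_n^{(1)},\widetilde{\phi}_n^{(1)}$ (analog of \eqref{varphi-1-exp}) together with the uniform bound $\|\widetilde{f}_n^{(1)}\|_\infty,\|\widetilde{\phi}_n^{(1)}\|_\infty \leq C_\mu$. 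The uniform lower and upper bounds on the $L^2$ and $L^\infty$ norms of $\widetilde{f}_n,\widetilde{\phi}_n$ then follow by combining the $O(|\gamma|)$ estimate with the explicit $\mu$-dependent norms of $\widetilde{f}_n^{(0)},\widetilde{\phi}_n^{(0)}$, provided $\bar r_\mu$ is small enough (so that the perturbative corrections are dominated by the unperturbed norms). Finally, the Riesz-basis property of $\{\widetilde{f}_n/\|\widetilde{f}_n\|_{L^2}\}$ and $\{\widetilde{\phi}_n/\|\widetilde{\phi}_n\|_{L^2}\}$ is a direct application of Russell's Theorem \ref{tho-rus}, whose hypotheses are general $2\times2$ hyperbolic systems and are satisfied by $\widetilde{T}_\gamma$ for $|\gamma|<\bar r_\mu$.

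The main obstacle, relative to the proof for $\mathcal{S}_0$, is that $\widetilde{T}^{(0)}$ is \emph{not} anti-self-adjoint, so one cannot simply identify the eigenfunctions with their duals and the constants in the resolvent estimate genuinely depend on $\mu$ through the norm $\|\widetilde{f}_n^{(0)}\|_{L^2} = \sqrt{(e^{4\mu L}-1)/(2\mu)}$. Controlling this $\mu$-dependence throughout all estimates (and thereby explaining why $\bar r_\mu, c_\mu, C_\mu$ depend on $\mu$ in the statement) is the bookkeeping difficulty of the argument; the rest is a careful transcription of Sections \ref{sec-def-eig}--\ref{sec-2.2}.
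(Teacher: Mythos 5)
Your proposal is correct and follows essentially the same route as the paper: the paper itself omits the details, stating that the calculations of Sections \ref{sec-def-eig}--\ref{sec-2.2} are repeated verbatim for $\widetilde{\mathcal{S}}_0$ with the explicit zeroth-order eigenpairs \eqref{38}--\eqref{39}, and its accompanying remark singles out exactly the point you emphasize, namely that the $\mu$-dependence of $\lVert \widetilde{f}_n^{(0)}\rVert$, of the Riesz-basis constants and of $\lVert \widetilde{S}_n\rVert$ is what produces $c_\mu$, $C_\mu$ and especially $\bar r_\mu$.
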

\begin{rmrk}
Due to the fact that $\lVert \widetilde{f}_n^{(0)}\lVert_{\infty}$, \eqref{eqref-f-L2} and the Riesz basis \eqref{eqref-f-n-riesz} strongly depend on $\mu$, the norm of $\widetilde{S}_n$  depends on the value of $\mu$ (see \eqref{es-sn-norm} for instance). This is one of the key points which result in the existence of $c_{\mu}, C_{\mu}$ and (especially) $\bar r_{\mu}$.
\end{rmrk}
Thanks to \eqref{40} and Lemma \ref{tild-lem-fond}, for $\forall n \in \mathbb{Z}$, we have
\begin{align}
 \widetilde{\textcolor{black}{\mu}}_n \langle \textcolor{black}{\widetilde{\phi}_{n}}, (1,1)\rangle
  = & \big(\widetilde{\phi}_{n, 1}(\gamma)(L)-\widetilde{\phi}_{n, 1}(\gamma)(0)-\widetilde{\phi}_{n, 2}(\gamma)(L)+\widetilde{\psi}_{n, 2}(\gamma)(0)\big)+ \mathcal{O}(\gamma)\\
  = & \big(\widetilde{\phi}_{n, 1}(L)-\widetilde{\phi}_{n, 1}(0)-\widetilde{\phi}_{n, 2}(L)+\widetilde{\phi}_{n, 2}(0)\big)+ \mathcal{O}(\gamma)\\
  = &2(-1)^n e^{-\mu L} - 1 - e^{-2\mu L}+\mathcal{O}(\gamma),\label{tild-es-bd-con}
\end{align}
which yields the controllability of the system with control term $u_{\gamma}(t)(1, 1)$.

 Moreover, for System \eqref{til-w-con} with control $u_{\gamma}(t) \mathcal{I}_{\nu}$   similar calculations as \eqref{es-cal-macal-I} lead to
\begin{align}
  \widetilde{\mu}_{n} \langle\widetilde{\phi}_{n}, \mathcal{I}\rangle
  = &[\widetilde{\phi}_{n, 1} \mathcal{I}_1]_0^L- [\widetilde{\phi}_{n, 2} \mathcal{I}_2]_0^L+ \mathcal{O}(\gamma)\\
  = &[\widetilde{\phi}_{n, 1} ]_0^L- [\widetilde{\phi}_{n, 2} ]_0^L+ \mathcal{O}(\gamma)\\
  = & \big(\widetilde{\phi}_{n, 1}(L)-\widetilde{\phi}_{n, 1}(0)-\widetilde{\phi}_{n, 2}(L)+\widetilde{\phi}_{n, 2}(0)\big)+ \mathcal{O}(\gamma)\\
  = &2(-1)^n e^{-\mu L} - 1 - e^{-2\mu L}+\mathcal{O}(\gamma),\label{tild-es-bd-con-I}
\end{align}
{\color{black}where we used the fact that $\mathcal{I}=(1, 1)+ \mathcal{O}(\gamma)$ and $\widetilde{\phi}_n=\widetilde{\phi}_n^{(0)}+\mathcal{O}(\gamma)$.}
Then, thanks to similar calculations on zeroth order of $\gamma$, we further get
\begin{align}
  \widetilde{\mu}_{n} \big\langle\widetilde{\phi}_{n}, f_0\big\rangle
 = & \big\langle \Lambda \partial_x \widetilde{\phi}_{n}+ \delta(x) J \widetilde{\phi}_{n}, f_0\big\rangle \\
 = & \big\langle \Lambda \partial_x \widetilde{\phi}_{n}, f_0\big\rangle + \mathcal{O}(\gamma)\\
 = & [\widetilde{\phi}_{n, 1}f_{0, 1}- \widetilde{\phi}_{n, 2}f_{0, 2}]_0^L+ \mathcal{O}(\gamma)\\
  = &\big(e^{-2\mu L}-1 \big) + \mathcal{O}(\gamma).\label{tild-es-bd-con-II}
  \end{align}
Therefore, combining \eqref{tild-es-bd-con-I} and \eqref{tild-es-bd-con-II} we get
\begin{equation}\label{tild-es-bd-con-III}
   \widetilde{\mu}_{n} \big\langle\widetilde{\phi}_{n}, \mathcal{I}_{\nu}\big\rangle=  \widetilde{\mu}_{n} \big\langle\widetilde{\phi}_{n}, \mathcal{I}+ \nu f_0\big\rangle= 2(-1)^n e^{-\mu L} - 1 - e^{-2\mu L}+ \nu \big(e^{-2\mu L}-1 \big)+ \mathcal{O}(\gamma).
\end{equation}
\begin{rmrk}
Similar estimates hold for $\textcolor{black}{\widetilde{f}_{n}}$.
\end{rmrk}
\begin{rmrk}
One can compare estimate \eqref{tild-es-bd-con-III} to \eqref{bound-es-for} where we calculated up to order 2, while in \eqref{tild-es-bd-con} only terms of order 1 are needed. That is because the zeroth order term in \eqref{bound-es-for} becomes 0 when $n$ is even, hence we need to  use the first order terms as dominating term in this case. However, as we have seen in \eqref{40} the zeroth order of \eqref{tild-es-bd-con} is already away from 0, even for $n=0$, thus we no longer need to estimate the second order.
\end{rmrk}
Hence we get the following lemma:
\begin{lmm}\label{til-lem-fond}
Let $\mu> 3/L$. {\color{black} Let $0<\lvert\nu\lvert <1$}. There exists $\widetilde{\gamma}_{\mu}$ such that for any $\gamma\in (-\widetilde{\gamma}_{\mu}, \widetilde{\gamma}_{\mu})$ and for any $n\in \mathbb{Z}$ we have
\begin{itemize}
    \item[(i)] $\{\textcolor{black}{\widetilde{\phi}_{n}}\}_n$ $ (\textrm{resp. $\{\textcolor{black}{\widetilde{f}_{n}}\}_n)$ is a Riesz basis  \textcolor{black}{of $L^{2}$}}$;
    \item[(ii)] $\lvert\langle \textcolor{black}{\widetilde{f}_{n}}, \textcolor{black}{\widetilde{\phi}_{n}} \rangle\lvert\in (1/2, 2)$;
    \item[(iii)]$|\widetilde{\textcolor{black}{\mu}}_n-\widetilde{\mu}_n^{\textcolor{black}{(0)}}|< 1/(4L)$;
     \item[(iv)] $
   0< 1/2 < |\widetilde{\textcolor{black}{\mu}}_n \langle \textcolor{black}{\widetilde{f}_{n}}, \mathcal{I}_{\nu}\rangle|, |\widetilde{\textcolor{black}{\mu}}_n \langle \textcolor{black}{\widetilde{\phi}_{n}}, \mathcal{I}_{\nu}\rangle|< 3/2.$
\end{itemize}
\end{lmm}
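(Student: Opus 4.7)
The plan is to obtain all four items by combining the spectral perturbation estimates collected in Lemma \ref{tild-lem-fond} with the boundary calculations \eqref{tild-es-bd-con-I}--\eqref{tild-es-bd-con-III}. Items (i) and (iii) are essentially restatements of Lemma \ref{tild-lem-fond}: the Riesz basis property for $\{\widetilde f_n\}$ and $\{\widetilde\phi_n\}$ is already asserted there, and the bound $|\widetilde\mu_n-\widetilde\mu_n^{(0)}|<1/(4L)$ is included in that lemma provided $\gamma$ is taken inside the convergence radius $\bar r_\mu$. So the first step is simply to require $\widetilde\gamma_\mu\leq \bar r_\mu$.

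For (ii), I would exploit the Kato normalization $\langle \widetilde f_n,\widetilde\phi_n^{(0)}\rangle = 1 = \langle \widetilde f_n^{(0)},\widetilde\phi_n\rangle$ used throughout Section \ref{sec-def-eig}, and write
\begin{equation*}
\langle \widetilde f_n,\widetilde\phi_n\rangle = 1 + \langle \widetilde f_n,\widetilde\phi_n-\widetilde\phi_n^{(0)}\rangle.
\end{equation*}
The uniform-in-$n$ bounds $\|\widetilde f_n\|_{L^2}\leq C_\mu$ and $\|\widetilde\phi_n-\widetilde\phi_n^{(0)}\|_{\infty}\leq C_\mu|\gamma|$ from Lemma \ref{tild-lem-fond} give $|\langle \widetilde f_n,\widetilde\phi_n\rangle-1|\leq C_\mu'|\gamma|$ uniformly in $n$, and a further shrinking of $\widetilde\gamma_\mu$ puts this quantity in $(1/2,2)$.

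The substantive step is (iv). The computation \eqref{tild-es-bd-con-III}, together with its analogue for $\widetilde f_n$ obtained by integration by parts against the adjoint eigenfunctions, yields uniformly in $n\in\mathbb Z$,
\begin{equation*}
\widetilde\mu_n\langle\widetilde\phi_n,\mathcal I_\nu\rangle = 2(-1)^n e^{-\mu L}-1-e^{-2\mu L}+\nu(e^{-2\mu L}-1)+\mathcal O(\gamma).
\end{equation*}
Under $\mu>3/L$ one has $e^{-\mu L}<e^{-3}$, so the modulus of the explicit leading term can be split by parity of $n$ into $(1\mp e^{-\mu L})^2 \pm \nu(1-e^{-2\mu L})$ and is directly checked, by elementary algebra in the range $0<|\nu|<1$, to lie strictly between the prescribed thresholds $1/2$ and $3/2$. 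A final shrinking of $\widetilde\gamma_\mu$ absorbs the $\mathcal O(\gamma)$ remainder uniformly in $n$, and the analogous estimate for $\widetilde\mu_n\langle\widetilde f_n,\mathcal I_\nu\rangle$ follows from the same argument applied to the adjoint problem.

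The main difficulty, and the reason none of this is automatic, is the uniformity in $n$ of every error estimate: both the perturbation of the scalar product in (ii) and the remainder term in (iv) must be $\mathcal O(\gamma)$ with a constant independent of $n$. This uniformity is precisely what Lemma \ref{tild-lem-fond} provides, via the holomorphic extension and majorizing-series arguments already developed in Section \ref{sec-2.2} for the operator $\mathcal S_0$ and transposed verbatim to $\widetilde{\mathcal S}_0$ (the one genuine change being that the norms of the resolvent projectors now depend on $\mu$, which is what forces the restriction to $\gamma\in B_{\bar r_\mu}$). Once Lemma \ref{tild-lem-fond} is granted, the proof reduces to bookkeeping: $\widetilde\gamma_\mu$ is defined as the minimum of the four smallness thresholds produced above.
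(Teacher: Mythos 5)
Your proposal follows the same route as the paper's own (largely implicit) proof: items (i)--(iii) are read off from Lemma \ref{tild-lem-fond} (with (ii) obtained exactly as in the untilded computation \eqref{es-sca-pro}, via the Kato normalization and the uniform-in-$n$ $\mathcal{O}(\gamma)$ bound on $\widetilde{\phi}_n-\widetilde{\phi}_n^{(0)}$), and item (iv) follows from the boundary computations \eqref{tild-es-bd-con-I}--\eqref{tild-es-bd-con-III} after a final shrinking of $\widetilde{\gamma}_{\mu}$, which is precisely how the paper concludes (``Hence we get the following lemma''). One caveat: your claim that elementary algebra places the leading term $\bigl|2(-1)^n e^{-\mu L}-1-e^{-2\mu L}+\nu(e^{-2\mu L}-1)\bigr|$ strictly between $1/2$ and $3/2$ for \emph{every} $0<|\nu|<1$ is not literally true. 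Writing $\epsilon=e^{-\mu L}<e^{-3}$, the modulus equals $\bigl|(1-\epsilon)^2+\nu(1-\epsilon^2)\bigr|$ for even $n$ and $\bigl|(1+\epsilon)^2+\nu(1-\epsilon^2)\bigr|$ for odd $n$ (note the sign of the $\nu$-term does not alternate with the parity, contrary to what your $\pm$ suggests); for $\nu$ near $-(1-\epsilon)/(1+\epsilon)\approx-0.9$ the even-$n$ value nearly vanishes, while for $\nu$ near $1$ (or already $\nu=1/2$ with $n$ odd) it exceeds $3/2$. The stated constants therefore only come out for $|\nu|$ sufficiently small (e.g. $|\nu|\leq 1/4$), not on all of $(-1,1)\setminus\{0\}$. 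This looseness is inherited from the lemma's statement itself and is immaterial downstream, since $\nu$ is ultimately chosen and only uniform-in-$n$ upper and lower bounds are used; but as written, this ``direct check'' is the one step of your argument that does not go through verbatim and should either restrict $\nu$ or relax the numerical thresholds.
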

We note here that in this case $\gamma$ is allowed to be $0$.
Lemma \ref{til-lem-fond} and the classical moment theory lead to the controllability of System \eqref{target-control}.
\begin{thrm}\label{tmm-diss-con}
Let $\mu>3/L, T\geqslant 2 L$. If $\gamma\in (-\widetilde{\gamma}_{\mu}, \widetilde{\gamma}_{\mu})$,  then system \eqref{target-control} is $ D(\widetilde{\mathcal{A}}^s)$ controllable with $ D_p(\widetilde{\mathcal{A}}^{s-1}(0, 2L))$ controls.
\end{thrm}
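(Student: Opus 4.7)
The plan is to adapt the moments argument from the proof of Theorem \ref{tho-con-w} to the target operator $\widetilde{\mathcal{A}}$. By linearity it suffices to drive the zero state to an arbitrary target $z^\ast = \sum_{n\in\Z} k_n \widetilde{f}_n \in D(\widetilde{\mathcal{A}}^s)$ in time $T=2L$, since for larger $T$ we simply extend the control by zero on $(2L, T)$. Decomposing the solution as $z(t, \cdot) = \sum_n z_n(t) \widetilde{f}_n$ and testing \eqref{target-control} against the biorthogonal family $\{\widetilde{\phi}_n\}$ produces the decoupled ODEs
\begin{equation*}
\dot z_n(t) + \widetilde{\mu}_n z_n(t) = v(t) \langle \mathcal{I}_\nu, \widetilde{\phi}_n \rangle, \quad z_n(0) = 0.
\end{equation*}
Duhamel's formula then reduces the problem to solving the moments problem
\begin{equation*}
k_n = \langle \mathcal{I}_\nu, \widetilde{\phi}_n \rangle \int_0^{2L} e^{\widetilde{\mu}_n(s-2L)} v(s)\,ds, \quad \forall n \in \Z.
\end{equation*}

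To invert it, I would invoke the analogue of Theorem \ref{tho-rus}(3) for $\widetilde{\mathcal{A}}$: the asymptotics \eqref{mu_nAsympDissip} give $\widetilde{\mu}_n = \mu + i\pi n/L + O(1/n)$, so $\{e^{\widetilde{\mu}_n(\cdot - 2L)}\}_n$ is an $\ell^2$-summable perturbation of the orthogonal system $\{e^{\mu(\cdot - 2L)} e^{i\pi n(\cdot - 2L)/L}\}_n$, and a standard Kadec/Paley--Wiener stability argument yields that $\{e^{\widetilde{\mu}_n(\cdot - 2L)}\}_n$ is itself a Riesz basis of $L^2(0, 2L)$. Let $\{\widetilde{p}_n\}$ denote its dual biorthogonal Riesz basis. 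Then setting
\begin{equation*}
v(s) := \sum_{n\in\Z} \frac{k_n}{\langle \mathcal{I}_\nu, \widetilde{\phi}_n \rangle}\, \overline{\widetilde{p}_n(s)}
\end{equation*}
formally solves the moments problem by biorthogonality.

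For the regularity claim we use Lemma \ref{til-lem-fond}(iv), which states that $|\widetilde{\mu}_n \langle \mathcal{I}_\nu, \widetilde{\phi}_n \rangle| \in (1/2, 3/2)$ uniformly in $n$: the inverse weights $|\langle \mathcal{I}_\nu, \widetilde{\phi}_n \rangle|^{-1}$ grow exactly like $|\widetilde{\mu}_n|$. Thus, if $z^\ast \in D(\widetilde{\mathcal{A}}^s)$, meaning $\sum (1 + |\widetilde{\mu}_n|^{2s}) |k_n|^2 < \infty$, the coefficients $k_n / \langle \mathcal{I}_\nu, \widetilde{\phi}_n \rangle$ satisfy $\sum (1 + |\widetilde{\mu}_n|^{2(s-1)}) |k_n / \langle \mathcal{I}_\nu, \widetilde{\phi}_n \rangle|^2 < \infty$, which is exactly the summability condition defining $D_p(\widetilde{\mathcal{A}}^{s-1}(0, 2L))$ through the Riesz basis $\{\widetilde{p}_n\}$.

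The main obstacle is the Riesz basis property for $\{e^{\widetilde{\mu}_n(\cdot - 2L)}\}$ in $L^2(0, 2L)$, i.e.\ the analogue of Theorem \ref{tho-rus}(3) for the shifted spectrum. Multiplication by $e^{-\mu(\cdot - 2L)}$ is a bounded isomorphism of $L^2(0, 2L)$, so the question reduces to stability of the orthonormal basis $\{e^{i\pi n(\cdot - 2L)/L}\}$ under the $O(1/n)$ perturbation of frequencies from \eqref{mu_nAsympDissip}, which is classical. All remaining steps, namely the solvability of the moments problem, the regularity of the control, and the realization of the controlled trajectory in the required class, then follow routinely from Lemma \ref{til-lem-fond} and the restriction $\gamma \in (-\widetilde{\gamma}_\mu, \widetilde{\gamma}_\mu)$ which guarantees uniform lower bounds on the moments.
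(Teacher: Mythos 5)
Your proposal is correct and follows essentially the same route as the paper: the paper proves Theorem \ref{tmm-diss-con} by repeating the moments argument of Theorem \ref{tho-con-w} for the operator $\widetilde{\mathcal{A}}$, using the perturbative spectral estimates of Lemma \ref{tild-lem-fond} and the uniform moment bounds of Lemma \ref{til-lem-fond}(iv) together with the classical moment theory (Riesz basis of exponentials and its dual family, exactly as in \eqref{u-mom-pres}). Your explicit Kadec/Paley--Wiener justification of the Riesz basis property for $\{e^{\widetilde{\mu}_n(\cdot-2L)}\}$ is just a spelled-out version of the same classical ingredient the paper invokes via Theorem \ref{tho-rus}, and your regularity bookkeeping matches the paper's.
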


\section{\textcolor{black}{Construction of candidates for the backstepping transformation}}\label{s4}
From now on, {\color{black}for ease of notations},  let $\mu>0$ be the desired decay rate, and $\gamma, \nu >0$ {\color{black} satisfying conditions indicated in Section \ref{subsubsection-expstab-target} and Section \ref{s3} such that the target system has desired stability, and that both the original systems \eqref{sys1virt} and the target system \eqref{target-control} are  controllable. }\textcolor{black}{In order to stabilize the original system exponentially, we will now follow the backstepping strategy, that is, we will try to find a feedback such that the resulting closed-loop system can be mapped invertibly to our target system \eqref{target}. We will proceed in several steps. The first step is the following: for a given feedback law $F$, we solve an infinite-dimensional equivalent of the first equation of $\eqref{FiniteDOpEq}$ to build a transformation $T$. Using characterizations of Riesz bases in Section \ref{Riesz basis property}, this allows us to give conditions on $F$ for $T$ to be invertible. Then in  Section \ref{TB-from-OpEq} we check if there are some feedbacks $F$ such that the above construction actually yields a solution to the real backstepping equations, by considering an infinite-dimensional equivalent (equation \eqref{OperatorEquality}) of \eqref{backstepping-eq}. The search for such a feedback adds enough constraints on the feedback law to determine the expression of an exponentially stabilizing feedback law. Additionally, our work on equation \eqref{OperatorEquality} will ensure that the transformation really maps trajectories of the closed-loop system \eqref{sys1virt} (if they exist) to trajectories of the target system \eqref{target}.  }

\subsection{Kernel equations}\label{Kernel equations}
\textcolor{black}{Suppose there exists a solution $\mathbf{Z}$ to \eqref{sys1virt},
and a solution $z$ to \eqref{target}.
We are looking} for an invertible transformation that maps $ \mathbf{Z}$ to $ z$, under the form of a Fredholm transformation:
\begin{equation}
\label{transfo}
\begin{pmatrix}
z_{1}(t,x)\\z_{2}(t,x)
 \end{pmatrix}=T( \mathbf{Z}(t,\cdot))(x):=\int_{0}^{L}K(x,y).\begin{pmatrix}
\mathbf{Z}_{1}(t,y)\\\mathbf{Z}_{2}(t,y)
 \end{pmatrix}dy
\end{equation}
where $K(x,y)$ is a $2\times2$ matrix.
Following Section \ref{FiniteD}, we would like the kernel to satisfy the following equation, which is analogous to the first equation of \eqref{FiniteDOpEq}:
\begin{equation}\label{kernelEquation}
\Lambda \partial_{x} K+\partial_{y}K\Lambda+\delta(x)JK-K\delta(y)J=-\exp\left(\int_{0}^{x}\delta(y) dy\right)\begin{pmatrix}F_{1}&F_{2}\\F_{1}& F_{2} \end{pmatrix} (y)
\end{equation}
where $F_{1},F_{2}$ are the coefficients of the feedback,
together with
the following boundary conditions:
\begin{equation}\label{KernBoundx}
\begin{aligned}
& k_{11}(0,y)=-e^{-2\mu L}k_{21}(0,y),\\
& k_{12}(0,y)=-e^{-2\mu L}k_{22}(0,y),\\
& k_{11}(L,y)=-k_{21}(L,y),\\
& k_{12}(L,y)=-k_{22}(L,y),
\end{aligned}
\end{equation}

and
\begin{equation}\label{KernBoundy}
\begin{aligned}
&k_{11}(x,L)=-k_{12}(x,L),\\
&k_{21}(x,L)=-k_{22}(x,L),\\
&k_{11}(x,0)=-e^{2\mu L}k_{12}(x,0),\\
&k_{21}(x,0)=-e^{2\mu L}k_{22}(x,0).
\end{aligned}
\end{equation}

To study solutions to \eqref{kernelEquation}, \eqref{KernBoundx}, and \eqref{KernBoundy}, let us derive equations for the family of functions
\begin{equation*}
g_n:=(Tf_n)_{n\in \Z}.
\end{equation*}

This allows us to express \eqref{kernelEquation} as functions of $(g_n)$. Indeed, \textcolor{black}{integrating against the $f_{n}$} and using the boundary conditions \eqref{KernBoundx} and \eqref{KernBoundy} \textcolor{black}{we get that, for any $n\in\mathbb{Z}$,}
$$\begin{array}{rcl}
   -\int_0^L\exp\left(\int_{0}^{x}\delta(y) dy\right)\begin{pmatrix}F_{1}&F_{2}\\F_{1}& F_{2}\end{pmatrix}(y)f_n(y)dy & = &  \int_0^L \left(\Lambda \partial_{x} K(x,y)f_n(y)+\partial_{y}K(x,y)\Lambda f_n(y) \right.\\
   & &\left.+\delta(x)JK(x,y)f_n(y)-K(x,y)\delta(y)J f_n(y)\right)dy \\
   &=& \Lambda \partial_{x} g_n  + \delta(x) J g_n - T\mathcal{A}f_n \\
   &=& \Lambda \partial_{x} g_n  - \mu_n g_n + \delta(x) J g_n,
\end{array}$$
hence the following {\color{black}equations} for $(g_n)$:
\begin{equation}\label{ProjKernelEquation}
\left\{\begin{array}{l}\Lambda \partial_{x} g_n - \mu_n g_n + \delta(x) J g_n = - \textcolor{black}{\langle f_n,F\rangle} \mathcal{I}_\nu, \\
g_{n,1}(0)+e^{-2\mu L} g_{n,2}(0)=0, \quad g_{n,1}(L)+g_{n,2}(L)=0. \end{array}\right.
\end{equation}

Now, we study solutions to \eqref{ProjKernelEquation} for any given $F$.

By property of biorthogonal families, we have the following decomposition:
\begin{equation}\label{gndecomp}
g_n=\sum_{p \in \Z} \langle g_n, \widetilde{\phi}_p \rangle \widetilde{f}_p \ \textrm{in} \ L^2, \quad \forall n \in \Z.
\end{equation}
Taking the scalar product of equation \eqref{ProjKernelEquation} with the $\widetilde{\phi}_p$, we get
\begin{equation}\label{coeffequation}
    \left\langle \widetilde{\mathcal{A}}g_n , \widetilde{\phi}_p \right\rangle -\mu_n \langle g_n, \widetilde{\phi}_p\rangle = - \textcolor{black}{\langle f_n,F\rangle} \left\langle\mathcal{I}_\nu, \widetilde{\phi}_p \right\rangle, \quad \forall n \in \Z, \ \forall p \in \Z.
\end{equation}
As $\mathcal{A}$ is anti-hermitian, {\color{black} the equation} \eqref{coeffequation} becomes
\begin{equation*}
    \left\langle g_n , \widetilde{\mathcal{A}}^\ast \widetilde{\phi}_p \right\rangle -\mu_n \langle g_n, \widetilde{\phi}_p\rangle = - \textcolor{black}{\langle f_n,F\rangle} \left\langle\mathcal{I}_\nu, \widetilde{\phi}_p \right\rangle, \quad \forall n \in \Z, \ \forall p \in \Z,
\end{equation*}
hence finally
\begin{equation}\label{coeffexpression}
    \langle g_n, \widetilde{\phi}_p\rangle = - \frac{\textcolor{black}{\langle f_n,F\rangle} \left\langle\mathcal{I}_\nu, \widetilde{\phi}_p \right\rangle}{\widetilde{\mu}_p - \mu_n }, \quad \forall n \in \Z, \ \forall p \in \Z.
\end{equation}

\subsection{Riesz basis property}\label{Riesz basis property}

Now we study the invertibility of \textcolor{black}{this operator $T$ thus defined.}
The first obvious observation \textcolor{black}{from \eqref{coeffexpression}} is that $T$ is injective if and only if
\begin{equation}\label{NonZeroFeedbackCoeff}
 \langle f_n, F\rangle\neq 0, \quad \forall n \in \Z.
 \end{equation}
 Next, considering {\color{black} the formula} \eqref{coeffexpression}, we turn to the family of functions given by
\begin{equation}\label{def-kn}
    k_n:=\sum_{p\in \Z} \frac1{\widetilde{\mu}_p - \mu_n}\widetilde{f}_p, \quad \forall n \in \Z.
\end{equation}

Then we have the following property for the $(k_n)$:
\begin{lmm}
$(k_n)$ is a Riesz basis of $L^2(0,L)^2$.
\end{lmm}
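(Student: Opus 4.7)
The plan is to reduce the Riesz basis property of $(k_n)$ to the bounded invertibility of an infinite matrix on $\ell^2(\Z)$, handle the unperturbed case $\gamma=0$ via an explicit Fourier computation, and conclude for small $\gamma>0$ by perturbation.

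For the reduction, since $(\widetilde{f}_p)_{p\in\Z}$ is a Riesz basis of $(L^2)^2$, the map $V_{\widetilde{f}}:\ell^2(\Z)\to(L^2)^2$ sending $e_p\mapsto\widetilde{f}_p$ is a bounded isomorphism, and similarly $V_f:\ell^2\to(L^2)^2$, $V_f(e_n)=f_n$, is unitary since $(f_n)$ is orthonormal. Introduce the infinite matrix $\mathcal{B}$ on $\ell^2(\Z)$ with entries $\mathcal{B}_{pn}:=1/(\widetilde{\mu}_p-\mu_n)$ and set $V:=V_{\widetilde{f}}\circ\mathcal{B}\circ V_f^{-1}$. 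Then by construction $Vf_n=k_n$, so $(k_n)$ is a Riesz basis of $(L^2)^2$ if and only if $V$ is bounded invertible, which in turn holds if and only if $\mathcal{B}$ is a bounded invertible operator on $\ell^2(\Z)$.

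For $\gamma=0$, the entries $\mathcal{B}^{(0)}_{pn}=1/(\mu+i\pi(p-n)/L)$ depend only on $p-n$, making $\mathcal{B}^{(0)}$ a bi-infinite Toeplitz (Laurent) operator. I would compute its symbol by recognizing that $\{1/(\mu+i\pi q/L)\}_q$ are (up to normalization) the Fourier coefficients of $e^{-\mu y}$ on $(0,2L)$, yielding
\begin{equation*}
b(\theta):=\sum_{q\in\Z}\frac{e^{iq\theta}}{\mu+i\pi q/L}=\frac{2L}{1-e^{-2\mu L}}\,e^{-\mu L\theta/\pi},\qquad \theta\in(0,2\pi).
\end{equation*}
Since $\mu>0$, $b$ is bounded and bounded away from zero on the unit circle, so $\mathcal{B}^{(0)}$ is bounded and invertible on $\ell^2(\Z)$. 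As a cross-check, the same computation gives the explicit formula $k_n^{(0)}=\tfrac{2L}{1-e^{-2\mu L}}f_n^{(0)}$, exhibiting $(k_n^{(0)})$ as a scalar multiple of the orthonormal basis $(f_n^{(0)})$.

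For $\gamma>0$ small, I would conclude by perturbation. Writing
\begin{equation*}
(\mathcal{B}-\mathcal{B}^{(0)})_{pn}=\frac{(\widetilde{\mu}_p^{(0)}-\widetilde{\mu}_p)+(\mu_n-\mu_n^{(0)})}{(\widetilde{\mu}_p-\mu_n)(\widetilde{\mu}_p^{(0)}-\mu_n^{(0)})},
\end{equation*}
the denominators satisfy $|\widetilde{\mu}_p-\mu_n|,|\widetilde{\mu}_p^{(0)}-\mu_n^{(0)}|\gtrsim\mu+|p-n|$ uniformly in $\gamma$ small, since $\operatorname{Re}(\widetilde{\mu}_p-\mu_n)=\mu+O(\gamma)>0$. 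The numerator tends to zero pointwise as $\gamma\to 0$ and, thanks to the asymptotics \eqref{mu_n asymp-lam} (and the analogous refinement for $\widetilde{\mu}_p$ obtainable from the Kato-type analysis of Section \ref{sec-2.2}), is dominated by a fixed Hilbert--Schmidt summable majorant. Dominated convergence then yields $\|\mathcal{B}-\mathcal{B}^{(0)}\|_{HS}\to 0$, so a Neumann series argument shows that $\mathcal{B}$ is bounded invertible on $\ell^2(\Z)$ for $\gamma$ small, completing the proof.

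The main obstacle is precisely this perturbation step: one needs a quantitative small-norm bound on $\mathcal{B}-\mathcal{B}^{(0)}$, which requires sharper asymptotics for $\widetilde{\mu}_p-\widetilde{\mu}_p^{(0)}$ and $\mu_n-\mu_n^{(0)}$ than the uniform $O(1)$ bounds in Lemma \ref{tild-lem-fond}. These refined estimates stem from the smoothness of $\delta$ and integration-by-parts arguments analogous to \eqref{es-cal-macal-I}, which are already implicit in the controllability work of Section \ref{s3}.
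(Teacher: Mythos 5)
Your route is genuinely different from the paper's and is viable in outline, but it is longer than necessary and has one step that does not go through as written. The paper does not perturb off $\gamma=0$ at all: a single integration by parts (computing $\langle \mathcal{A}f_n,\widetilde{\phi}_p\rangle$ with the mismatched boundary conditions) gives the \emph{exact} identity \eqref{fn in tildefp}, $f_n=f_{n,1}(0)\,\widetilde{\tau}_\mathcal{A}k_n$, where $\widetilde{\tau}_\mathcal{A}$ is the diagonal operator \eqref{tautilde} with coefficients $\overline{\widetilde{\phi}_{p,1}(0)}(1-e^{-2\mu L})$; Lemma \ref{LmmBoundaryBehaviour} bounds these coefficients, and $|f_{n,1}(0)|$, above and below, so $k_n=\widetilde{\tau}_\mathcal{A}^{-1}\bigl(f_n/f_{n,1}(0)\bigr)$ is the image of a Riesz basis under an isomorphism and the lemma follows in a few lines, for every admissible $\gamma$. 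Your explicit cross-check $k_n^{(0)}=\frac{2L}{1-e^{-2\mu L}}f_n^{(0)}$ is precisely this identity at $\gamma=0$ (the factor $2L$ versus the paper's constants is just the $\frac{1}{2L}$ normalization of the scalar product); what you missed is that the relation is not an accident of the unperturbed case but holds identically in $\gamma$, which is what makes the Toeplitz-plus-perturbation machinery unnecessary. That said, your reduction to the bi-infinite matrix $\mathcal{B}_{pn}=1/(\widetilde{\mu}_p-\mu_n)$ is sound, and your Laurent-symbol computation at $\gamma=0$ (symbol $\frac{2L}{1-e^{-2\mu L}}e^{-\mu L\theta/\pi}$, bounded and bounded away from zero) is correct.

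The genuine gap is the perturbation step. With only the uniform bounds $|\mu_n-\mu_n^{(0)}|,\,|\widetilde{\mu}_p-\widetilde{\mu}_p^{(0)}|=\mathcal{O}(\gamma)$ of \eqref{est-mu-n-mu} and Lemma \ref{tild-lem-fond}, your kernel estimate is
\begin{equation*}
\bigl|(\mathcal{B}-\mathcal{B}^{(0)})_{pn}\bigr|\le \frac{C\gamma}{(1+|p-n|)^{2}},
\end{equation*}
and a majorant depending only on $p-n$ is \emph{never} Hilbert--Schmidt on $\ell^2(\Z)$: summing over $n$ diverges. So "dominated by a fixed Hilbert--Schmidt summable majorant'' fails unless you first upgrade the eigenvalue perturbations to $\mathcal{O}(\gamma/(1+|n|))$ as in \eqref{mu_n asymp-lam} (and prove its analogue for $\widetilde{\mu}_p$, which Lemma \ref{tild-lem-fond} does not state); you correctly flag this as the obstacle, but as written the proof is incomplete. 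The simpler repair is to drop Hilbert--Schmidt altogether: the displayed bound is a convolution-type majorant, so Schur's test (or Young's inequality) gives $\|\mathcal{B}-\mathcal{B}^{(0)}\|_{\ell^2\to\ell^2}\le C\gamma\sum_q(1+|q|)^{-2}=\mathcal{O}(\gamma)$ directly from the uniform $\mathcal{O}(\gamma)$ bounds already available, and the Neumann series then closes your argument without any refined asymptotics. Even so, note what each approach buys: yours makes the mechanism (Toeplitz symbol plus small perturbation) transparent and quantifies the distance to the unperturbed case, while the paper's identity is shorter, exact in $\gamma$, and only needs the boundary-value bounds of Lemma \ref{LmmBoundaryBehaviour}.
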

\begin{proof}
First, let us perform the computations of \eqref{coeffequation} with the $(f_n)$:
\begin{equation}\label{fncoeffequation}
\left\langle\mathcal{A}f_n , \widetilde{\phi}_p \right\rangle -\mu_n \langle f_n, \widetilde{\phi}_p\rangle = 0, \quad \forall n \in \Z.
\end{equation}
This time, due to the dissipative boundary conditions in $0$ for the $\widetilde{\phi}_p$, boundary terms appear in the integration by parts (which amounts to taking the adjoint $\widetilde{\mathcal{A}}^\ast$):
\begin{equation*}
    \begin{array}{rcl}
    \mu_n \langle f_n, \widetilde{\phi}_p\rangle &=& \left\langle\mathcal{A}f_n , \widetilde{\phi}_p \right\rangle \\
    &=& {\color{black}\left\langle f_n, \mathcal{A}^\ast \widetilde{\phi}_p\right\rangle + f_{n,2}(0)\overline{\widetilde{\phi}_{p,2}(0)}-f_{n,1}(0)\overline{\widetilde{\phi}_{p,1}(0)} } \\
    &=&\left\langle f_n, \widetilde{\mathcal{A}}^\ast \widetilde{\phi}_p\right\rangle + f_{n,2}(0)\overline{\widetilde{\phi}_{p,2}(0)}-f_{n,1}(0)\overline{\widetilde{\phi}_{p,1}(0)}  \\
    &=& \widetilde{\mu_p} \langle f_n, \widetilde{\phi}_p \rangle - f_{n,1}(0)\overline{\widetilde{\phi}_{p,1}(0)}\left( 1-e^{-2\mu L} \right),
    \end{array}
\end{equation*}
where the last equality is obtained using boundary conditions \textcolor{black}{given by \eqref{cond-0} and \eqref{target}}. From this we get the following decomposition for $f_n$:
\begin{equation}\label{fn in tildefp}
f_n=\sum_{p\in \Z} \frac{f_{n,1}(0)\overline{\widetilde{\phi}_{p,1}(0)}\left( 1-e^{-2\mu L} \right)}{\widetilde{\mu_p}-\mu_n}\widetilde{f}_p=f_{n,1}(0)\widetilde{\tau}_\mathcal{A} k_n, \quad \forall n \in \Z.
\end{equation}
where $\widetilde{\tau}_\mathcal{A}$ is the operator defined by
\begin{equation}\label{tautilde}
\widetilde{\tau}_\mathcal{A} \widetilde{f}_p:= \overline{\widetilde{\phi}_{p,1}(0)}(1-e^{-2\mu L}) \widetilde{f}_p, \quad \forall p \in \Z,
\end{equation}
{\color{black}which also explains the definition of $k_n$ in \eqref{def-kn}.}

Let us now give a crucial lemma.
\begin{lmm}\label{LmmBoundaryBehaviour}
\textcolor{black}{There exists $\gamma^{*}>0$ such that, for any $\gamma\in(0,\gamma^{*})$, t}here exist $m,M>0$ such that
\begin{equation}\label{boundaryphi}
\begin{aligned}
m \leq \left|\widetilde{\phi}_{p,1}(0)\right|\leq M, \quad \forall p \in \Z,\\
m \leq \left|f_{p,1}(0)\right|\leq M, \quad \forall p \in \Z.
\end{aligned}
\end{equation}
\end{lmm}
\begin{proof}
When $\gamma=0$, $f_{p,1}(0)=1$. Referring to asymptotic estimates given in Section \ref{s3}, more precisely \eqref{def-an} and estimate \eqref{varphi-0}, there exists
$\gamma^{*}>0$ such that this remains true for any $\gamma\in(0,\gamma^{*})$. Similarly, one can show looking at \eqref{target}--\eqref{Atilde} that $\widetilde{\phi}_{p,1}(0)\neq0$ and is uniformly away from 0 (independent of $p$), thus, using estimate \eqref{tild-varphi-es}, the result holds.
\end{proof}
We can assume that we have chosen $\gamma<\gamma^\ast$ without losing the controllability of \eqref{sys1virt} and \eqref{target-control}. Then it is clear thanks to Lemma \ref{LmmBoundaryBehaviour} that
$\widetilde{\tau}_\mathcal{A}$ is an isomorphism of $(L^2)^2$. Moreover, thanks to the same lemma \textcolor{black}{and the definition of a Riesz basis},
the family of functions defined by
\begin{equation*}
    \left(\frac{f_n}{f_{n,1}(0)}  \right)
\end{equation*}
forms a Riesz basis of $(L^2)^2$, so that the family
\begin{equation*}
    (k_n)=\left( \widetilde{\tau}_\mathcal{A}^{-1} \frac{f_n}{f_{n,1}(0)} \right)
\end{equation*}
also forms a Riesz basis of  $(L^2)^2$.
\end{proof}

We now recall a result about Riesz basis (see for instance \cite[Chapter 4]{Christensen})
\begin{prpstn}\label{riesz1}
A family of vector $(f_{k})_{k\in\mathbb{Z}}$ of a Hilbert space $H$ is a Riesz basis if and only if it is complete
and there exists positive constants $c$ and $C$ such that, for any scalar sequence $(a_{k})$ with finite support,
\begin{equation}
c\sum |a_{k}|^{2}\leq \lVert\sum a_{k}f_{k}\rVert_{H}^{2}\leq C\sum |a_{k}|^{2}.
\end{equation}
\end{prpstn}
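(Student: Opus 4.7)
\medskip

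\noindent\textbf{Proof plan.} The plan is to verify both implications by reducing to the defining property of a Riesz basis, namely that $(f_k)$ is the image of an orthonormal basis of $H$ under a bounded invertible operator $U: H \to H$.

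For the direct implication, I would start by assuming $(f_k)$ is a Riesz basis, so there exist a bounded invertible $U$ and an orthonormal basis $(e_k)$ of $H$ with $f_k = U e_k$. Completeness of $(f_k)$ is immediate from surjectivity of $U$ and completeness of $(e_k)$. For the inequality, given a finite sequence $(a_k)$, I would use
\begin{equation*}
\Bigl\lVert \sum a_k f_k \Bigr\rVert_H^{2} = \Bigl\lVert U \sum a_k e_k \Bigr\rVert_H^{2},
\end{equation*}
and then apply Parseval's identity $\lVert \sum a_k e_k\rVert_H^{2} = \sum |a_k|^2$ together with the two-sided bound $\lVert U^{-1}\rVert^{-1} \lVert x\rVert \le \lVert U x\rVert \le \lVert U\rVert \lVert x\rVert$ to obtain the desired inequality with $c = \lVert U^{-1}\rVert^{-2}$ and $C = \lVert U\rVert^{2}$.

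For the converse implication, I would fix any orthonormal basis $(e_k)$ of $H$ (using separability of $H$, which holds in the setting of the paper) and define a linear map $U$ on the dense subspace $\mathrm{span}\{e_k\}$ by $U e_k := f_k$. The upper inequality shows that $U$ is bounded on this dense subspace with $\lVert U\rVert \le \sqrt{C}$, hence extends uniquely to a bounded operator on all of $H$. The lower inequality then passes to the closure and shows $\lVert U x\rVert \ge \sqrt{c}\,\lVert x\rVert$ for all $x \in H$, so $U$ is injective with closed range. Completeness of $(f_k)$ means $\mathrm{Ran}\,U$ is dense; combined with closedness, this gives surjectivity, hence $U$ is an isomorphism and $(f_k) = (U e_k)$ is a Riesz basis.

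The argument is essentially routine; the only point requiring slight care is the extension-by-continuity step in the converse direction, where one must check that the operator defined on the dense subspace of finite combinations genuinely extends to a bounded operator bounded below on all of $H$ (which follows from the standard fact that a uniformly continuous densely defined map into a complete space extends uniquely, and that both the upper and lower bounds pass to this extension).
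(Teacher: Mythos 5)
Your proof is correct, and it is the standard argument: the paper itself does not prove this proposition but simply recalls it with a citation to Christensen's book (Chapter 4), where essentially the same operator-theoretic argument appears — write $f_k = Ue_k$, use Parseval plus the norms of $U$ and $U^{-1}$ for one direction, and in the converse extend the densely defined map by continuity, use the lower bound to get injectivity and closed range, and completeness to get surjectivity. One small remark: you do not need to invoke the setting of the paper to justify separability in the converse direction — a complete countable family $(f_k)$ already forces $H$ to be separable, since the rational (or rational-complex) finite combinations of the $f_k$ are dense.
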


We can now prove the following Riesz basis property for the $(g_n)$:
\begin{prpstn}\label{Riesz-prop}
The family
\begin{equation*}\left(\frac1{\textcolor{black}{\langle f_n,F\rangle}}g_n\right)_{n\in \Z}\end{equation*} is a Riesz basis of $D(\widetilde{\mathcal{A}})$.
\end{prpstn}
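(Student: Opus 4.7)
My plan is to identify $g_n/\langle f_n, F\rangle$ as the image of $k_n$ under an explicit diagonal multiplier, analogous to the role of $\widetilde{\tau}_\mathcal{A}$ in the preceding lemma, and then reduce the proposition to the Riesz basis property of $(k_n)$ already established in $(L^2)^2$. Concretely, combining \eqref{gndecomp} with \eqref{coeffexpression}, I rewrite
\begin{equation*}
\frac{g_n}{\langle f_n, F\rangle} \;=\; -\sum_{p\in\Z} \frac{\langle \mathcal{I}_\nu, \widetilde{\phi}_p\rangle}{\widetilde{\mu}_p-\mu_n}\,\widetilde{f}_p \;=\; \widetilde{\tau}_{\mathcal{I}_\nu}\, k_n,
\end{equation*}
where I introduce the multiplier $\widetilde{\tau}_{\mathcal{I}_\nu}$ defined on the Riesz basis $(\widetilde{f}_p)$ of $(L^2)^2$ by $\widetilde{\tau}_{\mathcal{I}_\nu}\widetilde{f}_p := -\langle \mathcal{I}_\nu, \widetilde{\phi}_p\rangle\,\widetilde{f}_p$. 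Since the image of a Riesz basis under an isomorphism is again a Riesz basis, it suffices to prove that $\widetilde{\tau}_{\mathcal{I}_\nu}$ is an isomorphism from $(L^2)^2$ onto $D(\widetilde{\mathcal{A}})$.

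To verify this, I would decompose an arbitrary $\alpha\in(L^2)^2$ via biorthogonality as $\alpha=\sum_p\langle\alpha,\widetilde{\phi}_p\rangle\widetilde{f}_p$ and compute, using the defining norm of $D(\widetilde{\mathcal{A}})$,
\begin{equation*}
\bigl\|\widetilde{\tau}_{\mathcal{I}_\nu}\alpha\bigr\|_{D(\widetilde{\mathcal{A}})}^{2} \;=\; \sum_{p\in\Z}(1+|\widetilde{\mu}_p|^{2})\,|\langle\mathcal{I}_\nu,\widetilde{\phi}_p\rangle|^{2}\,|\langle\alpha,\widetilde{\phi}_p\rangle|^{2}.
\end{equation*}
The crucial analytic input here is Lemma \ref{til-lem-fond}(iv), which provides the two-sided bound $|\widetilde{\mu}_p\langle\widetilde{\phi}_p,\mathcal{I}_\nu\rangle|\in(1/2,3/2)$. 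Combined with the asymptotics \eqref{mu_nAsympDissip}, which ensure that $|\widetilde{\mu}_p|$ is bounded away from zero (because $\mathrm{Re}\,\widetilde{\mu}_p\to\mu>0$), this yields
\begin{equation*}
(1+|\widetilde{\mu}_p|^{2})\,|\langle\mathcal{I}_\nu,\widetilde{\phi}_p\rangle|^{2}\;\asymp\;1 \quad \text{uniformly in } p\in\Z.
\end{equation*}
Invoking the Riesz basis property of $(\widetilde{f}_p)$ in $(L^2)^2$ (Lemma \ref{til-lem-fond}(i)) to identify $\|\alpha\|_{(L^2)^2}^2\asymp\sum_p|\langle\alpha,\widetilde{\phi}_p\rangle|^2$, one concludes $\|\widetilde{\tau}_{\mathcal{I}_\nu}\alpha\|_{D(\widetilde{\mathcal{A}})}\asymp\|\alpha\|_{(L^2)^2}$, so that $\widetilde{\tau}_{\mathcal{I}_\nu}$ is indeed an isomorphism.

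Putting everything together, since $(k_n)$ is a Riesz basis of $(L^2)^2$ by the previous lemma and $\widetilde{\tau}_{\mathcal{I}_\nu}:(L^2)^2\to D(\widetilde{\mathcal{A}})$ is an isomorphism, the image family $(\widetilde{\tau}_{\mathcal{I}_\nu}k_n)=(g_n/\langle f_n,F\rangle)$ is a Riesz basis of $D(\widetilde{\mathcal{A}})$, via Proposition \ref{riesz1}. I do not expect a real obstacle: all of the work has in fact been front-loaded into the controllability estimate of Lemma \ref{til-lem-fond} and into the Riesz basis property of $(k_n)$; the proposition itself then follows by a short, essentially diagonal norm comparison. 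The only subtle point to watch for is that the bound $|\widetilde{\mu}_p|\gtrsim 1$ (equivalent to $1+|\widetilde{\mu}_p|^2\asymp|\widetilde{\mu}_p|^2$) relies on $\mu>0$, which is precisely the regime of interest.
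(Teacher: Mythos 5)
Your argument is correct in substance and rests on exactly the same analytic ingredients as the paper's proof --- the Riesz basis property of $(k_n)$, the two-sided controllability bound giving $(1+|\widetilde{\mu}_p|^2)\,|\langle\mathcal{I}_\nu,\widetilde{\phi}_p\rangle|^2\asymp 1$, and the Riesz basis property of $(\widetilde{f}_p)$ --- but it packages them differently. The paper proceeds in two steps: it first verifies the Riesz sequence inequalities of Proposition \ref{riesz1} by a direct computation on finite sums $\sum_{n\in I}a_n\,g_n/\langle f_n,F\rangle$ (leading to \eqref{Rieszineqforfptilde} and \eqref{Rieszineqforkn}), and then proves completeness separately, showing that any $\alpha\in D(\widetilde{\mathcal{A}})$ which is $D(\widetilde{\mathcal{A}})$-orthogonal to every $g_n/\langle f_n,F\rangle$ pairs to zero against every $k_n$, hence vanishes by completeness of $(k_n)$ together with the nonvanishing of the coefficients $\langle\mathcal{I}_\nu,\widetilde{\phi}_p\rangle$. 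You instead factor $g_n/\langle f_n,F\rangle=\widetilde{\tau}_{\mathcal{I}_\nu}k_n$ through an explicit diagonal multiplier on $(\widetilde{f}_p)$ and reduce the proposition to showing that this multiplier is an isomorphism from $(L^2)^2$ onto $D(\widetilde{\mathcal{A}})$; this extends by one step the device the paper itself uses in the preceding lemma with $\widetilde{\tau}_{\mathcal{A}}$, and it yields a more compact, one-shot argument in which the norm computation is an exact identity for the coefficient norm of $D(\widetilde{\mathcal{A}})$.

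The one point you must make explicit is surjectivity. The equivalence $\|\widetilde{\tau}_{\mathcal{I}_\nu}\alpha\|_{D(\widetilde{\mathcal{A}})}\asymp\|\alpha\|_{(L^2)^2}$ gives injectivity and closed range only; without ontoness you obtain a Riesz sequence, not a Riesz basis of $D(\widetilde{\mathcal{A}})$. The fix is one line: given $\beta\in D(\widetilde{\mathcal{A}})$, the candidate preimage $\alpha=-\sum_{p\in\Z}\frac{\langle\beta,\widetilde{\phi}_p\rangle}{\langle\mathcal{I}_\nu,\widetilde{\phi}_p\rangle}\,\widetilde{f}_p$ belongs to $(L^2)^2$ because $|\langle\mathcal{I}_\nu,\widetilde{\phi}_p\rangle|^{-1}\leq C(1+|\widetilde{\mu}_p|)$ (the lower bound in item (iv) of Lemma \ref{til-lem-fond}, i.e.\ controllability of the target system) while $\big((1+|\widetilde{\mu}_p|)\langle\beta,\widetilde{\phi}_p\rangle\big)_p\in\ell^2(\Z)$. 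This is precisely where the paper's separate completeness argument is hiding, so your claim that all the work is front-loaded into the controllability estimates and the Riesz property of $(k_n)$ becomes accurate once this sentence is added; your remark that $1+|\widetilde{\mu}_p|^2\asymp|\widetilde{\mu}_p|^2$ because the real parts stay near $\mu>0$ is the right justification, via \eqref{mu_nAsympDissip} or item (iii) of Lemma \ref{til-lem-fond}.
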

\begin{proof}
We first prove that the above family is a Riesz sequence: let $I$ be a finite subset of $\Z$, and $(a_n)\in \CC^I$. Then
\begin{equation}
    \begin{aligned}
        \left\|\sum_{n \in I} \frac{a_n}{\textcolor{black}{\langle f_n,F\rangle}} g_n\right\|_{D(\widetilde{\mathcal{A}})}^2 &=  \left\|\sum_{n \in I} \frac{a_n}{\textcolor{black}{\langle f_n,F\rangle}} \sum_{p\in \Z}  \frac{\textcolor{black}{\langle f_n,F\rangle} \left\langle\mathcal{I}_\nu, \widetilde{\phi}_p \right\rangle}{\widetilde{\mu}_p - \mu_n } \widetilde{f}_p\right\|_{D(\widetilde{\mathcal{A}})}^2 \\
        &= \left\|\sum_{p\in \Z}  \left\langle\mathcal{I}_\nu, \widetilde{\phi}_p \right\rangle \left(\sum_{n \in I} \frac{a_n}{\widetilde{\mu}_p - \mu_n }\right) \widetilde{f}_p\right\|_{D(\widetilde{\mathcal{A}})}^2 \\
        &= \left\|\sum_{p\in \Z}  \left\langle\mathcal{I}_\nu, \widetilde{\phi}_p \right\rangle \left(\sum_{n \in I} \frac{a_n}{\widetilde{\mu}_p - \mu_n }\right) \widetilde{f}_p\right\|^2\\
        &+ \left\|\sum_{p\in \Z}  \left\langle\mathcal{I}_\nu, \widetilde{\phi}_p \right\rangle \left(\sum_{n \in I} \frac{a_n}{\widetilde{\mu}_p - \mu_n }\right)\widetilde{\mu}_p \widetilde{f}_p\right\|^2,
    \end{aligned}
\end{equation}
as $(\widetilde{f}_p)$ is {\color{black}a Riesz basis for $(L^2)^2$}, and from Proposition \ref{riesz1}, there exist $C_1, C_2>0$ such that
\begin{equation}\label{Rieszineqforfptilde}
\begin{aligned}
    C_1\sum_{p\in \Z}  (1+|\widetilde{\mu}_p|^2)& \left|\left\langle\mathcal{I}_\nu, \widetilde{\phi}_p \right\rangle\right|^2\left|\sum_{n\in I} \frac{a_n}{\widetilde{\mu}_p - \mu_n } \right|^2 \\
    &\leq \left\|\sum_{n \in I}\frac{a_n}{\textcolor{black}{\langle f_n,F\rangle}} g_n\right\|_{D(\widetilde{\mathcal{A}})}^2 \leq C_2\sum_{p\in \Z}  (1+|\widetilde{\mu}_p|^2) \left|\left\langle\mathcal{I}_\nu, \widetilde{\phi}_p \right\rangle\right|^2\left|\sum_{n\in I} \frac{a_n}{\widetilde{\mu}_p- \mu_n } \right|^2.\end{aligned}
\end{equation}
Now, similar estimates to the ones used to obtain \eqref{Iboundary} (though even simpler in this case, as only first order of $\gamma$ is required), also thanks to the controllability of our system,
lead to the existence of constants $m, M>0$ such that
\begin{equation}
    m \leq (1+|\widetilde{\mu}_p|^2) \left|\left\langle\mathcal{I}_\nu, \widetilde{\phi}_p \right\rangle\right|^2  \leq M.
\end{equation}
Moreover,
\begin{equation}
    \sum_{p\in \Z} \left|\sum_{n\in I}\frac{a_n}{\widetilde{\mu}_p - \mu_n }\right|^2 = \left\| \sum_{n \in I} a_n k_n \right\|^2,
\end{equation}
so that, \textcolor{black}{from Proposition \ref{riesz1} and as $(k_{n})$ is a Riesz basis }, there exist constants $C_1, C_2>0$ such that
\begin{equation}\label{Rieszineqforkn}
C_1\sum_{n \in I} |a_n|^2 \leq \sum_{p\in \Z} \left|\sum_{n\in I}\frac{a_n}{\widetilde{\mu}_p - \mu_n }\right|^2  \leq C_2 \sum_{n \in I} |a_n|^2,
\end{equation}
and we get, putting \eqref{Rieszineqforfptilde} and \eqref{Rieszineqforkn} together, and for some constants $C_1, C_2>0$,
 \begin{equation}\label{Rieszineqforgn}
 C_1\sum_{n \in I} |a_n|^2 \leq\left\|\sum_{n \in I} a_n \frac{g_n}{\textcolor{black}{\langle f_n,F\rangle}}\right\|_{D(\widetilde{\mathcal{A}})}^2\leq C_2\sum_{n \in I} |a_n|^2.
 \end{equation}

Now let us prove that the above family is complete. Let $\alpha\in D(\widetilde{\mathcal{A}})$ be such that
\begin{equation}\label{orthTogn}
\left\langle \alpha, \textcolor{black}{\frac{g_n}{\langle F,f_{n}\rangle}} \right\rangle_{D(\widetilde{\mathcal{A}})} =0, \quad \forall n \in \Z.
\end{equation}
Then \eqref{orthTogn} \textcolor{black}{and the definition of $(k_{n})$ given by \eqref{def-kn}} imply that
\begin{equation}\begin{aligned}
    0&=\sum_{p\in \Z} (1+|\mu_p|^2) \langle\alpha, \widetilde{f}_p \rangle \frac{\left\langle\mathcal{I}_\nu, \widetilde{\phi}_p \right\rangle}{\widetilde{\mu}_p - \mu_n } \\
    &=\left\langle \sum_{p\in\Z}(1+|\mu_p|^2)\langle\alpha, \widetilde{f}_p\rangle\left\langle\mathcal{I}_\nu, \widetilde{\phi}_p \right\rangle \widetilde{\phi}_p, k_n \right\rangle, \quad \forall n \in \Z.
    \end{aligned}
\end{equation}
Thanks to \eqref{NonZeroFeedbackCoeff}, this implies
\begin{equation}
    \left\langle \sum_{p\in\Z}(1+|\mu_p|^2)\langle\alpha, \widetilde{f}_p\rangle\left\langle\mathcal{I}_\nu, \widetilde{\phi}_p \right\rangle \widetilde{\phi}_p, k_n \right\rangle=0, \quad \forall n \in \Z,
\end{equation}
which implies, thanks to the completeness of the Riesz basis $(k_n)$, that
\begin{equation}
    \sum_{p\in\Z}(1+|\mu_p|^2)\langle\alpha, \widetilde{f}_p\rangle\left\langle\mathcal{I}_\nu, \widetilde{\phi}_p \right\rangle \widetilde{\phi}_p=0.
\end{equation}
Hence, thanks to the controllability of the system,
\begin{equation*}
    \alpha=0.
\end{equation*}
This proves the completeness of the family, and, together with \eqref{Rieszineqforgn}, the proposition.
\end{proof}
\begin{crllr}\label{Tisomorphism}
If there exist constants $c,C>0$ such that
\begin{equation}\label{Fgrowth}
c(1+|n|) \leq |\langle F, f_n\rangle| \leq C(1+|n|), \quad \forall n \in \Z,
\end{equation}
then the expression, for $\alpha \in D(A)$,
\begin{equation}\label{DefinitionOfT}
T\alpha:=\sum_{n\in \Z} \langle \alpha, f_n \rangle g_n=-\sum_{n\in \Z} \langle \alpha, f_n \rangle \textcolor{black}{\langle f_n,F\rangle} \sum_{p\in \Z} \frac{\left\langle\mathcal{I}_\nu, \widetilde{\phi}_p \right\rangle}{\widetilde{\mu}_p - \mu_n } \widetilde{f}_p
\end{equation}
defines an isomorphism of $D(\mathcal{A}) \rightarrow D(\widetilde{\mathcal{A}})$. {\color{black}Moreover, if $F$ is real-valued, then it maps real-valued functions to real-valued functions.}
\end{crllr}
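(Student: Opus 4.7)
The plan is to deduce the isomorphism property directly from the Riesz basis statement of Proposition \ref{Riesz-prop}, by showing that the growth hypothesis \eqref{Fgrowth} makes the map $\alpha \mapsto T\alpha$ a rebranding of coordinates between two Riesz bases whose weighted $\ell^2$ norms are equivalent.

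Concretely, I would first rewrite the defining series \eqref{DefinitionOfT} as
\begin{equation*}
T\alpha = \sum_{n\in\Z} a_n(\alpha)\,\frac{g_n}{\langle f_n,F\rangle}, \qquad a_n(\alpha) := \langle\alpha,f_n\rangle\,\langle f_n,F\rangle.
\end{equation*}
By Proposition \ref{Riesz-prop}, $(g_n/\langle f_n,F\rangle)_{n\in\Z}$ is a Riesz basis of $D(\widetilde{\mathcal{A}})$, so Proposition \ref{riesz1} yields constants $c_1,C_1>0$ with $c_1\sum |a_n(\alpha)|^2 \leq \|T\alpha\|_{D(\widetilde{\mathcal{A}})}^2 \leq C_1\sum |a_n(\alpha)|^2$ for every $\alpha\in D(\mathcal{A})$. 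On the other hand, since $(f_n)$ is orthonormal in $(L^2)^2$, the definition of $D(\mathcal{A}^s)$ and the asymptotics \eqref{mu_n asymp} give $\|\alpha\|_{D(\mathcal{A})}^2 \asymp \sum_n (1+|n|^2)|\langle\alpha,f_n\rangle|^2$. Combining this with the two-sided bound \eqref{Fgrowth} on $|\langle f_n,F\rangle|$ produces
\begin{equation*}
\sum_{n\in\Z}|a_n(\alpha)|^2 = \sum_{n\in\Z}|\langle f_n,F\rangle|^2\,|\langle\alpha,f_n\rangle|^2 \asymp \sum_{n\in\Z}(1+|n|^2)|\langle\alpha,f_n\rangle|^2 \asymp \|\alpha\|_{D(\mathcal{A})}^2,
\end{equation*}
so $T$ is both bounded and bounded below from $D(\mathcal{A})$ to $D(\widetilde{\mathcal{A}})$. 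Surjectivity follows immediately from completeness of the Riesz basis $(g_n/\langle f_n,F\rangle)$ in $D(\widetilde{\mathcal{A}})$: given $z\in D(\widetilde{\mathcal{A}})$, expand it in this basis, read off the coefficients $a_n$, and set $\langle\alpha,f_n\rangle := a_n/\langle f_n,F\rangle$; the norm equivalence just obtained shows $\alpha\in D(\mathcal{A})$.

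For the real-valued preservation, I would use the symmetries $\mu_{-n}=\overline{\mu_n}$, $f_{-n}=\overline{f_n}$ (Proposition \ref{eigenf-prop}), $\widetilde{\mu}_{-n}=\overline{\widetilde{\mu}_n}$, $\widetilde{f}_{-n}=\overline{\widetilde{f}_n}$, $\widetilde{\phi}_{-n}=\overline{\widetilde{\phi}_n}$ (Proposition \ref{eigenf-prop-tilde}), together with the fact that $\mathcal{I}_\nu$ is real-valued (as $\delta$ is real for $\gamma\in\R^+$, and $f_0$ is real by \eqref{f-0-1-2-0}). A real-valued $F$ satisfies \eqref{real-valued-form}, and a real-valued $\alpha$ satisfies \eqref{real-valued-func}. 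Substituting $n\mapsto -n$ and $p\mapsto -p$ in the double series \eqref{DefinitionOfT} and using these symmetries gives $\langle T\alpha,\widetilde{\phi}_{-p}\rangle = \overline{\langle T\alpha,\widetilde{\phi}_p\rangle}$, which is exactly the analogue of \eqref{real-valued-func} for the basis $(\widetilde{f}_p)$ with biorthogonal $(\widetilde{\phi}_p)$, hence $T\alpha$ is real-valued a.e.

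The main (already largely absorbed) obstacle is the norm equivalence at the level of coefficients: without the sharp two-sided bound \eqref{Fgrowth} on $|\langle f_n,F\rangle|$, one could not simultaneously match the $(1+|\mu_n|^2)$ weighting that characterizes $D(\mathcal{A})$ and exploit the unweighted Riesz basis inequality inherited from Proposition \ref{Riesz-prop} in $D(\widetilde{\mathcal{A}})$; the rest is bookkeeping once that bound is available. The non-trivial work has already been done in Proposition \ref{Riesz-prop}, so the corollary is essentially a consequence of assembling these pieces.
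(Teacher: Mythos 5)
Your proof is correct and follows essentially the same route as the paper: the isomorphism part is exactly the "immediate" consequence of the Riesz basis property of Proposition \ref{Riesz-prop} (you merely spell out the coefficient bookkeeping that \eqref{Fgrowth} matches the $(1+|\mu_n|^2)$ weighting of $D(\mathcal{A})$), and the real-valuedness part is the same conjugation/re-indexing argument via Propositions \ref{eigenf-prop}, \ref{eigenf-prop-tilde} and Corollary \ref{cor-real-valued} that the paper carries out directly on $\overline{T\alpha}$.
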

{\color{black}\begin{proof}
The first part of the corollary follows immediately from the Riesz basis property of Proposition \ref{Riesz-prop}. To see that $T$ maps real-valued functions to real-valued functions, notice first that $\mathcal{I}_\nu$ is real-valued, as $\mathcal{I}$ and $f_0$ are real valued (see Proposition \ref{eigenf-prop}). Then, if $\alpha \in D(\mathcal{A})$ is real-valued, using Corollary \ref{cor-real-valued}, Propositions \ref{eigenf-prop} and \ref{eigenf-prop-tilde}, we have
\begin{equation}
    \overline{T\alpha}=-\sum_{n\in \Z} \langle \alpha, f_{-n} \rangle \langle f_{-n},F\rangle\sum_{p\in \Z} \frac{\left\langle\mathcal{I}_\nu, \widetilde{\phi}_{-p} \right\rangle}{\widetilde{\mu}_{-p} - \mu_{-n} } \widetilde{f}_{-p}=T\alpha.
\end{equation}
\end{proof}}

\subsection{Finding a suitable candidate: the operator equality}\label{TB-from-OpEq}

Now, for any $F \in \dist$ satisfying \eqref{Fgrowth}, we have used the heuristic kernel equation \eqref{kernelEquation}, \eqref{KernBoundx}, \eqref{KernBoundy} to build an isomorphism. The next step is to find a feedback law $F$ such that the corresponding isomorphism $T$ {\color{black}is indeed a backstepping transformation, which really maps the solutions of \eqref{sys1virt} to solutions of \eqref{target}. This translates into the following operator equality, which is the infinite-dimensional equivalent of $F$-equivalence, i.e. \eqref{backstepping-eq}}:
\begin{equation}\label{OperatorEquality}
T(-\mathcal{A}+\mathcal{I}_\nu F)=-\widetilde{\mathcal{A}}T
\end{equation}
on the domain
\begin{equation}\label{Domain}
D_F=\left\{\alpha \in D(\mathcal{A}), \quad -\mathcal{A}\alpha + \langle  \alpha, F\rangle \mathcal{I}_\nu \in D(\mathcal{A})\right\}.
\end{equation}

Let $F\in \dist$, satisfying \eqref{Fgrowth}. To prove \eqref{OperatorEquality}, it suffices to prove
\begin{equation}\label{WeakOpEq}
 \langle T(-\mathcal{A}\alpha+\langle  \alpha, F \rangle \mathcal{I}_\nu), \widetilde{\phi}_m\rangle=\langle-\widetilde{\mathcal{A}}T\alpha, \widetilde{\phi}_m\rangle , \quad \forall \alpha \in D_F, \quad \forall m \in \Z.
\end{equation}
Let $m\in \Z$, we consider the left-hand side of \eqref{WeakOpEq}, which can be rewritten as
\begin{equation}
    \langle -\mathcal{A}\alpha+\langle  \alpha, F \rangle \mathcal{I}_\nu, T^\ast \widetilde{\phi}_m\rangle_{D(\mathcal{A})\times D(\mathcal{A})^\prime}
    \label{Tdual}.
\end{equation}
Now, to evaluate the linear form $T^\ast \widetilde{\phi}_m$ on the function $\mathcal{A}\alpha+\langle \alpha, F \rangle \mathcal{I}_\nu$, we approximate the function by its truncated expansion in the basis $(\textcolor{black}{f}_n)_{\textcolor{black}{n\in\mathbb{Z}}}$:
\begin{equation}\begin{aligned}
    \langle T(-\mathcal{A}\alpha^{(N)}+\langle \alpha, F \rangle \mathcal{I}_\nu^{(N)}),  \widetilde{\phi}_m\rangle
&=\langle -\mathcal{A}\alpha^{(N)}+\langle \alpha, F \rangle \mathcal{I}_\nu^{(N)}, T^\ast \widetilde{\phi}_m\rangle_{D(\mathcal{A})\times D(\mathcal{A})^\prime} \\
&\xrightarrow[N \to \infty]{} \langle -\mathcal{A}\alpha+\langle \alpha,F \rangle \mathcal{I}_\nu, T^\ast \widetilde{\phi}_m\rangle_{D(\mathcal{A})\times D(\mathcal{A})^\prime}\end{aligned}
    \label{TruncApprox}
\end{equation}
where, for $N>0$,
\begin{equation}\label{PartialSums}
\begin{aligned}
    \alpha^{(N)}&:=\sum_{n=-N}^N \langle \alpha, f_n \rangle f_n, \\
    \mathcal{I}_\nu^{(N)}&:=\sum_{n=-N }^N \langle \mathcal{I}_\nu, f_n \rangle f_n.
\end{aligned}
\end{equation}
We can then make the following computations for $\alpha\in D_F$ and $N>0$, using \eqref{ProjKernelEquation} and \eqref{DefinitionOfT}:
\begin{equation}\label{TroncEq}
    \begin{aligned}
       T(-\mathcal{A}\alpha^{(N)}+\langle \alpha, F \rangle \mathcal{I}_\nu^{(N)})&= \langle\alpha, F \rangle T\mathcal{I}_\nu^{(N)} +\sum_
       {n=-N }^N - \mu_n \langle \alpha, f_n \rangle g_n  \\
       &= \langle \alpha, F \rangle T\mathcal{I}_\nu^{(N)} +\sum_{n=-N }^N  \langle \alpha, f_n \rangle (-\widetilde{\mathcal{A}}g_n - \textcolor{black}{\langle f_n,F\rangle} \mathcal{I}_\nu)   \\
       &= -\widetilde{\mathcal{A}} T\alpha^{(N)}-\langle\alpha^{(N)}, F \rangle \mathcal{I}_\nu + \langle \alpha, F\rangle T\mathcal{I}_\nu^{(N)}.
    \end{aligned}
\end{equation}
Now, as
\begin{equation}
    \label{EasyLimit}
   \langle \widetilde{\mathcal{A}} T\alpha^{(N)}, \widetilde{\phi}_m \rangle \xrightarrow[N\to\infty]{} \langle \widetilde{\mathcal{A}} T\alpha, \widetilde{\phi}_m \rangle, \quad \forall m \in \Z, \quad \forall \alpha \in D_F,
\end{equation}
to get \eqref{WeakOpEq} {\color{black}(that is, $F$-equivalence)} \textcolor{black}{from \eqref{TroncEq}}, it suffices to have
\begin{equation}\label{OpEqLimits}
\begin{aligned}
    \langle \alpha^{(N)}, F \rangle & \xrightarrow[N\to\infty]{} \langle \alpha, F \rangle, \quad \forall \alpha \in D_F, \\
   \langle T\mathcal{I}_\nu^{(N)}, \widetilde{\phi}_m\rangle &\xrightarrow[N\to\infty]{} \langle \mathcal{I}_\nu, \widetilde{\phi}_m\rangle, \quad \forall m \in \Z.
\end{aligned}
\end{equation}
\begin{rmrk}
The second limit corresponds to a weak version of the $TB=B$ equation in \eqref{FiniteDOpEq}. In itself, it is only consistent that a condition of this type should appear, as we have started our construction in Section \ref{Kernel equations} by considering the infinite-dimensional analog of the first equation of \eqref{FiniteDOpEq}. But this is a modified backstepping equation, obtained by adding the $TB=B$ condition to the equation \eqref{backstepping-eq} to make the backstepping problem linear and well-posed. Accordingly, the nonlinearity in \eqref{backstepping-eq} translates as a nonlocal term in the infinite-dimensional analog, whereas the kernel equation \eqref{kernelEquation} we get considering the modified equation in \eqref{FiniteDOpEq} is a simple linear PDE. One can also consider that the nonlocal term is simplified by applying a formal $TB=B$ condition:
\begin{equation}\label{FormalTB=B}
    \int_0^L K(x,y) \mathcal{I}_\nu(y)dy = \mathcal{I}_\nu(x), \quad \forall x \in (0,L).
\end{equation}
However the equation above is actually purely formal, and the ``right way'' to formulate it is the weak form in \eqref{OpEqLimits}, which is not surprising, as, according to Corollary \ref{Tisomorphism}, $T\mathcal{I}_\nu$  is not defined and thus \eqref{FormalTB=B} has no real mathematical meaning. This is already the case for transport equations (\cite{ZhangRapidStab,ZhangFiniteStab}).
\end{rmrk}

\section{Backstepping transformation and feedback law}
\label{s5}
To find an actual backstepping transformation, \textcolor{black}{we want to construct a feedback $F$ that satisfies \eqref{OpEqLimits}. We} will first study the second limit of \eqref{OpEqLimits}, which will determine the value of $F$.
Then we will check that $F$ thus defined satisfies the first limit of \eqref{OpEqLimits}, so that the corresponding isomorphism $T$ is indeed a backstepping transformation.
\subsection{Construction of the feedback law}
\label{deuxeq}
Keeping the notations of \eqref{PartialSums}, 
first note that
\begin{equation}
    \begin{aligned}
        T\mathcal{I}_\nu^{(N)}&={\color{black} \sum_{n=-N}^N \langle \mathcal{I}_\nu,f_n\rangle g_n }\\
        &=-\sum_{n=-N}^N \langle \mathcal{I}_\nu,f_n\rangle \textcolor{black}{\langle f_n,F\rangle} \sum_{p\in \Z} \frac{\langle \mathcal{I}_\nu, \widetilde{\phi}_p \rangle}{\widetilde{\mu}_p-\mu_n} \widetilde{f}_p \\
        &= - \sum_{p\in \Z} \langle \mathcal{I}_\nu, \widetilde{\phi}_p \rangle  \left(\sum_{n=-N}^N \frac{\langle \mathcal{I}_\nu,f_n\rangle \textcolor{black}{\langle f_n,F\rangle}}{\widetilde{\mu_p}-\mu_n}\right) \widetilde{f}_p.    \end{aligned}
\end{equation}
Hence, for $m\in  \Z$, using \eqref{tautilde}, \textcolor{black}{and recalling that $\langle \widetilde{f}_p,\widetilde{\phi}_m\rangle=0$, for any $p\neq m$ from the definition of a biorthogonal family,}

\begin{equation}
    \begin{aligned}
      \langle T\mathcal{I}_\nu^{(N)}, \widetilde{\phi}_m\rangle&= -  \left\langle \sum_{p\in \Z} \langle \mathcal{I}_\nu, \widetilde{\phi}_p \rangle  \left(\sum_{n=-N}^N \frac{\langle \mathcal{I}_\nu,f_n\rangle \textcolor{black}{\langle f_n,F\rangle}}{\widetilde{\mu}_p-\mu_n}\right) \widetilde{f}_p, \widetilde{\phi}_m\right\rangle \\
      &=- \langle \mathcal{I}_\nu, \widetilde{\phi}_m \rangle\left\langle \sum_{p\in \Z}  \left(\sum_{n=-N}^N \frac{\langle \mathcal{I}_\nu,f_n\rangle \textcolor{black}{\langle f_n,F\rangle}}{\widetilde{\mu}_p-\mu_n}\right) \widetilde{f}_p, \widetilde{\phi}_m\right\rangle \\
      &=- \langle \mathcal{I}_\nu, \widetilde{\phi}_m \rangle\left\langle  \sum_{n=-N}^N \langle \mathcal{I}_\nu,f_n\rangle \textcolor{black}{\langle f_n,F\rangle}\sum_{p\in \Z} \frac{1}{\widetilde{\mu}_p-\mu_n}\widetilde{f}_p, \widetilde{\phi}_m\right\rangle \\
      &= - \langle \mathcal{I}_\nu, \widetilde{\phi}_m \rangle\left\langle  \sum_{n=-N}^N \langle \mathcal{I}_\nu,f_n\rangle \textcolor{black}{\langle f_n,F\rangle}\widetilde{\tau}_\mathcal{A}^{-1} \frac{f_n}{f_{n,1}(0)}, \widetilde{\phi}_m\right\rangle \\
      &= -\langle \mathcal{I}_\nu, \widetilde{\phi}_m \rangle\left\langle \widetilde{\tau}_\mathcal{A}^{-1} \sum_{n=-N}^N \langle \mathcal{I}_\nu,f_n\rangle \textcolor{black}{\langle f_n,F\rangle}\frac{f_n}{f_{n,1}(0)}, \widetilde{\phi}_m\right\rangle .
    \end{aligned}
\end{equation}
 Moreover, by definition \eqref{tautilde} of $\widetilde{\tau}_\mathcal{A}, (\widetilde{f}_n), (\widetilde{\phi}_n)$, we have:
 \begin{equation}
     \left(\widetilde{\tau}_\mathcal{A}^{-1}\right)^\ast \widetilde{\phi}_m= \frac{1}{\widetilde{\phi}_{m,1}(0)(1-e^{-2\mu L})} \widetilde{\phi}_m.
 \end{equation}
Hence
\begin{equation}
    \begin{aligned}
      \langle T\mathcal{I}_\nu^{(N)}, \widetilde{\phi}_m\rangle&= -\frac{\langle \mathcal{I}_\nu, \widetilde{\phi}_m \rangle}{\overline{\widetilde{\phi}_{m,1}(0)}(1-e^{-2\mu L})}\left\langle \sum_{n=-N}^N \langle \mathcal{I}_\nu,f_n\rangle \textcolor{black}{\langle f_n,F\rangle} \frac{f_n}{f_{n,1}(0)}, \widetilde{\phi}_m\right\rangle \\
      &=  -\frac{\langle \mathcal{I}_\nu, \widetilde{\phi}_m \rangle}{\overline{\widetilde{\phi}_{m,1}(0)}(1-e^{-2\mu L})}\sum_{n=-N}^N \langle \mathcal{I}_\nu,f_n\rangle \frac{\textcolor{black}{\langle f_n,F\rangle}}{f_{n,1}(0)}\left\langle  f_n, \widetilde{\phi}_m\right\rangle .
    \end{aligned}
\end{equation}
Now let us set
\begin{equation}
    \label{Fcoeff}
    \langle f_n, F \rangle:=-2\tanh(\mu L) \frac{\textcolor{black}{(f_{n,1}(0))^{2}}}{\langle \mathcal{I}_\nu, f_n \rangle}, \quad \forall n \in \Z ,
\end{equation}
so that
\begin{equation}\label{PartialSumforTB}
    \langle T\mathcal{I}_\nu^{(N)}, \widetilde{\phi}_m\rangle = \frac{\textcolor{black}{2}\langle \mathcal{I}_\nu, \widetilde{\phi}_m \rangle}{\overline{\widetilde{\phi}_{m,1}(0)}(1+e^{-    2\mu L})}\sum_{n=-N }^N f_{n,1}(0)\left\langle  f_n, \widetilde{\phi}_m\right\rangle .
\end{equation}
The sum in \eqref{PartialSumforTB} is analog to the Dirichlet sum which appears in \cite{ZhangRapidStab} \textcolor{black}{and that we recall here: for $f\in C^{1}_{pw}([0,L])$, then
\begin{equation}
\sum\limits_{n=-N}^{N}\langle f ,e_{p}\rangle\xrightarrow[N\rightarrow +\infty]{}\frac{f(0)+f(L)}{2}
\end{equation}
 where $(e_{p})_{p\in \mathbb{Z}}=(e^{\frac{2i\pi p x}{L}})_{p\in \mathbb{Z}}$. Note that this can be extended to functions $f\in(C^{1})^{2}$ such that $f_{1}(0)=-f_{2}(0)$ and $f_{1}(L)=-f_{2}(L)$ and the basis $(E_{p})_{p\in \mathbb{Z}}=\left(\left(e^{\frac{i\pi p x}{L}},-e^{\frac{-i\pi p x}{L}}\right)\right)_{p\in\mathbb{Z}}$. Indeed, using the gluing defined below in \eqref{attachementmap}, from each of these functions one can define $\underline{f}\in C^{1}_{pw}([0,2L])$ and $(\underline{E_{p}})_{p\in \mathbb{Z}}=(e^{\frac{2i\pi p x}{2L}})_{p\in \mathbb{Z}}$, thus
 \begin{equation}\label{dirichlet22}
\sum\limits_{n=-N}^{N}\langle f ,E_{p}\rangle\xrightarrow[N\rightarrow +\infty]{}\frac{\underline{f}(0)+\underline{f}(2L)}{2}=\frac{f_{1}(0)-f_{2}(0)}{2}.
\end{equation}
}
As it turns out, \textcolor{black}{the sum in \eqref{PartialSumforTB}} has the same remarkable property of converging towards the mean of the left and right values of $\widetilde{\phi}_m$. This comes from a powerful equiconvergence theorem proved in \cite{Komornik1984} \textcolor{black}{for a Schr\"{o}dinger operator (see \cite{Komornik1986} for its generalization on operators of order $n\geq 2$)}:
 \begin{thrm}
 \label{KomornikSchrodinger}
 Let $(u_{p})\in L^{2}(0,L)^{\mathbb{Z}}$ be a complete orthonormal system of eigenfunctions associated to the eigenvalues $\lambda_{p}$ of the Schr\"{o}dinger operator $-\partial_{xx} +q$, where $q$ is a locally integrable function.
 Let us denote for any $f\in L^{2}(0,L)$, $\mu>0$, and $x\in(0,L)$,
 \begin{equation}
 \sigma_{\mu}(f,x)=\sum\limits_{|Re(\sqrt{\lambda_{p}})|<\mu} \langle f, u_{p}\rangle u_{p}(x).
 \end{equation}
 Similarly, let $(\hat{u}_{p})\in L^{2}(0,L)^{\mathbb{Z}}$ be a complete orthonormal system of eigenfunctions associated to the eigenvalues $\hat{\lambda}_{p}$ of the Schr\"{o}dinger operator $-\partial_{xx} +\hat{q}$, where $\hat{q}$ is a locally integrable function and denote
 \begin{equation}
\hat{\sigma}_{\mu}(f,x)=\sum\limits_{|Re(\sqrt{\hat{\lambda}_{p}})|<\mu} \langle f, \hat{u}_{p}\rangle \hat{u}_{p}(x).
 \end{equation}
 Then, given any compact interval $K\subset(0,L)$, for any $f\in L^{2}(0,L)$, the following holds
 \begin{equation}
    \lim\limits_{\mu\rightarrow +\infty}\sup\limits_{x\in K}|\sigma_{\mu}(f,x)-\hat{\sigma}_{\mu}(f,x)|=0.
 \end{equation}
 \end{thrm}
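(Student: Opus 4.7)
My plan is to reduce the equiconvergence of the two Schrödinger partial sums to a comparison between each one and the standard Fourier partial sum $\sigma_\mu^{0}$ associated to the free operator $-\partial_{xx}$ with the same boundary conditions. By the triangle inequality, if $\sup_{x\in K} |\sigma_\mu(f,x)-\sigma_\mu^{0}(f,x)| \to 0$ and the analogous statement holds for $\hat{\sigma}_\mu$, the claim follows. So the real work reduces to a single equiconvergence statement between a general Schrödinger operator and the free one, and it would suffice to carry out the argument once.

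The core of that comparison is a resolvent identity. I would represent the spectral projector through a contour integral
\begin{equation*}
\sigma_\mu(f,x) - \sigma_\mu^{0}(f,x) = \frac{1}{2\pi i}\oint_{\Gamma_\mu} \bigl( G_\lambda(x,\cdot) - G_\lambda^{0}(x,\cdot) \bigr) f \, d\lambda,
\end{equation*}
where $G_\lambda,G_\lambda^{0}$ are the Green's functions of the perturbed and free operators and $\Gamma_\mu$ is a contour in the $\lambda$-plane encircling exactly those $\lambda_p$ with $|\mathfrak{Re}(\sqrt{\lambda_p})|<\mu$ (and similarly for the free spectrum). The resolvent identity $R_\lambda - R_\lambda^{0} = -R_\lambda^{0} q R_\lambda$, combined with pointwise estimates on $G_\lambda^{0}$ which decay like $|\omega|^{-1}$ in the spectral parameter $\omega = \sqrt{\lambda}$ off the real axis, gives pointwise bounds on the kernel difference that improve as $\mu \to \infty$.

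These resolvent estimates would then be combined with sharp uniform asymptotics for the eigenfunctions of $-\partial_{xx}+q$, namely $u_p(x) = \sqrt{2/L}\sin(p\pi x/L) + O(1/p)$ uniformly on $[0,L]$, which are obtained by iterating the Volterra integral equation associated to the Schrödinger ODE. Using these asymptotics, each term in the partial sum can be replaced by the corresponding trigonometric term up to a controllable error, effectively transferring the problem to the classical analysis of Fourier partial sums, where the kernel $\sigma_\mu^{0}(f,x)$ is an explicit Dirichlet kernel.

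The main obstacle I anticipate is handling an arbitrary $f\in L^2$ without any extra smoothness hypothesis: one cannot simply integrate by parts to extract decay, so the classical Riemann--Lebesgue argument must be replaced by a more delicate uniform bound of the form $\sup_{\mu}\|\sigma_\mu(f,\cdot)-\hat{\sigma}_\mu(f,\cdot)\|_{L^\infty(K)} \lesssim \|f\|_{L^2}$. Once such a bound is established, a density argument reduces everything to smooth $f$ where the argument is routine. This uniform bound is also precisely what fails near the endpoints $\{0,L\}$: at the boundary, the Dirichlet-type singularity of the spectral projector acting $L^2 \to L^\infty$ is unavoidable, whereas on any compact $K \subset (0,L)$ the \emph{difference} of the two kernels lies in an improved class, and this gain is what powers the theorem.
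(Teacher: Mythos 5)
You should first note that the paper does not prove this statement at all: Theorem \ref{KomornikSchrodinger} is quoted verbatim from Komornik's 1984 equiconvergence paper, and the only proof work in the paper (Appendix \ref{appendixB}) is an adaptation of Komornik's argument to a glued, squared first-order operator; that argument is built on the Titchmarsh mean-value identity \eqref{titchmarsh} for individual eigenfunctions together with local bounds on spectral sums, and it deliberately avoids any global information about the eigenfunctions. Your proposal takes a genuinely different route (resolvent/contour comparison with the free operator, global eigenfunction asymptotics, then a uniform bound plus density), but as written it has gaps that are fatal under the stated hypotheses.

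Concretely: (1) the asymptotics $u_p(x)=\sqrt{2/L}\,\sin(p\pi x/L)+O(1/p)$ uniformly on $[0,L]$ are false in the generality of the theorem. Here $q$ is only \emph{locally} integrable, so it may fail to be integrable near $0$ and $L$, and no boundary conditions are imposed --- $(u_p)$ is an arbitrary complete orthonormal eigenfunction system. The Volterra iteration that yields sine asymptotics requires $q\in L^{1}(0,L)$ and a specific realization (e.g.\ Dirichlet); the whole point of the theorem, and the reason it is stated only on compact $K\subset(0,L)$, is that no such global control is available, which is why Komornik works with the local mean-value identity instead. (2) The resolvent identity $R_\lambda-R_\lambda^{0}=-R_\lambda^{0}qR_\lambda$ presupposes that the perturbed and free operators share the same domain; since the two eigensystems being compared may come from different and unspecified boundary conditions, the difference of Green's functions contains boundary contributions your contour computation does not see, and drawing $\Gamma_\mu$ so as to enclose exactly $\{\,p:|\mathfrak{Re}\sqrt{\lambda_p}|<\mu\,\}$ requires eigenvalue localization you have not established for merely locally integrable $q$. (3) Finally, the uniform bound $\sup_\mu\|\sigma_\mu(f,\cdot)-\hat{\sigma}_\mu(f,\cdot)\|_{L^\infty(K)}\lesssim\|f\|_{L^2}$, which you identify as the ``main obstacle'' and then assume in order to run the density argument, is essentially the full content of the theorem; deferring it leaves the heart of the proof missing rather than reducing it to something routine.
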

The proof of this theorem can be adapted to our operator, leading to the following proposition.
\begin{prpstn}\label{PropEquiconvergence}
Let us denote
\begin{equation}
    \sigma_{\mu}(f,x)=\sum\limits_{|Im(\mu_{p})|<\mu} \langle f, f_{p}\rangle f_{p}(x)
\end{equation}
and
\begin{equation}
    p_{\mu}(f,x)=\sum\limits_{|Im(\mu_{p}^{\textcolor{black}{(0)}})|<\mu} \langle f, E_{p}\rangle E_{p}(x)
\end{equation} where
$(E_{p})_{p\in\mathbb{Z}}=\left(\left(e^{\frac{i\pi p x}{L}},-e^{\frac{-i\pi p x}{L}}\right)\right)_{p\in\mathbb{Z}}$. One has for any compact $K\subset\textcolor{black}{[}0,L)$
\begin{equation}
 \lim\limits_{\mu\rightarrow +\infty} \sup\limits_{x\in K}|\sigma_{\mu}(f,x)-p_{\mu}(f,x)|=0.
\end{equation}
\label{EquiConv}
\end{prpstn}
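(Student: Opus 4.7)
The plan is to adapt the strategy Komornik used for Theorem \ref{KomornikSchrodinger} (see \cite{Komornik1984,Komornik1986}) to our first-order hyperbolic setting, exploiting the fact that the operator $\mathcal{A}$ of \eqref{A} is, from \eqref{T-gamma} onward, a relatively compact perturbation of the simple transport operator $T = \Lambda \partial_x$ by the smooth multiplication operator $\delta(x) J$. Concretely, I would first rewrite both truncated sums as Dunford--Cauchy contour integrals:
\begin{equation*}
\sigma_{\mu}(f,\cdot) = -\frac{1}{2\pi i}\oint_{\Gamma_\mu} R(\xi,\gamma) f \, d\xi, \qquad p_{\mu}(f,\cdot) = -\frac{1}{2\pi i}\oint_{\Gamma_\mu} R(\xi) f \, d\xi,
\end{equation*}
where $\Gamma_\mu$ is a rectangular contour of the type used in Section \ref{sec-def-eig}, staying inside the strip $\mathbb{C}\setminus \mathbb{D}$ and enclosing exactly the eigenvalues of $\mathcal{A}$ and of $T$ with imaginary part less than $\mu$ in absolute value. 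Since, for $|\gamma|$ small, \eqref{est-mu-n-mu} guarantees that $|\mu_n(\gamma)-\mu_n^{(0)}|<1/(4L)$, a single contour works for both operators as soon as $\mu$ avoids a thin neighborhood of the $\{i\pi n/L\}$, which we can always arrange in the limit.

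The difference $\sigma_{\mu}-p_\mu$ is then handled by the second resolvent identity
\begin{equation*}
R(\xi,\gamma)-R(\xi) = -R(\xi,\gamma)\,\delta(x) J\, R(\xi),
\end{equation*}
which reduces the problem to estimating
\begin{equation*}
I_\mu(f,x) := \frac{1}{2\pi i}\oint_{\Gamma_\mu} \bigl(R(\xi,\gamma)\,\delta J\, R(\xi) f\bigr)(x)\, d\xi.
\end{equation*}
For the unperturbed operator $T$, the Green's kernel $G_0(\xi,x,y)$ is explicit (piecewise exponential in $x-y$ with $\pm 1$ coefficients determined by the boundary conditions), and one has the uniform bound $\|R(\xi)\|_{L^2\to L^\infty}\lesssim 1$ on $\Gamma_\mu$ away from the unperturbed spectrum, analogous to the bound $\|R(\xi)\|<2L$ proved just after \eqref{est-A-gamma}. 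The perturbed resolvent $R(\xi,\gamma)$ admits, on the same contours, the convergent Neumann series $R(\xi)(1+\delta J R(\xi))^{-1}$ and hence obeys comparable estimates. Writing the inner integrand as an oscillatory integral in $y$ against the smooth factor $\delta(y)$ and the exponential Green's kernel, a Riemann--Lebesgue / integration-by-parts argument (using $\delta\in C^\infty$ in \eqref{def_delta}) produces an additional decay in $|\xi|$ uniformly for $x\in K$, which, after deforming $\Gamma_\mu$ to horizontal segments at height $\pm \mu$ plus short vertical pieces and using the length $O(\mu)$ of the contour, gives $\sup_{x\in K}|I_\mu(f,x)|\to 0$.

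The main obstacle is precisely the uniformity in $x$ on compacts of $[0,L)$: near the right endpoint $x=L$, the Green's kernel of $T$ picks up reflected exponentials $e^{\pm i\pi n(L-x)/L}$ whose oscillation is frozen, destroying the Riemann--Lebesgue decay exactly as in the classical Dirichlet-sum situation (hence the need to exclude $x=L$ from $K$, compatibly with the statement). A secondary technical point is choosing the heights $\pm\mu$ so that $\Gamma_\mu$ stays a fixed distance from both spectra simultaneously; this is possible thanks to the eigenvalue asymptotics \eqref{mu_n asymp-lam} together with \eqref{est-mu-n-mu}. Once the uniform convergence is established for $f$ in a dense subspace (say finite sums of $E_p$'s), a standard density argument using the Riesz basis property \eqref{var-rie-bas} and the uniform boundedness in $\mu$ of the partial-sum operators extends the conclusion to all $f\in L^2$, completing the proof.
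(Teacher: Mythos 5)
Your overall strategy (resolvent representation of the spectral partial sums, second resolvent identity, contour estimates) is a genuinely different route from the paper, which instead glues the two components into a scalar function on $(0,2L)$ via the map \eqref{attachementmap}, observes that the glued eigenfunctions solve a second-order Schr\"odinger-type equation coming from $-\mathcal{A}^2$ (with a non-local potential $q_1 u(2L-\cdot)$ and derivative jumps at the gluing points), and then adapts Komornik's proof of Theorem \ref{KomornikSchrodinger} through a generalized Titchmarsh formula \eqref{titchmarsh}. However, as written your sketch has a genuine gap, in fact two related ones, at exactly the point where the real difficulty of an equiconvergence theorem sits.

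First, the closing density argument is circular. The partial-sum operators $f\mapsto \sigma_\mu(f,\cdot)$ and $f\mapsto p_\mu(f,\cdot)$ are \emph{not} uniformly bounded from $(L^2)^2$ to $L^\infty(K)$: by Cauchy--Schwarz their norms grow like $\sqrt{\mu}$ (the Dirichlet kernel already exhibits this), so you cannot pass from a dense class of $f$ to all of $L^2$ using ``uniform boundedness of the partial-sum operators''. What would be needed is uniform boundedness in $\mu$ of the \emph{difference} operator $\sigma_\mu-p_\mu:L^2\to L^\infty(K)$, but that uniform bound is essentially the content of the equiconvergence statement itself (it is what Komornik's kernel lemmas deliver), and your sketch does not establish it. Second, the contour estimate does not close quantitatively. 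On a contour enclosing the spectrum up to height $\mu$ the length is $O(\mu)$ (dominated by the vertical pieces, on which $|\xi|$ ranges over $[c,\mu]$), so even if the integration by parts against the smooth $\delta$ gains a factor $1/|\xi|$, the resulting bound is $O(\log\mu)$, not $o(1)$; and for a general $f\in L^2$ the factor $R(\xi)f$ carries no decay in $\xi$, only $O(1)$ bounds away from the spectrum. Moreover the claimed oscillatory gain in the $y$-integral is not automatic: although $J$ is off-diagonal, the Green's kernels of $T$ and $T_\gamma$ contain boundary-reflected exponentials (the boundary conditions convert right-movers into left-movers), and in the composed kernel $R(\xi,\gamma)\,\delta J\,R(\xi)$ some of these products have their $y$-oscillation cancel, so a blanket Riemann--Lebesgue argument fails precisely on those terms. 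To repair the approach you would have to carry out genuine kernel estimates for the difference (uniform in $x\in K$ and in $f$ through an $L^2\to L^\infty$ bound), at which point you are redoing, in resolvent language, the localization work that the paper imports from \cite{Komornik1984} after the gluing-and-squaring reduction.
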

A way to adapt the proof of \cite[Theorem 2]{Komornik1984} in order to get Proposition \ref{PropEquiconvergence} is given in  Appendix \ref{appendixB}.

Now let us recall that for $m\in \Z$, $\widetilde{\phi}_m$ is the solution of a linear ODE given by the operator $\widetilde{\mathcal{A}}^\ast$. Thus $\underline{\widetilde{\phi}_m}\in C^1_{pw}(0,2L)$ (recall the definition of $\underline{f}$ given in \eqref{attachementmap}) and, applying Proposition \ref{PropEquiconvergence} and the Dirichlet convergence theorem \textcolor{black}{given by \eqref{dirichlet22}}, we get:
\begin{equation}\label{281}
    \sum_{n=-N}^N f_{n,1}(0)\left\langle  f_n, \widetilde{\phi}_m\right\rangle = \overline{ \sum_{n=-N}^N f_{n,1}(0)\left\langle\widetilde{\phi}_m, f_n\right\rangle}  \xrightarrow[N\to \infty]{} 
    \textcolor{black}{\frac{\overline{\widetilde{\phi}_{m,1}(0)-\widetilde{\phi}_{m,2}(0)}}{2}}
    \end{equation}
Now,
using the boundary conditions \eqref{AtildeStar} for $\widetilde{\phi}_m$, we have
\begin{equation}
\begin{aligned}
    \frac{\overline{\widetilde{\phi}_{m,1}(0)-\widetilde{\phi}_{m,2}(0)}}{2}
    &=\frac{\overline{\widetilde{\phi}_{m,1}(0)(1+e^{-2\mu L})}}{2}
\end{aligned}
\end{equation}
so that finally, if $F$ is defined by \eqref{Fcoeff}, we get from \eqref{PartialSumforTB}:
\begin{equation}
    \label{TB=B}
\langle T \mathcal{I}_\nu^{(N)}, \widetilde{\phi}_m \rangle \xrightarrow[N \to \infty]{}  \langle \mathcal{I}_\nu, \widetilde{\phi}_m \rangle, \quad \forall m \in \Z,
\end{equation}
which is the second limit of \eqref{OpEqLimits}.

\subsection{Regularity of the feedback law}\label{section-feedback-reg}
Let us now study the regularity of the feedback law defined by \eqref{Fcoeff}. First note that, thanks to \eqref{Igrowth} that was deduced from the moments condition that gave us the controllability of the system, and thanks to
Lemma \ref{LmmBoundaryBehaviour},
\begin{equation}
    \label{Fgrowth-mu}
    \textcolor{black}{c(1+|\mu_n|)}\leq |\langle f_n, F \rangle | \leq C \textcolor{black}{(1+|\mu_n|)}, \quad n\in \Z,
\end{equation}
for some constants $c,C>0$, which is \textcolor{black}{exactly} to
\eqref{Fgrowth}
for some other constants, thanks to \eqref{mu_n asymp}.
Then we have the following regularity result, analogous to \cite[Lemma 2.1]{ZhangRapidStab}:
\begin{lmm}\label{lemreg}
$F\in \dist$ defined by \eqref{Fcoeff} defines a linear form on $D(\mathcal{A}^2)$  which is continuous for $\norm_{D(\mathcal{A}^2)}$ but not for $\norm_{D(\mathcal{A})}$.
\end{lmm}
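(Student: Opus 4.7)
The lemma is a direct consequence of the precise two-sided growth estimate \eqref{Fgrowth-mu} on the coefficients $\langle f_n, F\rangle$, combined with the Riesz (in fact orthonormal) basis property of $(f_n)$ which gives the explicit expression of the $D(\mathcal{A}^s)$ norm. The plan is to handle the two assertions separately.

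For the continuity on $D(\mathcal{A}^2)$, I will extend $F$ from $\mathcal{E}$ to $D(\mathcal{A}^2)$ via the natural formula
\begin{equation}
\langle \alpha, F\rangle := \sum_{n\in\mathbb{Z}} \langle \alpha, f_n\rangle \, \langle f_n, F\rangle,
\end{equation}
which coincides with the duality pairing on $\mathcal{E}$ by definition of $F$. By Cauchy--Schwarz, for any $\alpha \in D(\mathcal{A}^2)$,
\begin{equation}
|\langle \alpha, F\rangle|^2 \leq \left(\sum_{n\in\mathbb{Z}} (1+|\mu_n|^4)|\langle \alpha, f_n\rangle|^2\right) \left(\sum_{n\in\mathbb{Z}} \frac{|\langle f_n, F\rangle|^2}{1+|\mu_n|^4}\right) = \|\alpha\|_{D(\mathcal{A}^2)}^2 \cdot S,
\end{equation}
so it suffices to check $S<+\infty$. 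Using \eqref{Fgrowth-mu}, the general term of $S$ is $O((1+|\mu_n|)^2/(1+|\mu_n|^4))=O(1/|\mu_n|^2)$, and by the asymptotics \eqref{mu_n asymp} of $\mu_n$ this is summable. Hence $F$ extends to a continuous linear form on $D(\mathcal{A}^2)$.

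For the non-continuity on $D(\mathcal{A})$, I will exhibit a sequence of finite linear combinations (which belong to every $D(\mathcal{A}^s)$, and in particular to $D(\mathcal{A})$) on which $F$ is unbounded. Set
\begin{equation}
\alpha^{(N)} := \sum_{|n|\leq N} \frac{\overline{\langle f_n, F\rangle}}{1+|\mu_n|^2}\, f_n \in \mathcal{E}.
\end{equation}
Using orthonormality of the $(f_n)$, one computes on one hand
\begin{equation}
\|\alpha^{(N)}\|_{D(\mathcal{A})}^2 = \sum_{|n|\leq N} (1+|\mu_n|^2)\,\frac{|\langle f_n, F\rangle|^2}{(1+|\mu_n|^2)^2} = \sum_{|n|\leq N} \frac{|\langle f_n, F\rangle|^2}{1+|\mu_n|^2},
\end{equation}
and on the other hand
\begin{equation}
\langle \alpha^{(N)}, F\rangle = \sum_{|n|\leq N} \frac{|\langle f_n, F\rangle|^2}{1+|\mu_n|^2}.
\end{equation}
By the two-sided estimate \eqref{Fgrowth-mu}, the general term of both sums is bounded above and below by positive constants, so $\|\alpha^{(N)}\|_{D(\mathcal{A})}^2 \asymp N$ while $\langle \alpha^{(N)}, F\rangle \asymp N$. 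Consequently the ratio $|\langle \alpha^{(N)}, F\rangle|/\|\alpha^{(N)}\|_{D(\mathcal{A})} \asymp \sqrt{N}$ diverges, which precludes continuity of $F$ for the $D(\mathcal{A})$ norm.

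There is no real obstacle here: the entire argument is a Cauchy--Schwarz trade-off, and the single nontrivial input is the sharp two-sided estimate \eqref{Fgrowth-mu}, which has already been established from the controllability bounds on $\langle \mathcal{I}_\nu, f_n\rangle$ and from Lemma~\ref{LmmBoundaryBehaviour} on the boundary values $f_{n,1}(0)$. The value $s=2$ is dictated precisely by the critical exponent for convergence of $\sum 1/n^{2(s-1)}$; thus the lemma also shows that $D(\mathcal{A}^2)$ is in a sense the natural domain for $F$, which will dictate the regularity required of the initial data in the sequel.
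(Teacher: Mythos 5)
Your proof is correct and follows exactly the argument the paper intends: the paper states Lemma \ref{lemreg} without detailed proof, citing the analogous \cite[Lemma 2.1]{ZhangRapidStab}, and the underlying mechanism is precisely the two-sided bound \eqref{Fgrowth-mu} combined with the orthonormality of $(f_n)$ — Cauchy--Schwarz against the weights $(1+|\mu_n|^{4})$ for continuity on $D(\mathcal{A}^2)$, and an explicit sequence of finite combinations on which the ratio grows like $\sqrt{N}$ for the failure of $\|\cdot\|_{D(\mathcal{A})}$-continuity. The only cosmetic remark is that the summand bound in $S$ is better written $O\bigl(1/(1+|\mu_n|^{2})\bigr)$ to cover $n=0$ where $\mu_0=0$; this changes nothing in the argument.
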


\begin{crllr}
If $F$ is defined by \eqref{Fcoeff}, then \textcolor{black}{the domain $D_{F}$ given by} \eqref{Domain} defines a dense domain of $D(\mathcal{A})$.
\end{crllr}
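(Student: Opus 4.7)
The plan is to exhibit a dense subspace of $D(\mathcal{A})$ that is contained in $D_F$. A natural candidate is
\begin{equation*}
V := \{\alpha \in D(\mathcal{A}^2) : \langle \alpha, F \rangle = 0\},
\end{equation*}
which is well-defined because Lemma \ref{lemreg} ensures that $F$ extends continuously to $D(\mathcal{A}^2)$.

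First I would verify the inclusion $V \subset D_F$: if $\alpha \in D(\mathcal{A}^2)$ then $\mathcal{A}\alpha \in D(\mathcal{A})$, and the additional condition $\langle \alpha, F \rangle = 0$ kills the nonlocal term, so that $-\mathcal{A}\alpha + \langle \alpha, F\rangle \mathcal{I}_\nu = -\mathcal{A}\alpha \in D(\mathcal{A})$, which is exactly the defining condition of $D_F$. Note that restricting to $\ker F$ is necessary here because $\mathcal{I}_\nu$ does not satisfy the boundary condition at $x=0$ and therefore does not lie in $D(\mathcal{A})$; one cannot hope to simply take all of $D(\mathcal{A}^2)$.

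Next I would prove density of $V$ in $D(\mathcal{A})$ by chaining two statements: (i) $D(\mathcal{A}^2)$ is dense in $D(\mathcal{A})$, which follows from expanding $\alpha \in D(\mathcal{A})$ in the Riesz basis $(f_n)$ and truncating; (ii) $V$ is dense in $D(\mathcal{A}^2)$ for the weaker norm $\|\cdot\|_{D(\mathcal{A})}$. Step (ii) is the crux of the argument and is where Lemma \ref{lemreg} plays a pivotal role: since $F$ is \emph{not} continuous for $\|\cdot\|_{D(\mathcal{A})}$, there exists a sequence $(\beta_n) \subset D(\mathcal{A}^2)$ with $\|\beta_n\|_{D(\mathcal{A})} \leq 1$ but $|\langle \beta_n, F\rangle| \to \infty$. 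For any $\alpha \in D(\mathcal{A}^2)$ one then sets
\begin{equation*}
\alpha_n := \alpha - \frac{\langle \alpha, F\rangle}{\langle \beta_n, F\rangle} \beta_n \in V,
\end{equation*}
and the estimate $\|\alpha_n - \alpha\|_{D(\mathcal{A})} \leq |\langle \alpha, F\rangle|/|\langle \beta_n, F\rangle| \to 0$ yields the desired approximation.

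The main conceptual point, which I expect to be the essence of the proof rather than a technical obstacle, is the interplay with Lemma \ref{lemreg}: the very discontinuity of $F$ for the $D(\mathcal{A})$ topology, which looks like a defect, is precisely what forces the hyperplane $V$ to be dense (rather than closed of codimension one) in $D(\mathcal{A})$. Once this observation is made, the chain of density statements goes through without further difficulty.
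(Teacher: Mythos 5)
Your proof is correct and follows essentially the same route as the paper: you exhibit the same subspace $\mathcal{K}_F=\{\alpha\in D(\mathcal{A}^2):\langle\alpha,F\rangle=0\}\subset D_F$ and use Lemma \ref{lemreg} (discontinuity of $F$ for $\|\cdot\|_{D(\mathcal{A})}$) to conclude that this kernel is dense in $D(\mathcal{A}^2)$, hence in $D(\mathcal{A})$. The only difference is that you spell out the standard kernel-of-a-discontinuous-form density argument, which the paper leaves implicit.
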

\begin{proof}
It is clear from \eqref{Domain} that:
\begin{equation}
    \mathcal{K}_F:=\{\alpha \in D(\mathcal{A}^2), \quad \langle \alpha, F \rangle =0 \} \subset D_F.
\end{equation}
Now, $\mathcal{K}_F$ is the kernel of a non-continuous linear form, so it is dense in $D(\mathcal{A}^2)$ for $\norm_{D(\mathcal{A})}$, which is in turn dense in ${D(\mathcal{A})}$ for $\norm_{D(\mathcal{A})}$, hence the density of $D_F$ in $D(\mathcal{A})$.
\end{proof}

Now, to obtain the second limit of \eqref{OpEqLimits}, we need a more precise knowledge of the regularity of $F$.

Recall that \eqref{I-asympt} had given more information on the growth of the coefficients of $\mathcal{I}_\nu$.
Now, taking the inverse, \textcolor{black}{and using \eqref{Igrowth}} we get the following property for the coefficients of the feedback law $F$:
\begin{equation}
    \label{F-asympt}
    \left(\frac1{\mu_n} \left(\langle f_n, F \rangle+\frac{\tanh(\mu L)}{\tau_n^{\mathcal{I}}} f_{n,1}(0)\mu_n\right)\right)\in \ell^2(\Z),
    \end{equation}
    where
    \begin{equation}
        \tau_n^{\mathcal{I}}:=\left(e^{\int_0^L \delta} \frac{f_{n,1}(L)}{f_{n,1}(0)}-1\right), \quad \forall n \in \Z.
    \end{equation}
  Note that we have, thanks to Lemma \textcolor{black}{\ref{LmmBoundaryBehaviour}} and \eqref{Iboundary},
  \begin{equation}
      \label{tau-I-growth}
      c \leq |\tau_n^\mathcal{I}| \leq  C, \forall n\in \Z ,
  \end{equation}
  for some constants $c,C>0$, so that the operator defined by
  \begin{equation}
      \label{tau-I}
      \tau^{\mathcal{I}} \alpha := \sum_{n\in \Z } \tau_n^{\mathcal{I}} \langle \alpha, f_n \rangle f_n, \quad \forall \alpha\in \textcolor{black}{(L^2)^2}
  \end{equation}
  is an isomorphism of $D(\mathcal{A}^s), \forall s\geq 0$.
  \begin{rmrk}\label{rmk-commu}
  Because it is a diagonal operator {\color{black}on $(f_n)$}, $ \tau^{\mathcal{I}}$ commutes with $\mathcal{A}$.
  \end{rmrk}
    \textcolor{black}{We recall now the definition of the spaces $X^{s}$ for $s\geq0$ 
  \begin{equation}
    X^s:=\{f\in (L^2_{(0)})^2, \quad (\tau^\mathcal{I})^{-1}(\Lambda \partial_x f + \delta(x) J f) \in (H^{s-1})^2\}.
\end{equation}
    }
    Let us then note:
    \begin{equation}
        \label{sing-part-coeff}
        h_n:=-\frac{\tanh(\mu L)}{\tau_n^\mathcal{I}} f_{n,1}(0)\mu_n, \quad n \in \Z ,
    \end{equation}
    and $h\in \dist$ the linear form defined by:
    \begin{equation}
        \label{sing-part}
        \langle f_n, h \rangle=h_n, \quad \forall n \in \Z .
    \end{equation}
    Then we have the following proposition.
    \begin{prpstn}\label{FeedbackReg}
The linear form $h$ defines the following linear form on $X^2\cap D(\mathcal{A})$,
continuous for $\norm_{X^2}$:
\begin{equation}\label{sing-part-val}
\langle \alpha, h \rangle=-\tanh(\mu L)\frac{ \left(\mathcal{A}(\tau^{\mathcal{I}})^{-1} \alpha\right)_1(0)-\left(\mathcal{A}(\tau^{\mathcal{I}})^{-1} \alpha\right)_2(0)}2
\end{equation}
Moreover, $\widetilde{F}:=F-h$ is continuous for $\|\cdot\|_{D(\mathcal{A})}$, so that $F$ is actually defined on $X^2\cap D(\mathcal{A})$, and is continuous for $\|\cdot\|_{X^2}$, but not for $\|\cdot\|_{D(\mathcal{A})}$.
\end{prpstn}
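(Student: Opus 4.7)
My plan is to define the right-hand side of \eqref{sing-part-val} as a form on the whole of $X^2 \cap D(\mathcal{A})$, verify its continuity for $\|\cdot\|_{X^2}$, then match it with the prescribed coefficients $h_n$ on each eigenfunction to conclude that it extends $h$; finally, I would use the asymptotic estimate \eqref{F-asympt} to handle the remainder $\widetilde{F}$. For the continuity of the formula, note that by Remark \ref{rmk-commu} the operator $\tau^\mathcal{I}$ commutes with $\mathcal{A}$, so for $\alpha \in X^2 \cap D(\mathcal{A})$ we have $\mathcal{A}(\tau^\mathcal{I})^{-1}\alpha = (\tau^\mathcal{I})^{-1}(\Lambda\partial_x \alpha + \delta J\alpha) \in (H^1)^2$ by the very definition of $X^2$. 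The one-dimensional Sobolev embedding $H^1 \hookrightarrow C^0$ then ensures that the boundary values at $x=0$ are well-defined and bounded by $C\|\alpha\|_{X^2}$, showing that the right-hand side of \eqref{sing-part-val} is continuous for $\|\cdot\|_{X^2}$.

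Next, I would verify that this form agrees with $h$ on each $f_n$. Since $\mathcal{A}(\tau^\mathcal{I})^{-1}f_n = (\mu_n/\tau_n^\mathcal{I})f_n$ and the boundary condition $f_{n,2}(0) = -f_{n,1}(0)$ holds in $D(\mathcal{A})$, a direct computation gives that the formula evaluated on $f_n$ equals $-\tanh(\mu L)\mu_n f_{n,1}(0)/\tau_n^\mathcal{I}$, which is exactly $h_n = \langle f_n, h\rangle$. By linearity the formula coincides with $\langle \cdot, h\rangle$ on $\mathcal{E}$, and a standard density argument (the finite linear combinations of the Riesz basis $(f_n)$ being dense in $X^2 \cap D(\mathcal{A})$ for $\|\cdot\|_{X^2}$, via truncated eigenfunction expansions controlled by the $X^2$-coefficient decay of $\alpha$) identifies the formula as the unique continuous extension of $h$ to $X^2 \cap D(\mathcal{A})$.

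For the remainder $\widetilde{F} = F - h$, the estimate \eqref{F-asympt} says precisely that $(\langle f_n,\widetilde{F}\rangle/\mu_n)_{n\in \Z^\ast} \in \ell^2(\Z)$; combined with the boundedness of the zeroth coefficient, a Cauchy--Schwarz estimate
\begin{equation*}
|\langle \alpha, \widetilde{F}\rangle| \leq \Bigl(\sum_{n \in \Z}\frac{|\langle f_n,\widetilde{F}\rangle|^2}{1+|\mu_n|^2}\Bigr)^{1/2}\Bigl(\sum_{n\in \Z}(1+|\mu_n|^2)|\langle \alpha, f_n\rangle|^2\Bigr)^{1/2} \leq C\|\alpha\|_{D(\mathcal{A})}
\end{equation*}
yields continuity of $\widetilde{F}$ on $D(\mathcal{A})$. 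On $X^2\cap D(\mathcal{A})$ the isomorphism property of $\tau^\mathcal{I}$ on $(L^2)^2$ gives $\|\mathcal{A}\alpha\|_{L^2}\leq C\|\alpha\|_{X^2}$, so $\|\cdot\|_{D(\mathcal{A})}\leq C\|\cdot\|_{X^2}$ on that intersection; together with the continuity of $h$ for $\|\cdot\|_{X^2}$ proved above, this shows $F=\widetilde{F}+h$ is continuous on $X^2\cap D(\mathcal{A})$ for $\|\cdot\|_{X^2}$. Finally, the failure of continuity of $F$ for $\|\cdot\|_{D(\mathcal{A})}$ alone is immediate from the linear lower bound $|\langle f_n,F\rangle|\geq c(1+|\mu_n|)$ in \eqref{Fgrowth-mu}, which prevents $(\langle f_n,F\rangle/(1+|\mu_n|))_n$ from being square-summable. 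The main delicate point I expect is the density step in the second paragraph: even though the formula itself defines a bona fide continuous form, identifying it as the extension of $h$ (originally known only through its values on the $f_n$) requires care, and is what justifies calling the resulting object the feedback law $h$.
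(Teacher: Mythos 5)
Your first and third paragraphs are essentially the paper's argument (continuity of the boundary formula via $H^1\hookrightarrow C^0$, continuity of $\widetilde F$ on $D(\mathcal{A})$ from \eqref{F-asympt}, non-continuity from the growth \eqref{Fgrowth-mu} via Riesz representation), and your computation that the formula gives $h_n$ on each $f_n$ is correct. The genuine gap is the density step in your second paragraph, and it is precisely the point you flag as ``delicate''. Finite linear combinations of the $(f_n)$ are \emph{not} dense in $X^2\cap D(\mathcal{A})$ for $\|\cdot\|_{X^2}$: the map $\alpha\mapsto \left(\mathcal{A}(\tau^{\mathcal{I}})^{-1}\alpha\right)_1(0)+\left(\mathcal{A}(\tau^{\mathcal{I}})^{-1}\alpha\right)_2(0)$ is continuous for $\|\cdot\|_{X^2}$ and vanishes on every finite combination of eigenfunctions (since $\mathcal{A}(\tau^{\mathcal{I}})^{-1}f_n=(\mu_n/\tau_n^{\mathcal{I}})f_n$ satisfies the boundary conditions of $D(\mathcal{A})$), yet it is not identically zero on $X^2\cap D(\mathcal{A})$ — take any $a\in(H^1)^2$ with $a_1(0)+a_2(0)\neq0$ and $\langle a,f_0\rangle=0$, set $\alpha:=\tau^{\mathcal{I}}\sum_{n\neq0}\mu_n^{-1}\langle a,f_n\rangle f_n$, so that $\mathcal{A}(\tau^{\mathcal{I}})^{-1}\alpha=a$. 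For the same reason your proposed mechanism fails: the truncations $\alpha^{(N)}$ converge to $\alpha$ in $D(\mathcal{A})$, but $(\tau^{\mathcal{I}})^{-1}\mathcal{A}\alpha^{(N)}$ are partial sums of the eigenfunction expansion of a function that in general does \emph{not} satisfy the boundary conditions of $D(\mathcal{A})$, so they do not converge in $(H^1)^2$ (their boundary values tend to the Dirichlet-type mean, not to the function's boundary values — that is exactly the phenomenon encoded in \eqref{sing-part-val}). Consequently your argument neither shows that the series $\sum_n\langle\alpha,f_n\rangle h_n$ converges to the right-hand side of \eqref{sing-part-val} for any $\alpha$ beyond $\mathcal{E}$, nor pins down a unique extension (one could add any multiple of the boundary functional above and still agree on $\mathcal{E}$).

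The paper closes this gap in two steps that cannot be replaced by soft density reasoning: first, for $\alpha\in X^3\cap D(\mathcal{A})$, where $\mathcal{A}(\tau^{\mathcal{I}})^{-1}\alpha\in(H^2)^2$, the equiconvergence result (Proposition \ref{PropEquiconvergence}, built on the Komornik theorem) together with the Dirichlet convergence theorem shows that the partial sums $\sum_{|n|\leq N}\langle\alpha,f_n\rangle h_n$ converge exactly to $-\tanh(\mu L)\bigl(\left(\mathcal{A}(\tau^{\mathcal{I}})^{-1}\alpha\right)_1(0)-\left(\mathcal{A}(\tau^{\mathcal{I}})^{-1}\alpha\right)_2(0)\bigr)/2$; second, it proves that $X^3\cap D(\mathcal{A})$ (not $\mathcal{E}$) is dense in $X^2\cap D(\mathcal{A})$ for $\|\cdot\|_{X^2}$, by an explicit construction ($\alpha_\varepsilon:=\langle\alpha,f_0\rangle f_0+\sum_{n\neq0}\tau_n^{\mathcal{I}}\mu_n^{-1}\langle\widetilde a_\varepsilon,f_n\rangle f_n$ with $\widetilde a_\varepsilon$ an $(H^2)^2$ approximation of $\mathcal{A}(\tau^{\mathcal{I}})^{-1}\alpha$), and only then extends by continuity. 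To repair your proof you would need to supply both of these ingredients; the identification of $h$ with the boundary formula is not a formal consequence of agreement on the eigenfunctions.
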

\textcolor{black}{\begin{rmrk}
{\color{black}With these notations,} $h$ is the ``singular part" of $F$, i.e. the part that limits the regularity of $F$, the rest $F-h$ being continuous for $\|\cdot\|_{D(\mathcal{A})}$. In the following we will therefore study specifically this singular part.
\end{rmrk}}
\begin{proof}[Proof of Proposition \ref{FeedbackReg}]
The continuity of $\widetilde{F}$ on $D(\mathcal{A})$ follows directly from \eqref{F-asympt}.
On the other hand, let $\alpha \in X^3\cap D(\mathcal{A})$. Then $\mathcal{A}(\tau^{\mathcal{I}})^{-1} \alpha\in (H^2)^2$, and thus satisfies the conditions of the Dirichlet convergence theorem, so that, using again Proposition \ref{PropEquiconvergence}, we get:
\begin{equation}\label{h-for-smooth}
\begin{aligned}
    \langle \alpha, h \rangle &= -\sum_{n\in \Z} \langle \alpha, f_n \rangle \frac{\tanh(\mu L)}{\tau_n^\mathcal{I}} f_{n,1}(0)\mu_n \\
    &= -\tanh(\mu L) \lim_{N\to \infty} p_{N}\left(\mathcal{A}(\tau^{\mathcal{I}})^{-1} \alpha, 0\right) \\
    &=  -\tanh(\mu L)\frac{ \left(\mathcal{A}(\tau^{\mathcal{I}})^{-1} \alpha\right)_1(0)-\left(\mathcal{A}(\tau^{\mathcal{I}})^{-1} \alpha\right)_2(0)}2.
    \end{aligned}
\end{equation}
It is clear from this last expression that $h$ is continuous for $\norm_{X^2}$, by the continuous injection of $(H^1)^2 \rightarrow (L^\infty)^2$.
Now let us show that $X^3\cap D(\mathcal{A})$ is dense in $X^2\cap D(\mathcal{A})$ for the $X^2$ norm. Let $\alpha\in X^2 \cap D(\mathcal{A})$, and $\varepsilon >0$. Then, by density of $(H^2)^2$ in $(H^1)^2$ there exists $a_\varepsilon \in (H^2)^2$ such that
\begin{equation}\label{a-eps-es}
    \|a_\varepsilon-\mathcal{A}(\tau^{\mathcal{I}})^{-1} \alpha \|_{(H^1)^2} \leq \varepsilon.
\end{equation}
Now,
\begin{equation}
    \langle\mathcal{A}(\tau^{\mathcal{I}})^{-1} \alpha, f_0\rangle =0
\end{equation}
so that
\begin{equation}\label{smallzeromode}
    |\langle a_\varepsilon, f_0 \rangle| \leq \varepsilon,
\end{equation}
and, setting
\begin{equation}
    \widetilde{a}_\varepsilon:= a_{\varepsilon}- \langle a_\varepsilon, f_0 \rangle f_0,
\end{equation}
we get, thanks to \eqref{smallzeromode} and the smoothness of $f_0$ as the solution of {\color{black} a linear ordinary differential equation ($\delta$ is smooth), that}
\begin{equation}
\begin{array}{c}
    \widetilde{a}_\varepsilon \in (H^2)^2, \\
    \left\|\widetilde{a}_\varepsilon-\mathcal{A}(\tau^{\mathcal{I}})^{-1} \alpha \right\|_{(H^1)^2} \leq C\varepsilon,
    \end{array}
\end{equation}
{\color{black}where} the constant $C>0$ \textcolor{black}{ only depends on $\lVert f_{0}\rVert_{(H^{1})^{2}}$}.
Let us now set:
\begin{equation}
    \alpha_\varepsilon := \langle \alpha, f_0 \rangle f_0 + \sum_{n \in \Z^\ast} \tau_n^{\mathcal{I}}\frac{\langle \widetilde{a}_\varepsilon, f_n \rangle}{\mu_n} f_n \in D(\mathcal{A}).
    \end{equation}
Then
\begin{equation}\label{a-eps-and-alpha-eps}
    \mathcal{A}(\tau^{\mathcal{I}})^{-1}  \alpha_\varepsilon =  \tilde{a}_\varepsilon,
\end{equation}
so that $\alpha_\varepsilon\in X^3$. Moreover, by \eqref{a-eps-es} \textcolor{black}{and \eqref{tau-I-growth}},
\begin{equation}\label{a-eps-A}
    \| \mathcal{A}\alpha_\varepsilon - \mathcal{A} \alpha \| \leq C \varepsilon,
\end{equation}
for some constant $C>0$. As $|\mu_n| \geq \delta >0, \ \forall |n|\geq 1$, and by definition of
$\alpha_\varepsilon$,
$\langle\alpha_{\varepsilon}-\alpha,f_{0}\rangle=0$, we get
\begin{equation}\label{L2-part}
    \| \alpha_\varepsilon - \alpha \| \leq C^\prime \varepsilon,
\end{equation}
and \eqref{a-eps-es}, \eqref{a-eps-and-alpha-eps}, \textcolor{black}{\eqref{a-eps-A}}, and \eqref{L2-part} yield
 \begin{equation}
     \label{density-es}
     \| \alpha_\varepsilon - \alpha \|_{X^2} \leq C^{\prime\prime} \varepsilon,
 \end{equation}
This proves the density of $X^3 \cap D(\mathcal{A})$ in $X^2\cap D(\mathcal{A})$.
We can then continuously extend \eqref{h-for-smooth} to $X^2\cap D(\mathcal{A})$, which proves the rest of the proposition.
\end{proof}

Now that we have some knowledge on $F$ defined by \eqref{Fcoeff}, we get some more knowledge on the corresponding domain $D_F$:
\begin{lmm}
\textcolor{black}{With the above definitions, the following space} inclusion holds:
\begin{equation}\label{DF-and-X}
D_F \subset X^2.
\end{equation}
\end{lmm}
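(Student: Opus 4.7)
Let $\alpha \in D_F$. By definition, $\alpha \in D(\mathcal{A})$ and
\[
g:=\mathcal{A}\alpha - \langle \alpha, F\rangle \mathcal{I}_\nu \in D(\mathcal{A}),
\]
so that $\mathcal{A}\alpha = \langle \alpha, F\rangle \mathcal{I}_\nu + g$. The goal is to show $(\tau^\mathcal{I})^{-1}\mathcal{A}\alpha \in (H^1)^2$. The operator $(\tau^\mathcal{I})^{-1}$ is an isomorphism of $D(\mathcal{A})$ by the uniform bounds \eqref{tau-I-growth} and commutes with $\mathcal{A}$ (Remark \ref{rmk-commu}), so in particular $(\tau^\mathcal{I})^{-1}g \in D(\mathcal{A}) \subset (H^1)^2$. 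The argument thus reduces to showing $(\tau^\mathcal{I})^{-1}\mathcal{I}_\nu \in (H^1)^2$.

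I would decompose $\mathcal{I}_\nu = \mathcal{I} + \nu f_0$; the piece $\nu f_0$ is mapped to $(\nu/\tau_0^\mathcal{I})f_0 \in D(\mathcal{A})$. For $\mathcal{I}$, recast \eqref{I-asympt} in terms of $\tau_n^\mathcal{I}$ to obtain, for $n\neq 0$,
\[
\frac{\langle \mathcal{I}, f_n\rangle}{\tau_n^\mathcal{I}} \;=\; -\frac{2 f_{n,1}(0)}{\mu_n} \;+\; \frac{\langle \delta J_0 \mathcal{I}, f_n\rangle}{\mu_n \tau_n^\mathcal{I}}.
\]
A second integration by parts, exploiting the smoothness of $\delta J_0 \mathcal{I}$, yields $\langle \delta J_0 \mathcal{I}, f_n\rangle = O(|\mu_n|^{-1})$, so the second summand produces a series with coefficients of order $O(|\mu_n|^{-2})$ and lies in $D(\mathcal{A})\subset (H^1)^2$. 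What remains is the leading ``sawtooth'' series $S:=\sum_{n\neq 0} -2 f_{n,1}(0)\mu_n^{-1} f_n$.

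To identify $S$ with an element of $(H^1)^2$, I would use the Kato-type asymptotics of Section \ref{s3} (notably \eqref{mu_n asymp-lam} and \eqref{varphi-0}) to split $S = S_0 + R_\gamma$, where $S_0 := \sum_{n\neq 0} -2(\mu_n^{(0)})^{-1}f_n^{(0)}$ is the unperturbed counterpart and $R_\gamma$ collects the corrections, which have coefficients of order $O(\gamma |\mu_n|^{-2})$ and therefore yield a $D(\mathcal{A})$-convergent series. The series $S_0$ is summed componentwise in closed form via the classical Fourier identity $\sum_{n\neq 0} e^{iny}/(in) = \pi - y$ on $(0,2\pi)$, producing an explicit vector of piecewise affine functions, manifestly in $(H^1)^2$. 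Combining, $(\tau^\mathcal{I})^{-1}\mathcal{I}_\nu \in (H^1)^2$, hence $(\tau^\mathcal{I})^{-1}\mathcal{A}\alpha \in (H^1)^2$, i.e.\ $\alpha \in X^2$.

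The main obstacle I anticipate lies in this last step: the sawtooth term $S$ sits precisely at the edge of $(H^1)^2$-regularity and is not in $D(\mathcal{A})$. A naive limit $\gamma\to 0$ is unavailable because $\tau^\mathcal{I}$ itself degenerates in that limit---the coefficients $\tau_n^{\mathcal{I},(0)}=(-1)^n-1$ vanish for even $n$, which is the very reason the unperturbed system fails to be controllable (Remark \ref{rmkcontrol})---so the comparison of $S$ with $S_0$ must be made at the level of the basis coefficients using the quantitative perturbation bounds of Section \ref{s3}, rather than through any direct convergence of operators.
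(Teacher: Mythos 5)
Your reduction is the paper's: $\alpha\in D_F$ gives $(\tau^{\mathcal{I}})^{-1}\mathcal{A}\alpha=(\tau^{\mathcal{I}})^{-1}\bigl(\langle\alpha,F\rangle\mathcal{I}_\nu+g\bigr)$ with $g\in D(\mathcal{A})$, and everything hinges on $(\tau^{\mathcal{I}})^{-1}\mathcal{I}_\nu\in (H^1)^2$, which after \eqref{I-asympt} reduces to handling the sawtooth series $S=\sum_{n\neq 0}c\,f_{n,1}(0)\mu_n^{-1}f_n$. But the step you use to sum $S$ is exactly where the argument breaks. You split $S=S_0+R_\gamma$ against the unperturbed series and claim the corrections have coefficients $O(\gamma|\mu_n|^{-2})$. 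The estimates you invoke do not give this: \eqref{varphi-0} (and more generally \eqref{Kato-es}) only yield $\|f_n-f_n^{(0)}\|_\infty=O(\gamma)$ \emph{uniformly in $n$}, with no decay in $n$ — the paper insists on this point twice (the remark after \eqref{Kato-es}, and the whole subsection asking whether better-than-$O(1)$ estimates are possible for the transformed operator, which is left open). Likewise $f_{n,1}(0)-1=O(\gamma)$ with no $n$-decay, and \eqref{mu_n asymp-lam} only improves the eigenvalue part. Consequently the $n$-th term of $R_\gamma$ is $O(\gamma/|n|)$ in $L^\infty$ and $O(\gamma)$ in $H^1$ (differentiating $f_n-f_n^{(0)}$ costs a factor $|\mu_n|$), so the series for $R_\gamma$ is not absolutely convergent in $L^2$, let alone in $H^1$, and even a coefficient-level estimate $\langle R_\gamma,f_m\rangle$ picks up off-diagonal overlaps $\langle f_n^{(0)},f_m\rangle=O(\gamma/|n-m|)$ that give at best $O(\gamma\log|m|/|m|)$, which multiplied by $\mu_m$ is not $\ell^2$. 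So with the available perturbation bounds neither $R_\gamma\in D(\mathcal{A})$ nor $R_\gamma\in (H^1)^2$ follows, and the gap is not cosmetic: the two-basis comparison you propose genuinely lacks the decay in $n$ it needs.

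The paper avoids the comparison with $S_0$ altogether. It introduces the explicit function $\varphi(x)=\bigl(\tfrac12-\tfrac{x}{2L},\ \tfrac12-\tfrac{x}{2L}\bigr)\in (H^1)^2$ and expands \emph{it} in the perturbed basis: integrating by parts against $f_n$ itself (not $f_n^{(0)}$), the choice $\varphi_1(L)=\varphi_2(L)=0$, $\varphi_1(0)=\varphi_2(0)=\tfrac12$ together with $f_{n,2}(0)=-f_{n,1}(0)$ makes the boundary terms produce exactly the coefficient $f_{n,1}(0)/\mu_n$, while the bulk term contributes $\mu_n^{-1}\times\ell^2$ corrections; hence $\varphi-\sum_{n\neq 0}f_{n,1}(0)\mu_n^{-1}f_n\in D(\mathcal{A})$, and $S$ is identified with $\varphi$ modulo $D(\mathcal{A})$ in one stroke, with no need for any decay of $f_n-f_n^{(0)}$ in $n$. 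Your closed-form summation of $S_0$ is in fact this same sawtooth function, so the fix is to apply your integration-by-parts idea to $\varphi$ tested against the perturbed eigenfunctions directly, rather than trying to transfer the identity from the unperturbed basis via Kato estimates.
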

\begin{proof}Recall that
\begin{equation}
D_F=\left\{\alpha \in D(\mathcal{A}); \quad -\mathcal{A}\alpha + \langle  \alpha, F\rangle \mathcal{I}_\nu \in D(\mathcal{A})\right\}.
\end{equation}
In particular, {\color{black} as $\tau^{\mathcal{I}}$ is an isomorphism of $D(\mathcal{A}^s)$,}
\begin{equation}\label{total-reg}
        (\tau^{\mathcal{I}})^{-1}\left(-\mathcal{A}\alpha + \langle \alpha, F\rangle \mathcal{I}_\nu \right) \in D(\mathcal{A}).
\end{equation}
{\color{black}We will prove that the second component in the preceding expression is in $(H^1)^2$}.  By \eqref{tau-I} and \eqref{I-asympt},
\begin{equation}\label{in-DA}
     (\tau^{\mathcal{I}})^{-1}\mathcal{I}_\nu-\sum_{n\in \Z^\ast } \frac{f_{n,1}(0)}{\mu_n} f_n \in D(\mathcal{A}).
\end{equation}
Now, let
\begin{equation}
    \varphi(x):=\begin{pmatrix} \frac12-\frac{x}{2L} \\[1em] \frac12-\frac{x}{2L} \end{pmatrix} \in (H^1)^2.
\end{equation}
{\color{black}By integration by parts it is clear} that
\begin{equation}
    \varphi - \sum_{n\in \Z^\ast} \frac{f_{n,1}(0)}{\mu_n } f_n \in D(\mathcal{A}),
\end{equation}
which implies, together with \eqref{in-DA},
\begin{equation}\label{part2-reg}
     (\tau^{\mathcal{I}})^{-1}\mathcal{I}_\nu - \varphi=(\tau^{\mathcal{I}})^{-1}\mathcal{I}_\nu-\sum_{n\in \Z^\ast } \frac{f_{n,1}(0)}{\mu_n} f_n - \left( \varphi - \sum_{n\in \Z^\ast} \frac{f_{n,1}(0)}{\mu_n } f_n  \right)\in D(\mathcal{A})
\end{equation}
so that
\begin{equation}
    (\tau^{\mathcal{I}})^{-1} \mathcal{I}_\nu  \in (H^1)^2.
\end{equation}
{\color{black}Finally, thanks to Remark \ref{rmk-commu}},  by \eqref{total-reg} and \eqref{part2-reg},
\begin{equation}
    \mathcal{A}(\tau^{\mathcal{I}})^{-1}\alpha=  {\color{black}(\tau^{\mathcal{I}})^{-1} \mathcal{A}\alpha}\in (H^1)^2.
\end{equation}
\end{proof}

Now we can tackle the first limit of \eqref{OpEqLimits}. Let $\alpha \in D_F$. \textcolor{black}{As}
\begin{equation}
    \alpha^{(N)} \xrightarrow[N\to \infty]{D(\mathcal{A})} \alpha,
\end{equation}
by Proposition \ref{FeedbackReg}, we only need to study the ``singular part'' $h$ of the feedback $F$. By \eqref{PartialSums} \textcolor{black}{and \eqref{fn-f-n}}, we have:
\begin{equation}
    \label{partial-h}
    \begin{aligned}
        \langle \alpha^{(N)}, h \rangle &= -\sum_{n=-N }^N \langle \alpha, f_n \rangle \frac{\tanh(\mu L)}{\tau_n^\mathcal{I}} f_{n,1}(0)\mu_n \\
        &=-\frac{\tanh(\mu L)}2 \sum_{n=-N }^N  \left(\frac{\langle \alpha, f_n \rangle}{\tau_n^\mathcal{I}}\mu_n+\frac{\langle \alpha, f_{-n} \rangle}{\tau_{-n}^\mathcal{I}}\mu_{-n}\right)  f_{n,1}(0) \\
        &=- \frac{\tanh(\mu L)}2 \sum_{n=-N }^N \left\langle \mathcal{A} ((\tau^{\mathcal{I}})^{-1}\alpha- \sigma((\tau^{\mathcal{I}})^{-1}\alpha)), f_n \right\rangle  f_{n,1}(0),
    \end{aligned}
\end{equation}
where
\begin{equation}
   \sigma(f)(x):=\sum_{n\in \Z} \langle f, f_{-n} \rangle f_n, \quad \forall f \in (L^2)^2.
\end{equation}
\textcolor{black}{Note that we used the fact that for $f\in D(\mathcal{A})$, $\mathcal{A}\sigma(f)=-\sigma(\mathcal{A}f)$.}
Now, as $\alpha \in X^2 \cap D(\mathcal{A})$, we have
\begin{equation}
    \mathcal{A} (\tau^{\mathcal{I}})^{-1} \alpha = \Lambda \partial_x (\tau^{\mathcal{I}})^{-1} \alpha + \delta(x) J( \tau^{\mathcal{I}})^{-1} \alpha \in (H^1)^2
\end{equation}
so
\begin{equation}
    \Lambda \partial_x( \tau^{\mathcal{I}})^{-1} \alpha  \in (L^2)^2,
\end{equation}
hence
\begin{equation}(\tau^{\mathcal{I}})^{-1} \alpha \in (H^1)^2.
\end{equation}
Repeating the same argument, we get
\begin{equation}
   ( \tau^{\mathcal{I}})^{-1} \alpha \in (H^2)^2 \cap D(\mathcal{A}).
\end{equation}
We now use the following lemma:
\begin{lmm}
\label{sym-reg}
Let $f\in (H^2)^2\cap D(\mathcal{A})$. Then
\begin{equation}
    f-\sigma(f)\in D(\mathcal{A}^2).
\end{equation}
\end{lmm}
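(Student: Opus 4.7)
The plan is to identify $\sigma$ explicitly and then check directly that $f-\sigma(f)$ satisfies both boundary conditions defining $D(\mathcal{A}^2)$.

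First, I would rewrite $\sigma$ in closed form. Define the involution $S:(g_1,g_2)\mapsto(-g_2,-g_1)$ on $(L^2)^2$. By Proposition \ref{eigenf-prop}, $f_{-n}=\overline{f_n}=Sf_n$. A direct computation shows that $S$ is self-adjoint for $\langle\cdot,\cdot\rangle$ (in fact it is unitary and involutive), so
\begin{equation*}
\sigma(f)=\sum_{n\in\Z}\langle f,Sf_n\rangle f_n=\sum_{n\in\Z}\langle Sf,f_n\rangle f_n=Sf,
\end{equation*}
since $(f_n)$ is an orthonormal basis of $(L^2)^2$. Consequently $\sigma(f)=(-f_2,-f_1)$, and therefore
\begin{equation*}
f-\sigma(f)=\begin{pmatrix}f_1+f_2\\ f_1+f_2\end{pmatrix}=:\begin{pmatrix}g\\ g\end{pmatrix},\qquad g:=f_1+f_2\in H^2.
\end{equation*}

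Next, I would verify that $(g,g)\in D(\mathcal{A})$. The boundary conditions defining $D(\mathcal{A})$ in \eqref{domaineA} require $g(0)+g(0)=0$ and $g(L)+g(L)=0$, i.e., $g(0)=g(L)=0$. But since $f\in D(\mathcal{A})$, $f_1(0)+f_2(0)=0$ and $f_1(L)+f_2(L)=0$, giving exactly $g(0)=g(L)=0$. Moreover $g\in H^2\subset H^1$, so $(g,g)\in (H^1)^2$ and therefore $(g,g)\in D(\mathcal{A})$.

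It then remains to check that $\mathcal{A}(g,g)\in D(\mathcal{A})$. A direct computation using $\Lambda=\mathrm{diag}(1,-1)$ and the expression of $J$ in \eqref{defJ} yields
\begin{equation*}
\mathcal{A}\begin{pmatrix}g\\ g\end{pmatrix}=\begin{pmatrix}\partial_x g+\tfrac{\delta}{3}g\\ -\partial_x g-\tfrac{\delta}{3}g\end{pmatrix}=\begin{pmatrix}h\\ -h\end{pmatrix},
\end{equation*}
where $h:=\partial_x g+\tfrac{\delta}{3}g\in H^1$ because $g\in H^2$ and $\delta$ is smooth. The pair $(h,-h)$ has zero sum of components at every point, so in particular $h(0)+(-h(0))=0$ and $h(L)+(-h(L))=0$; hence $(h,-h)\in D(\mathcal{A})$. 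This shows $\mathcal{A}(g,g)\in D(\mathcal{A})$, i.e., $(g,g)\in D(\mathcal{A}^2)$, which is the claim.

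The only non-routine ingredient is the closed-form identification $\sigma(f)=Sf$, which is really the main observation; once one sees that $\sigma$ acts by swapping and negating components, the boundary conditions that are ``half satisfied'' by $f\in D(\mathcal{A})$ (namely $f_1+f_2=0$ at the endpoints) become fully satisfied by $f-\sigma(f)$, and its image under $\mathcal{A}$ has anti-symmetric components, making membership in $D(\mathcal{A}^2)$ automatic.
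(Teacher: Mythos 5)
Your proof is correct, but it takes a genuinely different route from the paper. The paper works entirely at the level of coefficients: by a double integration by parts it writes $\langle f,f_n\rangle=\mu_n^{-2}\bigl(\text{boundary terms}\bigr)+\mu_n^{-2}\langle\mathcal{A}^2 f,f_n\rangle$, observes (using \eqref{imaginary-ev} and \eqref{fn-f-n}) that the boundary terms are identical for $n$ and $-n$ and hence cancel in the difference, and concludes that $\langle f,f_n\rangle-\langle f,f_{-n}\rangle=\mu_n^{-2}\langle\mathcal{A}^2f,f_n-f_{-n}\rangle$, which verifies directly the summability condition in the spectral definition \eqref{D(A^s)} of $D(\mathcal{A}^2)$. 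You instead identify $\sigma$ in closed form as the bounded self-adjoint involution $Sf=(-f_2,-f_1)$ (valid since $f_{-n}=Sf_n$ by Proposition \ref{eigenf-prop} and $(f_n)$ is orthonormal), so that $f-\sigma(f)=(g,g)$ with $g=f_1+f_2$, and then check boundary conditions for $(g,g)$ and for $\mathcal{A}(g,g)=(h,-h)$. This is cleaner and more structural — it makes transparent why the "half-satisfied'' boundary conditions of $f$ become fully satisfied — but note that it uses one extra (standard) fact the paper's argument does not need: that the spectrally defined $D(\mathcal{A}^2)$ of \eqref{D(A^s)} coincides with $\{\alpha\in D(\mathcal{A}):\ \mathcal{A}\alpha\in D(\mathcal{A})\}$, which holds here because $\mathcal{A}$ is skew-adjoint (for real $\gamma$) with the orthonormal eigenbasis $(f_n)$, so that membership in $D(\mathcal{A})$ and the action of $\mathcal{A}$ are themselves characterized spectrally. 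With that remark added, your argument is a complete and arguably more illuminating proof; the paper's version has the advantage of matching the spectral definition of $D(\mathcal{A}^2)$ without invoking this identification.
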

\begin{proof}
Using the regularity of $f$, we write, by double integration by parts:
\begin{equation}
\begin{aligned}
    \langle f, f_n \rangle &= \frac1{\mu_n} \langle \mathcal{A}f, f_n \rangle \\
    &= \frac{ \left(\mathcal{A}f_{1}(L)+\mathcal{A}f_{2}(L)\right)f_{n,1}(L)- \left(\mathcal{A}f_{1}(0)+\mathcal{A}f_{2}(0)\right)f_{n,1}(0)}{(\mu_n)^2}  \\
    &\quad \quad \quad  + \frac1{(\mu_n)^2}\langle \mathcal{A}^2f, f_n \rangle,\text{  }
    \end{aligned}
\end{equation}
Hence, using \eqref{imaginary-ev} and \eqref{fn-f-n}, we get,
\begin{equation}
    \langle f, f_n \rangle - \langle f, f_{-n} \rangle = \frac1{(\mu_n)^2}\langle \mathcal{A}^2f, f_n-f_{-n} \rangle.
\end{equation}
As $\mathcal{A}^2f \in (L^2)^2$, this proves the Lemma.
\end{proof}
From Lemma \ref{sym-reg}, we get
\begin{equation}
    \mathcal{A} ((\tau^{\mathcal{I}})^{-1}\alpha- \sigma((\tau^{\mathcal{I}})^{-1}\alpha)) \in D(\mathcal{A}).
\end{equation}
Hence, the last sum of \eqref{partial-h} converges absolutely. As $\alpha \in X^2\cap D(\mathcal{A})$, by Proposition \ref{FeedbackReg} and uniqueness of the limit, we get
 \begin{equation}\langle \alpha^{(N)}, F \rangle \xrightarrow[N\to \infty]{} \langle \alpha, F \rangle,\end{equation}
 which is the first limit of \eqref{OpEqLimits}.

 Thus, by the results of the two previous subsections and \eqref{OpEqLimits}, we have the following proposition:
 \begin{prpstn}\label{opeqprop}
 Let $F$ be defined by \eqref{Fcoeff}, and $T$ accordingly defined by \eqref{DefinitionOfT}. Then $T$ satisfies \eqref{OperatorEquality} on the domain $D_F$.
 \end{prpstn}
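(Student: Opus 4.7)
The plan is essentially to assemble all the preparatory work already carried out in Sections \ref{TB-from-OpEq}, \ref{deuxeq} and \ref{section-feedback-reg}. More precisely, by linearity and density (since $D_F$ is dense in $D(\mathcal{A})$), it suffices to verify the weak formulation \eqref{WeakOpEq}: for every $\alpha\in D_F$ and every $m\in\mathbb{Z}$,
\begin{equation*}
\langle T(-\mathcal{A}\alpha+\langle\alpha,F\rangle\mathcal{I}_\nu),\widetilde{\phi}_m\rangle=\langle -\widetilde{\mathcal{A}}T\alpha,\widetilde{\phi}_m\rangle.
\end{equation*}
The first step is to truncate in the Riesz basis $(f_n)$: setting $\alpha^{(N)}$ and $\mathcal{I}_\nu^{(N)}$ as in \eqref{PartialSums}, the computation \eqref{TroncEq} is an \emph{exact} algebraic identity obtained by plugging \eqref{ProjKernelEquation} into the definition \eqref{DefinitionOfT} of $T$; no passage to the limit is needed at this stage. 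It yields
\begin{equation*}
T(-\mathcal{A}\alpha^{(N)}+\langle\alpha,F\rangle\mathcal{I}_\nu^{(N)}) = -\widetilde{\mathcal{A}}T\alpha^{(N)}-\langle\alpha^{(N)},F\rangle\mathcal{I}_\nu+\langle\alpha,F\rangle T\mathcal{I}_\nu^{(N)}.
\end{equation*}

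The second step is to pair this identity with $\widetilde{\phi}_m$ and pass to the limit $N\to\infty$. The term $\langle\widetilde{\mathcal{A}}T\alpha^{(N)},\widetilde{\phi}_m\rangle$ converges to $\langle\widetilde{\mathcal{A}}T\alpha,\widetilde{\phi}_m\rangle$ because $T$ is an isomorphism $D(\mathcal{A})\to D(\widetilde{\mathcal{A}})$ by Corollary \ref{Tisomorphism} and $\alpha^{(N)}\to\alpha$ in $D(\mathcal{A})$. Everything then reduces to the two limits collected in \eqref{OpEqLimits}:
\begin{equation*}
\langle\alpha^{(N)},F\rangle\xrightarrow[N\to\infty]{}\langle\alpha,F\rangle,\qquad \langle T\mathcal{I}_\nu^{(N)},\widetilde{\phi}_m\rangle\xrightarrow[N\to\infty]{}\langle\mathcal{I}_\nu,\widetilde{\phi}_m\rangle.
\end{equation*}

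The second limit is exactly what drove the choice \eqref{Fcoeff} of $F$: following the computation of Subsection \ref{deuxeq}, expanding $T\mathcal{I}_\nu^{(N)}$ in the biorthogonal basis and using the identity \eqref{fn in tildefp} involving $\widetilde{\tau}_\mathcal{A}$ reduces $\langle T\mathcal{I}_\nu^{(N)},\widetilde{\phi}_m\rangle$ to the partial sum \eqref{PartialSumforTB}. The equiconvergence Proposition \ref{PropEquiconvergence} together with the Dirichlet convergence theorem \eqref{dirichlet22} applied to the piecewise $C^1$ function $\underline{\widetilde{\phi}_m}$ yields \eqref{TB=B}. For the first limit, the difficulty is that $F$ is only continuous on $X^2\cap D(\mathcal{A})$, not on $D(\mathcal{A})$ (Proposition \ref{FeedbackReg}); however the inclusion $D_F\subset X^2$ proved just before Proposition \ref{FeedbackReg} places $\alpha$ in the right space. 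One then splits $F=\widetilde{F}+h$ with $\widetilde{F}$ continuous on $D(\mathcal{A})$ (for which convergence $\langle\alpha^{(N)},\widetilde{F}\rangle\to\langle\alpha,\widetilde{F}\rangle$ is immediate), and treats the singular part via the symmetrized identity \eqref{partial-h}, which rewrites the partial sum as $\langle\mathcal{A}((\tau^{\mathcal{I}})^{-1}\alpha-\sigma((\tau^{\mathcal{I}})^{-1}\alpha)),f_n\rangle f_{n,1}(0)$; Lemma \ref{sym-reg} upgrades $(\tau^{\mathcal{I}})^{-1}\alpha-\sigma((\tau^{\mathcal{I}})^{-1}\alpha)$ to $D(\mathcal{A}^2)$ regularity, which makes the series absolutely convergent and, by uniqueness of the limit together with continuity of $h$ on $X^2$, produces $\langle\alpha,h\rangle$.

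The only genuinely delicate step is the second limit: the cancellation that allows the partial sums of $T\mathcal{I}_\nu^{(N)}$ on $\widetilde{\phi}_m$ to recover exactly $\langle\mathcal{I}_\nu,\widetilde{\phi}_m\rangle$ is what forced the specific form \eqref{Fcoeff} of the feedback, and it relies crucially on the Komornik-type equiconvergence Proposition \ref{PropEquiconvergence} to handle a Dirichlet sum associated with a non-selfadjoint operator. Once these two limits are established, injecting them into the truncated identity produces \eqref{WeakOpEq} on $D_F$, which is precisely \eqref{OperatorEquality}.
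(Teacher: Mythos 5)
Your proposal is correct and follows essentially the same route as the paper: the paper's own proof of Proposition \ref{opeqprop} is exactly the assembly you describe — reduction to the weak identity \eqref{WeakOpEq}, the exact truncated identity \eqref{TroncEq}, the second limit of \eqref{OpEqLimits} obtained in Subsection \ref{deuxeq} via \eqref{fn in tildefp}, \eqref{PartialSumforTB}, Proposition \ref{PropEquiconvergence} and \eqref{dirichlet22}, and the first limit obtained via $D_F\subset X^2$, the splitting $F=\widetilde F+h$, the symmetrization \eqref{partial-h} and Lemma \ref{sym-reg}. One small remark: the reduction to \eqref{WeakOpEq} is justified not by density of $D_F$ in $D(\mathcal{A})$ but by completeness of the biorthogonal family $(\widetilde{\phi}_m)$ (both sides lie in $(L^2)^2$, and equality of all brackets against a family biorthogonal to a Riesz basis forces equality), so your parenthetical gives the wrong reason for a step that is nonetheless valid.
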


\subsection{Well-posedness and stability of the closed-loop system}\label{s6}

Now that we have constructed a pair $(T,F)$ that satisfies \eqref{OperatorEquality}, let us check that the closed-loop system \eqref{sys1virt} corresponding to the feedback $F$ is well-posed in some sense. The idea is to use the dynamics of the target system \eqref{target}, as \eqref{OperatorEquality} essentially means that $T$ exchanges the dynamics of the target system and the closed-loop system. We have seen in Section \ref{section-target} that \eqref{target} is well-posed, more specifically, \textcolor{black}{looking at Section \ref{subsubsection-expstab-target},} $\widetilde{\mathcal{A}}$ with domain $D(\widetilde{\mathcal{A}}^2)\subset D(\widetilde{\mathcal{A}})$ generates a contraction semigroup on $D(\widetilde{\mathcal{A}})$ for the norm defined by the Lyapunov function \eqref{defV} with $p=1$. We note that semigroup $\widetilde{S}(t), t\geq0$.

{\color{black} Following the intuition that $T$ exchanges the dynamics of the target system and the closed-loop system, let us now define the following semigroup:
\begin{equation}\label{closed-loop-semigroup}
    \begin{array}{rccl}
    S:&\RR^+ & \rightarrow & \mathcal{L}(D(\mathcal{A})) \\
    &t &\mapsto & T^{-1} \widetilde{S}(t) T.
    \end{array}
\end{equation}
The goal of this section is to prove that the closed-loop operator $-\mathcal{A}+\mathcal{I}_\nu F$ we have designed indeed generates the semigroup $S(t), t\geq 0$, which will imply that the closed-loop system \eqref{sys1virt} with feedback law defined by \eqref{Fcoeff} is well-posed.

Let us begin by giving a more precise characterization of its domain $D_F \subset D(\mathcal{A})$.} We start with the following lemma:
\begin{lmm}\label{lemm-A+BK-diag}
The operator $-\mathcal{A}+\mathcal{I}_\nu F$ admits a Riesz basis of eigenvectors in $D(\mathcal{A})$, given by
\begin{equation}
    h_p:=T^{-1} \frac{\widetilde{f}_p}{\widetilde{\mu}_p}, \quad \forall p \in \Z,
\end{equation}
with corresponding eigenvalues $({\color{black}-\widetilde{\mu}_p})_{p\in \Z}$.
\end{lmm}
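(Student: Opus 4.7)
The plan is to deduce both the spectral structure and the Riesz basis property from the isomorphism $T:D(\mathcal{A})\to D(\widetilde{\mathcal{A}})$ of Corollary \ref{Tisomorphism} combined with the operator equality of Proposition \ref{opeqprop}. First I would verify that $(\widetilde{f}_p/\widetilde{\mu}_p)_{p\in\mathbb{Z}}$ is a Riesz basis of $D(\widetilde{\mathcal{A}})$. Since $\mu>0$, the asymptotic \eqref{mu_nAsympDissip} together with $\gamma$ sufficiently small gives $|\widetilde{\mu}_p|\geq c>0$ uniformly in $p$; in particular $\widetilde{\mu}_p\ne 0$. For any finitely supported $(a_p)\subset\mathbb{C}$, the biorthogonality $\langle\widetilde{f}_p,\widetilde{\phi}_q\rangle=\delta_{pq}$ yields
\begin{equation*}
\left\|\sum_p a_p\frac{\widetilde{f}_p}{\widetilde{\mu}_p}\right\|_{D(\widetilde{\mathcal{A}})}^{2}=\sum_p (1+|\widetilde{\mu}_p|^2)\left|\frac{a_p}{\widetilde{\mu}_p}\right|^2=\sum_p\left(\frac{1}{|\widetilde{\mu}_p|^2}+1\right)|a_p|^2,
\end{equation*}
which is equivalent to $\sum|a_p|^2$. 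Completeness in $D(\widetilde{\mathcal{A}})$ follows by decomposing any $\beta\in D(\widetilde{\mathcal{A}})$ as $\sum\langle\beta,\widetilde{\phi}_p\rangle\widetilde{f}_p$ in the Riesz basis of $(L^2)^2$ and regrouping. Applying the isomorphism $T^{-1}$ then shows that $(h_p)=(T^{-1}\widetilde{f}_p/\widetilde{\mu}_p)$ is a Riesz basis of $D(\mathcal{A})$.

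Next, granting for a moment that $h_p\in D_F$, the operator equality \eqref{OperatorEquality} provides the eigenvector relation: noting that $\widetilde{\mathcal{A}}(\widetilde{f}_p/\widetilde{\mu}_p)=\widetilde{f}_p$, one computes
\begin{equation*}
T\bigl((-\mathcal{A}+\mathcal{I}_\nu F)h_p\bigr)=-\widetilde{\mathcal{A}}\,Th_p=-\widetilde{f}_p=T(-\widetilde{\mu}_p h_p),
\end{equation*}
so that the injectivity of $T$ gives $(-\mathcal{A}+\mathcal{I}_\nu F)h_p=-\widetilde{\mu}_p h_p$. Combined with the Riesz basis property above, this proves the lemma.

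The main obstacle is therefore to verify that $h_p\in D_F$. Since $\widetilde{f}_p$ solves a linear ODE system and is thus smooth, $Th_p=\widetilde{f}_p/\widetilde{\mu}_p$ lies in $D(\widetilde{\mathcal{A}}^k)$ for every $k$; in particular $Th_p\in D(\widetilde{\mathcal{A}}^2)$. I would then establish the characterization
\begin{equation*}
\alpha\in D_F\iff T\alpha\in D(\widetilde{\mathcal{A}}^2),
\end{equation*}
which makes the membership $h_p\in D_F$ automatic. The forward implication is immediate from Proposition \ref{opeqprop}: if $\alpha\in D_F$ then $-\widetilde{\mathcal{A}}T\alpha=T\bigl((-\mathcal{A}+\mathcal{I}_\nu F)\alpha\bigr)\in T(D(\mathcal{A}))=D(\widetilde{\mathcal{A}})$, whence $T\alpha\in D(\widetilde{\mathcal{A}}^2)$. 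The reverse implication is the delicate part: given $\alpha$ with $T\alpha\in D(\widetilde{\mathcal{A}}^2)$, one must identify $T^{-1}(-\widetilde{\mathcal{A}}T\alpha)\in D(\mathcal{A})$ with $-\mathcal{A}\alpha+\langle\alpha,F\rangle\mathcal{I}_\nu$. My plan is to approximate $\alpha$ by a sequence in $D_F$, using the density of $D_F$ in $D(\mathcal{A})$ established after Lemma \ref{lemreg}, and pass to the limit in the operator equality. The subtlety lies in controlling the singular part of $F$: using the decomposition $F=h+\widetilde{F}$ from Proposition \ref{FeedbackReg}, where $\widetilde{F}$ is continuous on $D(\mathcal{A})$ and $h$ is continuous on $X^2$, the approximation must be taken in the combined $D(\mathcal{A})\cap X^2$ topology. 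The required $X^2$-regularity of $\alpha$ in this case follows from the inclusion \eqref{DF-and-X} applied to the regular element $T^{-1}(-\widetilde{\mathcal{A}}T\alpha)$ together with the smoothness of $Th_p$, which closes the argument.
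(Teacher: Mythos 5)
Your overall skeleton is the same as the paper's: the Riesz basis property of $(h_p)$ does follow from the Riesz basis $(\widetilde{f}_p/\widetilde{\mu}_p)$ of $D(\widetilde{\mathcal{A}})$ pushed through the isomorphism $T^{-1}$, and once $h_p\in D_F$ is known the eigenvalue relation is exactly the paper's application of \eqref{OperatorEquality} plus injectivity of $T$. The problem is that you have deferred the whole difficulty to the membership $h_p\in D_F$, and your route to it does not close. The equivalence $\alpha\in D_F\iff T\alpha\in D(\widetilde{\mathcal{A}}^2)$ that you propose to prove is precisely Proposition \ref{prop-domain-equality} of the paper, and there it is proved \emph{after} and \emph{using} Lemma \ref{lemm-A+BK-diag} (the inclusion $T^{-1}D(\widetilde{\mathcal{A}}^2)\subset D_F$ rests on the expansion of $D_F$ in the eigenbasis $(h_p)$, hence on $h_p\in D_F$). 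So you must prove the reverse implication independently, and your sketch for it has two genuine gaps. First, the appeal to \eqref{DF-and-X} is circular and misdirected: $D_F\subset X^2$ only yields $X^2$-regularity of elements already known to lie in $D_F$, which is exactly what is being proved; applying it to $T^{-1}(-\widetilde{\mathcal{A}}T\alpha)$ is unjustified (there is no reason this element belongs to $D_F$) and, even if it did, it would give regularity of that element, not of $\alpha$. Second, the limiting argument is not available with the tools in the paper: since $F$ is \emph{not} continuous for $\|\cdot\|_{D(\mathcal{A})}$ (Lemma \ref{lemreg}), approximating $\alpha$ by a sequence in $D_F$ convergent in $D(\mathcal{A})$ does not let you pass to the limit in $\langle\alpha_k,F\rangle$, and to pass to the limit on the right-hand side $-\widetilde{\mathcal{A}}T\alpha_k$ you would need convergence of $T\alpha_k$ in $D(\widetilde{\mathcal{A}}^2)$; the density of $D_F$ in the required topology (something like $D(\mathcal{A})\cap X^2$, or $T^{-1}D(\widetilde{\mathcal{A}}^2)$) is not established anywhere and is essentially equivalent to the statement you are trying to prove.

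What the paper actually does at this point is a direct, explicit verification that $h_p\in D_F$, and this is the real content of the lemma: it expands $h_p=\sum_n a_{n,p}f_n$, identifies the coefficients $a_{n,p}$ through a truncation argument using \eqref{coeffexpression} and \eqref{fn in tildefp}, computes $\langle h_p,F\rangle$ via the equiconvergence result (Proposition \ref{PropEquiconvergence}) applied to the Dirichlet-type sum $\sum_{n}f_{n,1}(0)\langle\widetilde{f}_p,f_n\rangle$, and then shows by an explicit computation, using $\langle\widetilde{f}_p,f_n\rangle=-\widetilde{f}_{p,1}(0)f_{n,1}(0)(1-e^{2\mu L})/(\widetilde{\mu}_p-\mu_n)$ and the choice \eqref{Fcoeff}, that $\langle -\mathcal{A}h_p+\langle h_p,F\rangle\mathcal{I}_\nu, f_n\rangle$ is proportional to $\langle\mathcal{I}_\nu,f_n\rangle/(\widetilde{\mu}_p-\mu_n)$, which together with \eqref{Igrowth} and \eqref{mu_n asymp} gives $(-\mathcal{A}+\mathcal{I}_\nu F)h_p\in D(\mathcal{A})$. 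Your proposal contains none of this quantitative work, so as written it does not prove the lemma; you would either have to reproduce such a computation or find a genuinely new proof of $T^{-1}D(\widetilde{\mathcal{A}}^2)\subset D_F$ that does not presuppose the lemma.
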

\begin{proof}
It is clear that the normalized family $(\widetilde{f}_p/\widetilde{\mu}_p)_{p\in \Z}$ is a Riesz basis of $D(\widetilde{\mathcal{A}})$, so that, applying the isomorphism $T^{-1}$, $(h_p)_{p\in\Z}$ is a Riesz basis of $D(\mathcal{A})$.
Let us now show that
\begin{equation}
    h_{p}\in D_F, \quad \forall p\in \Z.
\end{equation}
Let us denote the following decomposition along the orthonormal basis $(f_n)_{n\in \Z}$:
\begin{equation}\label{hp-reg}
h_{p}=\sum\limits_{n\in\mathbb{Z}}a_{n,p} f_{n}, \quad (\mu_n a_{n,p})_{n\in \Z}\in \ell^2(\Z), \  \forall p \in \Z.
\end{equation}
From the definitions of $T$ and $h_{p}$,
\begin{equation}
\label{decompftildeg}
\widetilde{f}_{p}=\widetilde{\mu}_p Th_p=\sum\limits_{n\in\mathbb{Z}}\widetilde{\mu}_p a_{n,p} g_{n}, \quad \forall p \in \Z,
\end{equation}
which implies, from {\color{black}\eqref{gndecomp} and} \eqref{coeffexpression},
\begin{equation}
\widetilde{f}_{p}=-\sum\limits_{n\in\mathbb{Z}}\widetilde{\mu}_p a_{n,p} \langle f_{n},F\rangle \sum\limits_{k\in\mathbb{Z}}\frac{\langle \mathcal{I}_{\nu},\widetilde{\phi}_{k}\rangle}{\widetilde{\mu}_{k}-\mu_{n}}\widetilde{f}_{k}, \quad \forall p \in \Z.
\end{equation}
Now, as in Subsection \ref{deuxeq}, consider the truncations along the Riesz basis $(g_n/\langle f_n, F \rangle)_{n\in \Z}$,
\begin{equation}
    \widetilde{f}_p^{(N)}:= \sum_{n=-N}^N \widetilde{\mu}_p a_{n,p}\langle f_n, F \rangle  \frac{g_n}{\langle f_n, F \rangle}, \quad \forall N \in \NN^\ast, \ \forall p \in \Z,
\end{equation}
so that
\begin{equation}
\begin{split}
\langle\widetilde{f}_{p}^{(N)},\widetilde{\phi}_{m}\rangle=&-\widetilde{\mu}_p\left\langle\sum\limits_{n=-N}^N a_{n,p} \langle f_{n},F\rangle \sum\limits_{k\in\mathbb{Z}}\frac{\langle \mathcal{I}_{\nu},\widetilde{\phi}_{k}\rangle}{\widetilde{\mu}_{k}-\mu_{n}}\widetilde{f}_{k},\widetilde{\phi}_{m}\right\rangle \\
=&-\widetilde{\mu}_p\left\langle\sum\limits_{k\in\mathbb{Z}} \langle \mathcal{I}_{\nu},\widetilde{\phi}_{k}\rangle\left(\sum\limits_{n=-N}^N  \frac{a_{n,p} \langle f_{n},F\rangle}{\widetilde{\mu}_{k}-\mu_{n}}\right)\widetilde{f}_{k},\widetilde{\phi}_{m}\right\rangle \\
=& {\color{black}-\widetilde{\mu}_p\langle \mathcal{I}_{\nu},\widetilde{\phi}_{m}\rangle \left\langle \sum\limits_{n=-N}^N  \frac{a_{n,p} \langle f_{n},F\rangle}{\widetilde{\mu}_{m}-\mu_{n}}\widetilde{f}_{m},\widetilde{\phi}_{m}\right\rangle} \\
=& -\widetilde{\mu}_p\langle \mathcal{I}_{\nu},\widetilde{\phi}_{m}\rangle \left\langle\sum\limits_{k\in\mathbb{Z}} \left(\sum\limits_{n=-N}^N  \frac{a_{n,p} \langle f_{n},F\rangle}{\widetilde{\mu}_{k}-\mu_{n}}\right)\widetilde{f}_{k},\widetilde{\phi}_{m}\right\rangle \\
=&{\color{black} -\widetilde{\mu}_p\langle \mathcal{I}_{\nu},\widetilde{\phi}_{m}\rangle \left\langle \sum\limits_{n=-N}^N  a_{n,p} \langle f_{n},F\rangle  \sum\limits_{k\in\mathbb{Z}} \frac{\widetilde{f}_{k}}{\widetilde{\mu}_{k}-\mu_{n}},\widetilde{\phi}_{m}\right\rangle} \\
=&{\color{black} -\widetilde{\mu}_p\langle \mathcal{I}_{\nu},\widetilde{\phi}_{m}\rangle \left\langle \sum\limits_{n=-N}^N  a_{n,p} \langle f_{n},F\rangle  k_n,\widetilde{\phi}_{m}\right\rangle} \\
=&-\widetilde{\mu}_p\langle \mathcal{I}_{\nu},\widetilde{\phi}_{m}\rangle \left\langle\sum\limits_{n=-N}^N a_{n,p} \langle f_{n},F\rangle \widetilde{\tau}_{\mathcal{A}}^{-1}\frac{f_{n}}{f_{n,1}(0)},\widetilde{\phi}_{m}\right\rangle,\\
& \quad \forall p\in \Z, \ \forall m \in \Z, \ \forall N \in\NN^\ast,
\end{split}
\end{equation}
where we used the biorthogonality of the families $(\widetilde{f}_p)_{p\in \Z}$ and {\color{black}$(\widetilde{\phi}_p)_{p\in \Z}$}, and the relation \eqref{fn in tildefp}. Using again the aforementioned biorthogonality, and the convergences
\begin{equation}\begin{split}
    \widetilde{f}_p^{(N)} &\xrightarrow[N\to \infty]{D(\widetilde{\mathcal{A}})} \widetilde{f}_p, \\
    \sum\limits_{n=-N}^N a_{n,p} \langle f_{n},F\rangle \widetilde{\tau}_{\mathcal{A}}^{-1}\frac{f_{n}}{f_{n,1}(0)} &\xrightarrow[N\to\infty]{L^2} \sum\limits_{n\in \Z}a_{n,p} \langle f_{n},F\rangle \widetilde{\tau}_{\mathcal{A}}^{-1}\frac{f_{n}}{f_{n,1}(0)},
    \end{split}
\end{equation}
given by \eqref{Igrowth},  \eqref{Fcoeff} {\color{black}and \eqref{hp-reg}},
we get
\begin{equation}
\left\langle\frac{\widetilde{f}_{p}}{\langle \mathcal{I}_{\nu},\widetilde{\phi}_m\rangle},\widetilde{\phi}_{m}\right\rangle
=\left\langle\frac{\widetilde{f}_{p}}{\langle \mathcal{I}_{\nu},\widetilde{\phi}_p\rangle},\widetilde{\phi}_{m}\right\rangle
=- \left \langle\widetilde{\mu}_p \sum\limits_{n\in\mathbb{Z}}a_{n,p} \langle f_{n},F \rangle \widetilde{\tau}_\mathcal{A}^{-1}\frac{f_{n}}{f_{n,1}(0)},\widetilde{\phi}_{m}\right\rangle.
\end{equation}
\textcolor{black}{Note that the first equality holds as $\left\langle\widetilde{f}_{p}/\langle \mathcal{I}_{\nu},\widetilde{\phi}_m\rangle,\widetilde{\phi}_{m}\right\rangle=0$ whenever $p\neq m$. Therefore}, by property of the Riesz basis $(\widetilde{\phi}_m)_{m\in \Z}$, and by continuity and invertibility of $\widetilde{\tau}_\mathcal{A}$,
\begin{equation}    \widetilde{\tau}_{A} \frac{\widetilde{f}_{p}}{\langle \mathcal{I}_{\nu},\widetilde{\phi}_p\rangle}=-\widetilde{\mu}_p\sum\limits_{n\in\mathbb{Z}}a_{n,p} \langle f_{n},F \rangle \frac{f_{n}}{f_{n,1}(0)}.
\end{equation}
Using the expression of $\widetilde{\tau}_\mathcal{A}$ given in \eqref{tautilde}, we finally get
\begin{equation}
\overline{\widetilde{\phi}_{p,1}(0)}(1-e^{-2\mu L})\frac{\widetilde{f}_{p}}{\langle \mathcal{I}_{\nu},\widetilde{\phi}_p\rangle}= -\widetilde{\mu}_p\sum\limits_{n\in\mathbb{Z}}a_{n,p} \langle f_{n},F \rangle \frac{f_{n}}{f_{n,1}(0)},
\end{equation}
so that, by property of the orthonormal basis $(f_n)_{n\in \Z}$,
\begin{equation}\label{anp-expression}
a_{n,p}=-\frac{\overline{\widetilde{\phi}_{p,1}(0)}(1-e^{-2\mu L})}{\widetilde{\mu}_p \langle \mathcal{I}_{\nu},\widetilde{\phi}_{p}\rangle}\frac{f_{n,1}(0)}{\langle f_{n}, F\rangle}\langle\widetilde{f}_{p}, f_{n}\rangle.
\end{equation}

Let us now compute $\langle h_p, F \rangle$ with truncations of the $h_p$:
   \begin{equation*}
       h_{p}^{(N)}:=\sum\limits_{n=-N}^{N}a_{n,p} f_{n}.
   \end{equation*}
   \begin{equation}
   \label{expr0}
   \begin{split}
\langle h_{p}^{(N)}, F \rangle &= \sum\limits_{n=-N}^{N}a_{n,p}\langle f_{n}, F \rangle\\
&=-\frac{\overline{\widetilde{\phi}_{p,1}(0)}(1-e^{-2\mu L})}{\widetilde{\mu}_p \langle \mathcal{I}_{\nu},\widetilde{\phi}_{p}\rangle}\sum\limits_{n=-N}^{N}
f_{n,1}(0)\langle \widetilde{f}_{p}, f_{n}\rangle.
\end{split}
 \end{equation}
 Now, using Proposition \ref{PropEquiconvergence}, {\color{black}similar to \eqref{281}}, we have,
 \begin{equation}
\sum\limits_{n=-N}^{N}
f_{n,1}(0)\langle  \widetilde{f}_{p}, f_{n}\rangle \xrightarrow[N\to \infty]{} \frac{\widetilde{f}_{p,1}(0)-\widetilde{f}_{p,2}(0)}{2},
 \end{equation}
so that
 \begin{equation}\label{F-on-hp}
\langle h_p, F\rangle=-\frac{\overline{\widetilde{\phi}_{p,1}(0)}(1-e^{-2\mu L})}{\widetilde{\mu}_p\langle \mathcal{I}_{\nu},\widetilde{\phi}_{p}\rangle}\frac{\widetilde{f}_{p,1}(0)-\widetilde{f}_{p,2}(0)}{2}.
 \end{equation}

    Using \eqref{anp-expression} together with \eqref{F-on-hp}, we get
    \begin{equation}
    \begin{split}
       \left\langle-\mathcal{A}h_{p}+ \mathcal{I}_{\nu}\langle h_{p}, F\rangle, f_n \right\rangle=&
       \frac{\overline{\widetilde{\phi}_{p,1}(0)}(1-e^{-2\mu L})}{\widetilde{\mu}_p\langle \mathcal{I}_{\nu},\widetilde{\phi}_{p}\rangle}\frac{f_{n,1}(0)}{\langle f_{n}, F\rangle}\langle\widetilde{f}_{p}, f_{n}\rangle \mu_{n}\\
        & -\langle\mathcal{I}_{\nu},f_n\rangle\frac{\overline{\widetilde{\phi}_{p,1}(0)}(1-e^{-2\mu L})}{\widetilde{\mu}_p\langle \mathcal{I}_{\nu},\widetilde{\phi}_{p}\rangle}\frac{\widetilde{f}_{p,1}(0)-\widetilde{f}_{p,2}(0)}{2} \\
       =&\frac{\overline{\widetilde{\phi}_{p,1}(0)}(1-e^{-2\mu L})}{\widetilde{\mu}_p\langle \mathcal{I}_{\nu},\widetilde{\phi}_{p}\rangle}\left(-\frac{\langle \mathcal{I}_\nu, f_n\rangle}{2\tanh(\mu L)f_{n,1}(0)}\langle\widetilde{f_{p}}, f_{n}\rangle \mu_{n}\right.\\
        &-\left. \langle\mathcal{I}_{\nu},f_n\rangle\frac{\widetilde{f}_{p,1}(0)-\widetilde{f}_{p,2}(0)}{2}\right).
\end{split}
    \end{equation}

Note that, proceeding as for \eqref{fncoeffequation}, we have
\begin{equation}
\begin{split}
\widetilde{\mu}_{p}\langle\widetilde{f}_{p},f_{n}\rangle&=-\overline{\mu_{n}}\langle\widetilde{f}_{p},f_{n}\rangle+(\widetilde{f}_{p,2}\overline{f_{n,2}}(0)-\widetilde{f}_{p,1}(0)\overline{f_{n,1}}(0))\\
&=\mu_{n}\langle\widetilde{f}_{p},f_{n}\rangle-\widetilde{f}_{p,1}(0)f_{n,1}(0)(1-e^{2 \mu L})
\end{split}
\end{equation}
where we used the boundary conditions given by \eqref{domaineA} and \eqref{Atilde} together with \eqref{imaginary-ev}. Thus
\begin{equation}
\langle\widetilde{f}_{p},f_{n}\rangle=\frac{-\widetilde{f}_{p,1}(0)f_{n,1}(0)(1-e^{2 \mu L})}{\widetilde{\mu}_{p}-\mu_{n}}.
\end{equation}
Therefore, using \textcolor{black}{again the boundary conditions given by \eqref{domaineA} and \eqref{Atilde} and} also \eqref{Fcoeff}, we have
\begin{equation}
\begin{split}
 \left\langle -\mathcal{A}h_{p}+  \langle h_{p}, F\rangle \mathcal{I}_{\nu}, f_n \right\rangle = & \frac{\overline{\widetilde{\phi}_{p,1}(0)}(1-e^{2\mu L})}{\widetilde{\mu}_p\langle \mathcal{I}_{\nu},\widetilde{\phi}_{p}\rangle} \left(-\frac{\langle \mathcal{I}_\nu, f_n\rangle}{2\tanh(\mu L)}\frac{-\widetilde{f}_{p,1}(0)(1-e^{2 \mu L})}{\widetilde{\mu}_{p}-\mu_{n}} \mu_{n}\right. \\
        &-\left. \langle\mathcal{I}_{\nu},f_n\rangle\frac{\widetilde{f}_{p,1}(0)-\widetilde{f}_{p,2}(0)}{2}\right) \\
        =& -\frac{\overline{\widetilde{\phi}_{p,1}(0)}(1-e^{2\mu L})}{\widetilde{\mu}_p\langle \mathcal{I}_{\nu},\widetilde{\phi}_{p}\rangle} \langle \mathcal{I}_\nu, f_n\rangle \\
        &\left(\widetilde{f}_{p,1}(0)\frac{1+e^{2\mu L}}2 \frac{\mu_n}{\widetilde{\mu}_{p}-\mu_{n}}+ \widetilde{f}_{p,1}(0)\frac{1+e^{2\mu L}}2 \right) \\
        =& -\frac{\widetilde{f}_{p,1}(0)\overline{\widetilde{\phi}_{p,1}(0)}(1-e^{4\mu L})}{2\langle \mathcal{I}_{\nu},\widetilde{\phi}_{p}\rangle} \frac{\langle \mathcal{I}_\nu, f_n\rangle}{\widetilde{\mu}_p-\mu_n}.
 \end{split}
\end{equation}
This shows, thanks to \eqref{Igrowth} and \eqref{mu_n asymp}, that
\begin{equation}
    (-\mathcal{A}+\mathcal{I}_\nu F) h_p \in D(\mathcal{A}).
\end{equation}

Then we can apply \eqref{OperatorEquality} to the $(h_p)_{p\in \Z}$, thanks to Proposition \ref{opeqprop}:
\begin{equation}
\begin{split}
    T(-\mathcal{A}+\mathcal{I}_\nu F)h_p & = -\widetilde{\mathcal{A}}Th_p \\
    & = -\widetilde{\mathcal{A}} \textcolor{black}{\frac{\widetilde{f}_p}{\widetilde{\mu}_{p}}} \\
    & = -{\color{black}\widetilde{\mu}_p}\textcolor{black}{\frac{\widetilde{f}_p}{\widetilde{\mu}_{p}}},
    \quad \forall p\in \Z,
    \end{split}
\end{equation}
so that
\begin{equation}
    \label{A+BK-eigenvalue-eqn}
    (-\mathcal{A}+\mathcal{I}_\nu F)h_p=-\widetilde{\mu}_p h_p, \quad \forall p \in \Z.
\end{equation}
\end{proof}

We can now prove the following proposition which gives a precise characterization of the elements of $D_F$:
\begin{prpstn}\label{prop-domain-equality}
The domain $D_F$ satisfies the following equality:
\begin{equation}
D_F = T^{-1}D(\widetilde{\mathcal{A}}^2).
\end{equation}
\end{prpstn}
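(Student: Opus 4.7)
The inclusion $T^{-1}D(\widetilde{\mathcal{A}}^2) \supseteq D_F$ is the easy direction: for $\alpha \in D_F$, the operator equality \eqref{OperatorEquality} gives $-\widetilde{\mathcal{A}}T\alpha = T(-\mathcal{A}\alpha + \langle \alpha,F\rangle \mathcal{I}_\nu)$, and since $-\mathcal{A}\alpha + \langle \alpha,F\rangle \mathcal{I}_\nu \in D(\mathcal{A})$ and $T$ is an isomorphism $D(\mathcal{A}) \to D(\widetilde{\mathcal{A}})$ (Corollary~\ref{Tisomorphism}), one obtains $\widetilde{\mathcal{A}}T\alpha \in D(\widetilde{\mathcal{A}})$, hence $T\alpha \in D(\widetilde{\mathcal{A}}^2)$.

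For the reverse inclusion, the plan is to use the Riesz basis of eigenfunctions $(h_p)$ of $-\mathcal{A}+\mathcal{I}_\nu F$ constructed in Lemma~\ref{lemm-A+BK-diag}. Let $\beta \in D(\widetilde{\mathcal{A}}^2)$ and $\alpha := T^{-1}\beta \in D(\mathcal{A})$. Setting $b_p := \langle \beta, \widetilde{\phi}_p\rangle$, so that $\beta = \sum b_p \widetilde{f}_p$ with $\sum (1+|\widetilde{\mu}_p|^4)|b_p|^2 < \infty$, applying $T^{-1}$ to $\widetilde{f}_p = \widetilde{\mu}_p T h_p$ gives the expansion $\alpha = \sum \widetilde{\mu}_p b_p h_p$ in $D(\mathcal{A})$. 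I would then introduce the truncations $\alpha_N := \sum_{|p|\le N} \widetilde{\mu}_p b_p h_p$, which lie in $D_F$ (being finite linear combinations of the $h_p$) and converge to $\alpha$ in $D(\mathcal{A})$. Using the eigenvalue equation \eqref{A+BK-eigenvalue-eqn} and the fact that $\widetilde{\mathcal{A}}\beta \in D(\widetilde{\mathcal{A}})$, I would check that $(-\mathcal{A}+\mathcal{I}_\nu F)\alpha_N = -\sum_{|p|\le N} \widetilde{\mu}_p^2 b_p h_p$ converges in $D(\mathcal{A})$ to some $\gamma := -T^{-1}\widetilde{\mathcal{A}}\beta$.

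The main obstacle is now to conclude from this that $\alpha \in D_F$ and $(-\mathcal{A}+\mathcal{I}_\nu F)\alpha = \gamma$: because $F$ is not continuous on $D(\mathcal{A})$ (Lemma~\ref{lemreg}), passing to the limit in $\langle \alpha_N, F\rangle$ is not automatic. I plan to circumvent this in three steps. First, writing $\mathcal{A}\alpha_N = \langle \alpha_N,F\rangle \mathcal{I}_\nu - \gamma_N$ and taking $L^2$-limits (using that $\mathcal{A}\alpha_N \to \mathcal{A}\alpha$ and $\gamma_N \to \gamma$ in $L^2$), the nonvanishing of $\mathcal{I}_\nu$ in $L^2$ forces $\langle \alpha_N,F\rangle$ to converge to some scalar $c \in \CC$ with $-\mathcal{A}\alpha + c\mathcal{I}_\nu = \gamma \in D(\mathcal{A})$. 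Second, since $(\tau^\mathcal{I})^{-1}\mathcal{I}_\nu \in (H^1)^2$ by the argument used in the proof of \eqref{DF-and-X}, the identity $(\tau^\mathcal{I})^{-1}\mathcal{A}\alpha = c(\tau^\mathcal{I})^{-1}\mathcal{I}_\nu - (\tau^\mathcal{I})^{-1}\gamma$ shows that $\alpha \in X^2$, so $\langle \alpha, F\rangle$ is well-defined. Third, the same identity applied to $\alpha_N - \alpha$ combined with $\gamma_N \to \gamma$ in $D(\mathcal{A})$ yields $\alpha_N \to \alpha$ in $X^2$, so by continuity of $F$ in the $X^2$ norm (Proposition~\ref{FeedbackReg}) $\langle \alpha_N, F\rangle \to \langle \alpha, F\rangle$, hence $c = \langle \alpha, F\rangle$ and $\alpha \in D_F$.
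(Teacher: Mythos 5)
Your proposal is correct, and its two halves line up with the paper's proof in structure but not in execution. The easy inclusion $D_F\subset T^{-1}D(\widetilde{\mathcal{A}}^2)$ is argued exactly as in the paper, via Proposition \ref{opeqprop} and the isomorphism property of $T$ from Corollary \ref{Tisomorphism}. For the hard inclusion the paper is much terser: it asserts that Lemma \ref{lemm-A+BK-diag} ``clearly'' yields the coefficient characterization \eqref{DF-in-coeffs} of $D_F$ in the Riesz basis $(h_p)$, i.e.\ $D_F=\{f=\sum f_p h_p\in D(\mathcal{A}):(\widetilde{\mu}_p f_p)\in\ell^2(\Z)\}$, and then reads off $T^{-1}D(\widetilde{\mathcal{A}}^2)\subset D_F$ from the expansion $T^{-1}\alpha=\sum\alpha_p h_p$. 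What you do instead is prove the nontrivial half of that characterization honestly: you truncate $\alpha=\sum\widetilde{\mu}_p b_p h_p$, use \eqref{A+BK-eigenvalue-eqn} to identify $(-\mathcal{A}+\mathcal{I}_\nu F)\alpha_N$, extract the scalar limit $c$ of $\langle\alpha_N,F\rangle$ from the $L^2$-convergence of $c_N\mathcal{I}_\nu$, deduce $\alpha\in X^2$ from the identity $(\tau^{\mathcal{I}})^{-1}\mathcal{A}\alpha=c(\tau^{\mathcal{I}})^{-1}\mathcal{I}_\nu-(\tau^{\mathcal{I}})^{-1}\gamma$ together with the fact, established in the proof of \eqref{DF-and-X}, that $(\tau^{\mathcal{I}})^{-1}\mathcal{I}_\nu\in (H^1)^2$, and finally upgrade the convergence $\alpha_N\to\alpha$ to the $X^2$ norm so that Proposition \ref{FeedbackReg} gives $c=\langle\alpha,F\rangle$. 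This is precisely the point where the unboundedness of $F$ on $D(\mathcal{A})$ (Lemma \ref{lemreg}) makes the paper's ``clearly'' nontrivial, and your three-step limiting argument fills that gap; the price is length, the gain is that the diagonal action of the closed-loop operator on all of $T^{-1}D(\widetilde{\mathcal{A}}^2)$ is actually justified rather than asserted. One minor point worth making explicit if you write this up: $\alpha_N\in D_F$ because each $h_p\in D_F$ and $F$ is linear on $X^2\cap D(\mathcal{A})\supset D_F$, so finite combinations stay in $D_F$; and the convergence $\mathcal{A}\alpha_N\to\mathcal{A}\alpha$ in $L^2$ uses that $\|\cdot\|_{D(\mathcal{A})}$ controls $\|\mathcal{A}\cdot\|_{L^2}$, both of which are immediate but should be said.
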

\begin{proof}
Given Lemma \ref{lemm-A+BK-diag}, one clearly has the following characterization of $D_F$:
\begin{equation}\label{DF-in-coeffs}
    D_F=\left\{ f \in D(\mathcal{A}){\color{black}:} \ f = \sum_{n\in \Z} f_p h_p, \ (\widetilde{\mu}_p f_p)_{p\in \Z} \in \ell^2(\Z)\right\}.
\end{equation}
Then, let $\alpha \in D(\widetilde{\mathcal{A}}^2)$, with the following decomposition:
\begin{equation}
    \alpha:=\sum_{p\in \Z} \alpha_p \frac{\widetilde{f}_p}{\widetilde{\mu}_p}, \quad (\widetilde{\mu}_p \alpha_p)_{p\in \Z} \in \ell^2(\Z).
\end{equation}
Then
\begin{equation}
    T^{-1}\alpha:=\sum_{p\in \Z} \alpha_p T^{-1}\frac{\widetilde{f}_p}{\widetilde{\mu}_p}=\sum_{p\in \Z} \alpha_p h_p.
\end{equation}
Now, by construction of $T$, $T^{-1} \alpha \in D(\mathcal{A})$, and as $(\widetilde{\mu}_p \alpha_p)_{p\in \Z} \in \ell^2(\Z)$, it follows from \eqref{DF-in-coeffs} that
\begin{equation*}
    T^{-1}\alpha \in D_F.
\end{equation*}
Hence
\begin{equation}
    T^{-1}D(\widetilde{\mathcal{A}}^2) \subset D_F.
\end{equation}
The converse inclusion is a consequence of the operator equality \eqref{OperatorEquality}. Indeed, let $\alpha \in D_F$, then
\begin{equation*}
    T\alpha \in D(\widetilde{\mathcal{A}}), \quad \widetilde{\mathcal{A}} T \alpha=-T(-\mathcal{A}+\mathcal{I}_\nu F)\alpha \in D(\widetilde{\mathcal{A}}),
\end{equation*}
so that
\begin{equation*}
     T\alpha \in D(\widetilde{\mathcal{A}}^2),
\end{equation*}
hence,
\begin{equation}
     D_F\subset T^{-1}D(\widetilde{\mathcal{A}}^2).
\end{equation}
\end{proof}
Then we have the following result:
\begin{prpstn}
The mapping $S(t)$
defines an exponentially stable $C^0$-semigroup on $D(\mathcal{A})$, and its infinitesimal generator is the unbounded operator $(-\mathcal{A}+\mathcal{I}_\nu F, D_F)$. {\color{black}Moreover, it is real-valued on real-valued functions.}
\end{prpstn}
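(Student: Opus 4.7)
The plan is to split the proof into four pieces: (i) check that $S(t) := T^{-1}\widetilde{S}(t)T$ is a $C^0$-semigroup on $D(\mathcal{A})$; (ii) identify its infinitesimal generator with $(-\mathcal{A}+\mathcal{I}_\nu F, D_F)$; (iii) deduce exponential stability from that of $\widetilde{S}$; (iv) verify preservation of real-valuedness.

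Step (i) should be routine. Because $T : D(\mathcal{A}) \to D(\widetilde{\mathcal{A}})$ is an isomorphism by Corollary \ref{Tisomorphism}, the identities $S(0)=\mathrm{Id}$ and $S(t+s) = T^{-1}\widetilde{S}(t)T\,T^{-1}\widetilde{S}(s)T = S(t)S(s)$ are immediate, and strong continuity transfers from $\widetilde{S}$ by sandwiching with the bounded operators $T$ and $T^{-1}$. Step (iii) is equally quick: Proposition \ref{expstab-target} gives an exponential decay of $\widetilde{S}(t)$ on $D(\widetilde{\mathcal{A}})$ at a rate arbitrarily close to $\mu$ (for $\gamma$ small enough), and conjugation by the bounded isomorphism $T$ preserves this rate up to a multiplicative factor $\|T\|\,\|T^{-1}\|$, yielding the claimed exponential stability on $D(\mathcal{A})$.

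Step (ii) is the substantive part, and I expect it to be the main obstacle because it is where Proposition \ref{prop-domain-equality} and the operator equality \eqref{OperatorEquality} must be combined carefully. Let $\mathcal{G}$ denote the generator of $S$ on $D(\mathcal{A})$. By general semigroup theory, the generator of $\widetilde{S}$ viewed on $D(\widetilde{\mathcal{A}})$ is $(-\widetilde{\mathcal{A}}, D(\widetilde{\mathcal{A}}^2))$, and an element $\alpha \in D(\mathcal{A})$ lies in $D(\mathcal{G})$ if and only if $\lim_{t\to 0^+} t^{-1}(S(t)\alpha-\alpha) = T^{-1}\lim_{t\to 0^+} t^{-1}(\widetilde{S}(t)T\alpha - T\alpha)$ exists in $D(\mathcal{A})$, which happens exactly when $T\alpha \in D(\widetilde{\mathcal{A}}^2)$. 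By Proposition \ref{prop-domain-equality}, this is equivalent to $\alpha \in D_F$. On this domain, $\mathcal{G}\alpha = T^{-1}(-\widetilde{\mathcal{A}})T\alpha$, and applying Proposition \ref{opeqprop} (the operator equality \eqref{OperatorEquality}) converts the right-hand side into $(-\mathcal{A}+\mathcal{I}_\nu F)\alpha$, which identifies the generator.

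Step (iv) reduces to verifying that $F$ itself respects the symmetry \eqref{real-valued-form}, after which Corollary \ref{Tisomorphism} gives that $T$, and hence $T^{-1}$, map real-valued functions to real-valued functions; combined with the fact that $\widetilde{S}(t)$ has real coefficients (so preserves real-valuedness), this transfers through the conjugation $S(t)=T^{-1}\widetilde{S}(t)T$. To check the symmetry of $F$, I would use that $\mathcal{I}_\nu$ is real (from Proposition \ref{eigenf-prop} and the reality of $\nu$), that $f_{-n}=\overline{f_n}$ and $f_{n,1}(0)\in\mathbb{R}$, and plug into \eqref{Fcoeff} to obtain $\langle f_{-n},F\rangle = \overline{\langle f_n, F\rangle}$ for all $n\in\mathbb{Z}$. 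Outside of step (ii), the main point of care is just to keep track of which topology (the $D(\mathcal{A})$ norm versus the $L^2$ norm) each convergence statement is taking place in.
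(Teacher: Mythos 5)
Your proposal is correct and follows essentially the same route as the paper: conjugation by the isomorphism $T$ gives the $C^0$-semigroup and the exponential decay (via Proposition \ref{expstab-target} and the norm equivalence), the generator is identified through $D_S=T^{-1}D(\widetilde{\mathcal{A}}^2)=D_F$ (Proposition \ref{prop-domain-equality}) together with the operator equality of Proposition \ref{opeqprop}, and real-valuedness follows from the reality of $F$ and $\mathcal{I}_\nu$ plus Corollary \ref{Tisomorphism}. No gaps worth flagging.
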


\begin{proof}
By continuity and invertibility of $T$, \eqref{closed-loop-semigroup} clearly defines a $C^0$-semigroup, and the domain $D_S$ of its infinitesimal generator is clearly $T^{-1}(D(\widetilde{\mathcal{A}}^2))$. Now, Proposition \ref{prop-domain-equality} implies that
\begin{equation*}
    D_S=D_F.
\end{equation*}
Let $\alpha\in D_S$. Then $T\alpha\in D(\widetilde{\mathcal{A}}^2)$ so that, by definition of $S$, the definition of the infinitesimal generator $\widetilde{\mathcal{A}}$ of $\widetilde{S}(t)$, and \eqref{OperatorEquality},
\begin{equation}\label{target-inf-gen}
   \frac{\widetilde{S}(t)T\alpha - T\alpha}t \xrightarrow[t\to 0^+]{D(\widetilde{\mathcal{{A}}})} -\widetilde{\mathcal{A}} T \alpha= T(-\mathcal{A}+\mathcal{I}_\nu F)\alpha.
\end{equation}
Hence, applying the isomorphism $T^{-1}$ to both sides of \eqref{target-inf-gen}, we get
\begin{equation}
    \frac{S(t)\alpha - \alpha}t \xrightarrow[t\to 0^+]{D(\mathcal{{A}})}(-\mathcal{A}+\mathcal{I}_\nu F)\alpha,
\end{equation}
which proves the second part of the proposition.

Let $\alpha\in D(\mathcal{A})$. Then, using the equivalence of $\norm_{D(\widetilde{\mathcal{A}})}$ and the $(H^1)^2$ norm, and estimate \eqref{target-exp-stab-Hp} with $p=1$, we can write
\begin{equation}
\begin{aligned}
    \|S(t)\alpha\|_{D(\mathcal{A})} &= \|T^{-1} \widetilde{S}(t) T \alpha \|_{D(\mathcal{A})} \\
    &\leq \vertiii{T^{-1}} \|\widetilde{S}(t) T \alpha \|_{D(\widetilde{\mathcal{A}})} \\
    &\leq C\vertiii{T^{-1}}  e^{-\frac{3\mu}{4} s} \| T \alpha \|_{D(\widetilde{\mathcal{A}})} \\
    &\leq C\vertiii{T^{-1}} \vertiii{T} e^{-\frac{3\mu}{4} s} \|\alpha \|_{D(\mathcal{A})}, \quad \forall t\geq 0.
    \end{aligned}
\end{equation}
Hence, $S(t), t\geq 0$ is an exponentially stable semi-group.

{\color{black}Finally, it is clear that $F$ defined by \eqref{Fcoeff} is real-valued, as $\mathcal{I}_\nu$ is real-valued and by Proposition \ref{eigenf-prop}. Moreover, it is clear that the semigroup $\widetilde{S}(t)$ is real-valued on real-valued functions, so that by Corollary \ref{Tisomorphism}, for $\alpha\in D(\mathcal{A})$ a real-valued function, $S(t)\alpha $ is real-valued for $t\geq0$.}
 \end{proof}
This ends the proof of Proposition \ref{Prop-feedback-for-virtual-system}.

\section{Further comments}
Finally, let us make some comments and point out some prospects.
\begin{itemize}
    \item \textbf{Choice of target system.} Let us remark here that this is probably the first time that a backstepping approach is applied using a target system \textcolor{black}{where the damping occurs} at the boundary. Indeed, usual applications of the backstepping method seek to transform the control system into a similar system with an additional \textcolor{black}{internal} damping term. For example, the classical example of the heat equation shows how the stability of the heat equation
 \begin{equation}\label{heat-ex}
    \left\{\begin{aligned}
    w_t-w_{xx} & =0 , \\
    w(0)=0 &, \quad
    w(1)=U(t), \\
    \end{aligned}
    \right.
    \end{equation}
    can be enhanced by finding an invertible mapping to the \textcolor{black}{following system}
 \begin{equation}\label{target-heat-ex}
    \left\{\begin{aligned}
    u_t-u_{xx}& =-\lambda u, \\
    u(0) =0 &,  \quad u(1)=0. \\
    \end{aligned}
    \right.
    \end{equation}
    In the case of the water tank, it appears that our computations are simpler and more natural when the target system has boundary damping {\color{black}rather than internal damping. In fact, calculations in the case of internal damping do not seem to lead to an expression for the feedback law. Whether this is purely technical, or profoundly linked to the conservation of mass, or the presence of coupling terms, remains an open question.} At any rate, one of the reasons for which the choice of such a target system is nontrivial is that $\Lambda$ and $J$ do not commute, so that there is no ``simple'' transformation between our target system and a system with internal damping such as
    \begin{equation}
\label{target-example}
\left\{\begin{aligned}
 &\partial_{t}z+\Lambda \partial_x z + \delta(x)J z + \mu z=0,\\
&z_{1}(t,0)=-z_{2}(t,0), \\
&z_{2}(t,L)=-z_{1}(t,L).
 \end{aligned}\right.
\end{equation}
    {\color{black}This is in contrast with the case of a scalar transport equation (see \cite{ZhangRapidStab}), where adding an internal damping and adding a boundary damping in the target system are equivalent.} This means also that ensuring the stability or our target system where the dissipation occurs at the boundaries is not trivial. Here it is guaranteed by the existence of a Lyapunov function of the form \eqref{defV2} (also called basic Lyapunov function, see \cite{BastinCoron2011} and \cite[Section 2.3]{BastinCoron1D} for more details).

    \item \textbf{Relation between $\gamma$ and $\mu$.}  {\color{black}Thanks to Theorem \ref{th1}, it is clear that the higher $\mu$ is, the smaller $\gamma$ should be. However, is it possible to get a quantitative description, or even a sharp estimate, about this relation?

    Clearly, in order to obtain such a result, one needs to deal with three important issues, controllability of the original system,  stability of the target system, and controllability of the target system, which are described by Lemma \ref{lem-fond}, Proposition \ref{expstab-target}, and Lemma \ref{til-lem-fond}  respectively.  The middle one is rather clear, while the other two are more involved, mainly due to the Riesz basis estimates of the perturbed operators (of both the original and target systems).  However, we believe that it is possible to get an explicit value.

    In fact, this would be related to the  stabilization  around uniform steady-states.  As it is  known that the linearized system around  uniform steady-states is neither controllable nor stabilizable, following the idea of the return method, one may ask if it is  possible to stabilize the (nonlinear) system around  uniform steady-state as a limit of the non-uniform steady-states case. This requires further study of the stabilization of the nonlinear system around those states, and to get some uniform estimates.  Furthermore, it would also be interesting to study the limit $\mu\to +\infty$, in order to gain insight on the so called finite-time stabilization problem.}

    \item \textbf{Regularity of the feedback law.} It {\color{black}can be seen from} Section \ref{section-feedback-reg} that the feedback law is not bounded on the state space. In a sense, this is to be expected when dealing with methods that rely on spectral properties of {\color{black} hyperbolic} systems. Indeed, in backstepping, or in Riccati methods, the spectrum is essentially shifted to the left by the addition of the feedback control term, and general results on pole-shifting methods in infinite dimension (\cite{Sun, Rebarber}) seem to indicate that bounded feedback laws can only achieve limited pole-shifting. Indeed, the difference between the initial spectrum $(\rho_k)$ and the shifted spectrum $(\lambda_k)$ should satisfy
    \begin{equation}
        \sum_{k\in \Z}\left|\frac{\rho_k-\lambda_k}{B_k}\right|^2 <\infty
    \end{equation}
    where the $(B_k)$ are the coefficients of the control vector in a normalized basis of eigenvectors of the system.
    One can see that unless the control operator is very singular, the term
    $\rho_k-\lambda_k$ cannot be bounded from below, and thus exponential stabilization cannot be achieved.

    \item \textbf{Well-posedness of the closed-loop system.} In Section \ref{s6} we prove that the closed-loop operator $-\mathcal{A}+\mathcal{I}_\nu F$ is the infinitesimal generator of a contraction semi-group. Intuitively, this seems to be a direct consequence of the equivalence we have built between the closed-loop system and the target system. However, in practice it is not as straightforward: we have to tread carefully regarding the domains of the operators, as is illustrated in Section \ref{TB-from-OpEq}.

    The study of the domain $D_F$ of the closed-loop operator in particular constitutes a key point in our analysis. Indeed, Lemma \ref{lemm-A+BK-diag} essentially states that $T$ ``exchanges'' the eigenvectors of $-\mathcal{A}+\mathcal{I}_\nu F$ with those of $\widetilde{\mathcal{A}}$, which seems obvious when considering the operator equality \eqref{OperatorEquality}--\eqref{Domain}, but actually requires additional work because we know so little \textit{a priori} about the domain $D_F$ on which this equality holds. This in turn gives additional information in the form of the nice characterization of Proposition \ref{prop-domain-equality}, and in the end the well-posedness of the closed-loop system really follows naturally from the properties of the target system.

    This is notably different from previous works on the stabilization of PDEs using the Gramian (or Riccati) method (\cite{Komornik97, Urquiza, Vest}), where the wellposedness of the closed-loop system also has to be established once the feedback law is defined. In the work of Vilmos Komornik (\cite{Komornik97}), the first notion of well-posedness that can be found is that the closed-loop operator is a \textit{dense restriction} of the infinitesimal generator of a semi-group. This has since then been improved, in \cite{Urquiza} using optimal control theory and results on algebraic Riccati equations, and in \cite{Vest}, by some duality method together with what is called ``formal conjugation'', where the author derives some sort of operator equality from a formal Riccati equation.

\end{itemize}

\appendix
\section{Proof of Proposition \ref{eigenf-prop}}\label{appendixA}
\begin{proof}
Note that if we conjugate the eigenvalue equation, using the fact that the $\mu_n$ are all imaginary, we get, for $n\in \Z$:
\begin{equation}\label{conjugate}
    \begin{aligned}
       \partial_x \overline{f_{n,1}}+\frac{\delta}3 \overline{f_{n,2}}&= -\mu_n \overline{f_{n,1}}, \\
         -\partial_x\overline{f_{n,2}}-\frac{\delta}3 \overline{f_{n,1}}&= -\mu_n \overline{f_{n,2}}, \\
    \end{aligned}
\end{equation}
which proves \eqref{imaginary-ev} and the first equality of \eqref{fn-f-n}.
Moreover \eqref{conjugate} can be written
\begin{equation}\label{conjugate2}
       \mathcal{A} \begin{pmatrix}-\overline{f_{n,2}}\\ -\overline{f_{n,1}}\end{pmatrix}= \mu_n \begin{pmatrix}-\overline{f_{n,2}}\\ -\overline{f_{n,1}}\end{pmatrix}.
\end{equation}
Now, as
\begin{equation}
    \left\|\begin{pmatrix}-\overline{f_{n,2}}\\ -\overline{f_{n,1}}\end{pmatrix}\right\|= \|f_n\|
\end{equation}
and, according to \eqref{conjugate2}, these two functions of $L^2(0,L)^2$ $f_n$ and $(-\overline{f_{n,2}},\overline{f_{n,1}})^T$ are solutions to the same eigenvalue problem. Hence
\begin{equation}\label{plusoumoins}
    -\overline{f_{n,2}} = f_{n,1} 
    \textrm{ or } 
      \overline{f_{n,2}} = f_{n,1} .
\end{equation}
Now let us recall that, by \eqref{varphi-es} and \eqref{R-es}, the $f_n$ are $L^\infty$-close to the $E_n$, which satisfy the first relation of \eqref{plusoumoins}. So for a small enough $\gamma$, we have
\begin{equation}
       \overline{f_{n,1}} = -f_{n,2},
       \label{moins}
\end{equation}
which proves \eqref{fn-f-n}.
\end{proof}
\section{Proof of Proposition \ref{EquiConv}}\label{appendixB}
\begin{proof}[Proof of Proposition \ref{EquiConv}]
As \cite[Theorem 2]{Komornik1984} deals with scalar second-order equation, we first define a map $R$ on $L^2(0,L)^2$, gluing the two components to form a function of $L^2(0,L)$
and apply the first order operator twice
to recover a second order scalar equation.
For $f\in H^{1}((0,L);\mathbb{C}^{2})$ satisfying $f^{1}(L)=-f^{2}(L)$ we define $R(f):=\underline f$ on $(0,2L)$ by
\begin{equation}
\underline f= \textcolor{black}{\mathds{1}_{[0,L]}} f_{1} - \textcolor{black}{\mathds{1}_{[L, 2L]}} f_{2}(2L - \cdot)
\label{attachementmap}
\end{equation}
This is a natural mapping, given the boundary condition $f_{1}(L)=-f_{2}(L)$, and
defines an isomorphism between $H^{1}(0,2L)$ and $\left\{f \in H^{1}((0,L);\mathbb{C}^{2})| f_{1}(L)=-f_{2}(L)\right\}$.
We extend this definition to $L^{2}$ functions by density, and the resulting map $R$ is, up to a constant, an isometry from $L^2(0,L)^2 \rightarrow L^2(0,2L)$ for their usual scalar products.

Now, notice that the $(f_p)$ are also the eigenfunctions of the operator $-\mathcal{A}^2$, for which the following expression can be derived:
\begin{equation}\label{Asquared}
\begin{aligned}
-\left(\Lambda\partial_{x}+\delta(x) J\right)^{2}&= -\partial_{x}^{2}-\delta(x)^2J^2 - \Lambda \partial_x (\delta(x) J) - \delta(x) J \Lambda \partial_x\\
&=-\partial_{x}^{2}-\delta(x)^2J^2 - \delta^\prime (x)\Lambda J- \delta(x) \Lambda J \partial_x -\delta(x) J \Lambda \partial_x\\
&=-\partial_{x}^{2}-\delta(x)^2J^2 - \delta^\prime (x)\Lambda J,
\end{aligned}
\end{equation}
the last equality being obtained thanks to the relation
\begin{equation}\label{anticommutation}
\begin{aligned}
\Lambda J + J \Lambda&= \begin{pmatrix}
1 & 0 \\ 0 & -1
\end{pmatrix}\begin{pmatrix}
0 & 1/3 \\ -1/3 & 0
\end{pmatrix} + \begin{pmatrix}
0 & 1/3 \\ -1/3 & 0
\end{pmatrix} \begin{pmatrix}
1 & 0 \\ 0 & -1
\end{pmatrix} \\
&= \begin{pmatrix}
0 & 1/3 \\ 1/3 & 0
\end{pmatrix} + \begin{pmatrix}
0 & -1/3 \\ -1/3 & 0
\end{pmatrix} \\
&= 0.
\end{aligned}
\end{equation}
Hence,
\begin{equation}\label{Asquared2}
-\mathcal{A}^2=-\partial_{x}^{2} + \frac13 \begin{pmatrix}
\delta(x)^2/3 & -\delta^\prime (x) \\
-\delta^\prime(x) & \delta(x)^2/3
\end{pmatrix},
\end{equation}
and
\begin{equation*}-\mathcal{A}^2 R^{-1} \underline{f_k}= -\mu_k^2 R^{-1} \underline{f_k}\end{equation*}
i.e.
\begin{equation}\label{glued1}-R\mathcal{A}^2 R^{-1} \underline{f_k}= -\mu_k^2\underline{f_k}.
\end{equation}
Now, it is clear from \eqref{attachementmap} that
\begin{equation}\label{diffcompatibility}
R \Lambda\partial_x \alpha = \partial_x ( R \alpha), \quad \forall \alpha \in H^1, \ \alpha_1(L)=-\alpha_2(L).
\end{equation}
Also, for $a\in L^2(0,L)$, we define
\begin{equation}
    \underline{a}:=R\begin{pmatrix} a \\ -a \end{pmatrix}.
\end{equation}
We have
\begin{equation}\label{multiplygluing}
    R(af)=\underline{a}\underline{f}, \quad \forall f \in L^2(0,L)^2.
\end{equation}
Finally,
\begin{equation}\label{swapgluing}
    R\left( \begin{pmatrix}
    0 & 1 \\ 1 & 0
    \end{pmatrix} f \right)=\textcolor{black}{\mathds{1}}_{[0,L]} f_{2} - \textcolor{black}{\mathds{1}}_{[L, 2L]} f_{1}(2L - \cdot)=-\underline{f}(2L- \cdot).
\end{equation}
 From \eqref{diffcompatibility},\eqref{multiplygluing}, and \eqref{swapgluing}, \eqref{glued1} becomes
\begin{equation}
    \label{glued2}
    -\partial^2_x \underline{f_k}+ \frac{\underline{\delta}^2}9 \underline{f_k} + \frac{\underline{(\delta^\prime)}}3  \underline{f_k}(2L-\cdot) = -\mu_k^2  \underline{f_k}
\end{equation}
so that, using \eqref{swapgluing},
\begin{equation}\label{involutionisconjugation}
\underline{f_k}(2L-x)=\overline{\underline{f_k}}(x),\quad \forall x \in (0, 2L),
\end{equation}
and we finally get
\begin{equation}
    \label{glued3}
    -\partial^2_x \underline{f_k}+ \frac{\underline{\delta}^2}9  \underline{f_k} + \frac{\underline{(\delta^\prime)}}3 \overline{\underline{f_k}}= -\mu_k^2  \underline{f_k}.
\end{equation}

Thus $( \underline{f_{p}})_{p\in\mathbb{Z}}$ is a family of eigenvectors of the operator $\mathcal{L}$ defined by
\begin{equation}
\label{defmathcalL1}
\mathcal{L}u=-\partial_{x}^{2}u+\frac{1}{9}\underline\delta^{2} u+\frac{1}{3}\textcolor{black}{\underline{(\delta^\prime)}}\overline{u},
\end{equation}
with eigenvalues $-\lambda_{p}^{2}$ which, from \eqref{imaginary-ev}, are real and nonnegative.
Besides, $( \underline{f_{p}})_{p\in\mathbb{Z}}$ is still an orthonormal basis of $L^{2}(0,2L)$. Observe that $(\underline E_{p})_{p\in\mathbb{Z}}=(e^{i\pi px/L})_{p\in\mathbb{Z}}$
and is an orthonormal basis of $L^{2}(0,2L)$ and a family of eigenvectors of the operator
$\mathcal{L}_{0}=-\partial_{x}^{2}$ with associated eigenvalues $(\pi^{2} p^{2}/L^{2})_{p\in\mathbb{Z}}$. Note that $\mathcal{L}$ can be written as
\begin{equation}
\label{Lsimple}
\mathcal{L}u=-\partial_{x}^{2}u-q u-q_{1}u(2L-\cdot)
\end{equation}
where $q$ and $q_{1}$ are both $L^{1}$ (and in fact $C^{\infty}$) functions on $(0,2L)$.
We now extend periodically the functions $\underline{f_{p}}$, $\underline{E_{p}}$, $q$ and $q_{1}$ on $(-L,3L)$ as follows :
\begin{equation}
\begin{aligned}
\underline{f_{p}}(\cdot)=\underline{f_{p}}(\cdot+2L),\text{ on }(-L,0),\\
\underline{f_{p}}(\cdot)=\underline{f_{p}}(\cdot-2L),\text{ on }(2L,3L),
\end{aligned}
\end{equation}
and similarly for $\underline{E_{p}}$, $q$ and $q_{1}$. As $\underline{f_{p}}(2L)=\underline{f_{p}}(0)$ from \eqref{attachementmap} and \eqref{domaineA}, the functions $\underline{f_{p}}$ thus constructed are continuous on $[-2L,2L]$. Besides,
\begin{equation}
\lVert q\rVert_{L^{1}(-L,3L)}=2\lVert q\rVert_{L^{1}(0,2L)},\text{  }\lVert q_{1}\rVert_{L^{1}(-L,3L)}=2\lVert q_{1}\rVert_{L^{1}(0,2L)}.
\end{equation}

Then, let a compact interval $K\subset[0,L)$ and consider the restriction of $f_{p}$ to $K$, one can easily see from \eqref{attachementmap} that its gluing map corresponds to the restriction of $ \underline{f_{p}}$ on $\{x\in [0,2L)|x\in K \text{ or }2L-x\in K\}$ which is
a compact set symmetrical with respect to $L$ and with two connected components. This means that, in order to end the proof of Proposition \ref{KomornikSchrodinger}, it suffices to show the same type of result as \cite[Theorem 2]{Komornik1984} applied to $( \underline{f_{p}})_{p\in\mathbb{Z}}$ and $(\underline E_{p})_{p\in\mathbb{Z}}$,
but on compacts of \textcolor{black}{$(-L,3L)$}, symmetrical with respect to $L$ with two connected components only.
Observe that \cite[Theorem 2]{Komornik1984} is only given for compact interval but is also true for compacts with finite number of connected components. Now, the two only differences between our case \textcolor{black}{ans} \cite[Theorem 2]{Komornik1984} are that $\mathcal{L}$ has a non-local term which is the third term $q_{1}u(2L-\cdot)$ in the right-hand side of \eqref{Lsimple}, and that the $\underline{f_{p}}$ are continuous but their derivatives are not always continuous and have \textcolor{black}{discontinuities} on $\mathcal{D}:=\{x=0,x=2L\}$.
Observe, however, that in the proof of \cite[Theorem 2]{Komornik1984}, the fact that $ \underline{f_{p}}$ are eigenvectors of $\mathcal{L}$ is only used through the Titchmarsh formula \cite{Komornik1984}, and
note that a generalized Titchmarsh formula still holds for this operator and we have, using \eqref{defmathcalL1}--\eqref{Lsimple} and two integration by parts, for $p\in \Z$, and $x\in\textcolor{black}{(-L,3L)}$, $t\in\textcolor{black}{(0,\min(|3L-x|,|x+L|))}$
\begin{equation}\label{titchmarsh}
\begin{aligned}
 \underline{f_{p}}(x+t)&+ \underline{f_{p}}(x-t)=2f_{p}(x)\cos(\sqrt{-\mu_{p}^{2}}t)\\
&+\int_{x-t}^{x+t}q(\xi) \underline{f_{p}}(\xi)\frac{\sin(\sqrt{-\mu_{p}^{2}}(t-|x-\xi|))}{\sqrt{-\mu_{p}^{2}}}d\xi\\
&+\int_{x-t}^{x+t}q_{1}(\xi) \underline{f_{p}}(2L-\xi)\frac{\sin(\sqrt{-\mu_{p}^{2}}(t-|x-\xi|))}{\sqrt{-\mu_{p}^{2}}}d\xi\\
&+\sum\limits_{x_{1}\in(x-t,x+t)\cap\mathcal{D}}\frac{\sin(\sqrt{-\mu_{p}^{2}}(t-|x-x_{1}|))}{\sqrt{-\mu_{p}^{2}}}(\underline{f_{p}}'(x_{1}^{+})-\underline{f_{p}}'(x_{1}^{-})).
\end{aligned}
\end{equation}
As expected, the two differences with \cite[Theorem 2]{Komornik1984} are now translated in the appearance of the third and fourth terms that do not appear in the case of \cite[Theorem 2]{Komornik1984}.
Now observe that in the proof of \cite[Theorem 2]{Komornik1984}, the second term of the right-hand side is each time bounded using the $L^{\infty}$ norm of $ \underline{f_{p}}$
and $Q=\lVert q\rVert_{L^{1}(0,L)}$.
Thus, to adapt the proof of \cite[Theorem 2]{Komornik1984}, all we have to do is to provide the same type of bounds at each step for the third and fourth terms.
As for any compact $K\subset\textcolor{black}{(-L,3L)}$ symmetrical with respect to $L$,
$\lVert  \underline{f_{p}} \rVert_{L^{\infty}(K)}=\lVert  \underline{f_{p}}(2L-\cdot) \rVert_{L^{\infty}(K)}$, the
same bounds hold in our case for the third term
replacing $Q$ by $\lVert q\rVert_{L^{1}(-L,3L)}+\lVert q_{1}\rVert_{L^{1}(-L,3L)}$.\footnote{For the sake of rigor let us note that when $K=[a,b]$ is symmetrical with respect to $L$, $K_{R}=[a-R,b+R]$ is also symmetrical with respect to $L$.}
To deal with the fourth term, we need to study the jump \textcolor{black}{discontinuities} $\underline{f_{p}}'(x_{1}^{+})-\underline{f_{p}}'(x_{1}^{-})$.
From the definition of $\underline{f_{p}}$, one has
\begin{equation}
\begin{aligned}
&\partial_{x}\underline{f_{p}}-\frac{\underline{\delta}}{3}\underline{f_{p}}(2L-x)+\mu_{p}\underline{f_{p}}=0\text{ for }\textcolor{black}{x\in(0,L)\cup(2L,3L)},\\
&\partial_{x}\underline{f_{p}}+\frac{\underline{\delta}}{3}\underline{f_{p}}(2L-x)+\mu_{p}\underline{f_{p}}=0\text{ for }\textcolor{black}{x\in(-L,0)\cup(L,2L)}.
\end{aligned}
\end{equation}
Thus for $x_{1}\in\mathcal{D}$, one has
\begin{equation}
\underline{f_{p}}'(x_{1}^{+})-\underline{f_{p}}'(x_{1}^{-})=2\underline{\delta}(x_{1})\underline{f_{p}}(x_{1})
\end{equation}
which implies, for $x_{1}\in K$, that
\begin{equation}
\left|\underline{f_{p}}'(x_{1}^{+})-\underline{f_{p}}'(x_{1}^{-})\right|\leq C\lVert \underline{f_{p}}\rVert_{L^{\infty}(K)},
\end{equation}
where $C$ is a constant independent of $p$. With this in mind, and noting that for any compact $K$, $K\cap\mathcal{D}$ has a finite cardinal $\mathcal{N}_{K}\textcolor{black}{\leq 2}$ depending only on $K$, we can bound the fourth term of \eqref{titchmarsh} as in \cite[Theorem 2]{Komornik1984}. More precisely we have :
\begin{enumerate}
    \item For Lemma 1 of \cite{Komornik1984}, with $R>0$ such that $x+2R\in K $, integrating the fourth term on $(0,R)$ gives
\begin{equation}\label{boundtitch}
\begin{aligned}
&\left|\int_{0}^{R}\sum\limits_{x_{1}\in(x-t,x+t)\cap\mathcal{D}}\frac{\sin(\sqrt{-\mu_{p}^{2}}(t-|x-x_{1}|))}{\sqrt{-\mu_{p}^{2}}}(\underline{f_{p}}'(x_{1}^{+})-\underline{f_{p}}'(x_{1}^{-}))dt\right|\\
&\leq A C\lVert \underline{f_{p}}\rVert_{L^{\infty}(K)}\int_{0}^{R}\sum\limits_{x_{1}\in(x-t,x+t)\cap\mathcal{D}}|t-|x-x_{1}||dt\\
&\leq A C\lVert \underline{f_{p}}\rVert_{L^{\infty}(K)}\mathcal{N}_{K}R^{2},
\end{aligned}
\end{equation}
where we used in the last line that $x_{1}\in(x-t,x+t)$ and where $A=1$ but corresponds to the constant $A$ in \cite{Komornik1984}. Thus, this bound
is similar to the bound obtained in \cite{Komornik1984} for the second term of \eqref{titchmarsh} replacing $Q$ with $Q=C\mathcal{N}_{K}$. The proof of the first part a) of Theorem 1 in \cite{Komornik1984} follows directly.
\item For the part b) of Theorem 1 in \cite{Komornik1984}, one has, with $x\in K$ and $K_{R}$ is the compact extension of $K$ given by $\{x\in \overline{\mathcal{B}_{y}(R)}| y\in K\}$, \textcolor{black}{where $\mathcal{B}_{y}(R)$ is the ball centered in $y$ of radius $R$,}
\begin{equation}\label{boundtitch2}
\begin{aligned}
&\left|\int_{0}^{R} \cos(\mu t)\sum\limits_{x_{1}\in(x-t,x+t)\cap\mathcal{D}}\frac{\sin(\sqrt{-\mu_{p}^{2}}(t-|x-x_{1}|))}{\sqrt{-\mu_{p}^{2}}}(f_{p}'(x_{1}^{+})-f_{p}'(x_{1}^{-}))dt\right|\\
&\leq C\lVert \underline{f_{p}}\rVert_{L^{\infty}(K_{R})}\int_{0}^{R} \sum\limits_{x_{1}\in(x-t,x+t)\cap\mathcal{D}}\left|\frac{\sin(\sqrt{-\mu_{p}^{2}}(t-|x-x_{1}|))}{\sqrt{-\mu_{p}^{2}}}\right|dt
\end{aligned}
\end{equation}
Now, for $t\in[0,R]$ and $x_{1}\in(x-t,x+t)$, we have $(t-|x-x_{1}|)\in[0,R]$, which implies that
\begin{equation}
\left|\frac{\sin(\sqrt{-\mu_{p}^{2}}(t-|x-x_{1}|))}{\sqrt{-\mu_{p}^{2}}}\right|\leq 2\frac{R+1}{1+|\sqrt{-\mu_{p}^{2}}|}.
\end{equation}
Thus, \eqref{boundtitch2} becomes
\begin{equation}\label{boundtitch3}
\begin{aligned}
&\left|\int_{0}^{R} \cos(\mu t)\sum\limits_{x_{1}\in(x-t,x+t)\cap\mathcal{D}}\frac{\sin(\sqrt{-\mu_{p}^{2}}(t-|x-x_{1}|))}{\sqrt{-\mu_{p}^{2}}}(f_{p}'(x_{1}^{+})-f_{p}'(x_{1}^{-}))dt\right|\\
&\leq C\lVert \underline{f_{p}}\rVert_{L^{\infty}(K_{R})}2\frac{R+1}{1+|\sqrt{-\mu_{p}^{2}}|}\mathcal{N}_{K}R
\end{aligned}
\end{equation}
and we have once again a bound similar to the bound obtained in \cite{Komornik1984} for the second term of \eqref{titchmarsh} with $Q=C\mathcal{N}_{K}$.
\item Finally, one can do the same with the proof of Theorem 2 by noting that Lemma 3 and 4 still holds identically and that
\begin{equation}
\begin{aligned}
&\int_{0}^{R} \frac{\sin(\mu t)}{\pi t}\sum\limits_{x_{1}\in(x-t,x+t)\cap\mathcal{D}}\frac{\sin(\sqrt{-\mu_{p}^{2}}(t-|x-x_{1}|))}{\sqrt{-\mu_{p}^{2}}}(f_{p}'(x_{1}^{+})-f_{p}'(x_{1}^{-}))dt\\
&=\sum\limits_{\{|x-x_{1}|\leq R\}\cap\mathcal{D}} \int_{|x-x_{1}|}^{R} \frac{\sin(\mu t)}{\pi t}\frac{\sin(\sqrt{-\mu_{p}^{2}}(t-|x-x_{1}|))}{\sqrt{-\mu_{p}^{2}}}(f_{p}'(x_{1}^{+})-f_{p}'(x_{1}^{-}))dt.
\end{aligned}
\end{equation}
\end{enumerate}
Thus, we can overall apply the results of \cite{Komornik1984} by replacing $Q$ with $Q=\textcolor{black}{2}(\lVert q\rVert_{L^{1}(0,2L)}+\lVert q_{1}\rVert_{L^{1}(0,2L)})+C\mathcal{N}_{K}$.
Thus, we still have from \cite[Theorem 2]{Komornik1984}\textcolor{black}{
\begin{equation}
 \lim\limits_{\mu\rightarrow +\infty} \sup\limits_{x\in K}\left|\sum\limits_{|Im(\mu_{p})|<\mu} \langle \underline{f}, \underline{f_{p}}\rangle \underline{f_{p}}(x)-\sum\limits_{|Im(\mu_{p}^{(0)})|<\mu} \langle \underline{f}, \underline{E_{p}}\rangle \underline{E_{p}}(x)\right|=0.
\end{equation}
Choosing now the compact $K=[-L/2,5L/2]\in(-L,3L)$ and symmetrical with respect to $L$, one can see that for any $x\in[0,L]$, $x\in K$ and $2L-x\in K$. Thus}
\begin{equation}
 \lim\limits_{\mu\rightarrow +\infty} \sup\limits_{x\in K}|\sigma_{\mu}(f,x)-p_{\mu}(f,x)|=0.
\end{equation}
This ends the proof of Lemma \ref{EquiConv}
\end{proof}

\section{
Expression of the feedback law in the physical coordinates
}
\label{appendixC}
In this appendix, we show that \eqref{F} implies \eqref{Fcoeff} after the change of variables described in Subsection \ref{transfosubsection}.
Let $(h_{n},v_{n})_{n\in\mathbb{Z}}$ be the Riesz basis of $(L^{2})^{2}$ of eigenvectors associated to the problem \eqref{sv-lin}--\eqref{bc-sv}.
\textcolor{black}{We first construct an orthonormal basis $(f_{n})$ of $(L^{2})^{2}$ of eigenvectors of $\mathcal{A}$ deduced from $(h_{n},v_{n})_{n\in\mathbb{Z}}$ by applying the change of variables described in Subsection \ref{transfosubsection}. Looking at \eqref{diffeo0}--\eqref{diffeo} this family $(f_{n})$ is given by}
\begin{equation}\label{app-def-fn}
    f_{n}=\exp\left(\int_{0}^{z}\delta(x)dx\right)S(r(\cdot))\begin{pmatrix}h_{n}\\v_{n}\end{pmatrix}(r(\cdot)),
\end{equation}
where
\begin{equation}\label{def-S-var}
\begin{aligned}
    S=\begin{pmatrix}
        \sqrt{\frac{1}{H^\gamma}}&1\\
        -\sqrt{\frac{1}{H^\gamma}}&1
       \end{pmatrix}\\
       \end{aligned}
\end{equation}
and $r$ is the bijection from $[0,L]$ to $[0,L]$ defined by
\begin{equation}
r^{-1} :x\rightarrow  \frac{L}{L_{\gamma}}\frac{2}{\gamma}\left(\sqrt{1+\frac{\gamma L}{2}}\right)-\left(\sqrt{1-(\gamma x- \frac{ L}{2}})\right).
\end{equation}
As the transformations
given in \eqref{diffeo0}--\eqref{scal}
define a diffeomorphism, 
$(f_{n})$ is indeed a Riesz basis of $(L^{2})^{2}$ of eigenvectors of $\mathcal{A}$ given by \eqref{A}.

Thus, from Section \ref{s3}--\ref{s5}, the exponential stability holds provided that 
$F\in\dist$
satisfies \eqref{Fcoeff}. We are now going to show that \eqref{Fcoeff} indeed holds, using the hypothesis \eqref{F} on $(h_{n},v_{n})$ and \eqref{control-form}.

Let  $F\in \dist$ be a feedback law. From \eqref{control-form} and \eqref{diffeo0}--\eqref{sys01}, one has
\begin{equation}\label{uapp}
\begin{aligned}
    u_{\gamma}(t)&=\left(\langle \zeta(t,\cdot), F\rangle+\zeta_0\langle f_{0}, F\rangle\right),
    \end{aligned}
\end{equation}
and
\begin{equation}
    \begin{aligned}
\langle \mathbb{\zeta}(t,\cdot), F\rangle=\left\langle \exp\left(\int_{0}^{z}\delta(x)dx\right)S(r(\cdot))\begin{pmatrix}h\\v\end{pmatrix}(L_{\gamma}t/L,r(\cdot)), F \right\rangle.
    \end{aligned}
\end{equation}
Thus there exists $F_{1}\in \mathcal{D}_\gamma^\prime$ such that
       \begin{equation}\label{egal}
          \langle \mathbb{\zeta}(L/L_{\gamma}t,\cdot),F\rangle= \left\langle \begin{pmatrix}h\\v\end{pmatrix}(t,\cdot), F_1\right\rangle,
       \end{equation}
       and therefore from \eqref{app-def-fn}
\begin{equation}
\langle f_{n}, F\rangle=\left\langle \begin{pmatrix}h_{n}\\v_{n}\end{pmatrix}, F_{1}\right\rangle.
\end{equation}
Now if $F_{1}$ satisfies \eqref{F}, one has using \eqref{def-S-var} and \eqref{app-def-fn}
\begin{equation}
    \langle f_{n},F\rangle=-2\tanh(\mu L) \frac{(f_{n,1}(0))^2}{\int_{0}^{L} f_{n,1}(x)+f_{n,2}(x)dx}, \quad \forall n \in \Z ,
\end{equation}
which is exactly \eqref{Fcoeff}, noting that $f_{n,1}$ is real from \eqref{fn-f-n}.
Finally, as we restrict ourselves to solutions of the system \eqref{sys1virt}, \eqref{cond-0} with $\zeta_0(0)=0$ (see \eqref{integrator-starts-at-0}), and from \eqref{egal},
the control under the form \eqref{F0-new}--\eqref{F0-new2} corresponds exactly to \eqref{uapp}.

Finally, to get the final exponential decay rate for \eqref{sv-lin}--\eqref{bc-sv} with feedback $F_1^\gamma$, let us recall that we operated a scaling in time in \eqref{scal}, so the decay rate is
\begin{equation}
    \frac34 \mu \frac{L}{L^\gamma} \xrightarrow[\gamma\to 0]{} \frac{3}{8} \mu.
\end{equation}
In particular, for $\gamma>0$ small enough,
\begin{equation}
     \frac34 \mu \frac{L}{L^\gamma} \geq \frac{\mu}4,
\end{equation}
which gives us the decay rate of Theorem \ref{th1}.
\begin{rmrk}
Note that the conditions \eqref{F} and \eqref{Fcoeff} remain the same when the basis under consideration is renormalized.
\end{rmrk}
\section*{Acknowledgements}
The authors would like to thank the ANR project Finite4SoS (No.ANR 15-CE23-0007). Amaury Hayat was financially supported by IPEF,
Christophe Zhang was financially supported by the French Corps des Mines, and the Chair for Applied Analysis (Alexander von Humboldt professorship) of the Friedrich Alexander Universit\"{a}t Erlangen-N\"{u}rnberg.
\bibliographystyle{plain}
\bibliography{Bib.bib}
\end{document}